\DeclareFontFamily{U}{mathx}{\hyphenchar\font45}
\DeclareFontShape{U}{mathx}{m}{n}{<-> mathx10}{}
\DeclareSymbolFont{mathx}{U}{mathx}{m}{n}
\DeclareMathAccent{\wb}{0}{mathx}{"73}
\DeclarePairedDelimiterX{\norm}[1]{\lVert}{\rVert}{#1}
\DeclarePairedDelimiterX{\seminorm}[1]{\lvert}{\rvert}{#1}
\DeclareFontFamily{U}{mathx}{\hyphenchar\font45}
\DeclareFontShape{U}{mathx}{m}{n}{
	<5> <6> <7> <8> <9> <10>
	<10.95> <12> <14.4> <17.28> <20.74> <24.88>
	mathx10
}{}
\DeclareSymbolFont{mathx}{U}{mathx}{m}{n}
\DeclareMathAccent{\widecheck}{0}{mathx}{"71}
\newcommand{\Reals}{\mathbb{R}}
\newcommand{\abs}[1]{\left \lvert #1 \right \rvert}
\newcommand{\set}[1]{\left\{#1\right\}}
\newcommand{\floor}[1]{\left\lfloor #1 \right\rfloor}
\newcommand{\Var}{\mathrm{Var}}
\newcommand{\Cov}{\mathrm{Cov}}
\newcommand{\diam}{\mathrm{diam}}
\newcommand{\vol}{\text{vol}}
\newcommand{\1}{\mathbf{1}}
\DeclareMathOperator*{\argmin}{argmin}
\DeclareMathOperator*{\esssup}{ess\,sup}
\newcommand{\Rd}{\Reals^d}
\newcommand{\lambdavec}{\boldsymbol{\lambda}}
\newcommand{\Lap}{L}
\newcommand{\Id}{I}
\newcommand{\Xset}{\mathcal{X}}
\newcommand{\Leb}{L}
\newcommand{\mc}[1]{\mathcal{#1}}
\newcommand{\Pbb}{\mathbb{P}}
\newcommand{\Ebb}{\mathbb{E}}
\newcommand{\dive}{\mathrm{div}}
\newcommand{\dx}{\,dx}
\newcommand{\wt}[1]{\widetilde{#1}}
\newcommand{\wh}[1]{\widehat{#1}}
\newcommand{\wc}[1]{\widecheck{#1}}
\newtheorem{theorem}{Theorem}
\newtheorem{lemma}{Lemma}
\newtheorem{proposition}{Proposition}
\theoremstyle{definition}
\theoremstyle{remark}
\begin{document}

\begin{center} {\Large{\bf{Minimax Optimal Regression over Sobolev Spaces \\
\vspace{.2cm}
 via Laplacian Regularization on Neighborhood Graphs}}}
	
	\vspace*{.3cm}
	
	{\large{
			\begin{center}
				Alden Green~~~~~ Sivaraman Balakrishnan~~~~~ Ryan J. Tibshirani\\
				\vspace{.2cm}
			\end{center}

			\begin{tabular}{c}
				Department of Statistics and Data Science \\
				Carnegie Mellon University
			\end{tabular}
			
			\vspace*{.2in}
			
			\begin{tabular}{c}
				\texttt{\{ajgreen,siva,ryantibs\}@stat.cmu.edu}
			\end{tabular}
	}}
	
	\vspace*{.2in}
	
	\today
	\vspace*{.2in}

\end{center}

\begin{abstract}
	In this paper we study the statistical properties of Laplacian smoothing, a graph-based approach to nonparametric regression. Under standard regularity conditions, we establish upper bounds on the error of the Laplacian smoothing estimator \smash{$\wh{f}$}, and a goodness-of-fit test also based on \smash{$\wh{f}$}. These upper bounds match the minimax optimal estimation and testing rates of convergence over the first-order Sobolev class $H^1(\Xset)$, for $\Xset \subseteq \Rd$ and $1 \leq d < 4$; in the estimation problem, for $d = 4$, they are optimal modulo a $\log n$ factor. Additionally, we prove that Laplacian smoothing is manifold-adaptive: if $\Xset \subseteq \Reals^d$ is an $m$-dimensional manifold with $m < d$, then the error rate of Laplacian smoothing (in either estimation or testing) depends only on $m$, in the same way it would if $\Xset$ were a full-dimensional set in $\Reals^m$.
\end{abstract}

\section{Introduction}

We adopt the standard nonparametric regression setup, where we observe samples $(X_1,Y_1),\ldots,(X_n,Y_n)$ that are i.i.d.\ draws from the model 
\begin{equation}
\label{eqn:signal_plus_noise_model}
Y_i = f_0(X_i) + \varepsilon_i, \quad \varepsilon_i \sim N(0,1), 
\end{equation}
where $\varepsilon_i$ is independent of $X_i$. Our goal is to perform statistical inference on the unknown regression function $f_0$, by which we mean either \emph{estimating} $f_0$ or \emph{testing} whether $f_0 = 0$, i.e., whether there is any signal present. 

Laplacian smoothing \citep{smola2003} is a penalized least squares estimator, defined over a graph. Letting $G = (V,W)$ be a weighted undirected graph with vertices $V=\{1,\ldots,n\}$, associated with $\{X_1,\ldots,X_n\}$, and $W \in \mathbb{R}^{n \times n}$ is the (weighted) adjacency matrix of the graph. 
the Laplacian smoothing estimator \smash{$\wh{f}$} is given by
\begin{equation}
\label{eqn:laplacian_smoothing}
\wh{f} =  \argmin_{f \in \Reals^n} \; \sum_{i = 1}^{n}(Y_i - f_i)^2 + \rho \cdot f^\top \Lap f. 
\end{equation}
Here $\Lap$ is the graph Laplacian matrix (defined formally in Section~\ref{sec:problem_setup_and_background}), $G$ is typically a geometric graph (such as a $k$-nearest-neighbor or neighborhood graph), $\rho \geq 0$ is a tuning parameter, and the penalty
\begin{equation*}
f^\top \Lap f = \frac{1}{2} \sum_{i,j = 1}^{n} W_{ij}(f_i - f_j)^2
\end{equation*}
encourages \smash{$\wh{f}_i \approx \wh{f}_j$} when $X_i \approx X_j$. Assuming \eqref{eqn:laplacian_smoothing} is a reasonable estimator of $f_0$, the statistic
\begin{equation}
\label{eqn:laplacian_smoothing_test}
\wh{T} = \frac{1}{n} \| \wh{f} \|_2^2 
\end{equation}
is in turn a natural test statistic to test if $f_0 = 0$. 

Of course there are many methods for nonparametric regression (see, e.g., \citet{gyorfi2006,wasserman2006,tsybakov2008_book}), but Laplacian smoothing has its own set of advantages. For instance:
\begin{itemize}
	\item \emph{Computational ease.} Laplacian smoothing is fast, easy, and stable to compute. The estimate \smash{$\wh{f}$} can be computed by solving a symmetric diagonally dominant linear system. There are by now various nearly-linear-time solvers for this problem (see e.g., the seminal papers of \citet{spielman2011,spielman2013,spielman2014}, or the overview by \citet{vishnoi2012} and references therein).
	\item \emph{Generality.} Laplacian smoothing is well-defined whenever one can associate a graph with observed responses. This generality lends itself to many different data modalities, e.g., text and image classification, as in \citet{kondor2002,belkin03a,belkin2006}.
	\item \emph{Weak supervision.} Although we study Laplacian smoothing in the supervised problem setting \eqref{eqn:signal_plus_noise_model}, the method can be adapted to the semi-supervised or unsupervised settings, as in \citet{zhu2003semisupervised,zhou2005learning,nadler09}. 
\end{itemize}

For these reasons, a body of work has emerged that analyzes the statistical properties of Laplacian smoothing, and graph-based methods more generally. Roughly speaking, this work can be divided into two categories, based on the perspective they adopt. 
\begin{itemize}
	\item \emph{Fixed design perspective.} Here one treats the design points $X_1,\ldots,X_n$ and the graph $G$ as fixed, and carries out inference on $f_0(X_i)$, $i=1,\ldots,n$. In this problem setting, tight upper bounds have been derived on the error of various graph-based methods (e.g., \citet{wang2016,hutter2016,sadhanala16,sadhanala17,kirichenko2017,kirichenko2018}) and tests (e.g., \citet{sharpnack2010identifying,sharpnack2013b,sharpnack2013,sharpnack2015}), which certify that such procedures are \emph{optimal} over ``function'' classes (in quotes because these classes really model the $n$-dimensional vector of evaluations). The upside of this work is its generality: in this setting $G$ need not be a geometric graph, but in principle it could be any graph over $V=\{1,\ldots,n\}$. The downside is that, in the context of nonparametric regression, it is arguably not as natural to think of the evaluations of $f_0$ as exhibiting smoothness over some fixed pre-defined graph $G$, and more natural to speak of the smoothness of the function $f_0$ itself. 
	\item \emph{Random design perspective.} Here one treats the design points $X_1,\ldots,X_n$ as independent samples from some distribution $P$ supported on a domain $\Xset \subseteq \Rd$. Inference is drawn on the regression function $f_0: \Xset \to \Reals$, which is typically assumed to be smooth in some \emph{continuum} sense, e.g., it possesses a first derivative bounded in $L^{\infty}$ (H\"{o}lder) or $\Leb^2$ (Sobolev) norm. To conduct graph-based inference, the user first builds a neighborhood graph over the random design points---so that $W_{ij}$ is large when $X_i$ and $X_j$ are close in (say) Euclidean distance---and then computes e.g., \eqref{eqn:laplacian_smoothing} or \eqref{eqn:laplacian_smoothing_test}. In this context, various graph-based procedures have been shown to be \emph{consistent}: as $n \to \infty$, they converge to a continuum limit (see \citet{belkin07,vonluxburg2008,trillos2018} among others). However, until recently such statements were not accompanied by error rates, and even so, such error rates as have been proved \citep{lee2016,trillos2020} are not optimal over continuum function spaces, such as H\"{o}lder or Sobolev classes. 
\end{itemize}
The random design perspective bears a more natural connection with nonparametric regression (the focus in this paper), as it allows us to formulate smoothness based on $f_0$ itself (how it behaves as a continuum function, and not just its evaluations at the design points). In this paper, we will adopt the random design perspective, and seek to answer the following question:
\begin{quote}{When we assume the regression function $f_0$ is smooth in a continuum sense, does Laplacian smoothing achieve optimal performance for estimation and goodness-of-fit testing?} 
\end{quote}
This is no small question---arguably, it is \emph{the} central question of nonparametric regression---and without an answer one cannot fully compare the statistical properties of Laplacian smoothing to alternative methods. It also seems difficult to answer: as we discuss next, there is a fundamental gap between the \emph{discrete} smoothness imposed by the penalty $f^\top \Lap f$ in problem \eqref{eqn:laplacian_smoothing} and the \emph{continuum} smoothness assumed on $f_0$, and in order to obtain sharp upper bounds we will need to bridge this gap in a suitable sense.

\section{Summary of Results}
\label{sec:summary}

\paragraph{Advantages of the Discrete Approach.} 

In light of the potential difficulty in bridging the gap between discrete and continuum notions of smoothness, it is worth asking whether there is any \emph{statistical} advantage to solving a discrete problem such as \eqref{eqn:laplacian_smoothing} (setting aside computational considerations for the moment). After all, we could have instead solved the following variational problem:
\begin{equation}
\label{eqn:thin_plate_spline}
\wt{f} = \argmin_{f : \Xset \to \Reals} \; \sum_{i = 1}^{n} \bigl(Y_i - f(X_i)\bigr)^2 + \rho \hspace{-2pt} \int_{\Xset} \|\nabla f(x)\|_2^2 \,dx,
\end{equation}
where the optimization is performed over all continuous functions $f$ that have a weak derivative $\nabla f$ in $L^2(\Xset)$. Analogously, for testing, we could use:
\begin{equation}
\label{eqn:thin_plate_spline_test}
\wt{T} = \| \wt{f} \|_n^2 := \frac{1}{n} \sum_{i=1}^n \wt{f}(X_i)^2.
\end{equation}
The penalty term in \eqref{eqn:thin_plate_spline} leverages the assumption that $f_0$ has a smooth derivative in a seemingly natural way. Indeed, the estimator \smash{$\wt{f}$} and statistic \smash{$\wt{T}$} are well-known: for $d = 1$, \smash{$\wt{f}$} is the familiar \emph{smoothing spline}, and for $d>1$, it is a type of \emph{thin-plate spline}. The statistical properties of smoothing and thin-plate splines are well-understood \citep{vandergeer2000,liu2019}. As we discuss later, the Laplacian smoothing problem \eqref{eqn:laplacian_smoothing} can be viewed as a discrete and noisy approximation to \eqref{eqn:thin_plate_spline}. At first blush, this suggests that Laplacian smoothing should at best inherit the statistical properties of \eqref{eqn:thin_plate_spline}, and at worst may have meaningfully larger error.

However, as we shall see the actual story is quite different: remarkably, Laplacian smoothing enjoys optimality properties even in settings where the thin-plate spline estimator \eqref{eqn:thin_plate_spline} is not well-posed (to be explained shortly); Tables~\ref{tbl:estimation_rates} and~\ref{tbl:testing_rates} summarize. As we establish in Theorems~\ref{thm:laplacian_smoothing_estimation1}-\ref{thm:laplacian_smoothing_testing_manifold}, when computed over an appropriately formed neighborhood graph, Laplacian smoothing estimators and tests are minimax optimal over first-order \emph{continuum} Sobolev balls. This holds true either when $\Xset \subseteq \Rd$ is a full-dimensional domain and $d = 1,2,$ or $3$, or when $\Xset$ is a manifold embedded in $\Rd$ of intrinsic dimension $m = 1,2,$ or $3$. Additionally, the estimator \smash{$\wh{f}$} is nearly minimax optimal (to within a \smash{$(\log n)^{1/3}$} factor) when $d=4$ (or $m=4$ in the manifold case). 

By contrast, smoothing splines are optimal only when $d=1$. When $d>1$, the thin-plate spline estimator \eqref{eqn:thin_plate_spline} is not even well-posed, in the following sense: for any $(X_1,Y_1),\ldots,(X_n,Y_n)$ and any $\delta>0$, there exists (e.g., \citet{green93} give a construction using ``bump'' functions) a differentiable function $f$ such that $f(X_i) = Y_i$, $i=1,\ldots,n$, and
\begin{equation*}
\int_{\Xset} \|\nabla f(x)\|_2^2 \leq \delta.
\end{equation*}
In other words, $f$ achieves perfect (zero) data loss and arbitrarily small penalty in the problem \eqref{eqn:thin_plate_spline}. This will clearly not lead to a consistent estimator of $f_0$ across the design points (as it always yields $Y_i$ at each $X_i$). In this light, our results when $d>1$ favorably distinguish Laplacian smoothing from its natural variational analog.

\begin{table}
	\begin{center}
		\begin{tabular}{p{.1\textwidth} | p{.14\textwidth} p{.12\textwidth} }
			Dimension & Laplacian smoothing \eqref{eqn:laplacian_smoothing} & Thin-plate splines \eqref{eqn:thin_plate_spline} \\
			\hline
			$d = 1$ & $\bm{n^{-2/3}}$ & \textcolor{red}{$\bm{n^{-2/3}}$} \\
			$d = 2,3$ & $\bm{n^{-2/(2 + d)}}$ & \textcolor{red}{$1$} \\
			$d = 4$ & $\bm{n^{-1/3}} (\log n)^{1/3}$ & \textcolor{red}{$1$} \\
			$d \geq 5$  & $(\log n/n)^{4/(3d)}$ &\textcolor{red}{$1$} \\
		\end{tabular}
	\end{center}
	\caption{Summary of estimation rates over first-order Sobolev balls. Black font marks new results from this paper, red font marks previously-known results; bold font marks minimax optimal rates. Although we suppress it for simplicity, in all cases the dependence of the error rate on the radius of the Sobolev ball is also optimal. The rates for thin-plate splines with $d \geq 2$ assume the estimator \smash{$\wt{f}$} interpolates the responses, \smash{$\wt{f}(X_i) = Y_i$} for $i = 1,\ldots,n$; see the discussion in Section~\ref{sec:summary}. Here, we use ``$1$'' to indicate inconsistency (error not converging to 0). Lastly, when $\Xset$ is an $m$-dimensional manifold embedded in $\Rd$, all Laplacian smoothing results hold with $d$ replaced by $m$, without any change to the method itself.}
	\label{tbl:estimation_rates}
\end{table}

\begin{table}
	\begin{center}
		\begin{tabular}{p{.1\textwidth} | p{.14\textwidth} p{.12\textwidth} }
			Dimension & Laplacian smoothing \eqref{eqn:laplacian_smoothing_test} & Thin-plate splines \eqref{eqn:thin_plate_spline_test} \\
			\hline
			$d = 1$ & $\bm{n^{-4/5}}$ & \textcolor{red}{$\bm{n^{-4/5}}$} \\
			$d = 2,3$ & $\bm{n^{-4/(4 + d)}}$ & \textcolor{red}{$n^{-1/2}$} \\
			$d \geq 4$ & $\bm{n^{-1/2}}$ & \textcolor{red}{$\bm{n^{-1/2}}$}
		\end{tabular}
	\end{center}
	\caption{Summary of testing rates over first-order Sobolev balls; black, red, and bold fonts are used as in Table~\ref{tbl:estimation_rates}. The rates for thin-plate splines with $d \geq 2$ assume the test statistic \smash{$\wt{T}$} is computed using an \smash{$\wt{f}$} that interpolates the responses, $\wt{f}(X_i) = Y_i$ for $i = 1,\ldots,n$.  Rates for $d \geq 4$ assume that $f_0 \in \Leb^4(\Xset,M)$. Lastly, when $\Xset$ is an $m$-dimensional manifold embedded in $\Rd$, all rates hold with $d$ replaced by $m$.}
	\label{tbl:testing_rates}
\end{table}

\paragraph{Future Directions.} 

To be clear, there is still much left to be investigated. For one, the Laplacian smoothing estimator \smash{$\wh{f}$} is only defined at $X_1,\ldots,X_n$. In this work we study its in-sample mean squared error 
\begin{equation}
\label{eqn:empirical_norm_error}
\bigl\|\wh{f} - f_0\bigr\|_n^2 := \frac{1}{n}\sum_{i = 1}^{n} \Bigl(\wh{f}_i - f_0(X_i)\Bigr)^2.
\end{equation}
In Section~\ref{sec:minimax_optimal_laplacian_smoothing}, we discuss how to extend \smash{$\wh{f}$} to a function over all $\Xset$, in such a way that the out-of-sample mean squared error \smash{$\|\wh{f} - f_0\|_{\Leb^2(\Xset)}^2$} should remain small, but leave a formal analysis to future work.

In a different direction, problem \eqref{eqn:thin_plate_spline} is only a special, first-order case of thin-plate splines. In general, the $k$th order thin-plate spline estimator is defined as
\begin{equation*}
\wt{f} = \argmin_{f : \Xset \to \Rd} \; \sum_{i = 1}^{n} \bigl(Y_i - f(X_i)\bigr)^2 + \rho \hspace{-2pt} \sum_{|\alpha| = k} \int_{\Xset} \bigl(D^\alpha f(x)\bigr)^2 \,dx,
\end{equation*}
where for each multi-index $\alpha=(\alpha_1,\ldots,\alpha_d)$ we write $D^\alpha f(x) = \partial^kf/\partial x_{1}^{\alpha_1} \cdots \partial x_{d}^{\alpha_d}$. This problem is in general well-posed whenever $2k > d$. In this regime, assuming that the $k$th order partial derivatives $D^\alpha f_0$ are all $\Leb^2(\Xset)$ bounded, the degree $k$ thin-plate spline has error on the order of \smash{$n^{-2k/(2k + d)}$} \citep{vandergeer2000}, which is minimax rate-optimal for such functions. Of course, assuming $f_0$ has $k$ bounded derivatives for some $2k > d$ is a very strong condition, but at present we do not know if (adaptations of) Laplacian smoothing on neighborhood graphs achieve these rates.

\paragraph{Notation.}

For an integer $p \geq 1$, we use $\Leb^p(\Xset)$ for the set of functions $f$ such that 
\begin{equation*}
\norm{f}_{\Leb^p(\Xset)}^p := \int_{\Xset} |f(x)|^p \,dx < \infty,
\end{equation*}
and $C^p(\Xset)$ for the set of functions that are $p$ times continuously differentiable. For sequences $a_n,b_n$, we write $a_n \lesssim b_n$ to mean $a_n \leq Cb_n$ for a constant $C>0$ and large enough $n$, and $a_n \asymp b_n$ to mean $a_n \lesssim b_n$ and $b_n \lesssim a_n$. Lastly, we use $a \wedge b = \min\{a,b\}$.

\section{Background}
\label{sec:problem_setup_and_background}

Before we present our main results in Section~\ref{sec:minimax_optimal_laplacian_smoothing}, we define neighborhood graph Laplacians, and review known minimax rates over first-order Sobolev spaces.

\paragraph{Neighborhood Graph Laplacians.}

In the graph-based approach to nonparametric regression, we first build a neighborhood graph \smash{$G_{n,r} = (V,W)$}, for $V=\{1,\ldots,n\}$, to capture the geometry of $P$ (the design distribution) and $\Xset$ (the domain) in a suitable sense. The $n \times n$ weight matrix $W = (W_{ij})$ encodes proximity between pairs of design points; for a kernel function $K: [0,\infty) \to \Reals$ and radius $r > 0$, we have
\begin{equation*}
\label{eqn:neighborhood_graph}
W_{ij} = K\Biggl(\frac{\|X_i - X_j\|_2}{r}\Biggr),
\end{equation*}
with $\|\cdot\|_2$ denoting the $\ell_2$ norm on $\Rd$. Defining $D$ as the $n \times n$ diagonal matrix with entries \smash{$D_{ii} = \sum_{j = 1}^{n} W_{ij}$}, the graph Laplacian can then be written as
\begin{equation}
\label{eqn:graph_Laplacian}
\Lap = D - W.
\end{equation}
We use \smash{$L = \sum_{k = 1}^{n} \lambda_k v_k v_k^\top$} for an eigendecomposition of $L$, and we always assume, by convention, ordered eigenvalues $0 = \lambda_1 \leq \cdots \leq \lambda_n$, and unit-norm eigenvectors.

\paragraph{Sobolev Spaces.}

We step away from graph-based methods for a moment, to briefly recall some classical results regarding minimax rates over Sobolev classes. We say that a function $f \in \Leb^2(\Xset)$ belongs to the \emph{first-order Sobolev space} $H^1(\Xset)$ if, for each $j = 1,\ldots,d$, the weak partial derivative $D^j f$ exists and belongs to $\Leb^2(\Xset)$. For such functions $f \in H^1(\Xset)$, the Sobolev seminorm \smash{$\seminorm{f}_{H^{1}(\Xset)}$} is the average size of the gradient $\nabla f = (D^1 f, \ldots, D^d f)$, 
\begin{equation*}
\seminorm{f}_{H^1(\Xset)}^2 := \int_{\Xset} \bigl\|\nabla f(x)\bigr\|_2^2 \,dx,
\end{equation*}
with corresponding Sobolev norm 
\begin{equation*}
\norm{f}_{H^1(\Xset)} := \norm{f}_{\Leb^2(\Xset)} + |f|_{H^1(\Xset)}.
\end{equation*}
The Sobolev ball $H^1(\Xset,M)$ for $M > 0$ is
\begin{equation*}
H^1(\Xset,M) := \Bigl\{f \in H^1(\Xset): \norm{f}_{H^1(\Xset)}^2 \leq M^2\Bigr\}.
\end{equation*}
For further details regarding Sobolev spaces see, e.g., \citet{evans10,leoni2017}.

\paragraph{Minimax Rates.}

To carry out a minimax analysis of regression in Sobolev spaces, one must impose regularity conditions on the design distribution $P$. We shall assume the following.
\begin{enumerate}[label=(P\arabic*)]
	\item
	\label{asmp:domain}
	$P$ is supported on a domain $\Xset \subseteq \Rd$, which is an open, connected set with Lipschitz boundary.
	\item
	\label{asmp:density} 
	$P$ admits a density $p$ such that
	\begin{equation*}
	0 < p_{\min} \leq p(x) \leq p_{\max} < \infty, ~~\textrm{for all $x \in \Xset$}.
	\end{equation*}
	Additionally, $p$ is Lipschitz on $\Xset$, with Lipschitz constant $L_p$.
\end{enumerate}

Under conditions~\ref{asmp:domain}, \ref{asmp:density}, the minimax estimation rate over a Sobolev ball of radius $M \geq n^{-1/2}$ is (e.g., \citet{tsybakov2008_book}):
\begin{equation}
\label{eqn:sobolev_space_estimation_minimax_rate}
\inf_{\wh{f}} \sup_{f_0 \in H^1(\Xset, M)} \Ebb\Bigl[\norm{\wh{f} - f_0}_{L^2(\Xset)}^2\Bigr] \asymp M^{2d/(2 + d)} n^{-2/(2 + d)}.
\end{equation}
(Throughout we assume $M \geq n^{-1/2}$, as otherwise the trivial estimator \smash{$\wh{f} = 0$} achieves smaller error than the parametric rate $n^{-1}$, and the problem does not fit well within the nonparametric setup.) 

As minimax rates in nonparametric hypothesis testing are (comparatively) less familiar than those in nonparametric estimation, we briefly summarize the main idea before stating the optimal error rate. In the goodness-of-fit testing problem, we ask for a test function---formally, a Borel measurable function $\phi$ taking values in $\{0,1\}$---which can distinguish between the hypotheses
\begin{equation}
\mathbf{H}_0: f_0 = f_0^{\star}, ~~\textrm{versus}~~ \mathbf{H}_a: f_0 \in \mc{F} \setminus \{f_0^{\star}\}.
\end{equation} 
Typically, the null hypothesis $f_0 = f_0^{\star} \in \mc{F}$ reflects the absence of interesting structure, and $\mc{F} \setminus  \{f_0^{\star}\}$ is a set of smooth departures from this null. In this paper, as in \citet{ingster2009}, we focus on the problem of \emph{signal detection} in Sobolev spaces, where $f_0^{\star} = 0$ and $\mc{F} = H^1(\Xset,M)$ is a first-order Sobolev ball. This is without loss of generality since our test statistic and its analysis are easily modified to handle the case when $f_0^{\star}$ is not $0$, by simply subtracting $f_0^{\star}(X_i)$ from each observation $Y_i$.



The Type I error of a test $\phi$ is $\mathbb{E}_0[\phi]$, and if $\mathbb{E}_0[\phi] \leq \alpha$ for a given $\alpha \in (0,1)$ we refer to $\phi$ as a level-$\alpha$ test. The worst-case risk of $\phi$ over $\mc{F}$ is
\begin{equation*}
R_n(\phi, \mc{F}, \epsilon) := \sup\Bigl\{\mathbb{E}_{f_0}[1 - \phi]: f_0 \in \mc{F}, \|f_0\|_{\Leb^2(\mc{X})} > \epsilon\Bigr\},
\end{equation*}
and for a given constant $b \geq 1$, the minimax critical radius $\epsilon(\mc{F})$ is the smallest value of $\epsilon$ such that some level-$\alpha$ test has worst-case risk of at most $1/b$. Formally,
\begin{equation*}
\epsilon(\mc{F}) := \inf\Bigl\{\epsilon > 0: \inf_{\phi} R_n(\phi,\mc{F},\epsilon) \leq 1/b \Bigr\},
\end{equation*} 
where in the above the infimum is over all level-$\alpha$ tests $\phi$, and $\Ebb_{f_0}[\cdot]$ is the expectation operator under the regression function $f_0$.\footnote{Clearly, the minimax critical radius $\epsilon$ depends on $\alpha$ and $b$. However, we adopt the typical convention of treating $\alpha \in (0,1)$ and $b \geq 1$ as small but fixed positive constants; hence they will not affect the testing error rates, and we suppress them notationally.} 

The classical approach to hypothesis testing typically focuses on designing test statistics, and studying their (limiting) distribution in order to ensure control of the Type I error. In many cases the Type II error (or risk in our terminology) is not emphasized, or the risk of the test against fixed or directional alternatives (i.e. alternatives which deviate from the null in a fixed direction) is studied. In contrast, in the minimax paradigm the (uniform or worst-case) risk against a large collection of alternatives is the central focus. See \citet{ingster82,ingster87,ingster2012,ariascastro2018,balakrishnan2019,balakrishnan2018hypothesis} for a more extended treatment of the minimax paradigm in nonparametric testing, and for a discussion of its advantages (and disadvantages) over other approaches to studying hypothesis tests.

Testing $f_0 = 0$ is an easier problem than estimating $f_0$, and hence the minimax testing critical radius over $H^1(\Xset,M)$ is smaller than the minimax estimation rate, for $1 \leq d < 4$ (see \citet{ingster2009}):
\begin{equation}
\label{eqn:sobolev_space_testing_critical_radius}
\epsilon^2\bigl(H^1(\Xset,M)\bigr) \asymp M^{2d/(4 + d)}n^{-4/(4 + d)}.
\end{equation}
When $d \geq 4$ the functions in $H^1(\Xset)$ are very irregular; formally speaking $H^1(\Xset)$ does not continuously embed into $\Leb^4(\Xset)$ when $d \geq 4$, and the minimax testing rates in this regime are unknown. 

\section{Minimax Optimality of Laplacian Smoothing}
\label{sec:minimax_optimal_laplacian_smoothing}

We now formalize the main conclusions of this paper: that Laplacian smoothing methods on neighborhood graphs are minimax rate-optimal over first-order continuum Sobolev classes. We will assume \ref{asmp:domain}, \ref{asmp:density} on $P$, and the following condition on the kernel $K$.
\begin{enumerate}[label=(K\arabic*)]
	\item
	\label{asmp:kernel}
	$K:[0,\infty) \to [0,\infty)$ is a nonincreasing function supported on $[0,1]$, its restriction to $[0,1]$ is Lipschitz, and $K(1) > 0$. Additionally, it is normalized so that
	\begin{equation*}
	\int_{\Rd} K(\|z\|_2) \,dz = 1.
	\end{equation*}
	We assume \smash{$\sigma_K = \frac{1}{d} \int_{\Rd} \|x\|_2^2 K(\|x\|_2) \,dx < \infty$}.
\end{enumerate}
This is a mild condition: recall the choice of kernel is under the control of the user, and moreover \ref{asmp:kernel} covers many common kernel choices.

\paragraph{Estimation Error of Laplacian Smoothing.} 

Under these conditions, the Laplacian smoothing estimator \smash{$\wh{f}$} achieves an error rate that matches the minimax lower bound over $H^1(\Xset,M)$. This statement will hold whenever the graph $G_{n,r}$ is computed with radius $r$ in the following range.
\begin{enumerate}[label=(R\arabic*)]
	\setcounter{enumi}{0}
	\item 
	\label{asmp:ls_kernel_radius_estimation}
	For constants $C_0,c_0>0$, the neighborhood graph radius $r$ satisfies
	\begin{equation*}
	C_0 \biggl(\frac{\log n}{n}\biggr)^{\frac{1}{d}} \leq r \leq c_0 \wedge M^{\frac{d - 4}{4 + 2d}} n^{-\frac{3}{4 + 2d}}.  
	\end{equation*}
\end{enumerate}

Next we state Theorem~\ref{thm:laplacian_smoothing_estimation1}, our main estimation result. Its proof, as with all proofs of results in this paper, can be found in the appendix. 
\begin{theorem}
	\label{thm:laplacian_smoothing_estimation1}
	Given i.i.d.\ draws $(X_i,Y_i)$, $i=1,\ldots,n$ from \eqref{eqn:signal_plus_noise_model}, assume $f_0 \in H^1(\Xset,M)$ where $\Xset \subseteq \Rd$ has dimension $d < 4$ and $M \leq n^{1/d}$. Assume \ref{asmp:domain}, \ref{asmp:density} on the design distribution $P$, and assume the neighborhood graph \smash{$G_{n,r}$} is computed with a kernel $K$ satisfying \ref{asmp:kernel}. There are constants $N,C,C_1,c,c_1>0$ (not depending on $f_0$) such that for any $n \geq N$, and any radius $r$ as in \ref{asmp:ls_kernel_radius_estimation}, the Laplacian smoothing estimator \smash{$\wh{f}$} in \eqref{eqn:laplacian_smoothing} with \smash{$\rho = M^{-4/(2 + d)} (nr^{d + 2})^{-1} n^{-2/(2 + d)}$} satisfies
	\begin{equation*}
	\bigl\|\wh{f} - f_0\bigr\|_n^2 \leq \frac{C}{\delta} M^{2d/(2 + d)} n^{-2/(2 + d)},
	\end{equation*}
	with probability at least $1 - \delta -  C_1n\exp(-c_1nr^d) - \exp(-c (M^2n)^{d/(2 + d)})$.
\end{theorem}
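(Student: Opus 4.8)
The plan is to bridge the discrete penalty $f^\top L f$ and the continuum Sobolev seminorm $|f_0|_{H^1(\Xset)}^2$, and then run the standard basic inequality argument for penalized least squares. Let me lay out the steps.

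\textbf{Step 1: Basic inequality.} Since $\wh{f}$ minimizes the objective in \eqref{eqn:laplacian_smoothing} and $f_0$ (evaluated at the design points, call this vector $\bm{f}_0$) is feasible, comparing objective values gives
\begin{equation*}
\bigl\|\wh{f} - \bm{f}_0\bigr\|_n^2 \leq \frac{2}{n}\varepsilon^\top (\wh{f} - \bm{f}_0) + \frac{\rho}{n}\bigl( \bm{f}_0^\top L \bm{f}_0 - \wh{f}^\top L \wh{f} \bigr),
\end{equation*}
where $\varepsilon = (\varepsilon_1,\ldots,\varepsilon_n)$. The penalty difference term is bounded above by $\frac{\rho}{n} \bm{f}_0^\top L \bm{f}_0$ (dropping the nonnegative $\wh f^\top L \wh f$), so I need two ingredients: (a) control of $\bm{f}_0^\top L \bm{f}_0$ by something like $n r^{d+2} |f_0|_{H^1(\Xset)}^2 \lesssim n r^{d+2} M^2$, which is a pointwise-to-integral / graph-Laplacian-consistency estimate, and (b) control of the empirical process (noise) term.

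\textbf{Step 2: Discrete-to-continuum penalty bound.} I would establish, with high probability over the design (this is where the $C_1 n \exp(-c_1 n r^d)$ term enters — a uniform concentration / VC-type bound guaranteeing every ball of radius $\sim r$ contains $\asymp n r^d$ points), that $\bm{f}_0^\top L \bm{f}_0 \lesssim n^2 r^{d+2} |f_0|_{H^1(\Xset)}^2$ up to lower-order corrections involving the Lipschitz constant of $K$ and $p$. This is essentially a Bernstein-type estimate on the U-statistic $\frac{1}{2}\sum_{i,j} K(\|X_i-X_j\|_2/r)(f_0(X_i)-f_0(X_j))^2$, using a Taylor expansion $f_0(X_i)-f_0(X_j) \approx \nabla f_0(X_i)^\top(X_i-X_j)$ and $\int K(\|z\|)z z^\top \, dz = \sigma_K r^2 I$ — but care is needed because $f_0 \in H^1$ need only have a weak gradient, so the expansion must be justified in an $L^2$-averaged (mollified) sense rather than pointwise. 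With the prescribed $\rho = M^{-4/(2+d)}(n r^{d+2})^{-1} n^{-2/(2+d)}$, the term $\frac{\rho}{n}\bm{f}_0^\top L \bm{f}_0$ becomes $\asymp M^{2d/(2+d)} n^{-2/(2+d)}$, exactly the target rate.

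\textbf{Step 3: Empirical process term via eigenvalue/entropy bound.} For the noise term $\frac{2}{n}\varepsilon^\top(\wh f - \bm f_0)$, I would use that $\wh f - \bm f_0$ has a bounded penalty, $(\wh f - \bm f_0)^\top L (\wh f - \bm f_0) \lesssim \bm f_0^\top L \bm f_0$ (from Step 1, since $\wh f^\top L \wh f \le 2 \bm f_0^\top L \bm f_0 + \cdots$), so $\wh f - \bm f_0$ lives in an ellipsoid determined by the spectrum $\{\lambda_k\}$ of $L$. The key spectral input is a lower bound $\lambda_k \gtrsim n r^{d+2} \mu_k$ for the first $\asymp (M^2 n)^{d/(2+d)}$ eigenvalues, where $\mu_k \asymp (k/n)^{2/d}$ are the (Weyl-rate) eigenvalues of the continuum Laplacian on $\Xset$ — this is the deepest cited/borrowed ingredient (spectral convergence of neighborhood-graph Laplacians, the source of the $\exp(-c(M^2 n)^{d/(2+d)})$ failure probability). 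Given this, a standard Cauchy–Schwarz-in-the-eigenbasis plus Gaussian tail / peeling argument (à la Sobolev-ellipsoid empirical process bounds) yields $\frac{2}{n}\varepsilon^\top(\wh f - \bm f_0) \le \frac12 \|\wh f - \bm f_0\|_n^2 + \frac{C}{\delta} M^{2d/(2+d)} n^{-2/(2+d)}$ with probability $1-\delta$.

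\textbf{Step 4: Combine.} Plugging Steps 2 and 3 into Step 1, absorbing the $\frac12\|\wh f - \bm f_0\|_n^2$ into the left side, and noting $\|\wh f - f_0\|_n^2 = \|\wh f - \bm f_0\|_n^2$ gives the claim; the three failure probabilities accumulate by a union bound, and the constraint \ref{asmp:ls_kernel_radius_estimation} on $r$ (with $M \le n^{1/d}$) is exactly what makes all the error terms in Steps 2–3 subdominant to the target rate and keeps $\rho \ge 0$ sensible.

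\textbf{Main obstacle.} The hardest part is Step 3 — specifically, obtaining the eigenvalue lower bound $\lambda_k(L) \gtrsim n r^{d+2} (k/n)^{2/d}$ uniformly over the relevant range of $k$, with the right probability. Pointwise spectral convergence of graph Laplacians is classical, but one needs a *nonasymptotic*, *uniform-over-$k$* version with explicit dependence on $r$ and $n$, valid up to $k \asymp (M^2 n)^{d/(2+d)}$ (which grows with $n$). This requires either a careful interpolation/discretization argument comparing the graph Dirichlet form to the continuum one on a well-chosen finite-dimensional subspace, or invoking recent quantitative spectral-convergence results for neighborhood graphs; reconciling the scale $r$ with the Sobolev-ellipsoid geometry is the crux. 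A secondary subtlety is the weak-derivative issue in Step 2: the Taylor expansion of $f_0$ must be replaced by a mollification argument (e.g. convolving $f_0$ with a kernel at scale $\ll r$ and controlling the approximation in $\|\cdot\|_n$ and in penalty), which introduces the need for a trace/extension-type inequality to handle behavior near $\partial\Xset$ — hence the Lipschitz-boundary hypothesis \ref{asmp:domain}.
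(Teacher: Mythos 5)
Your architecture is sound and would yield the theorem, but it is a genuinely different route from the paper's. You run the basic inequality for penalized least squares and then control the noise term $\varepsilon^\top(\wh f - f_0)$ by a localized Gaussian-complexity/peeling argument over the random ellipsoid $\{v : v^\top L v \leq R^2\}$. The paper instead exploits the closed form $\wh f = (\rho L + I)^{-1}\mathbf{Y}$: conditional on the design it is a linear smoother, so the error splits exactly into a bias term $\frac{2\rho}{n} f_0^\top L f_0$ and a variance term $\frac{10}{n}\sum_k(\rho\lambda_k+1)^{-2}$ (Lemma~\ref{lem:ls_fixed_graph_estimation}), with the variance handled by one application of the Laurent--Massart $\chi^2$ bound (Lemma~\ref{lem:chi_square_bound}) --- no peeling, no self-bounding step. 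Both routes then consume the same two probabilistic ingredients, which you correctly identified as the crux: the discrete-to-continuum penalty bound $f_0^\top L f_0 \lesssim n^2 r^{d+2}|f_0|_{H^1}^2$ (Lemma~\ref{lem:graph_sobolev_seminorm}) and nonasymptotic, uniform-over-$k$ two-sided eigenvalue estimates $\lambda_k \asymp \min\{nr^{d+2}k^{2/d}, nr^d\}$ (Lemma~\ref{lem:neighborhood_eigenvalue}, proved via optimal transport and discretization/interpolation maps following Burago--Ivanov--Kurylev and Calder--Garc\'ia Trillos). Your approach generalizes more readily to non-quadratic penalties; the paper's is shorter for this quadratic problem and makes the probability bookkeeping transparent.

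Three repairable missteps. First, your Step 2 proposes a Bernstein-type concentration for the U-statistic $f_0^\top L f_0$; under $f_0 \in H^1$ alone this fails, because bounding the variance of the summands requires control of $\|\nabla f_0\|_{\Leb^4(\Xset)}$ (the paper makes exactly this point after Proposition~\ref{prop:graph_sobolev_seminorm_lipschitz}). Only the first moment is available, so one must settle for Markov's inequality --- which is why the theorem's bound carries the $C/\delta$ factor and the $\delta$ term in the failure probability. Second, your attribution of failure probabilities is swapped: the $C_1 n\exp(-c_1 nr^d)$ term comes from the spectral estimates (and the transport map construction behind them), not from the penalty bound, while $\exp(-c(M^2n)^{d/(2+d)})$ is the Gaussian concentration of the noise at the effective dimension $\wt\rho^{-d/2}=(M^2n)^{d/(2+d)}$. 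Third, you claim to need the eigenvalue lower bound only for the first $\asymp(M^2n)^{d/(2+d)}$ eigenvalues; in fact the localized complexity of the ellipsoid picks up contributions from all $k$, and for large $k$ the lower bound saturates at $nr^d$ rather than growing like $nr^{d+2}k^{2/d}$. That saturation produces the extra term of order $r^4\wt\rho^{-2}$ in the variance, and the upper bound on $r$ in \ref{asmp:ls_kernel_radius_estimation} exists precisely to make it subdominant --- so spectral control over the full range $2\le k\le n$ is genuinely required.
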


To summarize: for $d = 1,2,$ or $3$, with high probability, the Laplacian smoothing estimator \smash{$\wh{f}$} has in-sample mean squared error that is within a constant factor of the minimax error. Some remarks:
\begin{itemize}
	\item The first-order Sobolev space $H^1(\Xset)$ does not continuously embed into $C^0(\Xset)$ when $d>1$ (in general, the $k$th order space $H^k(\Xset)$ does not continuously embed into $C^0(\Xset)$ except if $2k>d$). For this reason, one really cannot speak of pointwise evaluation of a Sobolev function $f_0 \in H^1(\Xset)$ when $d>1$ (as we do in Theorem~\ref{thm:laplacian_smoothing_estimation1} by defining our target of estimation to be $f_0(X_i)$, $i=1,\ldots,n$). We can resolve this by appealing to what are known as \emph{Lebesgue points}, as explained in Appendix~\ref{app:preliminaries}.
	\item The assumption $M \leq n^{1/d}$ ensures that the upper bound provided in the theorem is meaningful (i.e., ensures it is of at most a constant order).
	\item The lower bound on $r$ imposed in condition \ref{asmp:ls_kernel_radius_estimation} is compatible with practice, where by far the most common choice of radius is the connectivity threshold \smash{$r \asymp (\log(n)/n)^{1/d}$}, which makes \smash{$G_{n,r}$} as sparse as possible while still being connected, for maximum computational efficiency. The upper bound may seem a bit more mysterious---we need it for technical reasons to ensure that \smash{$\wh{f}$} does not overfit, but we note that as a practical matter one rarely chooses $r$ to be so large anyway.
	\item It is possible to extend \smash{$\wh{f}$} to be defined on all of $\Xset$ and then evaluate the error of such an extension (as measured against $f_0$) in $\Leb^2(\Xset)$ norm. When \smash{$\wh{f}$} and $f_0$ are suitably smooth, tools from empirical process theory (see e.g., Chapter 14 of \citet{wainwright2019}) or approximation theory (e.g., Section 15.5 of \citet{johnstone2011}) guarantee that the $\Leb^2(\Xset)$ error is not too much greater than its in-sample counterpart. In fact, as shown in Appendix~\ref{subsec:laplacian_smoothing_estimation1_pf}, if $f_0$ is Lipschitz smooth and we extend \smash{$\wh{f}$} to be piecewise constant over the Voronoi tessellation induced by $X_1,\ldots,X_n$, then the out-of-sample error \smash{$\|\wh{f}-f_0\|_{\Leb^2(\Xset)}$} is within a negligible factor of the in-sample error \smash{$\|\wh{f}-f_0\|_n$}. We leave analysis of the Sobolev case to future work.
	\item When $f_0$ is Lipschitz smooth, we can also replace the factor of $\delta$ in the high probability bound by a factor of $\delta^2/n$, which is always smaller than $\delta$ when $\delta \in (0,1)$.
\end{itemize} 

When $d = 4$, our analysis results in an upper bound for the error of Laplacian smoothing that is within a \smash{$(\log n)^{1/3}$} factor of the minimax error rate. But when $d \geq 5$, our upper bounds do not match the minimax rates.
\begin{theorem}
	\label{thm:laplacian_smoothing_estimation2}
	Under the assumptions of Theorem~\ref{thm:laplacian_smoothing_estimation1}, if instead $\Xset$ has dimension $d = 4$, \smash{$r \asymp (\log n/n)^{1/4}$} and \smash{$\rho = M^{-2/3}(nr^6)^{-1}(\log n/n)^{1/3}$}, then we obtain 
	\begin{equation*}
	\bigl\|\wh{f} - f_0\bigr\|_n^2 \leq \frac{C}{\delta} M^{4/3} \biggl(\frac{\log n}{n}\biggr)^{1/3},
	\end{equation*}
	with the same probability guarantee as in Theorem~\ref{thm:laplacian_smoothing_estimation1}. If the dimension of $\Xset$ is $d \geq 5$, \smash{$r \asymp (\log n/n)^{1/d}$} and \smash{$\rho =  M^{-2/3}(nr^{2 + d})^{-1}n^{-4/(3d)}$}, then
	\begin{equation*}
	\bigl\|\wh{f} - f_0\bigr\|_n^2 \leq \frac{C}{\delta} M^{4/3} \biggl(\frac{\log n}{n}\biggr)^{4/(3d)},
	\end{equation*}
	again with the same probability guarantee.
\end{theorem}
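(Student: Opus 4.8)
The plan is to reuse essentially all the machinery behind Theorem~\ref{thm:laplacian_smoothing_estimation1} and re-examine only how the resulting error bound depends on the ambient dimension once $d\geq4$. Since \smash{$\wh f=(\Id+\rho\Lap)^{-1}Y$} is a linear smoother, writing $\mathbf{f}_0=(f_0(X_1),\dots,f_0(X_n))$ (read off at Lebesgue points, as in Appendix~\ref{app:preliminaries}) and $Y=\mathbf{f}_0+\varepsilon$ gives the bias--variance split
\begin{equation*}
\bigl\|\wh f-\mathbf{f}_0\bigr\|_n^2 \;\leq\; 2\rho^2\bigl\|(\Id+\rho\Lap)^{-1}\Lap\,\mathbf{f}_0\bigr\|_n^2 \;+\; 2\bigl\|(\Id+\rho\Lap)^{-1}\varepsilon\bigr\|_n^2 \;\leq\; \frac{2\rho}{n}\,\mathbf{f}_0^\top\Lap\,\mathbf{f}_0 \;+\; 2\bigl\|(\Id+\rho\Lap)^{-1}\varepsilon\bigr\|_n^2,
\end{equation*}
using $\rho^2\lambda^2/(1+\rho\lambda)^2\leq\rho\lambda$ on the bias term. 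The proof of Theorem~\ref{thm:laplacian_smoothing_estimation1} already supplies the two ingredients we need, valid on the high-probability event described there: (i) a discrete-to-continuum Dirichlet-energy comparison $\mathbf{f}_0^\top\Lap\,\mathbf{f}_0\lesssim n^2 r^{d+2}\seminorm{f_0}_{H^1(\Xset)}^2\leq n^2 r^{d+2}M^2$, which bounds the bias by \smash{$\lesssim\rho n r^{d+2}M^2$}; and (ii) a lower bound on the graph Laplacian spectrum of the form $\lambda_k(\Lap)\gtrsim n r^{d+2}k^{2/d}$ for $k$ below a resolution cutoff $k_{\max}\asymp r^{-d}$, together with control of the remaining eigenvalues, which --- after concentrating the quadratic form $\|(\Id+\rho\Lap)^{-1}\varepsilon\|_n^2$ about its mean --- bounds the variance by \smash{$\lesssim\frac1n\sum_{k=1}^n(1+c\rho\lambda_k(\Lap))^{-2}$}. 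Thus, up to the factor $1/\delta$ and the stated probability, $\|\wh f-f_0\|_n^2$ is controlled by $\rho n r^{d+2}M^2+\frac1n\sum_k(1+c\rho\lambda_k)^{-2}$, and it remains only to evaluate this for $d=4$ and $d\geq5$ at the prescribed $\rho$ (and $r$).

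The substance is the effective-degrees-of-freedom sum $\mathrm{df}(\rho):=\sum_k(1+c\rho\lambda_k)^{-2}$. Plugging in the spectral lower bound and splitting at the scale $k^\star:=(\rho n r^{d+2})^{-d/2}$ where $\rho\lambda_k$ crosses $1$, the head $k\leq k^\star$ contributes $O(k^\star)$, the tail $k^\star<k\leq k_{\max}$ contributes $O\bigl((\rho n r^{d+2})^{-2}\sum_{k^\star<k\leq k_{\max}}k^{-4/d}\bigr)$, and the high-frequency block $k>k_{\max}$ contributes $O\bigl(n(\rho\,\lambda_{k_{\max}})^{-2}\bigr)$, since there $\rho\lambda_k$ is large under the prescribed choices. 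This is exactly where the dimension enters. For $d<4$ the partial sum $\sum k^{-4/d}$ converges, so $\mathrm{df}(\rho)\asymp k^\star$ and balancing against the bias recovers Theorem~\ref{thm:laplacian_smoothing_estimation1}. For $d=4$ the partial sum is harmonic, of size $\log(k_{\max}/k^\star)\asymp\log n$ once $r$ is taken at the connectivity threshold $r\asymp(\log n/n)^{1/4}$ (which makes $k_{\max}$ as large as permitted), so the variance picks up an extra $\log n$; re-balancing then forces $\rho$ to be a factor $(\log n)^{1/3}$ larger than in Theorem~\ref{thm:laplacian_smoothing_estimation1}, which inflates the bias by the same $(\log n)^{1/3}$ and yields the common value \smash{$M^{4/3}(\log n/n)^{1/3}$}. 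For $d\geq5$ the partial sum $\sum k^{-4/d}$ diverges polynomially and is dominated by its upper endpoint, so the variance is no longer governed by the low-frequency modes but by contributions reaching all the way up to --- and including --- the graph's resolution scale; these terms shrink as $r$ decreases once $\rho$ is coupled to $r$ in the prescribed way, which is why the theorem fixes $r\asymp(\log n/n)^{1/d}$. With $r$ so pinned, optimizing the remaining one-dimensional trade-off over $\rho$ (the prescribed $\rho=M^{-2/3}(nr^{2+d})^{-1}n^{-4/(3d)}$ equalizes bias and variance) produces the stated rate \smash{$M^{4/3}(\log n/n)^{4/(3d)}$}. The probability guarantee is inherited verbatim: the only randomness-dependent inputs --- graph concentration, the spectral lower bound, and the Dirichlet-energy comparison --- hold on the same event as in Theorem~\ref{thm:laplacian_smoothing_estimation1}.

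I expect the genuine work to be the variance bound in the regime $d\geq5$: obtaining a lower bound on $\lambda_k(\Lap)$ that is uniform over the full range $1\leq k\leq k_{\max}$ and tight near the cutoff, together with a matching handle on the $n-k_{\max}$ high-frequency eigenvalues. Unlike $d<4$, where the variance is driven by low-frequency modes and the behaviour near $k_{\max}$ is lower order, for $d\geq5$ the dominant contribution comes from modes up to the resolution scale, so any logarithmic slack in the spectral estimate there --- in particular slack originating in the concentration of the random geometric graph --- feeds directly into the final exponent; this is also why the bound is only optimal up to its polynomial-in-$n$ part and carries the extra $(\log n)^{4/(3d)}$. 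By contrast the bias side is identical to Theorem~\ref{thm:laplacian_smoothing_estimation1} once the Dirichlet-energy comparison is in hand, and verifying that the prescribed $r,\rho$ keep $k^\star\lesssim k_{\max}$ and $\rho\,\lambda_{k_{\max}}\gtrsim1$ --- so that the head/tail split and the ``negligible high-frequency block'' step are legitimate --- is routine bookkeeping.
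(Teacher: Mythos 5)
Your proposal follows essentially the same route as the paper: the proof of Theorem~\ref{thm:laplacian_smoothing_estimation2} simply continues from the bias--variance bound \eqref{pf:laplacian_smoothing_estimation1_1} established for Theorem~\ref{thm:laplacian_smoothing_estimation1} (Lemmas~\ref{lem:ls_fixed_graph_estimation}, \ref{lem:graph_sobolev_seminorm}, \ref{lem:neighborhood_eigenvalue}), and your head/tail/high-frequency split of the degrees-of-freedom sum at $k^\star=\wt{\rho}^{-d/2}$ and $k_{\max}\asymp r^{-d}$ is exactly Lemma~\ref{lem:variance_term_estimation} plus the $r^4/\wt{\rho}^2$ term, with the same choices of $r$ and $\rho$. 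The only caveat is that the ``genuine work'' you anticipate for $d\geq 5$ is already covered by Lemma~\ref{lem:neighborhood_eigenvalue}, so the remaining argument really is the bookkeeping you describe.
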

This mirrors the conclusions of \citet{sadhanala16} who investigate estimation rates of Laplacian smoothing over the $d$-dimensional grid graph. These authors argue that their analysis is tight, and that it is likely the estimator, not the analysis, that is deficient when $d \geq 5$. Formalizing such a claim turns out to be harder in the random design setting than in the fixed design setting, and we leave it for future work. 

However, we do investigate the matter empirically. In Figure~\ref{fig:fig1}, we study the (in-sample) mean squared error of the Laplacian smoothing estimator as the dimension $d$ grows. Here $X_1,\ldots,X_n$ are sampled uniformly over $\Xset = [-1,1]^d$, and the regression function is taken as \smash{$f_0(x) \propto \Pi_{i = 1}^{d} \cos(a \pi x_i)$}, where $a = 2$ for $d = 2$, and $a = 1$ for $d \geq 3$. This regression function $f_0$ is quite smooth, and for $d = 2$ and $d = 3$ Laplacian smoothing appears to achieve or exceed the minimax rate. When $d = 4$, Laplacian smoothing appears modestly suboptimal; this fits with our theoretical upper bound, which includes a \smash{$(\log n)^{1/3}$} factor that plays a non-negligible role for these problem sizes ($n=1000$ to $n=10000$). On the other hand, when $d = 5$, Laplacian smoothing seems to be decidedly suboptimal. 

\begin{figure*}[tb]
	\includegraphics[width=.245\textwidth]{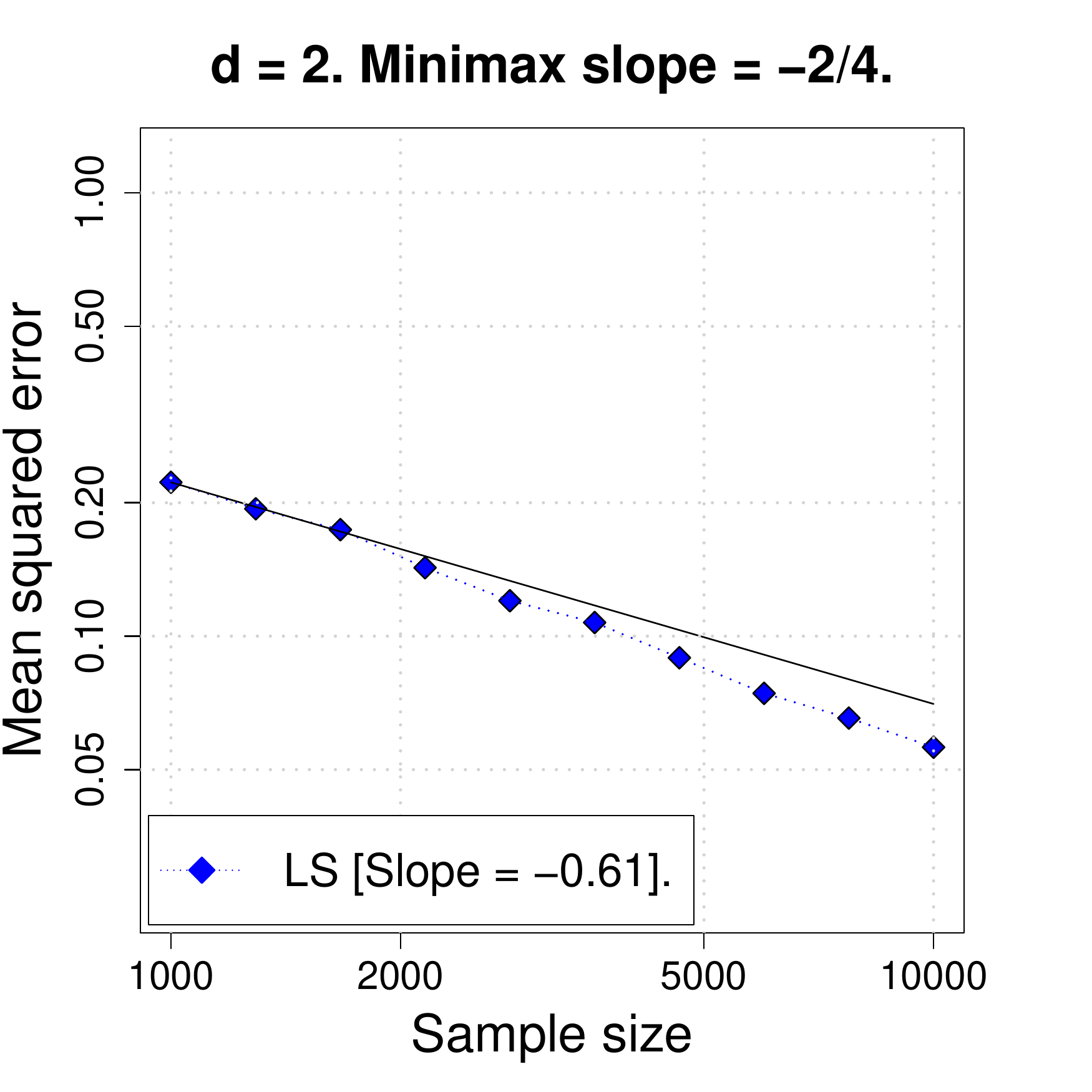}
	\includegraphics[width=.245\textwidth]{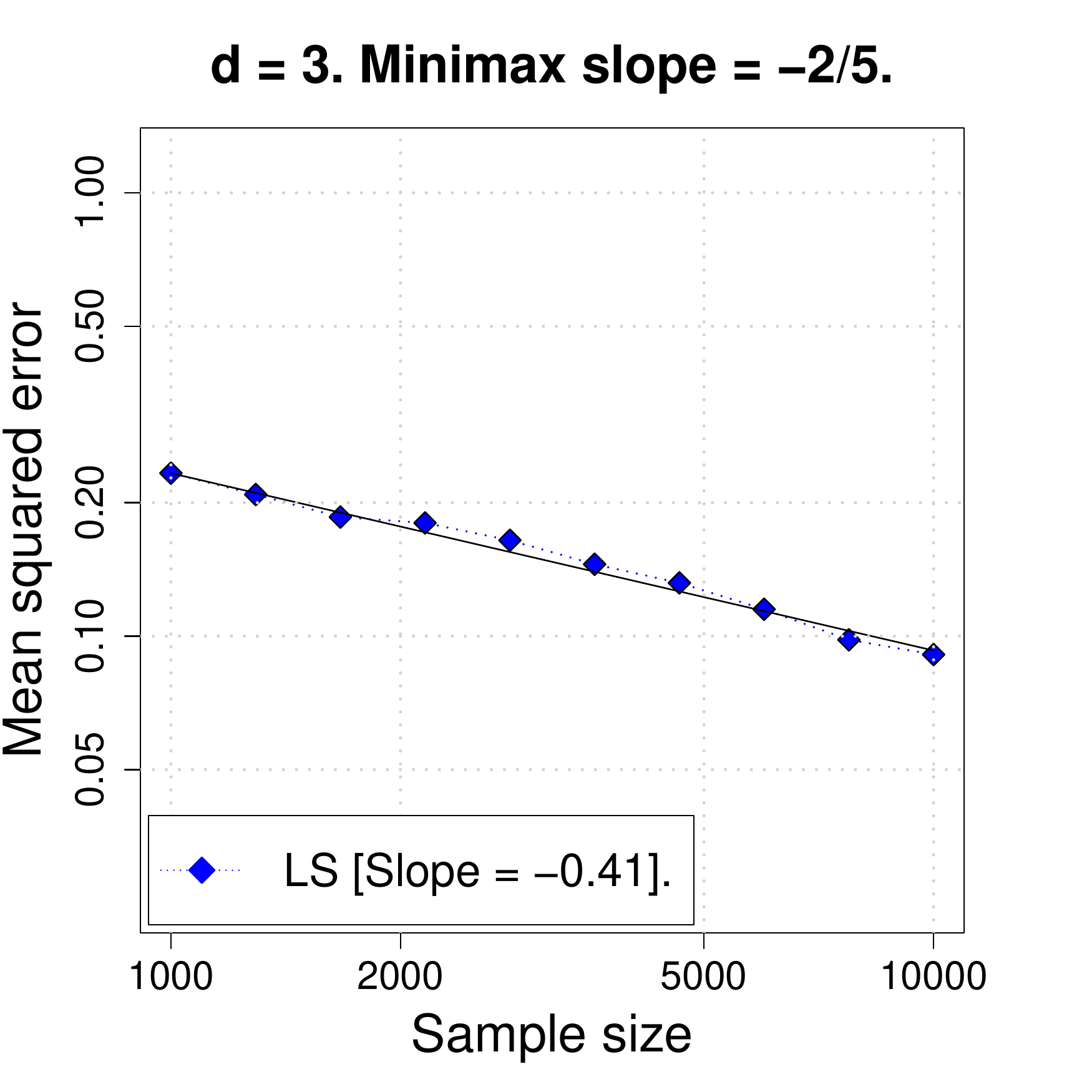} 
	\includegraphics[width=.245\textwidth]{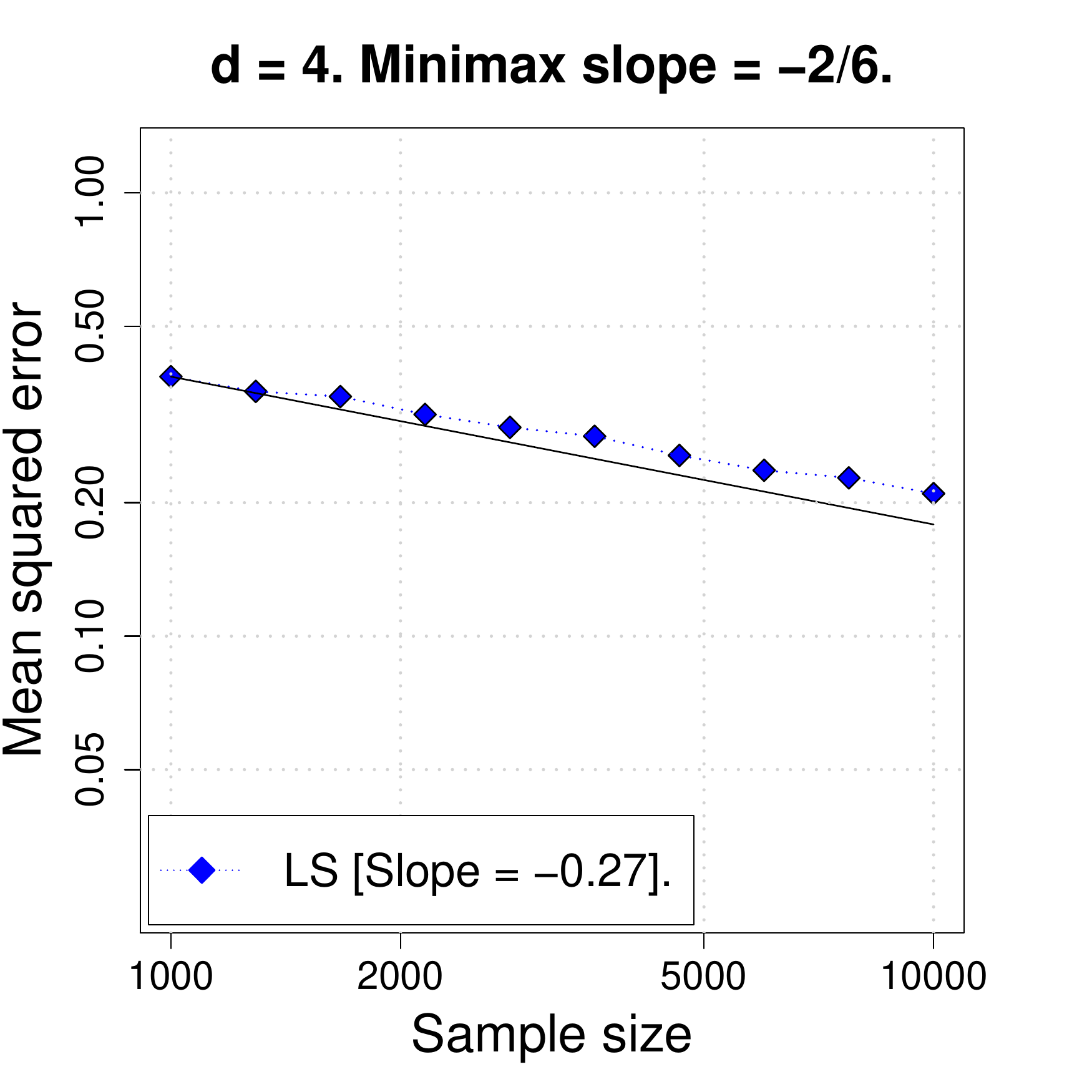}
	\includegraphics[width=.245\textwidth]{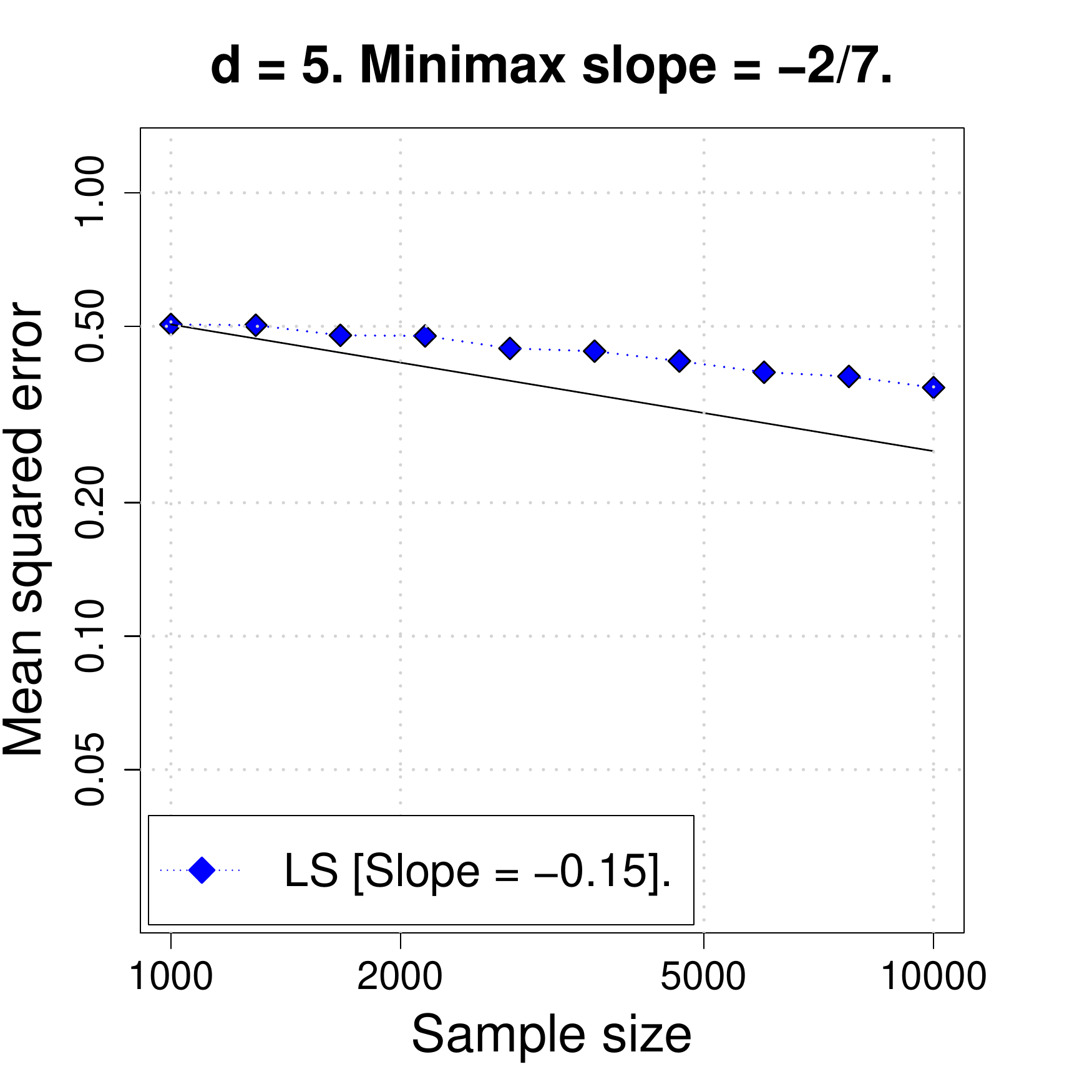}
	\caption{Mean squared error of Laplacian smoothing (\texttt{LS}) as a function of sample size $n$. Each plot is on the log-log scale, and the results are averaged over 5 repetitions, with Laplacian smoothing tuned for optimal average mean squared error. The black line shows the minimax rate (in slope only; the intercept is chosen to match the observed error).}
	\label{fig:fig1}
\end{figure*}

\paragraph{Testing Error of Laplacian Smoothing.}

For a given $0 < \alpha < 1$, define a threshold \smash{$\wh{t}_{\alpha}$} as
\begin{equation*}
\wh{t}_{\alpha} = \frac{1}{n}\sum_{k = 1}^{n} \frac{1}{(\rho \lambda_k + 1)^2} + \frac{1}{n}\sqrt{\frac{2}{\alpha} \sum_{k = 1}^{n} \frac{1}{(\rho \lambda_k + 1)^4}},
\end{equation*}
where we recall $\lambda_k$ is the $k$th smallest eigenvalue of \smash{$L$}. The Laplacian smoothing test is then simply
\begin{equation*}
\wh{\varphi} = \1\bigl\{\wh{T} > \wh{t}_{\alpha}\bigr\}.
\end{equation*} 
We show in Appendix~\ref{sec:fixed_graph_error_bounds} that \smash{$\wh{f}$} is a level-$\alpha$ test. In the next theorem, we upper bound the worst-case risk \smash{$R_n(\wh{\varphi},H^1(\mc{X},M),\epsilon)$} of \smash{$\wh{\varphi}$}, whenever $\epsilon$ is at least (a constant times) the critical radius given in~\eqref{eqn:sobolev_space_testing_critical_radius}. For this to hold, we will require a tighter range of scalings for the graph radius $r$.
\begin{enumerate}[label=(R\arabic*)]
	\setcounter{enumi}{1}
	\item 
	\label{asmp:ls_kernel_radius_testing}
	For constants $C_0,c_0>0$, the neighborhood graph radius $r$ satisfies
	\begin{equation*}
	C_0\biggl(\frac{\log n}{n}\biggr)^{\frac{1}{d}} \leq r \leq c_0 \wedge M^{\frac{(d - 8)}{8 + 2d}} n^{\frac{d - 20}{32 + 8d}}.
	\end{equation*}
\end{enumerate}
We will also require that the radius of the Sobolev class not be too large. Precisely, we will require $M \leq M_{\max}(d)$, where we define
\begin{equation*}
M_{\max}(d) :=
\begin{cases*}
n^{1/8} & \textrm{$d = 1$} \\
n^{(4 - d)/(4d)} & \textrm{$d \geq 2$}.
\end{cases*}
\end{equation*}

We now give Theorem~\ref{thm:laplacian_smoothing_testing}, our main testing result.
\begin{theorem}
	\label{thm:laplacian_smoothing_testing}
	Given i.i.d.\ draws $(X_i,Y_i)$, $i=1,\ldots,n$ from \eqref{eqn:signal_plus_noise_model}, assume $f_0 \in H^1(\Xset,M)$ where $\Xset \subseteq \Rd$ with $d < 4$, and $M \leq M_{\max}(d)$. Assume \ref{asmp:domain}, \ref{asmp:density} on the design distribution $P$, and assume \smash{$G_{n,r}$} is computed with a kernel $K$ satisfying \ref{asmp:kernel}. There exist constants $N,C,C_1,c_1>0$ such that for any $n \geq N$, and any radius $r$ as in \ref{asmp:ls_kernel_radius_testing}, the Laplacian smoothing test \smash{$\wh{\varphi}$} based on the estimator \smash{$\wh{f}$} in \eqref{eqn:laplacian_smoothing}, with \smash{$\rho = (nr^{d + 2})^{-1} n^{-4/(4 + d)} M^{-8/(4 + d)}$}, satisfies the following: for any $b \geq 1$, if
	\begin{equation}
	\label{eqn:laplacian_smoothing_testing}
	\epsilon^2 \geq C M^{2d/(4 + d)} n^{-4/(4 + d)}\biggl(b^2 + b\sqrt{\frac{1}{\alpha}}\biggr) ,
	\end{equation} 
	then the worst-case risk satisfies the upper bound: \smash{$R_n(\wh{\varphi},H^1(\mc{X},M), \epsilon) \leq C/b + C_1n\exp(-c_1nr^d)\bigr)$}.
\end{theorem}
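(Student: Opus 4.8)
The plan is to use the closed form $\wh{f} = AY$ with $A := (\Id + \rho\Lap)^{-1}$, so that the statistic is the quadratic form $\wh{T} = \frac1n Y^\top A^2 Y$, and then run the standard two-moment argument for quadratic tests, the novelty being a spectral comparison of $\Lap$ with a continuum operator. Under $\mathbf{H}_0$, $Y = \varepsilon$ and $\wh{T} = \frac1n\sum_k(\rho\lambda_k+1)^{-2}(v_k^\top\varepsilon)^2$, a weighted sum of independent $\chi^2_1$ variables with mean $\frac1n\mathrm{tr}(A^2)$ and variance $\frac2{n^2}\mathrm{tr}(A^4)$, so Chebyshev gives $\Pbb_0(\wh{T} > \wh{t}_\alpha) \le \alpha$ (the level-$\alpha$ claim, handled in Appendix~\ref{sec:fixed_graph_error_bounds}). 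For the power bound, fix $f_0 \in H^1(\Xset,M)$ with $\norm{f_0}_{\Leb^2(\Xset)} > \epsilon$, put $\theta = (f_0(X_1),\dots,f_0(X_n))$ (Lebesgue-point evaluations when $d>1$), and decompose using $Y = \theta + \varepsilon$,
\begin{equation*}
\wh{T} \;=\; \mu + Z_1 + Z_2, \qquad \mu := \tfrac1n\norm{A\theta}_2^2,\quad Z_1 := \tfrac2n\theta^\top A^2\varepsilon,\quad Z_2 := \tfrac1n\norm{A\varepsilon}_2^2 .
\end{equation*}
Since $\Ebb[Z_2] = \frac1n\mathrm{tr}(A^2)$ and $\wh{t}_\alpha = \frac1n\mathrm{tr}(A^2) + \frac1n\sqrt{(2/\alpha)\mathrm{tr}(A^4)}$, the $\mathrm{tr}(A^2)$ terms cancel and the rejection event $\{\wh{T} > \wh{t}_\alpha\}$ contains $\{\mu > \frac1n\sqrt{(2/\alpha)\mathrm{tr}(A^4)} + \abs{Z_1} + \abs{Z_2 - \Ebb Z_2}\}$.

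Controlling the stochastic pieces is routine: $Z_1$ is Gaussian with variance $\frac4{n^2}\theta^\top A^4\theta \le \frac4{n^2}\norm{A\theta}_2^2 = \frac{4\mu}{n}$ (using $0\preceq A\preceq\Id$), so $\abs{Z_1}\lesssim\sqrt{b\mu/n}$ off an event of probability $1/b$; and $\Var(Z_2) = \frac2{n^2}\mathrm{tr}(A^4)$, so $\abs{Z_2 - \Ebb Z_2} \lesssim \frac1n\sqrt{b\,\mathrm{tr}(A^4)}$ off an event of probability $1/b$. Hence it is enough to establish $\mu \gtrsim \frac1n\sqrt{\mathrm{tr}(A^4)}\,(\sqrt b + \sqrt{1/\alpha}) + b/n$, which reduces to a signal lower bound on $\mu$ and an upper bound on $\mathrm{tr}(A^4)$.

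For the signal, the convexity bound $(1+x)^{-2}\ge 1-2x$ gives $A^2\succeq \Id - 2\rho\Lap$, hence $\mu \ge \frac1n\norm{\theta}_2^2 - \frac{2\rho}{n}\theta^\top\Lap\theta$. The penalty term is exactly where the discrete-to-continuum bridge is invoked: on the high-probability connectivity event, the neighborhood-graph Dirichlet-energy comparison yields $\frac1{n^2 r^{d+2}}\theta^\top\Lap\theta \lesssim \seminorm{f_0}_{H^1(\Xset)}^2 \le M^2$, so $\frac{2\rho}{n}\theta^\top\Lap\theta \lesssim \rho n r^{d+2}M^2$, which with the prescribed $\rho = (nr^{d+2})^{-1}n^{-4/(4+d)}M^{-8/(4+d)}$ equals a constant times the critical testing rate $M^{2d/(4+d)}n^{-4/(4+d)}$. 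Meanwhile $\frac1n\norm{\theta}_2^2$ concentrates above $\norm{f_0}_{\Leb^2(\Xset)}^2 > \epsilon^2$ (up to the $p_{\min}$ factor), using the Sobolev embedding $H^1(\Xset)\hookrightarrow\Leb^4(\Xset)$ valid for $d<4$ to bound the variance of the empirical average. So $\mu \ge \epsilon^2/2$ once $\epsilon^2$ exceeds an absolute constant times $M^{2d/(4+d)}n^{-4/(4+d)}$. For the trace, I would invoke the concentration of the spectrum of $\Lap$ around that of the density-weighted Neumann Laplacian on $\Xset$; with Weyl's law this gives $\lambda_k \asymp nr^{d+2}k^{2/d}$ across the relevant range of $k$, so $\mathrm{tr}(A^4) \asymp (\rho nr^{d+2})^{-d/2}$ and $\frac1n\sqrt{\mathrm{tr}(A^4)} \asymp M^{2d/(4+d)}n^{-4/(4+d)}$. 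Plugging in, the sufficient condition becomes $\epsilon^2 \gtrsim M^{2d/(4+d)}n^{-4/(4+d)}(b^2 + b\sqrt{1/\alpha})$ — the $b^2$ absorbing the self-bounding term $\sqrt{b\mu/n}$ — which is precisely \eqref{eqn:laplacian_smoothing_testing}; a union bound over the two deviation events, the empirical-norm event, and the spectral/connectivity events gives $R_n(\wh{\varphi},H^1(\Xset,M),\epsilon) \le C/b + C_1 n\exp(-c_1 n r^d)$.

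\textbf{Main obstacle.} The real work lies in the spectral comparison and the companion Dirichlet-energy comparison: one needs non-asymptotic, two-sided control of $\theta^\top\Lap\theta$ and of the eigenvalue counting function of the random geometric-graph Laplacian in terms of a continuum elliptic operator, valid up to the cutoff frequency $k^\star \asymp (\rho nr^{d+2})^{-d/2}$. This is the concrete form of ``bridging discrete and continuum smoothness,'' and it is what forces the tighter radius window \ref{asmp:ls_kernel_radius_testing} and the cap $M \le M_{\max}(d)$, so that $k^\star$ stays within the range where the approximation holds. A secondary difficulty is making the lower bound on $\frac1n\norm{\theta}_2^2$ sufficiently sharp uniformly over the Sobolev ball, since $H^1$ functions need not be bounded when $d > 1$.
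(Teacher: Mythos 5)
Your proposal is correct and follows essentially the same route as the paper: a fixed-design two-moment (Chebyshev) analysis of the quadratic statistic, the bias controlled by comparing $f_0^\top L f_0$ to $n^2r^{d+2}\seminorm{f_0}_{H^1(\Xset)}^2$ (Lemma~\ref{lem:graph_sobolev_seminorm}), the variance/trace controlled by the eigenvalue comparison with the weighted continuum Laplacian plus Weyl's law (Lemma~\ref{lem:neighborhood_eigenvalue}), and the lower bound on $\|f_0\|_n^2$ obtained from a second-moment argument using the Sobolev embedding into $\Leb^4(\Xset)$ (Lemma~\ref{lem:empirical_norm_sobolev}), with the radius window \ref{asmp:ls_kernel_radius_testing} and the cap $M\leq M_{\max}(d)$ playing exactly the roles you identify. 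The only differences are cosmetic (you Chebyshev the cross term and the noise quadratic separately rather than bounding $\Var(\wh{T})$ once), so the argument matches the paper's.
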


Some remarks:
\begin{itemize}
	\item As mentioned earlier, Sobolev balls $H^1(\Xset,M)$ for $d \geq 4$ include quite irregular functions $f \not\in \Leb^4(\Xset)$. Proving tight lower bounds in this case is nontrivial, and as far as we understand such an analysis remains outstanding. On the other hand, if we explicitly assume that $f_0 \in \Leb^4(\Xset,M)$, then \citet{guerre02} show that the testing problem is characterized by a dimension-free lower bound $\epsilon^{2}(\Leb^4(\Xset,M)) \gtrsim n^{-1/2}$. Moreover, by setting $\rho = 0$ so that the resulting estimator \smash{$\wh{f}$} interpolates the responses $Y_1,\ldots,Y_n$, the subsequent test \smash{$\wh{\varphi}$} will achieve (up to constants) this lower bound. That is, for any $f_0 \in \Leb^4(\Xset,M)$ such that \smash{$\|f_0\|_{\Leb^2(\Xset)}^2 \geq C(b^2 + \sqrt{1/\alpha})n^{-1/2}$}, we have that \smash{$\Ebb_0[\wh{\varphi}] \leq \alpha$} and
	\begin{equation}
	\label{eqn:laplacian_smoothing_testing_low_smoothness}
	\Ebb_{f_0}\bigl[1 - \wh{\varphi}\bigr] \leq \frac{C(1 + M^4)}{b^2}.
	\end{equation} 
	\item To compute the data-dependent threshold \smash{$\wh{t}_{\alpha}$}, one must know all of the eigenvalues $\lambda_1,\ldots,\lambda_n$. Computing all these eigenvalues is far more expensive (cubic-time) than computing \smash{$\wh{T}$} in the first place (nearly-linear-time). But in practice we would not recommend using \smash{$\wh{t}_{\alpha}$} anyway, and would instead we make the standard recommendation to calibrate via a permutation test \citep{hoeffding1952}. Recent work \cite{kim2020minimax}, has shown that in a variety of closely related settings, calibration of a test statistic via the permutation test often retains minimax-optimal power, and we expect similar results to hold for the Laplacian smoothing-based test statistic.

\end{itemize}

\paragraph{More Discussion of Variational Analog.}

With some results in hand, let us pause to offer some explanation of why Laplacian smoothing can be optimal in settings where thin-plate splines are not even consistent. First, we elaborate on why this difference in performance is so surprising. As mentioned previously, the penalties in \eqref{eqn:laplacian_smoothing}, \eqref{eqn:thin_plate_spline} can be closely tied together: \citet{bousquet03} show that for $f \in C^2(\Xset)$, 
\begin{equation}
\label{eqn:seminorm_consistency}
\begin{aligned}
\lim \frac{1}{n^2 r^{d + 2}} f^\top L f & = \int_{\Xset} f(x) \cdot \Delta_Pf(x) p(x) \,dx \\
& = \int_{\Xset} \|\nabla f(x)\|_2^2 p^2(x) \,dx.
\end{aligned}
\end{equation}
In the above, the limit is as $n \to \infty$ and $r \to 0$, $\Delta_P$ is the (weighted) Laplace-Beltrami operator
\begin{equation*}
\Delta_Pf := -\frac{1}{p} \dive\bigl(p^2\nabla f),
\end{equation*}
and the second equality follows using integration by parts.\footnote{Assuming $f$ satisfies Neumann boundary conditions.} To be clear, this argument does not formally imply that the Laplacian eigenmaps estimator \smash{$\wh{f}$} and the thin-plate spline estimator \smash{$\wt{f}$} are close (for one, note that \eqref{eqn:seminorm_consistency} holds for $f \in C^2(\Xset)$, whereas the optimization in \eqref{eqn:thin_plate_spline} considers a much broader set of continuous functions with weak derivatives in $\Leb^2(\Xset)$). But it does seem to suggest that the two estimators should behave somewhat similarly. 

Of course, we know this is not the case: \smash{$\wh{f}$} and \smash{$\wt{f}$} look very different when $d > 1$. What is driving this difference? The key point is that the discretization imposed by the graph $G_{n,r}$---which might seem problematic at first glance---turns out to be a blessing. The problem with~\eqref{eqn:thin_plate_spline} is that the class $H^1(\Xset)$, which fundamentally underlies the criterion, is far ``too big'' for $d > 1$. This is meant in various related senses. By the Sobolev embedding theorem, for $d>1$, the class $H^1(\Xset)$ does not continuously embed into any H\"{o}lder space; and in fact it does not even continuously embed into $C^0(\Xset)$. Thus we cannot really restrict the optimization to \emph{continuous} and weakly differentiable functions, as we could when $d=1$ (the smoothing spline case), without throwing out a substantial subset of functions in $H^1(\Xset)$. Even among continuous and differentiable functions $f$, as we explained previously, we can use ``bump'' functions (as in \citet{green93}) to construct $f$ that interpolates the pairs $(X_i,Y_i)$, $i=1,\ldots,n$ and achieves arbitrarily small penalty (and hence criterion) in \eqref{eqn:thin_plate_spline}. In this sense, any estimator resulting from solving \eqref{eqn:thin_plate_spline} will clearly be inconsistent. 

On the other hand, problem \eqref{eqn:laplacian_smoothing} is finite-dimensional. As a result \smash{$\wh{f}$} has far less capacity to overfit than does \smash{$\wt{f}$}, for any given sample size $n$. Discretization is not the only way to make the problem \eqref{eqn:thin_plate_spline} more tractable: for instance, one can replace the penalty \smash{$\int_{\Xset} \|\nabla f(x)\|_2^2 \,dx$} with a stricter choice like \smash{$\esssup_{x \in \Xset} \|\nabla f(x)\|_2$}, or conduct the optimization over some finite-dimensional linear subspace of $H^1(\Xset)$ (i.e., use a sieve). While these solutions do improve the statistical properties of \smash{$\wt{f}$} for $d > 1$ (see e.g., \citet{birge1993,birge1998,vandergeer2000}), Laplacian smoothing is generally speaking much simpler and more computationally friendly. In addition, the other approaches are usually specifically tailored to the domain $\Xset$, in stark contrast to~\smash{$\wh{f}$}.

\paragraph{Overview of Analysis.}

The comparison with thin-plate splines highlights some surprising differences between \smash{$\wh{f}$} and \smash{$\wt{f}$}. Such differences also preclude us from analyzing \smash{$\wh{f}$} by, say, using \eqref{eqn:seminorm_consistency} to establish a coupling between \smash{$\wh{f}$} and \smash{$\wt{f}$}---we know this cannot work, because we would like to prove meaningful error bounds on \smash{$\wh{f}$} in regimes where no such bounds exist for \smash{$\wt{f}$}.

Instead we take a different approach, and directly analyze the error of \smash{$\wh{f}$} and \smash{$\wh{T}$} using a bias-variance decomposition (conditional on $X_1,\ldots,X_n$). A standard calculation shows that
\begin{equation*}
\bigl\|\wh{f} - f_0\bigr\|_n^2 \leq \underbrace{\vphantom{\sum_{k=1}^n}\frac{2\rho}{n} \bigl(f_0^\top L f_0\bigr)}_{\textrm{bias}} + \underbrace{\frac{10}{n} \sum_{k = 1}^{n} \frac{1}{(\rho \lambda_k + 1)^2}}_{\textrm{variance}},
\end{equation*}
and likewise that \smash{$\wh{\varphi}$} has small risk whenever
\begin{equation*}
\bigl\|f_0\bigr\|_n^2 \geq \underbrace{\vphantom{\sqrt{\sum_{k=1}^n}}\frac{2\rho}{n} \bigl(f_0^\top L f_0\bigr)}_{\textrm{bias}} + \underbrace{\frac{2\sqrt{2/\alpha} + 2b}{n} \sqrt{\sum_{k = 1}^{n} \frac{1}{(\rho \lambda_k + 1)^4}}}_{\textrm{variance}}.
\end{equation*}
The bias and variance terms are each functions of the random graph \smash{$G_{n,r}$}, and hence are themselves random. To upper bound them, we build on some recent works \citep{burago2014,trillos2019,calder2019} regarding the consistency of neighborhood graphs to establish the following lemmas. These lemmas assume \ref{asmp:domain}, \ref{asmp:density} on the design distribution $P$, and \ref{asmp:kernel} on the kernel used to compute the neighborhood graph \smash{$G_{n,r}$}. 

\begin{lemma}
	\label{lem:graph_sobolev_seminorm}
	There are constants $N,C_2 > 0$ such that for $n \geq N$, $r \leq c_0$, and $f \in H^1(\Xset)$, with probability at least $1 - \delta$, it holds that
	\begin{equation}
	\label{eqn:graph_sobolev_seminorm}
	f^\top L f \leq \frac{C_2}{\delta} n^2 r^{d + 2} |f|_{H^1(\Xset)}^2.
	\end{equation}
\end{lemma}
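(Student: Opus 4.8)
The plan is to bound the expectation $\Ebb[f^\top L f]$ and then apply Markov's inequality. Writing out the quadratic form using the definition of the Laplacian and the weight matrix, we have
\[
f^\top L f = \frac12 \sum_{i,j=1}^n W_{ij}(f(X_i) - f(X_j))^2 = \frac12 \sum_{i,j=1}^n K\!\left(\frac{\|X_i - X_j\|_2}{r}\right)(f(X_i) - f(X_j))^2,
\]
where I am identifying $f$ with its vector of evaluations (modulo the Lebesgue-point technicality referenced in Appendix~\ref{app:preliminaries}, which lets us make sense of $f(X_i)$ for $f \in H^1(\Xset)$). Taking the expectation over the i.i.d.\ draws $X_1, \ldots, X_n \sim P$ and using that the summand depends only on the pair $(X_i, X_j)$, we get $\Ebb[f^\top L f] = \frac{n(n-1)}{2}\, \Ebb_{X, X' \sim P}\big[K(\|X - X'\|_2/r)(f(X) - f(X'))^2\big]$, so it suffices to control the single expectation
\[
\int_{\Xset}\int_{\Xset} K\!\left(\frac{\|x - y\|_2}{r}\right)(f(x) - f(y))^2\, p(x) p(y) \dx \dy.
\]

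The heart of the argument is the standard nonlocal-to-local estimate: for $f \in H^1$, and $x, y$ with $\|x - y\|_2 \le r$, one writes $f(y) - f(x) = \int_0^1 \nabla f(x + t(y-x)) \cdot (y - x)\, dt$ along the segment (valid for smooth $f$, then extended to $H^1$ by density, with care near $\partial\Xset$ using the Lipschitz-boundary assumption~\ref{asmp:domain} and an extension operator), so that by Cauchy--Schwarz
\[
(f(x) - f(y))^2 \le \|x - y\|_2^2 \int_0^1 \|\nabla f(x + t(y - x))\|_2^2\, dt.
\]
Substituting this in, using $p(x), p(y) \le p_{\max}$ from~\ref{asmp:density}, the support constraint $K$ supported on $[0,1]$ (so $\|x-y\|_2 \le r$), and then performing the change of variables $z = (y-x)/r$ followed by Fubini to integrate out the segment parameter $t$ and the base point $x$, the double integral is bounded by a constant multiple of
\[
p_{\max}^2\, r^2 \int_{\Rd} \|z\|_2^2\, K(\|z\|_2)\, dz \cdot r^d \int_{\Xset} \|\nabla f(x')\|_2^2 \dx' \;=\; C\, r^{d+2}\, |f|_{H^1(\Xset)}^2,
\]
where finiteness of $\sigma_K = \frac1d\int \|z\|_2^2 K(\|z\|_2)\,dz$ from~\ref{asmp:kernel} is exactly what makes the $z$-integral converge. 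Combining with the factor $\tfrac{n(n-1)}{2} \le \tfrac{n^2}{2}$ gives $\Ebb[f^\top L f] \le C_2'\, n^2 r^{d+2}\, |f|_{H^1(\Xset)}^2$, and Markov's inequality then yields $f^\top L f \le \frac{C_2}{\delta} n^2 r^{d+2} |f|_{H^1(\Xset)}^2$ with probability at least $1 - \delta$.

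I expect the main obstacle to be rigor at the boundary: the segment-integral representation $f(y) - f(x) = \int_0^1 \nabla f(\cdot)\,dt$ requires the segment from $x$ to $y$ to stay inside $\Xset$, which can fail near $\partial\Xset$ for a general domain. The cleanest fix is to invoke a Sobolev extension operator $E: H^1(\Xset) \to H^1(\Rd)$ (available since $\Xset$ has Lipschitz boundary by~\ref{asmp:domain}) with $\|Ef\|_{H^1(\Rd)} \le C \|f\|_{H^1(\Xset)}$, run the entire argument for $Ef$ on all of $\Rd$ where every segment is admissible, and then restrict back; the extra $\|f\|_{L^2}$ term from the extension bound is harmless since it is dominated on the relevant scales. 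A secondary (minor) technical point is justifying the segment identity and the density argument for merely weakly-differentiable $f$, plus the identification of $f$ with its Lebesgue-point representative so that the pointwise evaluations $f(X_i)$ appearing in $f^\top L f$ are well-defined — both are handled by the preliminaries in Appendix~\ref{app:preliminaries}. Everything else is a routine change of variables and an application of Markov.
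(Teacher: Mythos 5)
Your proposal follows essentially the same route as the paper's proof: bound $\Ebb[f^\top L f]$ via the pairwise expectation, use the segment-integral representation of $f(x')-f(x)$, change variables $z=(x'-x)/r$ and invoke \ref{asmp:kernel} to extract the factor $r^{d+2}\sigma_K$, handle the boundary with a Sobolev extension operator, and finish with Markov's inequality (which applies cleanly because $f^\top L f\ge 0$). The paper computes $\int_{B(0,1)}[\nabla f(\wt{x})^\top z]^2K(\|z\|)\,dz=\sigma_K\|\nabla f(\wt x)\|^2$ exactly by rotational symmetry where you use Cauchy--Schwarz; that is only a constant-factor difference.

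There is, however, one step you wave away that does not hold as stated. You write that the extra $\|f\|_{\Leb^2}$ term coming from the extension bound ``is harmless since it is dominated on the relevant scales.'' It is not: the extension operator of \citet{evans10} controls $\|Ef\|_{H^1(\Omega)}$ by the \emph{full} norm $\|f\|_{H^1(\Xset)}$, and the $\Leb^2$ part of that norm is not controlled by the seminorm $|f|_{H^1(\Xset)}$ at any scale (a constant function has zero seminorm and arbitrary $\Leb^2$ norm), whereas the lemma's conclusion involves the seminorm only. The repair is the one the paper opens with: since $L\1=0$, replace $f$ by $g=f-\int_\Xset f$, note $f^\top Lf=g^\top Lg$ and $|g|_{H^1}=|f|_{H^1}$, and apply the Poincar\'{e} inequality to the zero-mean function $g$ to get $\|g\|_{H^1(\Xset)}^2\le C_8|g|_{H^1(\Xset)}^2$, after which your argument goes through with the full norm on the right-hand side. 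Without this mean-subtraction step the bound you actually prove is in terms of $\|f\|_{H^1(\Xset)}^2$, which is weaker than the stated lemma.
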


\begin{lemma}
	\label{lem:neighborhood_eigenvalue} 
	There are constants $N,C_1,C_3,c_1,c_3 > 0$ such that for $n \geq N$ and $C_0(\log n/n)^{1/d} \leq r \leq c_0$, with probability at least $1 - C_1n\exp(-c_1nr^d)$, it holds that
	\begin{equation}
	\label{eqn:neighborhood_eigenvalue}
	c_3A_{n,r}(k) \leq \lambda_k \leq C_3A_{n,r}(k), ~~\textrm{for $2 \leq k \leq n$},
	\end{equation}
	where \smash{$A_{n,r}(k) = \min\{nr^{d + 2}k^{2/d},nr^d\}$}.
\end{lemma}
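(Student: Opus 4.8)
The plan is to control the graph Laplacian eigenvalues $\lambda_k$ by relating them to the eigenvalues of a suitable continuum operator, via a discrete-to-continuum Dirichlet form comparison. Recall that by the Courant--Fischer min-max characterization, $\lambda_k = \min_{\dim V = k} \max_{0 \neq f \in V} \frac{f^\top L f}{\|f\|_2^2}$, where the minimum is over $k$-dimensional subspaces $V \subseteq \Reals^n$. The target quantity $A_{n,r}(k) = \min\{nr^{d+2}k^{2/d}, nr^d\}$ has two regimes: for small $k$ (precisely $k \lesssim r^{-d}$) it behaves like $nr^{d+2}k^{2/d}$, which (after dividing by the natural normalization $n^2 r^{d+2}$) matches the Weyl-law growth $k^{2/d}$ of the eigenvalues of the weighted Laplace--Beltrami operator $\Delta_P$ on $\Xset$; for large $k$ it saturates at $nr^d$, reflecting that the graph is essentially a bounded-degree graph (each vertex has $\asymp nr^d$ neighbors) so its Laplacian eigenvalues are bounded by (a constant times) the maximum degree $\asymp nr^d$.

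First I would establish the saturation bound $\lambda_k \leq C_3 n r^d$ for all $k$: this is immediate since $L = D - W$ is positive semidefinite with $L \preceq 2D$, and by a concentration argument (Bernstein / Chernoff for the sums $D_{ii} = \sum_j W_{ij}$, which are sums of independent bounded random variables with mean $\asymp nr^d$) we get $\max_i D_{ii} \leq C n r^d$ with probability at least $1 - C_1 n \exp(-c_1 n r^d)$ — this is exactly the event and failure probability appearing in the statement, and also the event on which the graph is connected (so $\lambda_2 > 0$). Second, for the regime $2 \le k \lesssim r^{-d}$ I would invoke the discrete-to-continuum machinery cited in the overview (\citet{burago2014,trillos2019,calder2019}): these works provide, on a high-probability event, a two-sided comparison between the graph Dirichlet energy $\frac{1}{n^2 r^{d+2}} f^\top L f$ and the continuum Dirichlet energy $\int_\Xset \|\nabla \bar f\|_2^2 p^2$ for functions obtained by restriction/extension between $\Reals^n$ and $L^2(\Xset)$ (nonlocal means–type interpolation), with multiplicative errors tending to $1$. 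Combining this spectral comparison with the known Weyl asymptotics/bounds for the eigenvalues of $\Delta_P$ under \ref{asmp:domain}, \ref{asmp:density} — namely the $k$-th eigenvalue is $\asymp k^{2/d}$ — yields $c_3 n r^{d+2} k^{2/d} \le \lambda_k \le C_3 n r^{d+2} k^{2/d}$ in this range. Taking the minimum of the two regimes, and checking that at the crossover $k \asymp r^{-d}$ both expressions are $\asymp n r^d$ (so the bounds glue consistently), gives the claimed two-sided bound with $A_{n,r}(k) = \min\{n r^{d+2} k^{2/d}, n r^d\}$.

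To make the continuum comparison usable for the \emph{upper} bound on $\lambda_k$ (which requires exhibiting a good $k$-dimensional test subspace of $\Reals^n$) I would take the span of the discretizations (restrictions to $X_1,\dots,X_n$, or Lebesgue-point values) of the first $k$ eigenfunctions of $\Delta_P$; Lemma~\ref{lem:graph_sobolev_seminorm} already controls $f^\top L f$ by $\frac{C_2}{\delta} n^2 r^{d+2} |f|_{H^1(\Xset)}^2$ for a \emph{fixed} $f$, but here I need it uniformly over this fixed finite-dimensional subspace together with a matching lower bound on the discrete $\ell_2$ norm $\|f\|_2^2 \asymp n \|f\|_{L^2(P)}^2$ (concentration of the empirical norm), which is where smoothness of the eigenfunctions enters. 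For the \emph{lower} bound on $\lambda_k$ I would instead argue that any $(k-1)$-dimensional subspace of $\Reals^n$ must contain a vector whose nonlocal-means extension has small continuum Dirichlet energy relative to its $L^2$ norm only if $k-1$ exceeds the continuum eigenvalue count, contradicting the Weyl bound; this uses the reverse direction of the discrete-to-continuum energy comparison.

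The main obstacle I expect is the uniformity of the discrete-to-continuum comparison over an appropriate class of functions — the cited results give such comparisons, but stitching them together to get a \emph{two-sided, per-eigenvalue} statement valid up to $k \asymp r^{-d}$ (rather than just for the bottom few eigenvalues) requires that the interpolation maps be near-isometries on the relevant scale, and that one track the dependence of all error terms on $k$ and $r$ carefully; the saturation regime and the crossover behavior add bookkeeping but are conceptually routine given the degree concentration. The connectivity/degree event is what produces the $1 - C_1 n \exp(-c_1 n r^d)$ probability, consistent with the requirement $r \geq C_0(\log n / n)^{1/d}$.
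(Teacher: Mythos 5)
Your outline matches the paper's proof in all essentials: the saturation bound $\lambda_k \leq 2D_{\max} \lesssim nr^d$ via degree concentration (the paper's Lemma~\ref{lem:max_degree}), the two-sided comparison $\lambda_k(G_{n,r}) \asymp nr^{d+2}\lambda_k(\Delta_P)$ for $k$ up to a cutoff $\ell_\star \asymp r^{-d}$ via interpolation/discretization maps and Courant--Fischer (the paper's Theorem~\ref{thm:neighborhood_eigenvalue}, proved with exactly the test-subspace and near-isometry arguments you describe), Weyl's law to convert $\lambda_k(\Delta_P)$ into $k^{2/d}$, and the gluing at the crossover. One detail you only gesture at: the lower bound $\lambda_k \gtrsim nr^d$ for $k > \ell_\star$ is obtained in the paper by eigenvalue monotonicity, $\lambda_k \geq \lambda_{\ell_\star} \gtrsim nr^{d+2}\ell_\star^{2/d} \asymp nr^d$; your ``the bounds glue consistently'' presumably means this, but it should be stated since the continuum comparison itself gives nothing above $\ell_\star$.

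The one step that would fail as written is invoking \citet{burago2014,trillos2019,calder2019} off the shelf. Those results are proved for compact manifolds \emph{without boundary}, whereas Lemma~\ref{lem:neighborhood_eigenvalue} is stated under \ref{asmp:domain}, i.e.\ for an open domain with Lipschitz boundary. The paper explicitly cannot cite them and instead reproves the entire spectral comparison in the boundary setting: it constructs the transport measure $\wt{P}_n$ via a bi-Lipschitz partition of the domain, and reworks the kernel-smoothing estimates ($\tau_h \geq a_3$ rather than $\tau_h \approx 1$, a nonvanishing bound on $\|\nabla\tau_h\|$ near the boundary, the modified Poincar\'e-type Lemma~\ref{lem:poincare}, etc.), accepting constant-factor rather than $1+o(1)$ comparisons. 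Your ``main obstacle'' paragraph correctly identifies uniformity and error tracking as the hard part, but the boundary is the specific reason the citation alone does not close the argument; everything else in your plan is the paper's proof.
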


Lemma~\ref{lem:graph_sobolev_seminorm} gives a direct upper bound on the bias term. Lemma~\ref{lem:neighborhood_eigenvalue} leads to a sufficiently tight upper bound on the variance term whenever the radius $r$ is sufficiently small; precisely, when $r$ is upper bounded as in \ref{asmp:ls_kernel_radius_estimation} for estimation, or \ref{asmp:ls_kernel_radius_testing} for testing. The parameter $\rho$ is then chosen to minimize the sum of these upper bounds on bias and variance, as usual, and some straightforward calculations give Theorems~\ref{thm:laplacian_smoothing_estimation1}-\ref{thm:laplacian_smoothing_testing}.

It may be useful to give one more perspective on our approach. A common strategy in analyzing penalized least squares estimators is to assume two properties: first, that the regression function $f_0$ lies in (or near) a ball defined by the penalty operator; second, that this ball is reasonably small, e.g., as measured by metric entropy, or Rademacher complexity, etc. In contrast, in Laplacian smoothing, the penalty induces a ball
\begin{equation*}
H^1(G_{n,r},M) := \{f: f^\top L f \leq M^2\}
\end{equation*}
that is data-dependent and random, and so we do not have access to either of the aforementioned properties a priori, and instead, must prove they hold with high probability. In this sense, our analysis is different than the typical one in nonparametric regression.

\section{Manifold Adaptivity}
\label{sec:manifold_adaptivity}

The minimax rates $n^{-2/(2 + d)}$ and $n^{-4/(4 + d)}$, in estimation and testing, suffer from the curse of dimensionality. However, in practice it can be often reasonable to assume a \emph{manifold hypothesis}: that the data $X_1,\ldots,X_n$ lie on a manifold $\Xset$ of $\Rd$ that has intrinsic dimension $m < d$. Under such an assumption, it is known \citep{bickel2007,ariascastro2018} that the optimal rates over $H^1(\Xset)$ are now $n^{-2/(2 + m)}$ (for estimation) and $n^{-4/(4 + m)}$ (for testing), which are much faster than the full-dimensional error rates when $m \ll d$. 



On the other hand, a theory has been developed \citep{belkin03,belkin05,niyogi2008finding,niyogi2013,balakrishnan2012minimax,balakrishnan2013cluster} establishing that the neighborhood graph $G_{n,r}$ can ``learn'' the manifold $\Xset$ in various senses, so long as $\Xset$ is locally linear. We contribute to this line of work by showing that under the manifold hypothesis, Laplacian smoothing achieves the tighter minimax rates over $H^1(\Xset)$.

\paragraph{Error Rates Assuming the Manifold Hypothesis.}

The conditions and results presented here will be largely similar to the previous ones, except with the ambient dimension $d$ replaced by the intrinsic dimension $m$. For the remainder, we assume the following.
\begin{enumerate}[label=(P\arabic*)]
	\setcounter{enumi}{2}
	\item 
	\label{asmp:domain_manifold}
	$P$ is supported on a compact, connected, smooth manifold $\Xset$ embedded in $\Rd$, of dimension $m \leq d$. The manifold is without boundary and has positive reach \citep{federer1959}.
	\item 
	\label{asmp:density_manifold} 
	$P$ admits a density $p$ with respect to the volume form of $\Xset$ such that 
	\begin{equation*}
	0 < p_{\min} \leq p(x) \leq p_{\max} < \infty, ~~\textrm{for all $x \in \Xset$}. 
	\end{equation*}
	Additionally, $p$ is Lipschitz on $\Xset$, with Lipschitz constant $L_p$.
\end{enumerate}

Under the assumptions \ref{asmp:domain_manifold}, \ref{asmp:density_manifold}, and \ref{asmp:kernel}, and for a suitable range of $r$, the error bounds on the estimator \smash{$\wh{f}$} and test \smash{$\wh{\varphi}$} will depend on $m$ instead of $d$. 
\begin{enumerate}[label=(R\arabic*)]
	\setcounter{enumi}{3}
	\item 
	\label{asmp:ls_kernel_radius_estimation_manifold}
	For constants $C_0,c_0>0$, the neighborhood graph radius $r$ satisfies 
	\begin{equation*}
	C_0\biggl(\frac{\log n}{n}\biggr)^{\frac{1}{m}} \leq r \leq c_0 \wedge  M^{\frac{(m - 4)}{(4 + 2m)}} n^{\frac{-3}{(4 + 2m)}}.
	\end{equation*}
\end{enumerate}

\begin{theorem}
	\label{thm:laplacian_smoothing_estimation_manifold}
	As in Theorem \ref{thm:laplacian_smoothing_estimation1}, but where $\Xset \subseteq \Rd$ is a manifold with intrinsic dimension $m < 4$, the design distribution $P$ obeys \ref{asmp:domain_manifold}, \ref{asmp:density_manifold}, and $M \leq n^{1/m}$. There are constants $N,C,c>0$ (not depending on $f_0$) such that for any \smash{$n \geq N$}, and any $r$ as in \ref{asmp:ls_kernel_radius_estimation_manifold}, the Laplacian smoothing estimator \smash{$\wh{f}$} in \eqref{eqn:laplacian_smoothing}, with \smash{$L = L_{n,r}$} and \smash{$\rho = M^{-4/(2 + m)} (nr^{m + 2})^{-1} n^{-2/(2 + m)}$}, satisfies
	\begin{equation*}
	\bigl\|\wh{f} - f_0\bigr\|_n^2 \leq \frac{C}{\delta} M^{2m/(2 + m)} n^{-2/(2 + m)},
	\end{equation*}
	with probability at least $1 - \delta -  C n\exp(-cnr^m) - \exp(-c(M^2n)^{m/(2+m)})$.
\end{theorem}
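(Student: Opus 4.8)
The plan is to replicate, essentially verbatim, the bias--variance argument used to prove Theorem~\ref{thm:laplacian_smoothing_estimation1}, with the ambient dimension $d$ replaced throughout by the intrinsic dimension $m$, and with Lemma~\ref{lem:graph_sobolev_seminorm} and Lemma~\ref{lem:neighborhood_eigenvalue} replaced by their manifold analogues. Concretely, the first step is to establish that under \ref{asmp:domain_manifold}, \ref{asmp:density_manifold}, \ref{asmp:kernel}, and $r \le c_0$, with probability at least $1-\delta$,
\begin{equation*}
f^\top L f \le \frac{C_2}{\delta}\, n^2 r^{m+2}\, |f|_{H^1(\Xset)}^2 \quad \text{for all } f \in H^1(\Xset),
\end{equation*}
and that for $C_0(\log n/n)^{1/m} \le r \le c_0$, with probability at least $1 - C_1 n\exp(-c_1 n r^m)$,
\begin{equation*}
c_3 A_{n,r}(k) \le \lambda_k \le C_3 A_{n,r}(k), \quad 2 \le k \le n, \qquad A_{n,r}(k) = \min\{n r^{m+2} k^{2/m},\, n r^m\}.
\end{equation*}
Both are consequences of the same neighborhood-graph consistency machinery already cited in the full-dimensional case \citep{burago2014,trillos2019,calder2019}, which is developed on manifolds: the positive-reach hypothesis in \ref{asmp:domain_manifold} ensures that for small $r$ the intersection of a Euclidean $r$-ball with $\Xset$ has volume of order $r^m$ and that Euclidean and geodesic distances agree up to a cubic correction, so the Euclidean neighborhood graph behaves like a geodesic one; the growth $\lambda_k \asymp n r^{m+2} k^{2/m}$ below the saturation index follows from combining spectral convergence of $L$ to the weighted Laplace--Beltrami operator with Weyl's law $\lambda_k(\Delta_\Xset) \asymp k^{2/m}$ on a compact $m$-manifold, and the $\min$ with $n r^m$ records the $O(n r^m)$ maximum degree of $G_{n,r}$.

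The second step reuses the (high-probability over the noise) decomposition
\begin{equation*}
\bigl\|\wh{f} - f_0\bigr\|_n^2 \le \frac{2\rho}{n}\bigl(f_0^\top L f_0\bigr) + \frac{10}{n}\sum_{k=1}^n \frac{1}{(\rho \lambda_k + 1)^2},
\end{equation*}
whose failure probability is of order $\exp(-c(M^2 n)^{m/(2+m)})$, governed by the effective dimension $k^\star \asymp (M^2 n)^{m/(2+m)}$. Feeding in the seminorm bound and $\rho = M^{-4/(2+m)}(n r^{m+2})^{-1} n^{-2/(2+m)}$ makes the bias term of order $M^{2m/(2+m)} n^{-2/(2+m)}/\delta$. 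For the variance, substitute the eigenvalue bounds and split the sum at the index $k^\star$ where $\rho\lambda_k \asymp 1$ and at the saturation index $k \asymp r^{-m}$: the block $k \le k^\star$ contributes $\asymp k^\star/n$; the block $k^\star \le k \le r^{-m}$ is a convergent power sum, where $m<4$ is used so that $1-4/m<0$; and the saturated tail $k > r^{-m}$ contributes $\asymp (\rho n r^m)^{-2} = M^{8/(2+m)} n^{4/(2+m)} r^4$, which is $\lesssim M^{2m/(2+m)} n^{-2/(2+m)}$ exactly because $r \le M^{(m-4)/(4+2m)} n^{-3/(4+2m)}$ as imposed in \ref{asmp:ls_kernel_radius_estimation_manifold} (the same constraint also forces $\rho n r^m \gtrsim 1$ once $M \le n^{1/m}$, so the tail estimate is legitimate). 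All three blocks are of order $M^{2m/(2+m)} n^{-2/(2+m)}$, matching the bias, and a union bound over the two lemma failure events and the noise-concentration event yields the stated probability $1 - \delta - Cn\exp(-cnr^m) - \exp(-c(M^2 n)^{m/(2+m)})$.

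I expect the main obstacle to be the manifold version of Lemma~\ref{lem:graph_sobolev_seminorm}, the discrete-to-continuum upper bound on the Dirichlet energy $f^\top L f$. Unlike the eigenvalue comparison, which can be imported fairly directly from spectral-convergence results, this bound must hold \emph{uniformly} over all $f \in H^1(\Xset)$ and with an explicit, dimension-correct constant; proving it requires passing to geodesic normal coordinates, quantitatively controlling the $O(d_\Xset(x,y)^3)$ gap between Euclidean and intrinsic distances induced by the finite reach, carrying out a nonlocal-to-local (Taylor expansion plus change of variables) estimate of the graph Dirichlet form on the manifold, and then a concentration argument to pass from its expectation to a high-probability statement uniform in $f$. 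Once these manifold analogues are established, the remainder of the proof is identical in form to that of Theorem~\ref{thm:laplacian_smoothing_estimation1}.
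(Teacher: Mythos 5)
Your proposal matches the paper's proof, which is exactly the reduction you describe: rerun the bias--variance argument of Theorem~\ref{thm:laplacian_smoothing_estimation1} with $d$ replaced by $m$, substituting the manifold analogues of Lemmas~\ref{lem:graph_sobolev_seminorm} and~\ref{lem:neighborhood_eigenvalue} (Proposition~\ref{prop:garciatrillos19_1} and \eqref{eqn:neighborhood_eigenvalue_manifold}). The only difference is that the paper imports those two manifold estimates directly from \citet{trillos2019,calder2019} (the boundaryless assumption in \ref{asmp:domain_manifold} makes this possible) rather than re-deriving them, and the Dirichlet-energy bound is only needed for the single function $f_0$ via Markov's inequality, not uniformly over $H^1(\Xset)$ as you suggest.
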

In a similar vein, we obtain results for manifold adaptive testing under the following condition on the graph radius parameter.
\begin{enumerate}[label=(R\arabic*)]
	\setcounter{enumi}{4}
	\item 
	\label{asmp:ls_kernel_radius_testing_manifold}
	For constants $C_0,c_0>0$, the neighborhood graph radius $r$ satisfies 
	\begin{equation*}
	C_0\biggl(\frac{\log n}{n}\biggr)^{\frac{1}{m}} \leq r \leq c_0 \wedge M^{\frac{(m - 8)}{8 + 2m}} n^{\frac{m - 20}{32 + 8m}}.
	\end{equation*}
\end{enumerate}

\begin{theorem}
	\label{thm:laplacian_smoothing_testing_manifold}
	As in Theorem \ref{thm:laplacian_smoothing_testing}, but where $\Xset \subseteq \Rd$ is a manifold with intrinsic dimension $m < 4$, $M \leq M_{\max}(m)$, and the design distribution $P$ obeys \ref{asmp:domain_manifold}, \ref{asmp:density_manifold}.
	There are constants $N,C,c>0$ such that for any $n \geq N$, and any $r$ as in \ref{asmp:ls_kernel_radius_testing_manifold}, the Laplacian smoothing test \smash{$\wh{\varphi}$} based on the estimator \smash{$\wh{f}$} in \eqref{eqn:laplacian_smoothing}, with \smash{$\rho = (nr^{m + 2})^{-1} n^{-4/(4 + m)} M^{-8/(4 + m)}$}, satisfies the following: for any $b \geq 1$, if
	\begin{equation}
	\label{eqn:laplacian_smoothing_testing_manifold}
	\epsilon^2 \geq C M^{2m/(4 + m)} n^{-4/(4 + m)}\biggl(b^2 + b\sqrt{\frac{1}{\alpha}}\biggr) ,
	\end{equation} 
	then the worst-case risk satisfies the upper bound: \smash{$R_n(\wh{\varphi},H^1(\mc{X},M), \epsilon) \leq C/b + Cn\exp(-cnr^m)$}.
\end{theorem}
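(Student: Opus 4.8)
The plan is to reduce the theorem, via the bias--variance argument outlined in the ``Overview of Analysis'' paragraph, to manifold analogues of Lemmas~\ref{lem:graph_sobolev_seminorm} and~\ref{lem:neighborhood_eigenvalue} together with two facts that hold for any fixed graph. First I would establish that under \ref{asmp:domain_manifold}, \ref{asmp:density_manifold}, \ref{asmp:kernel} and for $C_0(\log n/n)^{1/m} \le r \le c_0$, there are constants $N, C_1, C_2, C_3, c_1, c_3 > 0$ such that for $n \ge N$: with probability at least $1 - \delta$,
\begin{equation*}
f^\top L f \le \frac{C_2}{\delta}\, n^2 r^{m+2}\, |f|_{H^1(\Xset)}^2 \qquad \text{for all } f \in H^1(\Xset),
\end{equation*}
and with probability at least $1 - C_1 n \exp(-c_1 n r^m)$,
\begin{equation*}
c_3 A_{n,r}(k) \le \lambda_k \le C_3 A_{n,r}(k), \quad 2 \le k \le n, \qquad A_{n,r}(k) = \min\{ n r^{m+2} k^{2/m},\, n r^m\}.
\end{equation*}
These are exactly Lemmas~\ref{lem:graph_sobolev_seminorm} and~\ref{lem:neighborhood_eigenvalue} with the ambient dimension $d$ replaced by the intrinsic dimension $m$; they should follow from the neighborhood-graph consistency machinery of \citet{burago2014,trillos2019,calder2019}, once transported to the manifold setting as discussed below.

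Given these two lemmas, the rest of the proof tracks that of Theorem~\ref{thm:laplacian_smoothing_testing} with $m$ in place of $d$. As recorded in the overview, $\wh{\varphi}$ has small worst-case risk whenever $\|f_0\|_n^2$ exceeds the sum of a bias term $\frac{2\rho}{n} f_0^\top L f_0$ and a variance term proportional to $\frac{1}{n}\sqrt{\sum_k (\rho\lambda_k+1)^{-4}}$; moreover $\wh{\varphi}$ controls Type I error at level $\alpha$ for any fixed graph (a Chebyshev bound on a Gaussian quadratic form, independent of $m$), and on the alternative $\|f_0\|_n^2$ is, with high probability, at least a constant multiple of $\|f_0\|_{L^2(\Xset)}^2 > \epsilon^2$ by the same empirical-norm concentration used for Theorem~\ref{thm:laplacian_smoothing_testing}. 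The seminorm lemma bounds the bias by $\frac{C_2 \rho}{\delta} n r^{m+2} M^2$ (since $|f_0|_{H^1(\Xset)}^2 \le M^2$). For the variance, the eigenvalue lemma gives
\begin{equation*}
\sum_{k=1}^n \frac{1}{(\rho \lambda_k + 1)^4} \lesssim \sum_{k \le k^\star} \frac{1}{(1 + \rho n r^{m+2} k^{2/m})^4} + \sum_{k > k^\star} \frac{1}{(1 + \rho n r^{m})^4},
\end{equation*}
where $k^\star \asymp r^{-m}$ is the ``knee'' of $A_{n,r}$; since $m < 4 < 8$ the first sum is $\asymp (\rho n r^{m+2})^{-m/2}$, and the upper bound on $r$ imposed in \ref{asmp:ls_kernel_radius_testing_manifold} is precisely what makes the second, ``saturated'' sum negligible next to the first. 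Plugging $\rho = (nr^{m+2})^{-1} n^{-4/(4+m)} M^{-8/(4+m)}$ balances the two, and a short calculation shows both bias and variance are of order $M^{2m/(4+m)} n^{-4/(4+m)}$ up to the $(b^2 + b\sqrt{1/\alpha})$ prefactor appearing in \eqref{eqn:laplacian_smoothing_testing_manifold}; the restriction $M \le M_{\max}(m)$ guarantees this common order is $o(1)$, so that the critical radius is meaningful and $k^\star$ falls in the admissible range $[2,n]$. Taking the failure probability $\delta$ in the seminorm lemma to be a small constant (absorbed into $C$) and combining with the level statement yields $R_n(\wh{\varphi}, H^1(\Xset,M), \epsilon) \le C/b + Cn\exp(-c n r^m)$ whenever \eqref{eqn:laplacian_smoothing_testing_manifold} holds.

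I expect the main obstacle to be the manifold version of the eigenvalue estimate, Lemma~\ref{lem:neighborhood_eigenvalue}. On a full-dimensional domain one can compare $L$ directly with a convolution operator and invoke Euclidean Weyl asymptotics; on a manifold one must instead (i) control the gap between the Euclidean distance $\|X_i - X_j\|_2$ used to build $G_{n,r}$ and the intrinsic geodesic distance, which positive reach and smoothness in \ref{asmp:domain_manifold} bound by $O(r^3)$ on an $r$-ball and hence make lower order; (ii) upgrade the pointwise and spectral consistency of the graph Laplacian toward the weighted Laplace--Beltrami operator $\Delta_P$ on $\Xset$, drawing on \citet{burago2014,trillos2019,calder2019} and the manifold-learning line \citet{belkin03,belkin05,niyogi2008finding}; and (iii) use Weyl's law for $\Delta_P$, $\lambda_k(\Delta_P) \asymp k^{2/m}$, to produce the $k^{2/m}$ growth in $A_{n,r}$. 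The manifold seminorm bound (Lemma~\ref{lem:graph_sobolev_seminorm}) is comparatively routine, requiring only an upper bound obtained by a localization/Fubini argument using the density bounds, the Lipschitz density, and the same geodesic-versus-Euclidean correction. Everything downstream of the two lemmas is the same bookkeeping as in the proof of Theorem~\ref{thm:laplacian_smoothing_testing}.
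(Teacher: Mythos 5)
Your proposal is correct and follows essentially the same route as the paper: the paper's proof of Theorem~\ref{thm:laplacian_smoothing_testing_manifold} simply reruns the proof of Theorem~\ref{thm:laplacian_smoothing_testing} with $d$ replaced by $m$, substituting the manifold versions of the seminorm and eigenvalue bounds for Lemmas~\ref{lem:graph_sobolev_seminorm} and~\ref{lem:neighborhood_eigenvalue}. The only difference is that the "main obstacle" you anticipate is not one the paper faces: since \ref{asmp:domain_manifold} assumes a closed manifold of positive reach, the needed seminorm and spectral-convergence estimates (Propositions~\ref{prop:garciatrillos19_1} and~\ref{prop:calder19_1}, plus Weyl's law) are imported directly from \citet{trillos2019,calder2019} rather than re-derived — it is the full-dimensional domain \emph{with boundary} in Theorem~\ref{thm:laplacian_smoothing_testing} that required the paper's own boundary-corrected analysis.
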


The proofs of Theorems \ref{thm:laplacian_smoothing_estimation_manifold} and \ref{thm:laplacian_smoothing_testing_manifold} proceed in a similar manner to that of Theorems \ref{thm:laplacian_smoothing_estimation1} and \ref{thm:laplacian_smoothing_testing}. The key difference is that in the manifold setting, the equations \eqref{eqn:graph_sobolev_seminorm} and \eqref{eqn:neighborhood_eigenvalue} used to upper bound bias and variance will hold with $d$ replaced by $m$.

We emphasize that little about $\Xset$ need be known for Theorems~\ref{thm:laplacian_smoothing_estimation_manifold} and \ref{thm:laplacian_smoothing_testing_manifold} to hold. Indeed, all that is needed is the intrinsic dimension $m$, to properly tune $r$ and $\rho$ (from a theoretical point of view), and otherwise \smash{$\wh{f}$} and \smash{$\wh{\varphi}$} are computed without regard to $\Xset$. In contrast, the penalty in \eqref{eqn:thin_plate_spline} would have to be specially tailored to work in this setting, revealing another advantage of the discrete approach over the variational one.

\section{Discussion}
\label{sec:discussion}

We have shown that Laplacian smoothing, computed over a neighborhood graph, can be optimal for both estimation and goodness-of-fit testing over Sobolev spaces. There are many extensions worth pursuing, and several have already been mentioned. We conclude by mentioning a couple more. In practice, it is more common to use a $k$-nearest-neighbor (kNN) graph than a neighborhood graph, due to the guaranteed connectivity and sparsity of the former; we suspect that by building on the work of \citet{calder2019}, one can show that our main results all hold under the kNN graph as well. In another direction, one can also generalize Laplacian smoothing by replacing the penalty \smash{$f^\top L f$} with \smash{$f^\top L^{s} f$}, for an integer $s > 1$. The hope is that this would then achieve minimax optimal rates over the higher-order Sobolev class $H^s(\Xset)$. 

\subsection*{Acknowledgments}
AG and RJT were supported by ONR grant N00014-20-1-2787. AG and SB were supported by NSF grants DMS-1713003 and CCF-1763734.

\bibliographystyle{plainnat}
\bibliography{../../../graph_regression_bibliography} 

\appendix

\section{Preliminaries}
\label{app:preliminaries}
\noindent In the appendix, we provide complete proofs of all results. 
Our main theorems (Theorems~\ref{thm:laplacian_smoothing_estimation1}-\ref{thm:laplacian_smoothing_testing_manifold}) all follow the same general proof strategy of first establishing bounds in the fixed-design setup. In Section~\ref{sec:fixed_graph_error_bounds}, we establish (estimation or testing) error bounds which hold for any graph $G$; these bounds are stated with respect to (functionals of) the graph $G$, and allow us to upper bound the error of $\wh{f}$ and $\wh{\varphi}$ conditional on the design $\{X_1,\ldots,X_n\} = \{x_1,\ldots,x_n\}$. In Sections~\ref{sec:graph_sobolev_seminorm}, \ref{sec:graph_eigenvalues}, \ref{sec:empirical_norm}, and \ref{sec:manifold} we develop all the necessary probabilistic estimates on these functionals, for the particular random neighborhood graph $G = G_{n,r}$. It is in these sections where we invoke our various assumptions on the distribution $P$ and regression function $f_0$. In Section~\ref{sec:main_results}, we prove our main theorems and some other results. In Section~\ref{sec:concentration}, we state a few concentration bounds that we use repeatedly in our proofs.

\paragraph{Pointwise evaluation of Sobolev functions.}
First, however, as promised in our main text we clarify what is meant by pointwise evaluation of the regression function $f_0$. Strictly speaking, each $f \in H^1(X)$ is really an equivalence class, defined only up to sets of Lebesgue measure 0. In order to make sense of the evaluation $x \mapsto f(x)$, one must therefore pick a representative $f^{\star} \in f$. When $d = 1$, this is resolved in a standard way---since $H^1(\mc{X})$ embeds continuously into $C^0(\mc{X})$, there exists a continuous version of every $f \in H^1(\mc{X})$, and we take this continuous version as the representative $f^{\star}$. On the other hand, when $d \geq 2$, the Sobolev space $H^1(\mc{X})$ does not continuously embed into $C^0(\mc{X})$, and we must choose representatives in a different manner. In this case we let $f^{\star}$ be the precise representative \citep{evans15}, defined pointwise at points $x \in \mc{X}$ as
\begin{equation*}
f^{\star}(x) = 
\begin{dcases*}
\lim_{\varepsilon \to 0} \frac{1}{\nu(B(x,\varepsilon))} \int_{B(x,\varepsilon)} f(z) dz,&~~\textrm{if the limit exists,} \\
0,&~~\textrm{otherwise.}
\end{dcases*}
\end{equation*}
Note that when $d = 1$, the precise representative of any $f \in H^1(\mc{X})$ is continuous. 

Now we explain why the particular choice of representative is not crucial, using the notion of a Lebesgue point. Recall that for a locally Lebesgue integrable function $f$, a given point $x \in \mc{X}$ is a \emph{Lebesgue point} of $f$ if the limit of $1/(\nu(B(x,\varepsilon)))\int_{B(x,\varepsilon)} f(x) dx$ as $\varepsilon \to 0$ exists, and satisfies
\begin{equation*}
\lim_{\varepsilon \to 0} \frac{1}{\nu\bigl(B(x,\varepsilon)\bigr)} \int_{B(x,\varepsilon)} f(x) dx = f(x).
\end{equation*}
Let $E$ denote the set of Lebesgue points of $f$. By the Lebesgue differentiation theorem \citep{evans15}, if $f \in L^1(\mc{X})$ then almost every $x \in \mc{X}$ is a Lebesgue point, $\nu(\mc{X}\setminus E) = 0$. Since $f_0 \in H^1(\mc{X}) \subseteq \Leb^1(\mc{X})$, we can conclude that any function $g_0 \in f_0$ disagrees with the precise representative $f_0^{\star}$ only on a set of Lebesgue measure 0. Moreover, since we always assume the design distribution $P$ has a continuous density, with probability $1$ it holds that $g_0(X_i) = f_0^{\star}(X_i)$ for all $i = 1,\ldots,n$. This justifies the notation $f_0(X_i)$ used in the main text.

\section{Graph-dependent error bounds}
\label{sec:fixed_graph_error_bounds}

In this section, we adopt the fixed design perspective; or equivalently, condition on $X_i = x_i$ for $i = 1,\ldots,n$. Let $G = \bigl([n],W\bigr)$ be a fixed graph on $\{1,\ldots,n\}$ with Laplacian matrix $\Lap = D - W$. The randomness thus all comes from the responses 
\begin{equation}
\label{eqn:fixed_graph_regression_model}
Y_i = f_{0}(x_i) + \varepsilon_i
\end{equation}
where the noise variables $\varepsilon_i$ are independent $N(0,1)$. In the rest of this section, we will mildly abuse notation and write $f_0 = (f_0(x_1),\ldots,f_0(x_n)) \in \Reals^n$. We will also write ${\bf Y} = (Y_1,\ldots,Y_n)$.

Recall \eqref{eqn:laplacian_smoothing} and \eqref{eqn:laplacian_smoothing_test}: the Laplacian smoothing estimator of $f_0$ on $G$ is
\begin{equation*}
\label{eqn:ls_G}
\wh{f} := \argmin_{f \in \Reals^n} \biggl\{ \sum_{i = 1}^{n}(Y_i - f_i)^2 + \rho \cdot f^{\top} \Lap f \biggr\} = (\rho \Lap + \Id)^{-1}{\bf Y}.
\end{equation*}
and the Laplacian smoothing test statistic is 
\begin{equation*}
\label{eqn:ls_ts_G}
\wh{T} := \frac{1}{n} \|\wh{f}\|_2^2.
\end{equation*}
We note that in this section, many of the derivations involved in upper bounding the estimation error of $\wh{f}$ are similar to those of \cite{sadhanala16}, with the difference being that we seek bounds in high probability rather than in expectation. We keep the work here self-contained for purposes of completeness.

\subsection{Error bounds for linear smoothers}

Let $S \in \Reals^{n \times n}$ be a fixed square, symmetric matrix, and let 
\begin{equation*}
\wc{f} := SY
\end{equation*}
be a linear estimator of $f_0$. In  Lemma~\ref{lem:linear_smoother_fixed_graph_estimation} we upper bound the error $\frac{1}{n}\|\wc{f} - f_0\|_2^2$ as a function of the eigenvalues of $S$. Let $\lambdavec(S) = (\lambda_1(S),\ldots,\lambda_n(S)) \in \Reals^n$ denote these eigenvalues, and let $v_k(S)$ denote the corresponding unit-norm eigenvectors, so that $S = \sum_{k = 1}^{n} \lambda_k(S) \cdot v_k(S) v_k(S)^{\top}$. Denote $Z_k = v_k(S)^{\top} \varepsilon$, and observe that ${\bf Z} = (Z_1,\ldots,Z_n) \sim N(0,\Id)$. 

\begin{lemma}
	\label{lem:linear_smoother_fixed_graph_estimation}
	Let $\wc{f} = SY$ for a square, symmetric matrix, $S \in \Reals^{n \times n}$. Then
	\begin{equation*}
	\Pbb_{f_0}\biggl(\frac{1}{n}\bigl\|\wc{f} - f_0\bigr\|_2^2 \geq \frac{10}{n} \bigl\|\lambdavec(S)\bigr\|_2^2 + \frac{2}{n}\bigl\|(S - I)f_0\bigr\|_2^2\biggr) \leq 1 - \exp\Bigl(-\bigl\|\lambdavec(S)\bigr\|_2^2\Bigr)
	\end{equation*}
\end{lemma}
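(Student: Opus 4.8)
The plan is to decompose the error into a bias term and a stochastic (variance) term, and then apply a Gaussian concentration bound to the stochastic part. Write $\wc{f} - f_0 = SY - f_0 = S(f_0 + \varepsilon) - f_0 = (S - I)f_0 + S\varepsilon$, so that
\begin{equation*}
\bigl\|\wc{f} - f_0\bigr\|_2^2 \leq 2\bigl\|(S - I)f_0\bigr\|_2^2 + 2\bigl\|S\varepsilon\bigr\|_2^2,
\end{equation*}
using $\|a + b\|_2^2 \leq 2\|a\|_2^2 + 2\|b\|_2^2$. The bias term $\|(S-I)f_0\|_2^2$ is deterministic and already appears (up to the factor $2/n$) in the statement, so the entire task reduces to showing that $\|S\varepsilon\|_2^2 \leq 5\|\lambdavec(S)\|_2^2$ with probability at least $1 - \exp(-\|\lambdavec(S)\|_2^2)$.

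For the stochastic term, diagonalize $S$: since $Z_k = v_k(S)^\top \varepsilon$ and $\mathbf{Z} \sim N(0, I)$, we have $\|S\varepsilon\|_2^2 = \sum_{k=1}^n \lambda_k(S)^2 Z_k^2$, a weighted sum of independent $\chi^2_1$ random variables with weights $w_k = \lambda_k(S)^2 \geq 0$. Its mean is $\|\lambdavec(S)\|_2^2 =: V$. I would then invoke a standard sub-exponential / Bernstein-type tail bound for such quadratic forms (the kind of concentration inequality collected in Section~\ref{sec:concentration}): for all $t \geq 0$,
\begin{equation*}
\Pbb\biggl(\sum_k w_k Z_k^2 \geq \sum_k w_k + 2\sqrt{\Bigl(\sum_k w_k^2\Bigr) t} + 2\bigl(\max_k w_k\bigr) t\biggr) \leq e^{-t}.
\end{equation*}
Now bound $\sum_k w_k^2 \leq (\max_k w_k)\sum_k w_k \leq (\sum_k w_k)^2 = V^2$ and $\max_k w_k \leq \sum_k w_k = V$, so the right-hand deviation is at most $V + 2\sqrt{V^2 t} + 2Vt = V(1 + 2\sqrt{t} + 2t)$. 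Choosing $t = V = \|\lambdavec(S)\|_2^2$ gives a bound of $V(1 + 2\sqrt{V} + 2V)$; this is $\leq 5V^2$ once $V$ is at least a constant, which is not quite what we want uniformly. A cleaner route is to pick $t = \|\lambdavec(S)\|_2^2$ but bound more carefully, or simply observe that for the inequality $1 + 2\sqrt{t} + 2t \leq 5t$ to fail we would need $t$ small, in which case the claimed probability $1 - e^{-t}$ is vacuous anyway (it is negative or meaninglessly small); so one can assume $V$ bounded below without loss. Either way, with $t = V$ we obtain $\|S\varepsilon\|_2^2 \leq 5\|\lambdavec(S)\|_2^2$ except with probability $e^{-V} = \exp(-\|\lambdavec(S)\|_2^2)$, and combining with the bias term and dividing by $n$ yields exactly the claimed inequality.

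The main obstacle is purely bookkeeping with the constants: getting the tail bound to collapse to the clean form $\tfrac{10}{n}\|\lambdavec(S)\|_2^2 + \tfrac{2}{n}\|(S-I)f_0\|_2^2$ with failure probability exactly $\exp(-\|\lambdavec(S)\|_2^2)$ requires choosing the deviation parameter $t$ equal to $\|\lambdavec(S)\|_2^2$ and then slightly over-estimating ($5V^2$ in place of $V(1+2\sqrt t + 2t)$), which is legitimate since when $\|\lambdavec(S)\|_2^2$ is too small to make $5V^2$ dominate, the probability bound $1 - e^{-V}$ in the statement is itself non-restrictive. No deep idea is needed beyond the $\chi^2$ concentration inequality; the only care is in verifying that the stated constants $10$ and the exponent are consistent across the full range of $\|\lambdavec(S)\|_2^2$.
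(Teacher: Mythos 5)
Your decomposition and your choice of concentration inequality are exactly the paper's: split $\wc{f}-f_0$ into $(S-I)f_0+S\varepsilon$, use $\|a+b\|_2^2\le 2\|a\|_2^2+2\|b\|_2^2$, diagonalize the stochastic term, and apply the Laurent--Massart bound (Lemma~\ref{lem:chi_square_bound}) with weights $w_k=\lambda_k(S)^2$ and $t=\|\lambdavec(S)\|_2^2=:V$. The genuine gap is in your bounds on the deviation term. Estimating $\sum_k w_k^2\le V^2$ and $\max_k w_k\le V$ gives, on the good event, only $\|S\varepsilon\|_2^2\le V(1+2\sqrt V+2V)$, whereas the target is $\|S\varepsilon\|_2^2\le 5V$ (so that $\tfrac2n\|S\varepsilon\|_2^2\le\tfrac{10}{n}V$); the inequality $V(1+2\sqrt V+2V)\le 5V$ holds only for $V\le 1$. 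Your fallback---that the regime where the constants fail is the regime where the probability bound is vacuous---is backwards: your bookkeeping fails for \emph{large} $V$, which is exactly where the lemma is used (downstream $V=\sum_k(\rho\lambda_k+1)^{-2}\asymp\wt{\rho}^{-d/2}\to\infty$ and one needs the failure probability $e^{-V}\to 0$). In that regime you only obtain $\|S\varepsilon\|_2^2\lesssim V^2$, and replacing $5V$ by $5V^2$ turns the variance term of the lemma into $\tfrac{10}{n}\|\lambdavec(S)\|_2^4$, a genuinely weaker statement that would not yield the rates in Theorem~\ref{thm:laplacian_smoothing_estimation1}.

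The missing ingredient is the hypothesis that $S$ is a contraction, $|\lambda_k(S)|\le 1$ for all $k$. (The statement omits it, but without it the lemma is simply false---take $S=cI$, $f_0=0$, and let $c\to\infty$---and it holds in every application, where $S=(\rho L+I)^{-1}$; the companion testing lemma, Lemma~\ref{lem:linear_smoother_fixed_graph_testing}, states this assumption explicitly.) With $w_k=\lambda_k(S)^2\le 1$ one has the sharper bounds $\max_k w_k\le 1$ and $\sum_k w_k^2\le\sum_k w_k=V$, so the deviation at $t=V$ is $2\sqrt{V\cdot V}+2\cdot 1\cdot V=4V$, giving $\|S\varepsilon\|_2^2\le 5V$ except with probability $e^{-V}$, uniformly in $V$. (You are right, incidentally, to target failure probability $\exp(-\|\lambdavec(S)\|_2^2)$ rather than the displayed $1-\exp(-\|\lambdavec(S)\|_2^2)$; the former is how the bound is invoked in Lemma~\ref{lem:ls_fixed_graph_estimation}.)
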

Here we have written $\Pbb_{f_0}(\cdot)$ for the probability law under the regression ``function'' $f_0 \in \Reals^n$. 

In Lemma~\ref{lem:linear_smoother_fixed_graph_testing}, we upper bound the error of a test involving the statistic $\|\wc{f}\|_2^2 = {\bf Y}^{\top} S^2 {\bf Y}$. We will require that $S$ be a \emph{contraction}, meaning that it has operator norm no greater than $1$, $\|Sv\|_2 \leq \|v\|_2$ for all $v \in \Reals^n$.
\begin{lemma}
	\label{lem:linear_smoother_fixed_graph_testing}
	Let $\wc{T} = {\bf Y}^{\top} S^2 {\bf Y}$ for a square, symmetric matrix $S \in \Reals^{n \times n}$. Suppose $S$ is a contraction. Define the threshold $\wc{t}_{\alpha}$ to be 
	\begin{equation}
	\label{eqn:linear_smoother_fixed_graph_threshold}
	\wc{t}_{\alpha} := \norm{\lambdavec(S)}_2^2 + \sqrt{\frac{2}{\alpha}} \norm{\lambdavec(S)}_4^2.
	\end{equation}
	It holds that:
	\begin{itemize}
		\item \textbf{Type I error.}
		\begin{equation}
		\label{eqn:linear_smoother_fixed_graph_testing_typeI}
		\Pbb_0\bigl(\wc{T} > \wc{t}_{\alpha}\bigr) \leq \alpha.
		\end{equation}
		\item \textbf{Type II error.} Under the further assumption
		\begin{equation}
		\label{eqn:linear_smoother_fixed_graph_testing_critical_radius}
		f_0^{\top} S^2 f_0 \geq \Bigl(2\sqrt{\frac{2}{\alpha}} + 2b\Bigr) \cdot \norm{\lambdavec(S)}_4^2,
		\end{equation}
		then
		\begin{equation}
		\label{eqn:linear_smoother_fixed_graph_testing_typeII}
		\Pbb_{f_0}\bigl(\wc{T} \leq \wc{t}_{\alpha}\bigr) \leq \frac{1}{b^2} + \frac{16}{b \norm{\lambdavec(S)}_4^{2}}.
		\end{equation}
	\end{itemize}
\end{lemma}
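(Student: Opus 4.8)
The plan is to diagonalize $S$, reduce $\wc{T}$ to a weighted sum of (noncentral) chi-squared variables, compute its first two moments, and then apply Chebyshev's inequality separately under $\mathbf{H}_0$ and under the alternative. The contraction hypothesis will enter exactly once, to control the variance under the alternative in terms of the signal strength $f_0^\top S^2 f_0$.

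First I would write $\wc{T} = \norm{S\mathbf{Y}}_2^2 = \sum_{k=1}^n \lambda_k(S)^2 (v_k(S)^\top \mathbf{Y})^2$. Setting $\theta_k := v_k(S)^\top f_0$ and recalling $Z_k = v_k(S)^\top \varepsilon \iid N(0,1)$, this reads $\wc{T} = \sum_k \lambda_k(S)^2 (\theta_k + Z_k)^2$. Using $\Ebb[(\theta_k+Z_k)^2] = 1 + \theta_k^2$ together with $\sum_k \lambda_k(S)^2 \theta_k^2 = f_0^\top S^2 f_0$, we get $\Ebb_{f_0}[\wc{T}] = \norm{\lambdavec(S)}_2^2 + f_0^\top S^2 f_0$; and from $\Var[(\theta_k+Z_k)^2] = 2 + 4\theta_k^2$ we get $\Var_{f_0}(\wc{T}) = 2\norm{\lambdavec(S)}_4^4 + 4\sum_k \lambda_k(S)^4 \theta_k^2$. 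Since $S$ is a contraction, $\lambda_k(S)^2 \le 1$, so $\sum_k \lambda_k(S)^4 \theta_k^2 \le \sum_k \lambda_k(S)^2 \theta_k^2 = f_0^\top S^2 f_0$, and hence $\Var_{f_0}(\wc{T}) \le 2\norm{\lambdavec(S)}_4^4 + 4 f_0^\top S^2 f_0$. Under $\mathbf{H}_0$ ($f_0 = 0$) these specialize to $\Ebb_0[\wc{T}] = \norm{\lambdavec(S)}_2^2$ and $\Var_0(\wc{T}) = 2\norm{\lambdavec(S)}_4^4$.

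For the Type I bound, Chebyshev under $\mathbf{H}_0$ gives $\Pbb_0(\wc{T} > \wc{t}_\alpha) = \Pbb_0(\wc{T} - \Ebb_0[\wc{T}] > \sqrt{2/\alpha}\,\norm{\lambdavec(S)}_4^2) \le \Var_0(\wc{T}) / \bigl((2/\alpha)\norm{\lambdavec(S)}_4^4\bigr) = \alpha$, which is \eqref{eqn:linear_smoother_fixed_graph_testing_typeI}. For the Type II bound, set $m := \Ebb_{f_0}[\wc{T}] - \wc{t}_\alpha = f_0^\top S^2 f_0 - \sqrt{2/\alpha}\,\norm{\lambdavec(S)}_4^2$; the separation assumption \eqref{eqn:linear_smoother_fixed_graph_testing_critical_radius} makes $m > 0$ and, moreover, gives simultaneously $m \ge 2b\,\norm{\lambdavec(S)}_4^2$ and $m \ge \tfrac12 f_0^\top S^2 f_0$. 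Then Chebyshev again,
\[
\Pbb_{f_0}(\wc{T} \le \wc{t}_\alpha) \le \frac{\Var_{f_0}(\wc{T})}{m^2} \le \frac{2\norm{\lambdavec(S)}_4^4}{m^2} + \frac{4 f_0^\top S^2 f_0}{m^2}.
\]
Bounding the first term with $m \ge 2b\,\norm{\lambdavec(S)}_4^2$ gives $\le 1/(2b^2)$; for the second, use $m \ge \tfrac12 f_0^\top S^2 f_0$ on one factor of $m$ and $m \ge 2b\,\norm{\lambdavec(S)}_4^2$ on the other to get $\le 4/(b\,\norm{\lambdavec(S)}_4^2)$. Summing and loosening constants yields \eqref{eqn:linear_smoother_fixed_graph_testing_typeII}.

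None of these steps is deep; the computation is elementary once one observes that the contraction hypothesis is precisely what allows the alternative-case variance to be controlled by $f_0^\top S^2 f_0$ rather than by its larger, unweighted analog $\sum_k \theta_k^2$. The only point requiring care is checking that the separation condition is calibrated so that $m$ dominates $\norm{\lambdavec(S)}_4^2$ (with the factor $2b$) and $f_0^\top S^2 f_0$ (with the factor $\tfrac12$) simultaneously, so that the two Chebyshev steps combine to produce exactly the stated constants; this is the main (minor) obstacle.
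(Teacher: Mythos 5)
Your proposal is correct and follows essentially the same route as the paper: compute the mean and variance of $\wc{T}$, apply Chebyshev under the null and under the alternative, and invoke the contraction property exactly where the paper does (to bound $f_0^\top S^4 f_0 = \sum_k \lambda_k(S)^4\theta_k^2$ by $f_0^\top S^2 f_0$). The only difference is cosmetic: you diagonalize into independent noncentral $\chi^2$ terms and get the variance exactly as $2\norm{\lambdavec(S)}_4^4 + 4 f_0^\top S^4 f_0$, whereas the paper bounds it by $4\norm{\lambdavec(S)}_4^4 + 8 f_0^\top S^4 f_0$ via a Cauchy--Schwarz step, so your constants are slightly sharper but the argument is the same.
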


\paragraph{Proof of Lemma~\ref{lem:linear_smoother_fixed_graph_estimation}.}
The expectation $\Ebb_{f_0}[\wc{f}] = Sf_0$, and by the triangle inequality,
\begin{align*}
\frac{1}{n}\bigl\|\wc{f} - f_0\bigr\|_2^2 & \leq \frac{2}{n}\Bigl(\bigl\|\wc{f} - \Ebb_{f_0}[\wc{f}]\bigr\|_2^2 + \bigl\|\Ebb_{f_0}[\wc{f}] - f_0\bigr\|_2^2\Bigr) \\ 
& = \frac{2}{n}\Bigl(\bigl\|S\varepsilon\bigr\|_2^2 + \bigl\|(S - I)f_0\bigr\|_2^2\Bigr).
\end{align*}
Writing $\norm{S\varepsilon}_2^2 = \sum_{k = 1}^{n} \lambda_k(S)^2 Z_k^2$, the claim follows from the result of \citet{laurent00} on concentration of $\chi^2$-random variables, which for completeness we restate in Lemma~\ref{lem:chi_square_bound}. To be explicit, taking $t = \norm{\lambdavec(S)}_2^2$ in Lemma~\ref{lem:chi_square_bound} completes the proof of Lemma~\ref{lem:linear_smoother_fixed_graph_estimation}. 

\paragraph{Proof of Lemma~\ref{lem:linear_smoother_fixed_graph_testing}.}
We compute the mean and variance of $T$ as a function of $f_0$, then apply Chebyshev's inequality.

\textit{Mean.} We make use of the eigendecomposition $S = \sum_{k = 1}^{n} \lambda_k(S) \cdot v_k(S) v_k(S)^{\top}$ to obtain
\begin{equation}
\label{pf:linear_smoother_fixed_graph_testing1}
\begin{aligned}
\wc{T} & = f_0^{\top} S^2 f_0 + 2 f_0^{\top} S^2 \varepsilon + \varepsilon^{\top} S^2 \varepsilon \\
& = f_0^{\top} S^2 f_0 + 2 f_0^{\top} S^2 \varepsilon + \sum_{k = 1}^{n}  \bigl(\lambda_k(S)\bigr)^2 (\varepsilon^{\top} v_k(S))^2 \\
& = f_0^{\top} S^2 f_0 + 2 f_0^{\top} S^2 \varepsilon + \sum_{k = 1}^{n}  \bigl(\lambda_k(S)\bigr)^2 Z_k^2,
\end{aligned}
\end{equation}
implying
\begin{equation}
\label{pf:linear_smoother_fixed_graph_testing_mean}
\Ebb_{f_0}\bigl[\wc{T}\bigr] = f_0^{\top} S^2 f_0 + \sum_{k = 1}^{n} \bigl(\lambda_k(S)\bigr)^2.
\end{equation}

\textit{Variance.} We start from \eqref{pf:linear_smoother_fixed_graph_testing1}. Recalling that $\Var(Z_k^2) = 2$, it follows from the Cauchy-Schwarz inequality that
\begin{equation}
\label{pf:linear_smoother_fixed_graph_testing_var}
\Var_{f_0}\bigl[\wc{T}\bigr] \leq 8 f_0^{\top} S^4 f_0 + 4 \sum_{k = 1}^{n} \bigl(\lambda_k(S)\bigr)^4.
\end{equation}

\textit{Bounding Type I and Type II error.} The upper bound \eqref{eqn:linear_smoother_fixed_graph_testing_typeI} on Type I error follows immediately from \eqref{pf:linear_smoother_fixed_graph_testing_mean}, \eqref{pf:linear_smoother_fixed_graph_testing_var}, and Chebyshev's inequality.

We now establish the upper bound~\eqref{eqn:linear_smoother_fixed_graph_testing_typeII} on Type II error. From assumption~\eqref{eqn:linear_smoother_fixed_graph_testing_critical_radius}, we see that $f_0^{\top} S^2 f_0^{\top} - \wc{t}_{\alpha} \leq 0$. As a result,
\begin{align*}
\Pbb_{f_0}\Bigl(\wc{T} \leq \wc{t}_{\alpha}\Bigr) & = \Pbb_{f_0}\Bigl(\wc{T} - \Ebb_{f_0}\bigl[\wc{T}\bigr] \leq \wc{t}_{\alpha} - \Ebb_{f_0}\bigl[\wc{T}\bigr]\Bigr) \\ 
& \leq \Pbb_{f_0}\Bigl(\Bigl|\wc{T} - \Ebb_{f_0}\bigl[\wc{T}\bigr]\Bigr| \geq \Bigl|\wc{t}_{\alpha} - \Ebb_{f_0}\bigl[\wc{T}\bigr]\Bigr|\Bigr) \\ 
& \leq \frac{\Var_{f_0}\bigl[\wc{T}\bigr]}{\bigl(\wc{t}_{\alpha} - \Ebb_{f_0}\bigl[\wc{T}\bigr]\bigr)^2},
\end{align*}
where the last line follows from Chebyshev's inequality. Plugging in the expressions~\eqref{pf:linear_smoother_fixed_graph_testing_mean} and~\eqref{pf:linear_smoother_fixed_graph_testing_var} for the mean and variance of $\wc{T}$, as well as the definition of $\wc{t}_{\alpha}$ in~\eqref{eqn:linear_smoother_fixed_graph_threshold}, we obtain that
\begin{equation}
\label{pf:linear_smoother_fixed_graph_testing2}
\Pbb_{f_0}\Bigl(\wc{T} \leq \wc{t}_{\alpha}\Bigr) \leq \frac{4\|\lambdavec(S)\|_4^4}{\bigl(f_0^{\top}S^2f_0 - \sqrt{2/\alpha} \norm{\lambdavec(S)}_4^2\bigr)^2} + \frac{8f_0^{\top}S^4f_0 }{\bigl(f_0^{\top}S^2f_0 - \sqrt{2/\alpha} \norm{\lambdavec(S)}_4^2\bigr)^2}.
\end{equation}
We now use the assumed lower bound $f_0^{\top} S^2 f_0 \geq (2\sqrt{2/\alpha} + 2b) \norm{\lambdavec(S)}_4^2$ to separately upper bound each of the two terms on the right hand side of~\eqref{pf:linear_smoother_fixed_graph_testing2}. It follows immediately that
\begin{equation}
\label{pf:linear_smoother_fixed_graph_testing2.5}
\frac{4 \|\lambdavec(S)\|_4^4}{\bigl(f_0^{\top}S^2f_0 - \sqrt{2/\alpha} \norm{\lambdavec(S)}_4^2\bigr)^2} \leq \frac{1}{b^2},
\end{equation}
giving a sufficient upper bound on the first term. Now we upper bound the second term,
\begin{equation}
\label{pf:linear_smoother_fixed_graph_testing3}
\frac{8f_0^{\top}S^4f_0 }{\bigl(f_0^{\top}S^2f_0 - \sqrt{2/\alpha} \norm{\lambdavec(S)}_4^2\bigr)^2} \leq \frac{32 f_0^{\top}S^4f_0 }{\bigl(f_0^{\top}S^2f_0\bigr)^2} \leq \frac{16}{b \|\lambdavec(S)\|_4^2} \frac{f_0^{\top}S^4f_0 }{f_0^{\top}S^2f_0} \leq \frac{16}{b \|\lambdavec(S)\|_4^2},
\end{equation}
where the final inequality is satisfied because $S$ is a contraction. Plugging~\eqref{pf:linear_smoother_fixed_graph_testing2.5} and~\eqref{pf:linear_smoother_fixed_graph_testing3} back into~\eqref{pf:linear_smoother_fixed_graph_testing2} then gives the desired result.\\
\subsection{Analysis of Laplacian smoothing}

Upper bounds on the mean squared error of $\wh{f}$, and Type I and Type II error of $\wh{T}$, follow from setting $S = (\rho L + I)^{-1}$ in Lemmas~\ref{lem:linear_smoother_fixed_graph_estimation} and~\ref{lem:linear_smoother_fixed_graph_testing}. We give these results in Lemma~\ref{lem:ls_fixed_graph_estimation} and~\ref{lem:ls_fixed_graph_testing}, and prove them immediately. Recall that $\lambda_1,\ldots,\lambda_n$ are the $n$ eigenvalues of $\Lap$ (sorted in ascending order).
\begin{lemma}
	\label{lem:ls_fixed_graph_estimation}
	For any $\rho > 0$,
	\begin{equation}
	\label{eqn:ls_fixed_graph_estimation_prob}
	\frac{1}{n}\bigl\|\wh{f} - f_0\bigr\|_2^2 \leq \frac{2\rho}{n} \bigl(f_0^{\top} \Lap f_0\bigr) + \frac{10}{n}\sum_{k = 1}^{n} \frac{1}{\bigl(\rho \lambda_{k} + 1\bigr)^2},
	\end{equation}
	with probability at least $1 - \exp\Bigl(-\sum_{k = 1}^{n}\bigl(\rho \lambda_k + 1\bigr)^{-2}\Bigr)$.
\end{lemma}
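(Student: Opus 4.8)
The plan is to invoke Lemma~\ref{lem:linear_smoother_fixed_graph_estimation} with the linear smoother matrix $S = (\rho\Lap + \Id)^{-1}$, and then to translate the resulting bias term $\frac{2}{n}\|(S - \Id)f_0\|_2^2$ into the graph penalty $\frac{2\rho}{n} f_0^\top \Lap f_0$ via a coefficient-wise spectral bound.

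First I would record that $\wh{f} = (\rho\Lap+\Id)^{-1}{\bf Y} = S{\bf Y}$ is exactly a linear estimator of the type considered in Lemma~\ref{lem:linear_smoother_fixed_graph_estimation}: since $\Lap$ is symmetric positive semidefinite, $S$ is symmetric, and writing the eigendecomposition $\Lap = \sum_{k=1}^n \lambda_k v_k v_k^\top$ gives $S = \sum_{k=1}^n (\rho\lambda_k+1)^{-1} v_k v_k^\top$. In particular $\lambda_k(S) = (\rho\lambda_k+1)^{-1} \in (0,1]$, so $\|\lambdavec(S)\|_2^2 = \sum_{k=1}^n (\rho\lambda_k+1)^{-2}$. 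Plugging this directly into Lemma~\ref{lem:linear_smoother_fixed_graph_estimation} produces the variance term $\frac{10}{n}\sum_{k=1}^n(\rho\lambda_k+1)^{-2}$ together with the stated probability $1 - \exp\bigl(-\sum_k(\rho\lambda_k+1)^{-2}\bigr)$, so everything reduces to bounding the bias contribution $\frac{2}{n}\|(S-\Id)f_0\|_2^2$.

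For the bias, I would use the algebraic identity $S - \Id = (\rho\Lap+\Id)^{-1} - \Id = -\rho(\rho\Lap+\Id)^{-1}\Lap$. Expanding $f_0 = \sum_{k=1}^n c_k v_k$ in the eigenbasis of $\Lap$, this yields
\[
\bigl\|(S-\Id)f_0\bigr\|_2^2 = \rho^2 \sum_{k=1}^n \frac{\lambda_k^2}{(\rho\lambda_k+1)^2}\, c_k^2 = \rho\sum_{k=1}^n \frac{\rho\lambda_k}{(\rho\lambda_k+1)^2}\,\lambda_k c_k^2 \leq \rho \sum_{k=1}^n \lambda_k c_k^2 = \rho\, f_0^\top \Lap f_0,
\]
where the inequality uses $\rho\lambda_k/(\rho\lambda_k+1)^2 \le 1$ for every $k$ (immediate since $(\rho\lambda_k+1)^2 \ge \rho\lambda_k$ whenever $\rho\lambda_k \ge 0$). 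Combining this with the variance term from Lemma~\ref{lem:linear_smoother_fixed_graph_estimation} gives exactly \eqref{eqn:ls_fixed_graph_estimation_prob}.

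There is essentially no obstacle in this argument — it is a one-line application of Lemma~\ref{lem:linear_smoother_fixed_graph_estimation} plus the spectral identity above. The only point requiring a moment's care is the bias bound, which is what converts the operator-theoretic quantity $\|(S-\Id)f_0\|_2^2$ into the penalty $\rho f_0^\top \Lap f_0$; this is precisely the quantity that Lemma~\ref{lem:graph_sobolev_seminorm} later controls in terms of the continuum seminorm $|f_0|_{H^1(\Xset)}^2$, so isolating it here is exactly what sets up the subsequent bias-variance tradeoff.
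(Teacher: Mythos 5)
Your proposal is correct and follows essentially the same route as the paper: apply Lemma~\ref{lem:linear_smoother_fixed_graph_estimation} with $S=(\rho\Lap+\Id)^{-1}$, identify $\|\lambdavec(S)\|_2^2=\sum_k(\rho\lambda_k+1)^{-2}$, and bound the bias by $\rho\,f_0^\top\Lap f_0$ via the spectral inequality $\rho\lambda_k/(\rho\lambda_k+1)^2\le 1$. The paper phrases the bias step with the pseudoinverse square root $\Lap^{-1/2}$ rather than expanding $f_0$ in the eigenbasis, but the two computations are identical (your coordinate-wise version even sidesteps the need to treat the $\lambda_k=0$ indices separately).
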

Recall that 
\begin{equation*}
\wh{t}_{\alpha} := \frac{1}{n}\sum_{k = 1}^{n} \frac{1}{\bigl(\rho \lambda_k + 1\bigr)^2} + \frac{1}{n}\sqrt{\frac{2}{\alpha}\sum_{k = 1}^{n} \frac{1}{\bigl(\rho \lambda_k + 1\bigr)^4}}.
\end{equation*}
\begin{lemma}
	\label{lem:ls_fixed_graph_testing}
	For any $\rho > 0$ and any $b \geq 1$, it holds that:
	\begin{itemize}
		\item \textbf{Type I error.}
		\begin{equation}
		\label{eqn:ls_fixed_graph_testing_typeI}
		\Pbb_0\Bigl(\wh{T} > \wh{t}_{\alpha}\Bigr) \leq \alpha.
		\end{equation}
		\item \textbf{Type II error.} If
		\begin{equation}
		\label{eqn:ls_fixed_graph_testing_critical_radius}
		\frac{1}{n}\norm{f_0}_2^2 \geq \frac{2 \rho}{n} \bigl(f_0^{\top} \Lap f_0\bigr) + \frac{2\sqrt{2/\alpha} + 2b}{n} \Biggl(\sum_{k = 1}^{n} \frac{1}{(\rho\lambda_k + 1)^4} \Biggr)^{1/2},
		\end{equation}
		then
		\begin{equation}
		\label{eqn:ls_fixed_graph_testing_typeII}
		\Pbb_{f_0}\Bigl(\wh{T}(G) \leq \wh{t}_{\alpha}\Bigr) \leq \frac{1}{b^2} + \frac{16}{b} \Biggl(\sum_{k = 1}^{n} \frac{1}{(\rho\lambda_k + 1)^4} \Biggr)^{-1/2}.
		\end{equation}
	\end{itemize}
\end{lemma}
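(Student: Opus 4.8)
The plan is to obtain Lemma~\ref{lem:ls_fixed_graph_testing} as a direct specialization of Lemma~\ref{lem:linear_smoother_fixed_graph_testing} to the linear smoother $S = (\rho \Lap + \Id)^{-1}$. First I would record the spectral bookkeeping. Writing $\Lap = \sum_{k=1}^n \lambda_k v_k v_k^\top$, we have $S = \sum_{k=1}^n (\rho\lambda_k+1)^{-1} v_k v_k^\top$, so $\lambda_k(S) = (\rho\lambda_k+1)^{-1}$; since $\rho > 0$ and every $\lambda_k \geq 0$, each $\lambda_k(S) \in (0,1]$, hence $S$ is symmetric and a contraction, and Lemma~\ref{lem:linear_smoother_fixed_graph_testing} applies. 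Moreover $\wh{T} = \tfrac1n\norm{\wh f}_2^2 = \tfrac1n {\bf Y}^\top S^2 {\bf Y} = \tfrac1n \wc{T}$, and with $\wc{t}_\alpha = \norm{\lambdavec(S)}_2^2 + \sqrt{2/\alpha}\,\norm{\lambdavec(S)}_4^2$ from \eqref{eqn:linear_smoother_fixed_graph_threshold} we have $\norm{\lambdavec(S)}_2^2 = \sum_k (\rho\lambda_k+1)^{-2}$ and $\norm{\lambdavec(S)}_4^2 = \bigl(\sum_k (\rho\lambda_k+1)^{-4}\bigr)^{1/2}$, so that $\wh{t}_\alpha = \tfrac1n \wc{t}_\alpha$ with the $\wh{t}_\alpha$ recalled before the lemma. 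In particular $\{\wh T > \wh t_\alpha\} = \{\wc T > \wc t_\alpha\}$, so the Type I bound \eqref{eqn:ls_fixed_graph_testing_typeI} follows immediately from \eqref{eqn:linear_smoother_fixed_graph_testing_typeI}.

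For the Type II bound the only genuinely substantive point is translating the separation hypothesis \eqref{eqn:ls_fixed_graph_testing_critical_radius}, phrased in terms of $\norm{f_0}_2^2$ and the graph penalty $f_0^\top \Lap f_0$, into the hypothesis \eqref{eqn:linear_smoother_fixed_graph_testing_critical_radius}, phrased in terms of $f_0^\top S^2 f_0$. Expanding $f_0 = \sum_k a_k v_k$ in the eigenbasis of $\Lap$, this reduces to the scalar inequality $(1+t)^{-2} \geq 1 - 2t$ for every $t \geq 0$ (applied with $t = \rho\lambda_k$): for $t \geq 1/2$ the right-hand side is nonpositive, and for $t \in [0,1/2)$ it is equivalent to $1 \geq (1-2t)(1+t)^2 = 1 - 3t^2 - 2t^3$, which holds. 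Summing against $a_k^2$ gives $f_0^\top S^2 f_0 \geq \norm{f_0}_2^2 - 2\rho\, f_0^\top \Lap f_0$, and combining with \eqref{eqn:ls_fixed_graph_testing_critical_radius} yields $f_0^\top S^2 f_0 \geq \bigl(2\sqrt{2/\alpha} + 2b\bigr)\norm{\lambdavec(S)}_4^2$, which is exactly \eqref{eqn:linear_smoother_fixed_graph_testing_critical_radius}.

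Finally I would invoke \eqref{eqn:linear_smoother_fixed_graph_testing_typeII}, obtaining $\Pbb_{f_0}(\wc T \leq \wc t_\alpha) \leq b^{-2} + 16\bigl(b\norm{\lambdavec(S)}_4^2\bigr)^{-1}$, and substitute $\norm{\lambdavec(S)}_4^2 = \bigl(\sum_k (\rho\lambda_k+1)^{-4}\bigr)^{1/2}$ together with $\{\wh T \leq \wh t_\alpha\} = \{\wc T \leq \wc t_\alpha\}$ to read off \eqref{eqn:ls_fixed_graph_testing_typeII}. The main obstacle, such as it is, is checking the scalar inequality and in particular that the resulting bound on $f_0^\top S^2 f_0$ points in the correct direction; the remainder is rescaling by $1/n$ and matching notation.
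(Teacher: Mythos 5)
Your proposal is correct and takes essentially the same route as the paper: specialize Lemma~\ref{lem:linear_smoother_fixed_graph_testing} to $S=(\rho\Lap+\Id)^{-1}$ (a symmetric contraction), match $\wh{T}=\tfrac1n\wc{T}$ and $\wh{t}_\alpha=\tfrac1n\wc{t}_\alpha$, and convert the separation hypothesis via $f_0^{\top}S^2f_0\geq\|f_0\|_2^2-2\rho\,f_0^{\top}\Lap f_0$. Your eigenbasis verification of the scalar inequality $(1+t)^{-2}\geq 1-2t$ is the same estimate the paper writes in operator form as $f_0^{\top}(\Id-\wh{S}^2)f_0\leq 2\rho\,f_0^{\top}\Lap f_0$, so there is no substantive difference.
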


\paragraph{Proof of Lemma~\ref{lem:ls_fixed_graph_estimation}.}
Let $\wh{S} = (\Id + \rho \Lap)^{-1}$, the estimator $\wh{f} = \wh{S}Y$, and
\begin{equation*}
\bigl\|\lambdavec(\wh{S})\bigr\|_2^2 = \sum_{k = 1}^{n} \frac{1}{\bigl(1 + \rho \lambda_k\bigr)^2}.
\end{equation*} 
We deduce the following upper bound on the bias term,
\begin{equation*}
\begin{aligned}
\bigl\|(\wh{S} - I)f_0\bigr\|_2^2 & = f_0^{\top} \Lap^{1/2} \Lap^{-1/2}\bigl(\wh{S} - \Id\bigr)^2 \Lap^{-1/2} \Lap^{1/2} f_0 \\
& \leq f_0^{\top} \Lap f_0 \cdot \lambda_n\Bigl(\Lap^{-1/2}\bigl(\wh{S} - \Id\bigr)^2 \Lap^{-1/2}\Bigr) \\
& = f_0^{\top} \Lap f_0 \cdot \max_{k \in [n]} \biggl\{\frac{1}{\lambda_k} \Bigl(1 - \frac{1}{\rho\lambda_k + 1}\Bigr)^2 \biggr\} \\
& \leq f_0^{\top} \Lap f_0 \cdot \rho.
\end{aligned}
\end{equation*} 
In the above, we have written $\Lap^{-1/2}$ for the square root of the pseudoinverse of $\Lap$, the maximum is over all indices $k$ such that $\lambda_k > 0$, and the last inequality follows from the basic algebraic identity $1 - 1/(1 + x)^2 \leq 2 x$ for any $x > 0$. The claim of the Lemma then follows from Lemma~\ref{lem:linear_smoother_fixed_graph_estimation}.

\paragraph{Proof of Lemma~\ref{lem:ls_fixed_graph_testing}.}
Let $\wh{S} := (I + \rho L)^{-1}$, so that $\wh{T} = \frac{1}{n}{\bf Y}^{\top} \wh{S}^2 {\bf Y}$. Note that $\wh{S}$ is a contraction, so that we may invoke Lemma~\ref{lem:linear_smoother_fixed_graph_testing}. The bound on Type I error \eqref{eqn:ls_fixed_graph_testing_typeI} follows immediately from \eqref{eqn:linear_smoother_fixed_graph_testing_typeI}. 
To establish the bound on Type II error, we must lower bound $f_0^{\top} \wh{S}^2 f_0$. We first note that by assumption \eqref{eqn:ls_fixed_graph_testing_critical_radius},
\begin{align*}
f_0^{\top} \wh{S}^2 f_0 & = \bigl\|f_0\bigr\|_2^2 - f_0^{\top}(I - \wh{S}^2)f_0 \\
& \geq 2 \rho \bigl(f_0^{\top} \Lap f_0\bigr) - f_0^{\top}\bigl(I - \wh{S}^2\bigr)f_0 +  \Bigl(2\sqrt{\frac{2}{\alpha}} + 2b\Bigr) \cdot \Biggl(\sum_{k = 1}^{n} \frac{1}{(\rho\lambda_k + 1)^4} \Biggr)^{-1/2}.
\end{align*}
Upper bounding $f_0^{\top}\bigl(I - \wh{S}^2\bigr)f_0$ as follows:
\begin{equation*}
\begin{aligned}
f_0^{\top} \Bigl(\Id - \wh{S}^2\Bigr) f_0  & = f_0^{\top} \Lap^{1/2} \Lap^{-1/2}\Bigl(\Id - \wh{S}^2\Bigr) \Lap^{-1/2} \Lap^{1/2} f_0 \\ 
& \leq f_0^{\top} \Lap f_0 \cdot  \lambda_{n}\biggl(\Lap^{-1/2}\Bigl(\Id - \wh{S}^2\Bigr) \Lap^{-1/2}\biggr) \\ 
& = f_0^{\top} \Lap f_0 \cdot \max_{k} \biggl\{ \frac{1}{\lambda_k} \Bigl(1 - \frac{1}{(\rho \lambda_k + 1)^2}\Bigr) \biggr\} \\
& \leq f_0^{\top} \Lap f_0 \cdot 2\rho,
\end{aligned}
\end{equation*}
---where in the above the maximum is over all indices $k$ such that $\lambda_k > 0$---we deduce that
\begin{equation*}
f_0^{\top} \wh{S}^2 f_0 \geq  \Bigl(2\sqrt{\frac{2}{\alpha}} + 2b\Bigr) \cdot \biggl(\sum_{k = 1}^{n} \frac{1}{(\rho\lambda_k + 1)^4} \biggr)^{-1/2}.
\end{equation*} 
The upper bound on Type II error \eqref{eqn:ls_fixed_graph_testing_typeII} then follows from Lemma~\ref{lem:linear_smoother_fixed_graph_testing}.

\section{Neighborhood graph Sobolev semi-norm}
\label{sec:graph_sobolev_seminorm}

In this section, we prove Lemma~\ref{lem:graph_sobolev_seminorm}, which states an upper bound on $f^{\top} L f$ that holds when $f$ is bounded in Sobolev norm. We also establish stronger bounds in the case when $f$ has a bounded Lipschitz constant; this latter result justifies one of our remarks after Theorem~\ref{thm:laplacian_smoothing_estimation1}. 

Throughout this proof, we will assume that $f \in H^1(\Xset)$ has zero-mean, meaning $\int_{\Xset} f(x) \,dx = 0$. This is without loss of generality---assuming for the moment that \eqref{eqn:graph_sobolev_seminorm} holds for zero-mean functions, for any $f \in H^1(\Xset)$, taking $a = \int_{\Xset} f(x) \,dx$ and $g = f - a$, we have that
\begin{equation*}
f^{\top} L f = g^{\top} L g \leq \frac{C_2}{\delta} n^2 r^{d + 2} |g|_{H^1(\mc{X})}^2 = \frac{C_2}{\delta} n^2 r^{d + 2} |f|_{H^1(\mc{X})}^2.
\end{equation*} 
Now, for any zero-mean function $f \in H^1(\mc{X})$ it follows by the Poincare inequality (see Section 5.8, Theorem 1 of \citet{evans10}) that $\|f\|_{H^1(\mc{X})}^2 \leq C_8 |f|_{H^1(\mc{X})}^2$, for some constant $C_8$ that does not depend on $f$. Therefore, to prove Lemma~\ref{lem:graph_sobolev_seminorm}, it suffices to show that
\begin{equation*}
\Ebb\Bigl[f^{\top} L f\Bigr] \leq C n^2 r^{d + 2} \|f\|_{H^1(\Xset)}^2,
\end{equation*}
since the high-probability upper bound then follows immediately by Markov's inequality. (Recall that $L$ is positive semi-definite, and therefore $f^{\top} L f$ is a non-negative random variable).

Since
\begin{equation*}
f^{\top} L f = \frac{1}{2}\sum_{i, j = 1}^{n} \bigl(f(X_i) - f(X_j)\bigr)^2 W_{ij},
\end{equation*}
it follows that
\begin{equation}
\label{pf:first_order_graph_sobolev_seminorm_1}
\Ebb\Bigl[f^{\top} L f\Bigr] = \frac{n(n - 1)}{2} \Ebb\biggl[\Bigl(f(X') - f(X)\Bigr)^2 K\biggl(\frac{\|X' - X\|}{r}\biggr)\biggr],
\end{equation}
where $X$ and $X'$ are random variables independently drawn from $P$.

Now, take $\Omega$ to be an arbitrary bounded open set such that $B(x,c_0) \subseteq \Omega$ for all $x \in \mc{X}$. For the remainder of this proof, we will assume that (i) $f \in H^1(\Omega)$ and additionally (ii) $\|f\|_{H^1(\Omega)} \leq C_5 \|f\|_{H^1(\mc{X})}$ for a constant $C_5$ that does not depend on $f$. This is without loss of generality, since by Theorem~1 in Chapter 5.4 of~\citet{evans10} there exists an extension operator $E: H^1(\mc{X}) \to H^1(\Omega)$ for which the extension $Ef$ satisfies both (i) and (ii). Additionally, we will assume $f \in C^{\infty}(\Omega)$. Again, this is without loss of generality, as $C^{\infty}(\Omega)$ is dense in $H^1(\Omega)$ and the expectation on the right hand side of \eqref{pf:first_order_graph_sobolev_seminorm_1} is continuous in $H^1(\Omega)$. The reason for dealing with a smooth extension $f \in C^{\infty}(\Omega)$ is so that we can make sense of the following equality for any $x$ and $x'$ in $\mc{X}$:
\begin{equation}
\label{pf:first_order_graph_sobolev_seminorm_1.5}
f(x') - f(x) = \int_{0}^{1} \nabla f\bigl(x + t(x' - x)\bigr)^{\top} (x' - x) \,dt.
\end{equation}

Obviously
\begin{equation}
\Ebb\biggl[\bigl(f(X') - f(X)\bigr)^2K\biggl(\frac{\|X' - X\|}{r}\biggr)\biggr] \leq p_{\max}^2 \int_{\Xset} \int_{\Xset} \bigl(f(x') - f(x)\bigr)^2 K\biggl(\frac{\|x' - x\|}{r}\biggr) \,dx' \,dx, \label{pf:first_order_graph_sobolev_seminorm_2}
\end{equation}
so that it remains now to bound the double integral. Replacing difference by integrated derivative as in \eqref{pf:first_order_graph_sobolev_seminorm_1.5}, we obtain
\begin{align}
\int_{\Xset} \int_{\Xset} \bigl(f(x') - f(x)\bigr)^2 K\biggl(\frac{\|x' - x\|}{r}\biggr) \,dx' \,dx & = \int_{\Xset} \int_{\Xset} \biggl[\int_{0}^{1} \nabla f\bigl(x + t(x' - x)\bigr)^{\top} (x' - x)\,dt\biggr]^2 K\biggl(\frac{\|x' - x\|}{r}\biggr) \,dx' \,dx \nonumber \\
& \overset{(i)}{\leq} \int_{\Xset} \int_{\Xset} \int_{0}^{1} \biggl[\nabla f\bigl(x + t(x' - x)\bigr)^{\top} (x' - x)\biggr]^2 K\biggl(\frac{\|x' - x\|}{r}\biggr) \,dt\,dx' \,dx \nonumber \\
& \overset{(ii)}{\leq} r^{d + 2} \int_{\Xset} \int_{B(0,1)} \int_{0}^{1} \biggl[\nabla f\bigl(x + trz\bigr)^{\top} z\biggr]^2 K\bigl(\|z\|\bigr) \,dt\,dz \,dx \nonumber \\
&  \overset{(iii)}{\leq} r^{d + 2} \int_{\Omega} \int_{B(0,1)} \int_{0}^{1} \Bigl[\nabla f\bigl(\wt{x}\bigr)^{\top} z\Bigr]^2 K\bigl(\|z\|\bigr) \,dt \,dz \,d\wt{x}, \label{pf:first_order_graph_sobolev_seminorm_3} 
\end{align}
where $(i)$ follows by Jensen's inequality, $(ii)$ follows by substituting $z = (x' - x)/r$ and~\ref{asmp:kernel}, and $(iii)$ by exchanging integrals, substituting $\wt{x} = x + trz$, and noting that $x \in \mc{X}$ implies that $\wt{x} \in \Omega$.

Now, writing $\bigl(\nabla f(\wt{x}) ^{\top} z\bigr)^2 = \bigl(\sum_{i = 1}^{d} z_{i} f^{(e_i)}(x) \bigr)^2$, expanding the square and integrating, we have that for any $\wt{x} \in \Xset$,
\begin{align*}
\int_{B(0,1)} \Bigl[\nabla f\bigl(\wt{x}\bigr)^{\top} z\Bigr]^2 K\bigl(\|z\|\bigr) \,dz & = \sum_{i,j = 1}^{d} f^{(e_i)}(\wt{x}) f^{(e_j)}(\wt{x}) \int_{\Rd} z_iz_jK(\|z\|) \,dz \\
& = \sum_{i = 1}^{d} \bigl(f^{(e_i)}(\wt{x})\bigr)^2 \int_{B(0,1)} z_i^2 K\bigl(\|z\|\bigr) \,dz \\
& = \sigma_K \|\nabla f(\wt{x})\|^2,
\end{align*}
where the last equality follows from the rotational symmetry of $K(\|z\|)$. Plugging back into \eqref{pf:first_order_graph_sobolev_seminorm_3}, we obtain
\begin{equation*}
\int_{\Xset} \int_{\Xset} \bigl(f(x') - f(x)\bigr)^2 K\biggl(\frac{\|x' - x\|}{r}\biggr) \,dx' \,dx \leq r^{d + 2} \sigma_K \|f\|_{H^1(\Omega)}^2 \leq C_5 r^{d + 2} \sigma_K \|f\|_{H^1(\mc{X})}^2,
\end{equation*}
proving the claim of Lemma~\ref{lem:graph_sobolev_seminorm} upon taking $C_2 := C_8C_5 \sigma_K p_{\max}^2$ in the statement of the lemma.

\subsection{Stronger bounds under Lipschitz assumption}

Suppose $f$ satisfies $|f(x') - f(x)| \leq M \|x - x\|$ for all $x,x' \in \mc{X}$. Then we can strengthen the high probability bound in Lemma~\ref{lem:graph_sobolev_seminorm} from $1 - \delta$ to $1 - \delta^2/n$, at the cost of only a constant factor in the upper bound on $f^{\top} L f$.
\begin{proposition}
	\label{prop:graph_sobolev_seminorm_lipschitz}
	Let $r \geq C_0(\log n/n)^{1/d}$. For any $f$ such that $|f(x') - f(x)| \leq M \|x - x\|$ for all $x,x' \in \mc{X}$, with probability at least $1 - C\delta^2/n$ it holds that
	\begin{equation*}
	f^{\top} L f \leq \biggl(\frac{1}{\delta} + C_2\biggr) n^2 r^{d + 2} M^2.
	\end{equation*}
\end{proposition}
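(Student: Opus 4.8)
The plan is to use the Lipschitz property to reduce $f^\top L f$ to a constant multiple of the total degree $\sum_{i=1}^n D_{ii}$ (a quantity that does not depend on $f$ at all), and then to concentrate that total degree around its mean using Chebyshev's inequality. Since $K$ is supported on $[0,1]$, the weight $W_{ij}$ can only be nonzero when $\|X_i-X_j\|\le r$, and then the Lipschitz assumption gives $(f(X_i)-f(X_j))^2 \le M^2 r^2$. Hence $(f(X_i)-f(X_j))^2 W_{ij}\le M^2 r^2 W_{ij}$ for every $i,j$, so that
\begin{equation*}
f^\top L f = \frac12\sum_{i,j=1}^n \bigl(f(X_i)-f(X_j)\bigr)^2 W_{ij} \;\le\; \frac{M^2 r^2}{2}\sum_{i,j=1}^n W_{ij} \;=\; \frac{M^2 r^2}{2}\sum_{i=1}^n D_{ii}.
\end{equation*}
It therefore suffices to prove that $\sum_{i=1}^n D_{ii} \le (\tfrac1\delta + C')\,n^2 r^d$ with probability at least $1-C\delta^2/n$, for appropriate constants $C,C'$.

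First I would control the mean and variance of $\sum_i D_{ii}$. Writing $\sum_i D_{ii} = nK(0) + 2\sum_{i<j}W_{ij}$, and using $\int_{\Rd}K(\|z\|/r)\,dz = r^d$ from \ref{asmp:kernel} together with $p\le p_{\max}$ from \ref{asmp:density}, one gets $\Ebb[W_{ij}] = \Ebb\bigl[K(\|X-X'\|/r)\bigr] \le p_{\max} r^d$ for $i\ne j$, hence $\Ebb\bigl[\sum_i D_{ii}\bigr] \le nK(0) + n^2 p_{\max}r^d \le C' n^2 r^d$, the final step absorbing $nK(0)$ using $nr^d\ge 1$ (a consequence of $r\ge C_0(\log n/n)^{1/d}$). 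For the variance, note $\sum_i W_{ii} = nK(0)$ is deterministic, so $\Var\bigl(\sum_i D_{ii}\bigr) = 4\Var\bigl(\sum_{i<j}W_{ij}\bigr)$, and $\sum_{i<j}W_{ij}$ is a second-order $U$-statistic with bounded kernel $h(x,y) = K(\|x-y\|/r)\in[0,K(0)]$. The standard $U$-statistic variance identity (equivalently, the Hoeffding decomposition) gives $\Var\bigl(\sum_{i<j}h(X_i,X_j)\bigr) \lesssim n^3\zeta_1 + n^2\zeta_2$, with $\zeta_1 = \Cov\bigl(h(X_1,X_2),h(X_1,X_3)\bigr)$ and $\zeta_2 = \Var\bigl(h(X_1,X_2)\bigr)$. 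Here $\zeta_2 \le \Ebb[h^2] \le K(0)\,\Ebb[h] \le K(0)p_{\max}r^d$, and, setting $\bar h(x):=\Ebb[h(x,X_2)]\le p_{\max}r^d$, one has $\zeta_1 = \Var(\bar h(X_1))\le \Ebb[\bar h(X_1)^2]\le p_{\max}r^d\cdot\Ebb[\bar h(X_1)]\le p_{\max}^2 r^{2d}$. Using $nr^d\ge 1$ once more to fold the $n^2 r^d$ term into the $n^3 r^{2d}$ term, this yields $\Var\bigl(\sum_i D_{ii}\bigr) \le C_b\, n^3 r^{2d}$.

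Finally I would combine these by Chebyshev's inequality applied with deviation $n^2 r^d/\delta$:
\begin{equation*}
\Prob{\sum_{i=1}^n D_{ii} \ge \Ebb\Bigl[\sum_{i=1}^n D_{ii}\Bigr] + \frac{n^2 r^d}{\delta}} \le \frac{\Var\bigl(\sum_i D_{ii}\bigr)}{(n^2 r^d/\delta)^2} \le \frac{C_b\, n^3 r^{2d}\,\delta^2}{n^4 r^{2d}} = \frac{C_b\,\delta^2}{n}.
\end{equation*}
On the complementary event, $\sum_i D_{ii} \le \bigl(C' + \tfrac1\delta\bigr)n^2 r^d$; feeding this back into the first display gives $f^\top L f \le \bigl(\tfrac{1}{2\delta} + \tfrac{C'}{2}\bigr)M^2 n^2 r^{d+2} \le \bigl(\tfrac1\delta + C_2\bigr)M^2 n^2 r^{d+2}$ with $C_2 := C'/2$ and $C := C_b$, as claimed.

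The argument is elementary; the one point worth emphasizing is why Chebyshev's inequality — rather than the Markov bound that sufficed for Lemma~\ref{lem:graph_sobolev_seminorm} — delivers the much stronger failure probability $C\delta^2/n$. This works precisely because the (deterministic-kernel) total degree $\sum_i D_{ii}$ concentrates with relative standard deviation of order $n^{-1/2}$, which in turn rests on the one-argument covariance bound $\zeta_1\lesssim r^{2d}$; establishing that estimate cleanly is the only step that requires any care.
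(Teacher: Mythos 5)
Your proof is correct, but it takes a genuinely different route from the paper's. The paper applies Chebyshev's inequality directly to the random variable $f^\top L f$: it writes $\Var[f^\top L f]$ as a sum of covariances of the terms $\varDelta_{ij} = K(\|X_i-X_j\|/r)(f(X_i)-f(X_j))^2$, splits into cases according to how many indices the pairs $\{i,j\}$ and $\{\ell,m\}$ share, and bounds each case using the Lipschitz property inside the integrals (yielding $\Var[f^\top L f] \lesssim M^4 n^3 r^{4+2d}$); the mean bound $\Ebb[f^\top L f] \le C_2 M^2 n^2 r^{d+2}$ is inherited from the proof of Lemma~\ref{lem:graph_sobolev_seminorm}. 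You instead eliminate $f$ at the outset via the pointwise majorization $(f(X_i)-f(X_j))^2 W_{ij} \le M^2 r^2 W_{ij}$ (valid because $K$ is supported on $[0,1]$) and then concentrate the total degree $\sum_i D_{ii}$, a $U$-statistic with a deterministic kernel; your Hoeffding-decomposition bounds $\zeta_1 \lesssim r^{2d}$, $\zeta_2 \lesssim r^d$ are the exact analogues of the paper's $|I|=3$ and $|I|=2$ covariance cases, so the combinatorial core is the same. Your version has one advantage: the high-probability event depends only on the design, so the bound holds simultaneously for all $M$-Lipschitz $f$, whereas the paper's event is $f$-dependent. The paper's version has a different advantage, which it notes explicitly: its variance calculation only needs control of $\|\nabla f\|_{\Leb^4(\Xset)}$, so the Lipschitz hypothesis can be weakened, while your crude majorization $(f(X_i)-f(X_j))^2 \le M^2 r^2$ genuinely requires the $\Leb^\infty$ gradient bound. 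Two trivial points: your leading constant is $C'/2$ rather than the specific $C_2$ of Lemma~\ref{lem:graph_sobolev_seminorm} (immaterial to how the proposition is used), and the factor $1/(2\delta)$ you obtain is if anything slightly better than the stated $1/\delta$.
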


\paragraph{Proof of Proposition~\ref{prop:graph_sobolev_seminorm_lipschitz}.}
We will prove Proposition~\ref{prop:graph_sobolev_seminorm_lipschitz} using Chebyshev's inequality, so the key step is to upper bound the variance of $f^{\top}Lf$. Putting $\varDelta_{ij} := K(\|X_i - X_j\|/r) \cdot (f(X_i) - f(X_j))^2$, we can write the variance of $f^{\top} L f$ as a sum of covariances,
\begin{equation*}
\Var\bigl[f^{\top} L f\bigr] = \frac{1}{4} \sum_{i, j = 1}^{n} \sum_{\ell,m = 1}^{n} \Cov\bigl[\varDelta_{ij},\varDelta_{\ell m}\bigr].
\end{equation*}
Clearly $\Cov\bigl[\varDelta_{ij},\varDelta_{\ell m}\bigr]$ depends on the cardinality of $I := \{i,j,k,\ell\}$; we divide into cases, and upper bound the covariance in each case.
\begin{enumerate}
	\item[$\bigl|I\bigr| = 4$.] In this case $\varDelta_{ij}$ and $\varDelta_{\ell m}$ are independent, and $\Cov\bigl[\varDelta_{ij},\varDelta_{\ell m}\bigr] = 0$.
	\item[$\bigl|I\bigr| = 3$.] Taking $i = \ell$ without loss of generality, and noting that the expectation of $\Delta_{ij}$ and $\Delta_{im}$ is non-negative, we have
	\begin{align*}
	\Cov\bigl[\varDelta_{ij},\varDelta_{i m}\bigr] & \leq \Ebb\bigl[\varDelta_{ij} \varDelta_{i m} \bigr] \\
	& = \int_{\Xset} \int_{\Xset} \int_{\Xset} \bigl(f(z) - f(x)\bigr)^2 \bigl(f(x') - f(x)\bigr)^2 K\biggl(\frac{\|x' - x\|}{r}\biggr) K\biggl(\frac{\|z - x\|}{r}\biggr) p(z) p(x') p(x) \,dz \,dx' \,dx \\
	& \leq p_{\max}^3 M^4 r^4 \int_{\Xset} \int_{\Xset} \int_{\Xset} K\biggl(\frac{\|x' - x\|}{r}\biggr) K\biggl(\frac{\|z - x\|}{r}\biggr) \,dz \,dx' \,dx \\
	& \leq p_{\max}^3 M^4 r^{4 + 2d}.
	\end{align*}
	\item[$\bigl|I\bigr| = 2$.] Taking $i = \ell$ and $j = m$ without loss of generality, we have
	\begin{align*}
	\Var\bigl[\varDelta_{ij}\bigr] & \leq \Ebb\bigl[\varDelta_{ij}^2\bigr] \\
	& \leq \int_{\Xset} \int_{\Xset} \bigl(f(x') - f(x)\bigr)^4 \biggl[K\biggl(\frac{\|x' - x\|}{r}\biggr)\biggr]^2 p(x') p(x) \,dx' \,dx \\
	& \leq p_{\max}^2 M^4 r^4 K(0) \int_{\Xset} \int_{\Xset} K\biggl(\frac{\|x' - x\|}{r}\biggr) \,dx' \,dx \\
	& \leq p_{\max}^2 M^4 r^{4 + d} K(0).
	\end{align*}
	\item[$\bigl|I\bigr| = 1$.] In this case $\varDelta_{i j} = \varDelta_{\ell m} = 0$.
\end{enumerate}
Therefore
\begin{equation*}
\Var\bigl[f^{\top} L f\bigr] \leq n^3 p_{\max}^3 M^4 r^{4 + 2d} + n^2 p_{\max}^2 M^4 r^{4 + d} K(0) \leq C M^4 n^3r^{4 + 2d},
\end{equation*}
where the latter inequality follows since $nr^d \gg 1$. For any $\delta > 0$, it follows from Chebyshev's inequality that
\begin{equation*}
\Pbb\biggl(\Bigl|f^{\top} L f - \Ebb\bigl[f^{\top} L f\bigr]\Bigr| \geq \frac{M^2}{\delta}n^2 r^{d + 2}\biggr) \leq C\frac{\delta^2}{n},
\end{equation*}
and since we have already upper bounded $\Ebb\bigl[f^{\top} L f\bigr] \leq C_2 M^2 n^2 r^{d + 2}$, the proposition follows. 

Note that the bound on $\Var[\varDelta_{i j}]$ follows as long as we can control $\|\nabla f\|_{\Leb^4(\Xset)}$; this implies the Lipschitz assumption---which gives us control of $\|\nabla f\|_{\Leb^{\infty}(\Xset)}$---can be weakened. However, the Sobolev assumption---which gives us control only over $\|\nabla f\|_{\Leb^2(\Xset)}$---will not do the job. 

\section{Bounds on neighborhood graph eigenvalues}
\label{sec:graph_eigenvalues}

In this section, we prove Lemma~\ref{lem:neighborhood_eigenvalue}, following the lead of~\citet{burago2014,trillos2019,calder2019}, who establish similar results with respect to a manifold without boundary. To prove this lemma, in  Theorem~\ref{thm:neighborhood_eigenvalue} we give estimates on the difference between eigenvalues of the graph Laplacian $L$ and eigenvalues of the weighted Laplace-Beltrami operator $\Delta_P$. We recall $\Delta_P$ is defined as
\begin{equation*}
\Delta_Pf(x) := -\frac{1}{p(x)} \dive\bigl(p^2\nabla f\bigr)(x).
\end{equation*}
To avoid confusion, in this section we write $\lambda_k(G_{n,r})$ for the $k$th smallest eigenvalue of the graph Laplacian matrix $L$ and $\lambda_k(\Delta_P)$ for the $k$th smallest eigenvalue of $\Delta_P$ \footnote{Under the assumptions~\ref{asmp:domain} and~\ref{asmp:density}, the operator $\Delta_P$ has a discrete spectrum; see \citet{garciatrillos18} for more details.}. Some other notation: throughout this section, we will write $A, A_0, A_1,\ldots$ and $a,a_0,a_1,\ldots$ for constants which may depend on $\Xset$, $d$, $K$, and $p$, but do not depend on $n$; we keep track of all such constants explicitly in our proofs. We let $L_K$ denote the Lipschitz constant of the kernel $K$. Finally, for notational ease we set $\theta$ and $\wt{\delta}$ to be the following (small) positive numbers:
\begin{equation}
\label{asmp:smallness}
\wt{\delta} := \max\Biggl\{n^{-1/d}, \min\biggl\{\frac{1}{2^{d + 3}A_0}, \frac{1}{A_3}, \frac{K(1)}{8L_KA_0}, \frac{1}{8\max\{A_1,A\}c_0}\biggr\}r\Biggr\},~~\textrm{and}~~
\theta := \frac{1}{8\max\{A_1,A\}}.
\end{equation} 
We note that each of $\wt{\delta}, \theta$ and $\wt{\delta}/r$ are of at most constant order. 

\begin{theorem}
	\label{thm:neighborhood_eigenvalue}
	For any $\ell \in \mathbb{N}$ such that
	\begin{equation}
	\label{eqn:neighborhood_eigenvalue_1}
	1 - A\Biggl(r \sqrt{\lambda_{\ell}(\Delta_P)} + \theta + \wt{\delta}\Biggr)\geq \frac{1}{2}
	\end{equation}
	with probability at least $1 - A_0n\exp(-a_0n\theta^2\wt{\delta}^{d})$, it holds that
	\begin{equation}
	\label{eqn:eigenvalue_bound}
	a \lambda_k(G_{n,r}) \leq nr^{d+2} \lambda_k(\Delta_P) \leq A \lambda_k(G_{n,r}),~~\textrm{for all $1 \leq k \leq \ell$}
	\end{equation}
\end{theorem}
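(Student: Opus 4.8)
The plan is to obtain two-sided comparisons between the graph-Laplacian eigenvalues $\lambda_k(G_{n,r})$ and the weighted Laplace--Beltrami eigenvalues $\lambda_k(\Delta_P)$ by transferring spectra through suitable linear maps between the finite-dimensional space $\Reals^n$ (with an appropriately normalized inner product) and a subspace of $L^2(\Xset,P)$, and then invoking the variational (min-max) characterization of eigenvalues. Concretely, I would introduce a ``restriction'' map $P_n: L^2(\Xset,P) \to \Reals^n$ given by $(P_n u)_i = u(X_i)$ (or a locally averaged variant to handle low-regularity functions), and an ``interpolation'' map $I_n$ going the other way, built from a partition of $\Xset$ associated to the transport map between the empirical measure and $P$. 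The two ingredients that make this work are: (a) a Dirichlet-energy comparison --- for $u$ smooth enough, $\frac{1}{n^2 r^{d+2}}(P_n u)^\top L (P_n u)$ is within a multiplicative factor $1 \pm A(r\sqrt{\lambda} + \theta + \wt\delta)$ of $\int_\Xset \|\nabla u\|^2 p^2\,dx = \langle u, \Delta_P u\rangle_{L^2(P)}$, and symmetrically an energy comparison for $I_n$; and (b) an $L^2$-norm comparison --- $\frac{1}{n}\|P_n u\|_2^2 \approx \|u\|_{L^2(P)}^2$ and $\|I_n f\|_{L^2(P)}^2 \approx \frac{1}{n}\|f\|_2^2$, again up to factors controlled by $\theta,\wt\delta$. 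Both of these follow from the transport-map / discrete-to-continuum machinery of \citet{burago2014,trillos2019,calder2019}: specifically, with probability at least $1 - A_0 n\exp(-a_0 n\theta^2\wt\delta^d)$, the $\infty$-optimal-transport distance between $P_n$ (the empirical measure) and $P$ is at most $\wt\delta$, and one chooses the data radius $r$ large relative to $\wt\delta$ so that edge weights are stable under perturbations of size $\wt\delta$; this is exactly where the high-probability event and the smallness conventions \eqref{asmp:smallness} enter.

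The key steps, in order: (1) Set up the transport map $T_n$ pushing $P$ forward to $P_n$ with $\|T_n - \mathrm{id}\|_\infty \le \wt\delta$ on the stated event, and record the corresponding push-forward/pull-back isometry-up-to-error between $L^2(P)$ and $\Reals^n$. (2) Prove the Dirichlet energy lower bound: for any $f \in \Reals^n$, interpolate to $u = I_n f$, smooth it slightly at scale $\theta r$ (a mollification step needed because $f$ is only a vector), and show $\langle u,\Delta_P u\rangle_{L^2(P)} \le \frac{1}{1 - A(\cdots)}\cdot\frac{1}{n^2 r^{d+2}} f^\top L f$, with the loss factor coming from (i) the mollification bias, bounded using $\lambda$ (hence the $r\sqrt{\lambda}$ term), (ii) the transport displacement $\wt\delta$, and (iii) the scale parameter $\theta$. (3) Prove the matching upper bound: for smooth $u$ (in particular for the first $\ell$ eigenfunctions of $\Delta_P$), restrict to $f = P_n u$ and show $\frac{1}{n^2 r^{d+2}} f^\top L f \le (1 + A(\cdots))\langle u,\Delta_P u\rangle_{L^2(P)}$; here the Sobolev-seminorm estimate of Lemma~\ref{lem:graph_sobolev_seminorm}, refined to a pointwise/local statement, does the heavy lifting. (4) Combine (2) and (3) with the $L^2$-norm comparisons from step (1) to run the standard Courant--Fischer argument on both sides: feeding the span of the top-$k$ discrete eigenvectors into the continuum Rayleigh quotient gives $nr^{d+2}\lambda_k(\Delta_P) \le A\,\lambda_k(G_{n,r})$, and feeding the span of the top-$k$ continuum eigenfunctions into the discrete Rayleigh quotient gives the reverse, valid for all $k \le \ell$ precisely because condition \eqref{eqn:neighborhood_eigenvalue_1} guarantees the distortion factor stays bounded below by $1/2$ for those indices. (5) Absorb all the $1 \pm A(\cdots) \ge 1/2$ factors into the constants $a,A$.

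The main obstacle is step (2)--(3): getting a \emph{two-sided} Dirichlet energy comparison that is uniform over the relevant eigenspace, rather than the one-sided bound of Lemma~\ref{lem:graph_sobolev_seminorm}. The upper direction (discrete energy $\lesssim$ continuum energy) is essentially Lemma~\ref{lem:graph_sobolev_seminorm} localized, but the lower direction (continuum energy $\lesssim$ discrete energy) is genuinely harder: a generic vector $f \in \Reals^n$ need not be the sample of a smooth function, so one must mollify, and controlling the mollification error requires bounding $\|u - u_{\theta r}\|_{L^2(P)}$ and the energy of $u_{\theta r}$ in terms of the discrete energy --- this is where the spectral parameter $\lambda_\ell(\Delta_P)$ must be threaded through, and where the interplay between $r$, $\theta$, and $\wt\delta$ is most delicate. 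The second delicate point is ensuring all estimates hold \emph{simultaneously} over a $k$-dimensional subspace (not just for a single function), which is handled by noting the comparisons are bilinear-form inequalities, hence automatically uniform over subspaces, but one must be careful that the mollification/interpolation operators are linear and the error bounds depend only on the Rayleigh quotient, not on the individual function.
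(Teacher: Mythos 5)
Your proposal follows essentially the same route as the paper: an $\infty$-transport coupling between the empirical measure and a perturbed measure $\wt{P}_n$, discretization/interpolation maps (the interpolation being a kernel-smoothed piecewise-constant extension over the transport cells), Dirichlet-energy comparisons in each direction, near-isometry of the maps on low-energy functions, and a two-sided Courant--Fischer argument in which condition \eqref{eqn:neighborhood_eigenvalue_1} guarantees injectivity of the maps on the relevant eigenspaces. The one caveat is that your claimed $1 \pm A(r\sqrt{\lambda}+\theta+\wt{\delta})$ accuracy for the \emph{energy} comparison is more than the paper establishes for a domain with Lipschitz boundary (near the boundary the graph Laplacian approximates a different operator, so only constant-factor energy comparisons are proved, which is precisely why the theorem asserts constant-factor eigenvalue bounds rather than consistency) --- but since the stated result needs only constants, this does not affect the validity of your strategy.
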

Before moving forward to the proofs of Lemma~\ref{lem:neighborhood_eigenvalue} and Theorem~\ref{thm:neighborhood_eigenvalue}, it is worth being clear about the differences between Theorem~\ref{thm:neighborhood_eigenvalue} and the results of \citet{burago2014,trillos2019,calder2019}. First of all, the reason we cannot directly use the results of these works in the proof of Lemma~\ref{lem:neighborhood_eigenvalue} is that they all assume the domain $\mc{X}$ is without boundary, whereas for our results in Section~\ref{sec:minimax_optimal_laplacian_smoothing} we instead assume $\mc{X}$ has a (Lipschitz smooth) boundary. Fortunately, in this setting the high-level strategy shared by \citet{burago2014,trillos2019,calder2019} can still be used---indeed we follow it closely, as we summarize in Section~\ref{subsec:neighbrhood_eigenvalue_pf}. However, many calculations need to be redone, in order to account for points $x$ which are on or sufficiently close to the boundary of $\mc{X}$. For completeness and ease of reading, we provide a self-contained proof of Theorem~\ref{thm:neighborhood_eigenvalue}, but we comment where appropriate on connections between the technical results we use in this proof, and those derived in \citet{burago2014,trillos2019,calder2019}.

On the other hand, we should also point out that unlike the results of~\citet{burago2014,trillos2019,calder2019}, Theorem~\ref{thm:neighborhood_eigenvalue} does not imply that $\lambda_{k}(G_{n,r})$ is a consistent estimate of $\lambda_k(\Delta_P)$, i.e. it does not imply that $|(nr^{d + 2})^{-1}\lambda_{k}(G_{n,r}) - \lambda_k(\Delta_P)| \to 0$ as $n \to \infty, r \to 0$. The key difficulty in proving consistency when $\mc{X}$ has a boundary can be summarized as follows: while at points $x \in \mc{X}$ satisfying $B(x,r) \subseteq \mc{X}$, the graph Laplacian $L$ is a reasonable approximation of the operator $\Delta_P$, at points $x$ near the boundary $L$ is known to approximate a different operator altogether \citep{belkin2012}. This is reminiscent of the boundary effects present in the analysis of kernel smoothing. We believe a more subtle analysis might imply convergence of eigenvalues in this setting. However, the conclusion of Theorem~\ref{thm:neighborhood_eigenvalue}---that $\lambda_k(G_{n,r})/(nr^{d + 2}\lambda_k(\Delta_P))$ is bounded above and below by constants that do not depend on $k$---suffices for our purposes.  

The bulk of the remainder of this section is devoted to the proof of Theorem~\ref{thm:neighborhood_eigenvalue}. First, however, we show that under our regularity conditions on $p$ and $\Xset$, Lemma~\ref{lem:neighborhood_eigenvalue} is a simple consequence of Theorem~\ref{thm:neighborhood_eigenvalue}. The link between the two is Weyl's Law.
\begin{proposition}[Weyl's Law]
	\label{prop:weyl}
	Suppose the density $p$ and the domain $\Xset$ satisfy~\ref{asmp:domain} and~\ref{asmp:density}. Then there exist constants $a_2$ and $A_2$ such that
	\begin{equation}
	\label{eqn:weyls_law}
	a_2k^{2/d} \leq \lambda_k(\Delta_P) \leq A_2k^{2/d}~~\textrm{for all $k \in \mathbb{N}, k > 1$}.
	\end{equation}
\end{proposition}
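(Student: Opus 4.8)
\textbf{Proof proposal for Proposition~\ref{prop:weyl} (Weyl's Law).}

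The plan is to derive the two-sided bound on $\lambda_k(\Delta_P)$ by comparing the weighted operator $\Delta_P$ to the standard (unweighted) Neumann Laplacian on $\Xset$, for which the classical Weyl asymptotics are already known, and then invoking the min-max (Courant--Fischer) characterization of eigenvalues together with the density bounds in~\ref{asmp:density}. Recall that $\Delta_P f = -\tfrac{1}{p}\dive(p^2 \nabla f)$ is self-adjoint with respect to the inner product $\langle f, g\rangle_{p} = \int_\Xset f g \, p \dx$, and its Dirichlet form is $\mathcal{E}_P(f,f) = \langle f, \Delta_P f\rangle_p = \int_\Xset \|\nabla f(x)\|_2^2 \, p^2(x) \dx$. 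By Courant--Fischer,
\begin{equation*}
\lambda_k(\Delta_P) = \min_{\substack{V \subseteq H^1(\Xset)\\ \dim V = k}} \; \max_{f \in V \setminus \{0\}} \frac{\int_\Xset \|\nabla f\|_2^2 \, p^2 \dx}{\int_\Xset f^2 \, p \dx}.
\end{equation*}

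The key step is then a routine sandwiching. Using $p_{\min} \le p \le p_{\max}$, for every nonzero $f \in H^1(\Xset)$,
\begin{equation*}
\frac{p_{\min}^2}{p_{\max}} \cdot \frac{\int_\Xset \|\nabla f\|_2^2 \dx}{\int_\Xset f^2 \dx} \; \le \; \frac{\int_\Xset \|\nabla f\|_2^2 \, p^2 \dx}{\int_\Xset f^2 \, p \dx} \; \le \; \frac{p_{\max}^2}{p_{\min}} \cdot \frac{\int_\Xset \|\nabla f\|_2^2 \dx}{\int_\Xset f^2 \dx}.
\end{equation*}
Feeding these pointwise ratio bounds into the min-max formula shows that $\lambda_k(\Delta_P)$ is trapped between $(p_{\min}^2/p_{\max})\,\mu_k$ and $(p_{\max}^2/p_{\min})\,\mu_k$, where $\mu_k$ is the $k$-th Neumann eigenvalue of the ordinary Laplacian $-\Delta$ on $\Xset$. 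Since $\Xset$ is a bounded open connected set with Lipschitz boundary (assumption~\ref{asmp:domain}), the classical Weyl law for the Neumann Laplacian on Lipschitz domains (see, e.g., \citet{leoni2017} or standard references on spectral geometry) gives $\mu_k \asymp (k/\vol(\Xset))^{2/d}$, i.e. there are constants $\underline{a}, \overline{a} > 0$ depending only on $d$ and $\vol(\Xset)$ with $\underline{a}\, k^{2/d} \le \mu_k \le \overline{a}\, k^{2/d}$ for all $k \ge 1$. Combining, we obtain \eqref{eqn:weyls_law} with $a_2 = \underline{a}\, p_{\min}^2/p_{\max}$ and $A_2 = \overline{a}\, p_{\max}^2/p_{\min}$; the restriction $k > 1$ simply excludes the zero eigenvalue $\lambda_1(\Delta_P) = 0$ (with constant eigenfunction), which would otherwise violate the lower bound.

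The main obstacle — really the only nontrivial ingredient — is justifying the Weyl asymptotics $\mu_k \asymp k^{2/d}$ for the Neumann Laplacian on a general Lipschitz domain, rather than a smooth one. On smooth bounded domains this is textbook; on Lipschitz domains it still holds, but requires knowing that $H^1(\Xset) \hookrightarrow L^2(\Xset)$ is compact (Rellich--Kondrachov, which holds for Lipschitz domains) so that the spectrum is discrete, plus a Dirichlet--Neumann bracketing argument that sandwiches $\mu_k$ between eigenvalue counts on a grid of cubes covering and covered by $\Xset$. Alternatively, one can cite \citet{garciatrillos18} (already referenced in the footnote) for the analogous statement about $\Delta_P$ directly, in which case the comparison-with-$-\Delta$ step can be bypassed entirely and the proof reduces to quoting their Weyl-type estimate. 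Either route is standard; I would present the comparison argument since it isolates the only place where assumptions~\ref{asmp:domain} and~\ref{asmp:density} enter.
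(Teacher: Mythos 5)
Your argument is correct. Note, however, that the paper does not actually prove Proposition~\ref{prop:weyl}: it disposes of it in one line by citing Lemma 28 of \citet{dunlop2020} for the claim that \ref{asmp:domain} and \ref{asmp:density} imply the two-sided Weyl bound. Your comparison argument---sandwiching the Rayleigh quotient of $\Delta_P$ between $(p_{\min}^2/p_{\max})$ and $(p_{\max}^2/p_{\min})$ times the Rayleigh quotient of the unweighted Neumann Laplacian and pushing this through the Courant--Fischer min-max formula---is the standard route to such a statement and is, in substance, what the cited lemma does. What your write-up buys over the bare citation is that it isolates exactly where each hypothesis enters (\ref{asmp:density} only through the sandwich constants, \ref{asmp:domain} only through the two-sided Neumann eigenvalue bounds $\mu_k \asymp k^{2/d}$ on a bounded connected Lipschitz domain) and yields explicit constants $a_2 = \underline{a}\,p_{\min}^2/p_{\max}$, $A_2 = \overline{a}\,p_{\max}^2/p_{\min}$. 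Two minor points of hygiene: the lower bound $\underline{a}\,k^{2/d} \leq \mu_k$ should be asserted only for $k \geq 2$, since $\mu_1 = 0$ on a connected domain (you acknowledge this at the end, but state it ``for all $k \geq 1$'' earlier); and the remaining ingredient---the two-sided Neumann eigenvalue bounds on a Lipschitz domain, which rest on the Sobolev extension property, Rellich--Kondrachov compactness, and Dirichlet--Neumann bracketing---is itself either a citation or a nontrivial exercise, so one external input is unavoidable on either route.
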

See Lemma 28 of~\citet{dunlop2020} for a proof that~\ref{asmp:domain} and~\ref{asmp:density} imply Weyl's Law.

\paragraph{Proof of Lemma~\ref{lem:neighborhood_eigenvalue}.}
Put
\begin{equation*}
\ell_{\star} = \floor{\biggl(\frac{\bigl(1/(2A) - (\theta + \wt{\delta})\bigr)}{rA_2^{1/2}}\biggr)^d}.
\end{equation*}
Let us verify that $\lambda_{\ell_{\star}}(\Delta_P)$ satisfies the condition~\eqref{eqn:neighborhood_eigenvalue_1} of Theorem~\ref{thm:neighborhood_eigenvalue}. Setting $c_0 := 1/(2^{1/d}4A_2^{1/2})$, the assumed upper bound on the radius $r \leq c_0$ guarantees that $\ell_{\star} \geq 2$. Therefore, by Proposition~\ref{prop:weyl} we have that
\begin{align*}
\sqrt{\lambda_{\ell_{\star}}(\Delta_P)} \leq A_2^{1/2}\ell_{\star}^{1/d} \leq  \frac{1}{r}\biggl(\frac{1}{2A} - (\theta + \wt{\delta})\biggr).
\end{align*}
Rearranging the above inequality shows that condition \eqref{eqn:neighborhood_eigenvalue_1} is satisfied. 

It is therefore the case that the inequalities in \eqref{eqn:eigenvalue_bound} hold with probability at least $1 - A_0n\exp(-a_0n\theta^2\wt{\delta}^{d})$. Together, \eqref{eqn:eigenvalue_bound} and \eqref{eqn:weyls_law} imply the following bounds on the graph Laplacian eigenvalues:
\begin{equation*}
\frac{a}{A_2} nr^{d + 2} k^{2/d} \leq \lambda_k(G_{n,r}) \leq \frac{A}{a_2} nr^{d + 2} k^{2/d}~~\textrm{for all $2 \leq k \leq \ell_{\star}$}.
\end{equation*}
It remains to bound $\lambda_k(G_{n,r})$ for those indices $k$ which are greater than $\ell_{\star}$. On the one hand, since the eigenvalues are sorted in ascending order, we can use the lower bound on $\lambda_{\ell_{\star}}(G_{n,r})$ that we have just derived:
\begin{equation*}
\lambda_k(G_{n,r}) \geq \lambda_{\ell_{\star}}(G_{n,r}) \geq \frac{a_2}{A}nr^{d + 2}\ell_{\star}^{2/d} \geq \frac{a_2}{64A^3 A_2} nr^{d}.
\end{equation*}
On the other hand, for any graph $G$ the maximum eigenvalue of the Laplacian is upper bounded by twice the maximum degree \citep{chung97}. Writing $D_{\max}(G_{n,r})$ for the maximum degree of $G_{n,r}$, it is thus a consequence of Lemma~\ref{lem:max_degree} that
\begin{equation*}
\lambda_k(G_{n,r}) \leq 2D_{\max}(G_{n,r}) \leq 4p_{\max}nr^d,
\end{equation*}
with probability at least $1 - 2n\exp\Bigl(-nr^dp_{\min}/(3K(0)^2)\Bigr)$. In sum, we have shown that with probability at least $1 - A_0n\exp(-a_0n\theta^2\wt{\delta}^{d}) - 2n\exp\Bigl(-nr^dp_{\min}/(3K(0)^2)\Bigr)$,
\begin{equation*}
\min\biggl\{\frac{a_2}{A}nr^{d + 2}k^{2/d}, \frac{a_2}{A^3 64 A_3} nr^d\biggr\} \leq \lambda_k(G_{n,r}) \leq \min\biggl\{\frac{A_2}{a}nr^{2 + d}k^{2/d}, 4p_{\max}nr^d\biggr\}~~\textrm{for all $2 \leq k \leq n$.}
\end{equation*}
Lemma~\ref{lem:neighborhood_eigenvalue} then follows upon setting
\begin{align*}
C_1 & := \max\{2A_0,4\},~~ && c_1 := \min\Biggl\{\frac{p_{\min}}{3K(0)^2}, \frac{\theta^2\wt{\delta}}{r}\Biggr\} \\
C_3 & := \max\biggl\{\frac{A_2}{a}, 4p_{\max}\biggr\},~~ && c_3 := \min\biggl\{\frac{a_2}{A}, \frac{a_2}{A^3 64 A_3} \biggr\}.
\end{align*}
in the statement of that Lemma.

\subsection{Proof of Theorem~\ref{thm:neighborhood_eigenvalue}}
\label{subsec:neighbrhood_eigenvalue_pf}

In this section we prove Theorem~\ref{thm:neighborhood_eigenvalue}, following closely the approach of \citet{burago2014,trillos2019,calder2019}. As in these works, we relate $\lambda_k(\Delta_P)$ and $\lambda_k(G_{n,r})$ by means of the Dirichlet energies
\begin{equation*}
b_r(u) := \frac{1}{n^2 r^{d+ 2}}u^{\top} L u 
\end{equation*}
and
\begin{equation*}
D_2(f) :=
\begin{cases*}
\int_{\Xset} \|\nabla f(x)\|^2 p^2(x) \,dx~~ &\textrm{if $f \in H^1(\Xset)$} \\
\infty~~ & \textrm{otherwise,}
\end{cases*}
\end{equation*}
Let us pause briefly to motivate the relevance of $b_r(u)$ and $D_2(f)$. In the following discussion, recall that for a function $u: \{X_1,\ldots,X_n\} \to \Reals$, the empirical norm is defined as $\|u\|_n^2 := \frac{1}{n} \sum_{i = 1}^{n} (u(X_i))^2$, and the class $\Leb^2(P_n)$ consists of those $u \in \Reals^n$ for which $\|u\|_{n} < \infty$. Similarly, for a function $f: \Xset \to \Reals$, the $L^2(P)$ norm of $f$ is
\begin{equation*}
\|f\|_{P}^2 := \int_{\Xset} \bigl|f(x)\bigr|^2 p(x) \,dx,
\end{equation*} 
and the class $\Leb^2(P)$ consists of those $f$ for which $\|f\|_P < \infty$. Now, suppose one could show the following two results: 
\begin{enumerate}[(1)]
	\item an upper bound of $b_r(u)$ by $D_2\bigl(\mc{I}(u)\bigr)$ for an appropriate choice of interpolating map $\mc{I}: \Leb^2(P_n) \to \Leb^2(\mc{X})$, and vice versa an upper bound of $D_2(f)$ by $b_r(\mc{P}(f))$ for an appropriate choice of discretization map $\mc{P}: \Leb^2(\mc{X}) \to \Leb^2(P_n)$,
	\item that $\mc{I}$ and $\mc{P}$ were near-isometries, meaning $\|\mc{I}(u)\|_{P} \approx \|u\|_{n}$ and $\|\mc{P}(f)\|_{P} \approx \|f\|_{n}$.
\end{enumerate}
Then, by using the variational characterization of eigenvalues $\lambda_k(\Delta_P)$ and $\lambda_k(G_{n,r})$---i.e. the Courant-Fischer Theorem---one could obtain estimates on the error $\bigl|nr^{d + 2}\lambda_k(\Delta_P) - \lambda_k(G_{n,r})\bigr|$.

We will momentarily define particular maps $\wt{\mc{I}}$ and $\wt{\mc{P}}$, and establish that they satisfy both (1) and (2). In order to define these maps, we must first introduce a particular probability measure $\wt{P}_n$ that, with high probability, is close in transportation distance to both $P_n$ and $P$. This estimate on the transportation distance---which we now give---will be the workhorse that allows us to relate $b_r$ to $D_2$, and $\|\cdot\|_n$ to $\|\cdot\|_P$.

\paragraph{Transportation distance between $P_n$ and $P$.}
For a measure $\mu$ defined on $\Xset$ and map $T: \Xset \to \Xset$, let $T_{\sharp}\mu$ denote the \emph{push-forward} of $\mu$ by $T$, i.e the measure for which
\begin{equation*}
\bigl(T_{\sharp}\mu\bigr)(U) := \mu\bigl(T^{-1}(U)\bigr)
\end{equation*}
for any Borel subset $U \subseteq \Xset$. Suppose $T_{\sharp}\mu = P_n$; then the map $T$ is referred to as transportation map between $\mu$ and $P_n$. The  $\infty$-transportation distance between $\mu$ and $P_n$ is then
\begin{equation}
\label{eqn:optimal_transport}
d_{\infty}(\mu,P_n) := \inf_{T: T_{\sharp} \mu = P_n} \|T - \mathrm{Id}\|_{L^{\infty}(\mu)}
\end{equation}
where $\mathrm{Id}(x) = x$ is the identity mapping.

\citet{calder2019} take $\Xset$ to be a smooth submanifold of $\Rd$ without boundary, i.e. they assume $\Xset$ satisfies~\ref{asmp:domain_manifold}. In this setting, they exhibit an absolutely continuous measure $\wt{P}_n$ with density $\wt{p}_n$ that with high probability is close to $P_n$ in transportation distance, and for which $\|p - \wt{p}_n\|_{\Leb^\infty}$ is also small. In Proposition~\ref{prop:optimal_transport}, we adapt this result to the setting of full-dimensional manifolds with boundary.  
\begin{proposition}
	\label{prop:optimal_transport}
	Suppose $\Xset$ satisfies~\ref{asmp:domain} and $p$ satisfies~\ref{asmp:density}. Then with probability at least $1 - A_0 n \exp\bigl\{-a_0 n\theta^2\wt{\delta}^d\bigr\}$, the following statement holds: there exists a probability measure $\wt{P}_n$ with density $\wt{p}_n$ such that:
	\begin{equation}
	\label{eqn:optimal_transport_1}
	d_{\infty}(\wt{P}_n, P_n) \leq A_0 \wt{\delta}
	\end{equation}
	and
	\begin{equation}
	\label{eqn:optimal_transport_2}
	\|\wt{p}_n - p\|_{\infty} \leq A_0\bigl(\wt{\delta} + \theta\bigr).
	\end{equation}
\end{proposition}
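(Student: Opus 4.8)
\textbf{Proof strategy for Proposition~\ref{prop:optimal_transport}.} The plan is to follow the construction of \citet{calder2019} — who prove the analogous statement when $\Xset$ is a closed manifold — and adapt it to accommodate the Lipschitz boundary. The high-level idea is to partition $\Xset$ into finitely many ``fat'' cells of diameter on the order of $\wt{\delta}$, and then take $\wt{P}_n$ to be the measure whose density is \emph{piecewise constant} on this partition and whose mass on each cell agrees exactly with the empirical mass of that cell. Matching masses cell by cell makes $\wt{P}_n$ close to $P_n$ in $\infty$-transport distance, since the mass inside each cell can be transported without leaving it; and the piecewise-constant density is close to $p$ because $p$ is Lipschitz and the cells are small.

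First I would build the partition. Because $\Xset$ is open, connected and has a Lipschitz boundary (assumption~\ref{asmp:domain}), it satisfies a uniform interior cone condition, so one can cover $\Xset$ by cells $T_1,\dots,T_{N_\delta}$ with pairwise disjoint interiors such that each $T_j$ has $\diam(T_j) \leq C\wt{\delta}$ and contains a Euclidean ball of radius $c\wt{\delta}$ (cubes of side $\asymp\wt{\delta}$ meeting $\partial\Xset$ in only a thin sliver are merged into a neighbouring interior cube). Then $\vol(T_j) \asymp \wt{\delta}^d$, the number of cells obeys $N_\delta \lesssim \wt{\delta}^{-d} \leq n$ (using $\wt{\delta} \geq n^{-1/d}$), and by~\ref{asmp:density} one has $p_{\min}c'\wt{\delta}^d \leq P(T_j) \leq p_{\max}C'\wt{\delta}^d$.

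Next I would run the concentration step. Let $N_j := \#\{i : X_i \in T_j\}$, which is $\mathrm{Binomial}(n, P(T_j))$. Since $\theta$ is a fixed constant, a Bernstein/Chernoff bound gives $\Pbb\bigl(|N_j - nP(T_j)| > \theta\, nP(T_j)\bigr) \leq 2\exp(-a\theta^2 nP(T_j)) \leq 2\exp(-a'\theta^2 n\wt{\delta}^d)$, and a union bound over the $N_\delta \lesssim n$ cells shows that with probability at least $1 - A_0 n\exp(-a_0 n\theta^2\wt{\delta}^d)$ we have $(1-\theta)nP(T_j) \leq N_j \leq (1+\theta)nP(T_j)$ for all $j$; in particular $N_j \gtrsim n\wt{\delta}^d \geq 1$, so no cell is empty. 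On this event, set $\wt{p}_n := \sum_j \frac{N_j}{n\,\vol(T_j)}\,\mathbf{1}_{T_j}$ and let $\wt{P}_n$ be the measure with density $\wt{p}_n$; then $\wt{P}_n(T_j) = N_j/n = P_n(T_j)$ and $\wt{P}_n(\Xset) = 1$. For~\eqref{eqn:optimal_transport_2}, fix $x \in T_j$ and let $\bar p_j := P(T_j)/\vol(T_j)$; then $|\wt{p}_n(x) - p(x)| \leq |\tfrac{N_j}{n\vol(T_j)} - \bar p_j| + |\bar p_j - p(x)| = \bar p_j|\tfrac{N_j}{nP(T_j)} - 1| + |\bar p_j - p(x)| \leq p_{\max}\theta + L_p\diam(T_j) \leq A_0(\theta + \wt{\delta})$ after enlarging $A_0$. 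For~\eqref{eqn:optimal_transport_1}, since $\wt{P}_n$ and $P_n$ put equal mass on each $T_j$ and each $T_j$ contains at least one sample point, there is a map $T$ with $T_\sharp\wt{P}_n = P_n$ that keeps every point of $T_j$ inside $T_j$, so $\|T - \mathrm{Id}\|_{L^\infty(\wt{P}_n)} \leq \max_j \diam(T_j) \leq C\wt{\delta}$, giving $d_\infty(\wt{P}_n, P_n) \leq A_0\wt{\delta}$.

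The main obstacle is the construction of the partition near $\partial\Xset$: one must guarantee every cell is fat (contains a ball of radius $\asymp\wt{\delta}$) so that $P(T_j) \asymp \wt{\delta}^d$, which is what makes the Bernstein concentration — and in particular the non-emptiness $N_j \geq 1$ — survive the union bound over $\lesssim n$ cells; this is exactly the place the Lipschitz-boundary hypothesis is used and where the argument departs from the boundaryless treatments of \citet{burago2014,trillos2019,calder2019}. A secondary point is that here $\theta$ is only a fixed small constant, not a vanishing quantity, so $\wt{p}_n$ is merely comparable to $p$ (bounded between constant multiples of it) rather than a consistent approximation — which is all the downstream spectral comparison in Theorem~\ref{thm:neighborhood_eigenvalue} needs, and is consistent with the fact that near the boundary the graph Laplacian does not approximate $\Delta_P$.
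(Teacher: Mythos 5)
Your proposal matches the paper's proof in all essentials: a piecewise-constant density whose mass on each cell of a $\wt{\delta}$-scale partition equals the empirical mass of that cell, multiplicative Hoeffding plus a union bound over the $O(\wt{\delta}^{-d}) \leq O(n)$ cells, a within-cell transport map for \eqref{eqn:optimal_transport_1}, and the Lipschitz property of $p$ combined with the relative-error concentration for \eqref{eqn:optimal_transport_2}. The only divergence is in how the partition is adapted to the Lipschitz boundary---you sketch a direct geometric construction via the uniform interior cone condition with merging of thin boundary cells (leaving those details unverified), whereas the paper pulls back a cubic grid on $[0,1]^d$ through bi-Lipschitz homeomorphisms supplied by Proposition 3.2 of \citet{trillos2015}, which yields the diameter and volume bounds immediately; also, the paper does not need your non-emptiness claim, since cells with no sample points carry zero $\wt{P}_n$-mass and can simply be ignored in the essential supremum defining $d_\infty$.
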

For the rest of this section, we let $\wt{P}_n$ be a probability measure with density $\wt{p}_n$, that satisfies the conclusions of Proposition~\ref{prop:optimal_transport}. Additionally we denote by $\wt{T}_n$ an \emph{optimal transport map} between $\wt{P}_n$ and $P_n$, meaning a transportation map which achieves the infimum in \eqref{eqn:optimal_transport}. Finally, we write $U_1,\ldots,U_n$ for the preimages of $X_1,\ldots,X_n$ under $\wt{T}_n$, meaning $U_i = \wt{T}_n^{-1}(X_i)$. 

\paragraph{Interpolation and discretization maps.}
The discretization map  $\wt{\mathcal{P}}: \Leb^2(\Xset) \to \Leb^2(P_n)$ is given by averaging over the cells $U_1,\ldots,U_n$, 
\begin{equation*}
\bigl(\wt{\mathcal{P}}f\bigr)(X_i) := n \cdot \int_{U_i} f(x) \wt{p}_n(x) \,dx.
\end{equation*}
On the other hand, the interpolation map $\wt{\mc{I}}: \Leb^2(P_n) \to \Leb^2(\Xset)$ is defined as $\wt{\mc{I}}u := \Lambda_{r - 2A_0\wt{\delta}}(\wt{\mc{P}}^{\star}u)$. Here, $\wt{\mc{P}}^{\star} = u \circ \wt{T}$ is the adjoint of $\wt{\mc{P}}$, i.e.
\begin{equation*}
\bigl(\wt{\mc{P}}^{\star}u\bigr)(x) = \sum_{j = 1}^{n} u(x_i) \1\{x \in U_i\} 
\end{equation*} 
and $\Lambda_{r - 2A_0\wt{\delta}}$ is a kernel smoothing operator, defined with respect to a carefully chosen kernel $\psi$. To be precise, for any $h > 0$,
\begin{equation*}
\Lambda_h(f) := \frac{1}{h^d\tau_h(x)}\int_{\Xset} \eta_h(x',x) f(x') \,dx',~~ \eta_h(x',x) := \psi\biggl(\frac{\|x' - x\|}{r}\biggr)
\end{equation*}
where $\psi(t) := (1/\sigma_K)\int_{t}^{\infty} s K(s) \,ds$ and $\tau_h(x) := (1/h^d)\int_{\Xset} \eta_h(x',x) \,dx'$ is a normalizing constant.

Propositions~\ref{prop:dirichlet_energies} and~\ref{prop:isometry} establish our claims regarding $\wt{\mc{P}}$ and $\wt{\mc{I}}$: first, that they approximately preserve the Dirichlet energies $b_r$ and $D_2$, and second that they are near-isometries for functions $u \in \Leb^2(P_n)$ (or $f \in \Leb^2(P)$) of small Dirichlet energy $b_r(u)$ (or $D_2(f)$).

\begin{proposition}[\textbf{cf. Proposition 4.1 of \citet{calder2019}}]
	\label{prop:dirichlet_energies}
	With probability at least $1 - A_0n\exp(-a_0n\theta^2\wt{\delta}^{d})$, we have the following.
	\begin{enumerate}[(1)]
		\item For every $u \in \Leb^2(P_n)$,
		\begin{equation}
		\label{eqn:dirichlet_energies_1}
		\sigma_{K} D_2(\wt{\mc{I}}u) \leq A_8 \Bigl(1 + A_1(\theta + \wt{\delta})\Bigr) \cdot \biggl(1 + A_3\frac{\wt{\delta}}{r}\biggr) b_r(u).
		\end{equation}
		\item For every $f \in \Leb^2(\Xset)$,
		\begin{equation}
		\label{eqn:dirichlet_energies_2}
		b_r(\wt{\mc{P}}f) \leq \Bigl(1 + A_1(\theta + \wt{\delta})\Bigr) \cdot \Bigl(1 + A_9\frac{\wt{\delta}}{r}\Bigr) \cdot \Bigl(\frac{C_5 p_{\max}^2}{p_{\min}^2}\Bigr) \cdot \sigma_{K} D_2(f).
		\end{equation}
	\end{enumerate}
\end{proposition}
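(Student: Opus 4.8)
I would follow the scheme of \citet{calder2019}, transferring every estimate through the auxiliary measure $\wt{P}_n$ of Proposition~\ref{prop:optimal_transport}, its optimal transport map $\wt{T}_n$ onto $P_n$, and the induced measurable partition $\{U_i\}_{i=1}^n$ of $\Xset$ with $\wt{P}_n(U_i) = 1/n$ and $\diam(U_i) \le 2A_0\wt{\delta}$. Both halves of the proposition ultimately compare the graph Dirichlet form $u^\top L u = \tfrac12\sum_{i,j} K(\|X_i - X_j\|/r)(u(X_i)-u(X_j))^2$ with the nonlocal continuum energy $\tfrac{1}{r^{d+2}}\int_\Xset\int_\Xset K(\|x-x'\|/r)(f(x)-f(x'))^2\,dx\,dx'$, and the latter is in turn $\le \sigma_K|f|_{H^1(\Xset)}^2$ up to the extension-operator constant $C_5$ --- and hence $\le \sigma_K D_2(f)/p_{\min}^2$ --- by exactly the nonlocal-to-local computation already carried out in Section~\ref{sec:graph_sobolev_seminorm} (extend $f$ to a slightly larger domain $\Omega$, apply Jensen's inequality, evaluate the second moment of $K(\|z\|)$ using its rotational symmetry, and use Poincar\'e to move between norm and seminorm). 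The only device needed to switch kernel weights between cell representatives $X_i$ and arbitrary cell points is the crude \emph{kernel domination} from \ref{asmp:kernel}: since $K$ is nonincreasing with $K(0)<\infty$ and $K(1)>0$, whenever $W_{ij}>0$ (so $\|X_i-X_j\|\le r$), $x\in U_i$, $x'\in U_j$ we have $\|x-x'\|\le r+2A_0\wt{\delta}$ and therefore $K(\|X_i-X_j\|/r) \le \tfrac{K(0)}{K(1)}K\!\left(\|x-x'\|/(r+2A_0\wt{\delta})\right)$; and symmetrically $K(\|x-x'\|/(r-2A_0\wt{\delta})) \le \tfrac{K(0)}{K(1)}K(\|X_i-X_j\|/r)$ once $\|x-x'\|\le r-2A_0\wt{\delta}$. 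These constant factors are absorbed into $A_8$ (resp.\ $C_5$) in the statement; the genuinely vanishing corrections $1+A_1(\theta+\wt{\delta})$ and $1+A_3\wt{\delta}/r$ come from $\|\wt{p}_n-p\|_\infty\le A_0(\theta+\wt{\delta})$ and from the bandwidth ratios $(r\pm 2A_0\wt{\delta})/r = 1+O(\wt{\delta}/r)$.

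For part (2): $\wt{\mc{P}}f(X_i)$ is the $\wt{P}_n$-average of $f$ over $U_i$, so Jensen's inequality gives $(\wt{\mc{P}}f(X_i)-\wt{\mc{P}}f(X_j))^2 \le n^2\int_{U_i}\int_{U_j}(f(x)-f(x'))^2\,d\wt{P}_n(x)\,d\wt{P}_n(x')$. Summing against $K(\|X_i-X_j\|/r)$, applying kernel domination to pull the weight inside the integral at bandwidth $\bar r := r+2A_0\wt{\delta}$, and bounding $\wt{p}_n\le p_{\max}+A_0(\theta+\wt{\delta})$, one gets $b_r(\wt{\mc{P}}f) \le (1+A_1(\theta+\wt{\delta}))(\bar r/r)^{d+2}p_{\max}^2 \cdot \tfrac{1}{\bar r^{d+2}}\int_\Xset\int_\Xset K(\|x-x'\|/\bar r)(f(x)-f(x'))^2\,dx\,dx'$; feeding in the Section~\ref{sec:graph_sobolev_seminorm} estimate and $|f|_{H^1(\Xset)}^2\le D_2(f)/p_{\min}^2$, then folding $(\bar r/r)^{d+2}=1+O(\wt{\delta}/r)$ into the $1+A_9\wt{\delta}/r$ factor, yields \eqref{eqn:dirichlet_energies_2}.

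For part (1): write $\wt{\mc{I}}u = \Lambda_h(g)$ with $h := r-2A_0\wt{\delta}>0$ and $g := \wt{\mc{P}}^\star u = \sum_j u(X_j)\mathbf{1}_{U_j}$; after smoothing this is Lipschitz, hence in $H^1(\Xset)$. The point of the kernel $\psi(t)=\sigma_K^{-1}\int_t^\infty sK(s)\,ds$ is the two identities $-\psi'(t)=\sigma_K^{-1}tK(t)$ and $\Lambda_h(\mathrm{const})=\mathrm{const}$. Differentiating $\Lambda_h(g)(x)-g(x) = (h^d\tau_h(x))^{-1}\int_\Xset\psi(\|x-x'\|/h)(g(x')-g(x))\,dx'$ in $x$ (a.e., since $g$ is piecewise constant) produces a main term $-(\sigma_K h^{d+2}\tau_h(x))^{-1}\int_\Xset K(\|x-x'\|/h)(x-x')(g(x')-g(x))\,dx'$ plus a term proportional to $(\nabla_x\tau_h(x)/\tau_h(x))\int_\Xset\psi(\|x-x'\|/h)(g(x')-g(x))\,dx'$; since $\tau_h$ is bounded below by a positive constant (Lipschitz boundary) and varies on scale $h$, both are $\lesssim h^{-(d+2)}\int_\Xset K(\|x-x'\|/h)\|x-x'\|\,|g(x')-g(x)|\,dx'$. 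Cauchy--Schwarz together with $\int K(\|z\|/h)\|z\|^2\,dz \asymp h^{d+2}$ gives $\|\nabla\Lambda_h(g)(x)\|^2 \lesssim h^{-(d+2)}\int_\Xset K(\|x-x'\|/h)(g(x')-g(x))^2\,dx'$; integrating $p^2\,dx$ over $\Xset$, using $g(x)=u(X_i)$ for $x\in U_i$, applying kernel domination to replace $K(\|x-x'\|/h)$ by $(K(0)/K(1))K(\|X_i-X_j\|/r)$, and $\vol(U_i)\le 2/(np_{\min})$ (valid because $\wt{p}_n\ge p_{\min}/2$), collapses the double integral to $\lesssim (np_{\min})^{-2}u^\top L u$, which is $\lesssim p_{\min}^{-2}r^{d+2}b_r(u)$. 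Dividing by $\sigma_K$ and bookkeeping the $(h/r)^{-(d+2)}$, $\wt{p}_n\approx p$ and $\tau_h$ error factors produces \eqref{eqn:dirichlet_energies_1}. The probability bound in both parts is inherited verbatim from Proposition~\ref{prop:optimal_transport}.

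The main obstacle is the boundary of $\Xset$. In the no-boundary manifold setting of \citet{calder2019}, $\tau_h\equiv\text{const}$ and $\int K(\|x-x'\|/h)(x-x')\,dx'=0$ exactly, so $\nabla\Lambda_h$ has no contribution from constants or from $\nabla\tau_h$, and the nonlocal energy of $f$ needs no extension. With a Lipschitz boundary these identities fail by $O(h)$ for $x$ within distance $h$ of $\partial\Xset$, so one must re-prove (i) a uniform lower bound $\tau_h\ge c>0$ and an $O(1/h)$ bound on $\nabla\tau_h$, using that a Lipschitz boundary guarantees a fixed fraction of $B(x,h)$ lies in $\Xset$; (ii) that $\psi$ is a bounded, Lipschitz, compactly supported kernel (so that $\psi \le (\psi(0)/K(1))K$ on $[0,1]$); and (iii) that the nonlocal energy of $f$ near $\partial\Xset$ is still controlled by $\|f\|_{H^1(\Xset)}$ via the extension operator --- precisely the ``many calculations need to be redone'' flagged in the discussion before Theorem~\ref{thm:neighborhood_eigenvalue}. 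The rest is routine tracking of the $\theta$ and $\wt{\delta}/r$ error terms; note that, unlike Theorems~\ref{thm:laplacian_smoothing_estimation1}--\ref{thm:laplacian_smoothing_testing}, this proposition needs no upper bound on $r$ beyond $r\le c_0$ and places no restriction on $d$.
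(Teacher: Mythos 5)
Your proposal is correct and follows essentially the same route as the paper's proof: the same transport-based maps $\wt{\mc{P}}$ and $\wt{\mc{I}}$, the same Jensen-plus-bandwidth-shift argument for part (2) (cf.\ Lemma~\ref{lem:first_order_graph_sobolev_seminorm_discretized}), the same differentiation of $\Lambda_h$ into a main term and a $\nabla\tau_h$ term with the boundary-adapted bounds $\tau_h \geq a_3$ and $\|\nabla\tau_h\| \lesssim 1/h$ for part (1) (cf.\ Lemmas~\ref{lem:first_order_graph_sobolev_seminorm_expected_lb}, \ref{lem:tau_bound}, and \ref{lem:first_order_graph_sobolev_seminorm_discretized_lb}), and the same reduction to the nonlocal-to-local estimate of Section~\ref{sec:graph_sobolev_seminorm}. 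The only deviation is your crude kernel domination $K(\|X_i-X_j\|/r) \leq (K(0)/K(1))\,K\bigl(\|x-x'\|/(r+2A_0\wt{\delta})\bigr)$, where the paper instead uses the Lipschitz continuity of $K$ to obtain a $1 + O\bigl(L_K\wt{\delta}/(K(1)r)\bigr)$ multiplicative correction; since $\wt{\delta}/r$ is of constant order, this costs only a constant factor and does not affect any downstream use of the proposition.
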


\begin{proposition}[\textbf{cf. Proposition 4.2 of \citet{calder2019}}]
	\label{prop:isometry}
	With probability at least $1 - A_0n\exp(-a_0n\theta^2\wt{\delta}^{d})$, we have the following.
	\begin{enumerate}[(1)]
		\item For every $f \in \Leb^2(\Xset)$,
		\begin{equation}
		\label{eqn:isometry_1}
		\Bigl|\|f\|_{P}^2 - \|\wt{\mc{P}}f\|_{n}^2\Bigr| \leq A_5 r \|f\|_{P} \sqrt{D_2(f)} + A_1\bigl(\theta + \wt{\delta}\bigr)\|f\|_{P}^2.
		\end{equation}
		\item For every $u \in \Leb^2(P_n)$,
		\begin{equation}
		\label{eqn:isometry_2}
		\Bigl|\|\wt{\mc{I}}u\|_{P}^2 - \|u\|_{n}^2\Bigr| \leq A_6 r \|u\|_{n} \sqrt{b_r(u)} + A_7\bigl(\theta + \wt{\delta}\bigr) \|u\|_{n}^2.
		\end{equation}
	\end{enumerate}
\end{proposition}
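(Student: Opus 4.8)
The plan is to follow the argument for Proposition~4.2 of \citet{calder2019}, pushing the transport estimate of Proposition~\ref{prop:optimal_transport} through computations that must now be redone near $\partial\Xset$. I work throughout on the event of Proposition~\ref{prop:optimal_transport}, so that $\wt P_n,\wt p_n,\wt T_n$ and the cells $U_1,\dots,U_n$ are available with $\wt P_n(U_i)=1/n$, $U_i\subseteq B(X_i,A_0\wt\delta)$ (since $\wt T_n\equiv X_i$ on $U_i$ and $\|\wt T_n-\mathrm{Id}\|_\infty\le A_0\wt\delta$), and $\|\wt p_n-p\|_\infty\le A_0(\theta+\wt\delta)$, hence $\wt p_n$ bounded above and below by positive constants and $\mathrm{Leb}(U_i)\lesssim 1/n$; also $\wt\delta\lesssim r$ since $r\gtrsim(\log n/n)^{1/d}$. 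Two elementary ``density-swap'' estimates will supply the $(\theta+\wt\delta)$ terms: writing $\|g\|_{\wt P_n}^2:=\int_\Xset g^2\,\wt p_n$, the bound~\eqref{eqn:optimal_transport_2} gives $\bigl|\|f\|_{\wt P_n}^2-\|f\|_P^2\bigr|\le (A_0/p_{\min})(\theta+\wt\delta)\|f\|_P^2$; and, since $\|\wt{\mc P}^\star u\|_P^2=\sum_i u(X_i)^2 P(U_i)$ while $|P(U_i)-1/n|=\bigl|\int_{U_i}(p-\wt p_n)\bigr|\lesssim(\theta+\wt\delta)/n$, we get $\bigl|\|\wt{\mc P}^\star u\|_P^2-\|u\|_n^2\bigr|\lesssim(\theta+\wt\delta)\|u\|_n^2$.

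For part (1) we may assume $f\in H^1(\Xset)$, as otherwise $D_2(f)=\infty$ and there is nothing to prove. Since $\wt{\mc P}f(X_i)=n\int_{U_i}f\,\wt p_n\,dx$ is the $\wt P_n$-conditional average of $f$ over $U_i$, Jensen's inequality gives $\|\wt{\mc P}f\|_n\le\|f\|_{\wt P_n}$ with defect $\mathrm{Err}:=\sum_i\int_{U_i}\bigl(f-\wt{\mc P}f(X_i)\bigr)^2\,d\wt P_n\ge 0$. I bound $\mathrm{Err}$ two ways. Trivially, $\mathrm{Err}\le\|f\|_{\wt P_n}^2\lesssim\|f\|_P^2$. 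On the other hand, after replacing $f$ by an extension $Ef\in H^1(\Omega)$ on an enlargement $\Omega\supseteq\bigcup_{x\in\Xset}B(x,c_0)$ with $\|Ef\|_{H^1(\Omega)}\lesssim\|f\|_{H^1(\Xset)}$ (exactly as in the proof of Lemma~\ref{lem:graph_sobolev_seminorm}, so that the segments $[x,y]$ for $x,y\in U_i$ lie in $\Omega$ even when $\Xset$ is nonconvex), the identity $f(x)-f(y)=\int_0^1\nabla(Ef)\bigl(x+t(y-x)\bigr)^\top(y-x)\,dt$ together with Jensen, $\mathrm{diam}(U_i)\le 2A_0\wt\delta$, a change of variables $\wt x=x+t(y-x)$, and the density bounds yields $\mathrm{Err}\lesssim\wt\delta^2\|\nabla(Ef)\|_{L^2(\Omega)}^2\lesssim\wt\delta^2 D_2(f)$. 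Combining the two via $\min\{a,b\}\le\sqrt{ab}$ gives $\mathrm{Err}\lesssim\wt\delta\,\|f\|_P\sqrt{D_2(f)}\le A_5\,r\,\|f\|_P\sqrt{D_2(f)}$, and adding the first density-swap estimate proves~\eqref{eqn:isometry_1}.

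For part (2), set $v:=\wt{\mc P}^\star u=\sum_i u(X_i)\1\{\cdot\in U_i\}$, so $\wt{\mc I}u=\Lambda_h v$ with $h:=r-2A_0\wt\delta$; by the second density-swap estimate it remains to compare $\|\Lambda_h v\|_P$ with $\|v\|_P$. Since $\Lambda_h$ reproduces constants, $\tfrac{1}{h^d\tau_h(x)}\int_\Xset\eta_h(x',x)\,dx'=1$, Jensen gives both $\|\Lambda_h v\|_P\lesssim\|v\|_P$ (using that $\tau_h$ is bounded above and below by positive constants, uniformly on $\Xset$) and $\|\Lambda_h v-v\|_P^2\le\tfrac{p_{\max}}{\tau_{\min}h^d}\iint_{\Xset\times\Xset}\eta_h(x',x)\bigl(v(x')-v(x)\bigr)^2\,dx'\,dx$. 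Here the choice $h=r-2A_0\wt\delta$ pays off: if $x\in U_i$, $x'\in U_j$ and $\eta_h(x',x)\ne 0$ then $\|x-x'\|\le h$, hence $\|X_i-X_j\|\le h+2A_0\wt\delta=r$, so $W_{ij}=K(\|X_i-X_j\|/r)\ge K(1)>0$; combined with $\eta_h\le\psi(0)$ and $\mathrm{Leb}(U_i)\lesssim 1/n$, the double integral is $\lesssim n^{-2}\sum_{i,j}W_{ij}\bigl(u(X_i)-u(X_j)\bigr)^2=2n^{-2}\,u^\top L u$, whence $\|\Lambda_h v-v\|_P^2\lesssim\tfrac{r^{d+2}}{h^d}\,b_r(u)\lesssim r^2\,b_r(u)$ (using $h\ge r/2$). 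Then $\bigl|\|\Lambda_h v\|_P^2-\|v\|_P^2\bigr|\le\|\Lambda_h v-v\|_P\bigl(\|\Lambda_h v\|_P+\|v\|_P\bigr)\lesssim r\sqrt{b_r(u)}\,\|u\|_n$, and combining with the density-swap estimate proves~\eqref{eqn:isometry_2}.

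The main obstacle is the boundary: since \citet{calder2019} work on a closed manifold, the bulk of the effort is re-establishing, under only the Lipschitz-boundary assumption~\ref{asmp:domain}, that $\tau_h(x)\ge\tau_{\min}>0$ uniformly over $x\in\Xset$ and $h\le c_0$ (one must verify that at every point a cone of directions of positive solid angle points into $\Xset$), and making sure the extension operator of~\citet{evans10} can be invoked in part (1) exactly as in Lemma~\ref{lem:graph_sobolev_seminorm} so that the integrated-derivative identity is legitimate on segments that may leave $\Xset$. The remaining manipulations---Jensen, the algebraic identity $|a^2-b^2|=|a-b|(a+b)$, $\min\{a,b\}\le\sqrt{ab}$, and tracking the constants---are routine.
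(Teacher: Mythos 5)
Your part (2) is essentially correct and tracks the paper's route: you compare $\|\wt{\mc{I}}u\|_P$ to $\|\wt{\mc{P}}^{\star}u\|_{P}$ via the smoothing error, and your direct bound of $\iint \eta_h(x',x)(v(x')-v(x))^2\,dx'\,dx$ by $r^{d+2}b_r(u)$ (using $h = r - 2A_0\wt{\delta}$ so that $\eta_h(x',x)\neq 0$ forces $\|X_i - X_j\|\le r$, hence $W_{ij}\ge K(1)$) is a clean, valid merge of the paper's Lemmas~\ref{lem:smoothening_error} and~\ref{lem:first_order_graph_sobolev_seminorm_discretized_lb}; you correctly flag the uniform lower bound $\tau_h \ge a_3$ as the boundary-dependent ingredient (the paper's \eqref{eqn:integral_boundary} and Lemma~\ref{lem:tau_bound}).

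Part (1), however, has a genuine gap at the step $\mathrm{Err}\lesssim \wt{\delta}^2 D_2(f)$. You are asserting a Poincar\'e inequality on each transport cell $U_i$ with constant $\diam(U_i)^2$. The cells $U_i = \wt{T}_n^{-1}(X_i)$ are arbitrary measurable sets of Lebesgue measure $\asymp 1/n$ but diameter up to $2A_0\wt{\delta}\asymp r$; since $nr^d\gtrsim \log n$, they are thin slivers, and no per-cell Poincar\'e inequality with constant $\diam^2$ is available for such sets. Concretely, if you execute the segment argument you sketch (split $t\le 1/2$ and $t>1/2$, change variables $\wt{x}=x+t(y-x)$ with Jacobian $(1-t)^{-d}\le 2^d$), you arrive at $\mathrm{Err}\lesssim \wt{\delta}^2\sum_i \int_{B(X_i,\,2A_0\wt{\delta})}\|\nabla (Ef)\|^2$, and the enclosing balls overlap with multiplicity $\asymp n\wt{\delta}^d$, so the sum is $\lesssim n\wt{\delta}^d\,\|\nabla(Ef)\|_{\Leb^2(\Omega)}^2$, not $\|\nabla(Ef)\|_{\Leb^2(\Omega)}^2$. (The cells themselves tile $\Xset$, but their convex hulls do not.) This leaves you with $\mathrm{Err}\lesssim n\wt{\delta}^{d+2}D_2(f)$, off by the diverging factor $n\wt{\delta}^d$, and the $\min\{a,b\}\le\sqrt{ab}$ interpolation cannot absorb it. The paper's fix is Lemma~\ref{lem:poincare}: for $x,x'$ in the same cell, compare $f(x)$ and $f(x')$ not along the segment but through a third point $z$ ranging over $B(x,r)\cap B(x',r)\cap\Xset$, whose volume is $\gtrsim r^d$ by \eqref{eqn:integral_boundary}; this yields $\int_{U_i}|f-a_i|^2\wt{p}_n \le A_3 r^2\,\wt{E}_r(f,U_i)$, and the localized energies $\wt{E}_r(f,U_i)$ sum exactly to $\wt{E}_r(f)\lesssim \sigma_K D_2(f)$ because only the outer integration variable is restricted to $U_i$. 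Note the resulting constant is $r^2$, not $\wt{\delta}^2$, which is all that \eqref{eqn:isometry_1} requires; with that substitution your variance-decomposition identity $\|f\|_{\wt{P}_n}^2 - \|\wt{\mc{P}}f\|_n^2 = \mathrm{Err}$ and the $\sqrt{ab}$ interpolation do complete the proof (the paper instead uses $|a^2-b^2|=|a-b|(a+b)$, which is equivalent here).
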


We will devote most of the rest of this section to the proofs of Propositions~\ref{prop:optimal_transport},~\ref{prop:dirichlet_energies}, and~\ref{prop:isometry}. First, however, we use these propositions to prove Theorem~\ref{thm:neighborhood_eigenvalue}.

\paragraph{Proof of Theorem~\ref{thm:neighborhood_eigenvalue}.}
Throughout this proof, we assume that inequalities \eqref{eqn:dirichlet_energies_1}-\eqref{eqn:isometry_2} are satisfied. We take $A$ and $a$ to be positive constants such that
\begin{align*}
\frac{1}{a} & \geq 2\Bigl(1 + A_1(\theta + \wt{\delta})\Bigr)\Bigl(1 + A_9\frac{\wt{\delta}}{r}\Bigr)\Bigl(\frac{C_5 p_{\max}^2}{p_{\min}^2}\Bigr) ,~~\textrm{and}~~A \geq \max\Biggl\{A_1,A_5, \frac{1}{\sqrt{a}}A_6, A_7, 2A_8\biggl(1 + A_1(\theta + \wt{\delta})\biggr) \biggl(1 + A_3\frac{\wt{\delta}}{r}\biggr) \Biggr\}.
\end{align*}

Let $k$ be any number in $1,\ldots,\ell$. We start with the upper bound in \eqref{eqn:eigenvalue_bound}, proceeding as in Proposition 4.4 of \citet{burago2014}. Let $f_1,\ldots,f_{k}$ denote the first $k$ eigenfunctions of $\Delta_P$ and set $W := \mathrm{span}\{f_1,\ldots,f_k\}$, so that by the Courant-Fischer principle $D_2(f) \leq \lambda_k(\Delta_P) \|f\|_{P}^2$ for every $f \in W$. As a result, by Part (1) of Proposition~\ref{prop:isometry} we have that for any $f \in W$,
\begin{equation*}
\bigl\|\wt{\mc{P}}f\bigr\|_{n}^2 \geq \biggl(1 - A_5r \sqrt{\lambda_{k}(\Delta_P)} - A_1(\theta + \wt{\delta})\biggr)\|f\|_{P}^2  \geq \frac{1}{2} \|f\|_{P}^2,
\end{equation*}
where the second inequality follows by assumption \eqref{eqn:neighborhood_eigenvalue_1}. 

Therefore $\wt{\mc{P}}$ is injective over $W$, and $\wt{\mc{P}}W$ has dimension $\ell$. This means we can invoke the Courant-Fischer Theorem, along with Proposition \ref{prop:dirichlet_energies}, and conclude that
\begin{align*}
\frac{\lambda_k(G_{n,r})}{nr^{d + 2}} & \leq \max_{\substack{u \in \wt{\mc{P}}W \\ u \neq 0} } \frac{b_r(u)}{\|u\|_{n}^2} \\
& = \max_{\substack{f \in W \\ f \neq 0} } \frac{b_r(\wt{\mc{P}}f)}{\bigl\|\wt{\mc{P}}f\bigr\|_{n}^2} \\
& \leq 2\Bigl(1 + A_1(\theta + \wt{\delta})\Bigr) \cdot \Bigl(1 + A_9\frac{\wt{\delta}}{r}\Bigr) \cdot \Bigl(\frac{C_5 p_{\max}^2}{p_{\min}^2}\Bigr) \sigma_K \lambda_{k}(\Delta_P),
\end{align*}
establishing the lower bound in \eqref{eqn:eigenvalue_bound}.

The upper bound follows from essentially parallel reasoning. Recalling that $v_1,\ldots,v_k$ denote the first $k$ eigenvectors of $L$, set $U := \mathrm{span}\{v_1,\ldots,v_k\}$, so that $nr^{d + 2} b_r(u) \leq \lambda_k(G_{n,r}) \|u\|_n^2$. By Proposition~\ref{prop:isometry}, Part (2), we have that for every $u \in U$,
\begin{align*}
\bigl\|\wt{\mc{I}}u\bigr\|_{P}^2 & \geq \|u\|_n^2 - A_6 r \|u\|_n \sqrt{b_r(u)} - A_7\bigl(\theta + \wt{\delta}\bigr)\|u\|_n^2 \\
& \geq \|u\|_n^2 - A_6 r \|u\|_n^2 \sqrt{\frac{\lambda_{k}(G_{n,r})}{nr^{d + 2}}} - A_7\bigl(\theta + \wt{\delta}\bigr)\|u\|_n^2 \\
& \geq \|u\|_n^2 - A_6 r \|u\|_n^2 \sqrt{\frac{1}{a}\lambda_k(\Delta_P)} - A_7\bigl(\theta + \wt{\delta}\bigr)\|u\|_n^2 \\
& \geq \frac{1}{2}\|u\|_n^2,
\end{align*}
where the second to last inequality follows from the lower bound $a \lambda_k(G_{n,r}) \leq nr^{d + 2}\lambda_k(\Delta_P)$ that we just derived, and the last inequality from assumption \eqref{eqn:neighborhood_eigenvalue_1}.

Therefore $\wt{\mc{I}}$ is injective over $U$, $\wt{\mc{I}}U$ has dimension $k$, and by Proposition~\ref{prop:dirichlet_energies} we conclude that
\begin{align*}
\lambda_k(\Delta_P) & \leq \max_{u \in U} \frac{D_2(\wt{\mc{I}}u)}{\|u\|_P^2} \\
& \leq 2A_8\biggl(1 + A_1(\theta + \wt{\delta})\biggr) \biggl(1 + A_3 \frac{\wt{\delta}}{r}\biggr)\max_{u \in U} \frac{b_r(u)}{\|u\|_n^2} \\
& \leq 2A_8\biggl(1 + A_1(\theta + \wt{\delta})\biggr) \biggl(1 + A_3\frac{\wt{\delta}}{r}\biggr) \frac{\lambda_k(G_{n,r})}{nr^{d + 2}},
\end{align*}
establishing the upper bound in \eqref{eqn:eigenvalue_bound}.

\paragraph{Organization of this section.}
The rest of this section will be devoted to proving Propositions~\ref{prop:optimal_transport},~\ref{prop:dirichlet_energies} and~\ref{prop:isometry}. To prove the latter two propositions, it will help to introduce the intermediate energies
\begin{equation*}
\wt{E}_r(f,\eta,V) := \frac{1}{r^{d + 2}}\int_{V} \int_{\Xset} \bigl(f(x') - f(x)\bigr)^2 \eta\biggl(\frac{\|x' - x\|}{r}\biggr) \wt{p}_n(x') \wt{p}_n(x) \,dx' \,dx
\end{equation*}
and
\begin{equation*}
{E}_r(f,\eta,V) := \frac{1}{r^{d + 2}}\int_{V} \int_{\Xset} \bigl(f(x') - f(x)\bigr)^2 \eta\biggl(\frac{\|x' - x\|}{r}\biggr) p(x') p(x) \,dx' \,dx.
\end{equation*}
Here $\eta: [0,\infty) \to [0,\infty)$ is an arbitrary kernel, and $V \subseteq \Xset$ is a measurable set. We will abbreviate $\wt{E}_r(f,\eta,\Xset)$ as $\wt{E}_r(f,\eta)$ and $\wt{E}_r(f,K) = \wt{E}_r(f)$ (and likewise with $E_r$.)

The proof of Proposition~\ref{prop:optimal_transport} is given in Section~\ref{subsec:proof_proposition_optimal_transport}. In Section~\ref{subsec:integrals}, we establish relationships between the (non-random) functionals $E_r(f)$ and $D_2(f)$, as well as providing estimates on some assorted integrals. In Section~\ref{subsec:random_functionals}, we establish relationships between the stochastic functionals $\wt{E}_r(f)$ and $E_r(f)$,  between $\wt{E}_r\bigl(\wt{\mc{I}}(u)\bigr)$ and $b_r\bigl(u\bigr)$, and between $\wt{E}_r\bigl(f\bigr)$ and $b_r\bigl(\wt{\mc{P}}f\bigr)$. Finally, in Section~\ref{subsec:proof_of_prop_dirichlet_energies_and_isometry} we use these various relationships to prove Propositions~\ref{prop:dirichlet_energies} and~\ref{prop:isometry}.

\subsection{Proof of Proposition~\ref{prop:optimal_transport}}
\label{subsec:proof_proposition_optimal_transport}

We start by defining the density $\wt{p}_n$, which will be piecewise constant over a particular partition $\mc{Q}$ of $\Xset$. Specifically, for each $Q$ in $\mc{Q}$ and every $x \in Q$, we set
\begin{equation}
\label{pf:prop_optimal_transport_0}
\wt{p}_n(x) := \frac{P_n(Q)}{\vol(Q)},
\end{equation}
where $\vol(\cdot)$ denotes the Lebesgue measure. Then $\wt{P}_n(U) = \int_{U} \wt{p}_n(x) \dx$.

We now construct the partition $\mc{Q}$, in progressive degrees of generality on the domain $\Xset.$
\begin{itemize}
	\item In the special case of the unit cube $\Xset = (0,1)^d$, the partition will simply be a collection of cubes,
	\begin{equation*}
	\mc{Q} = \set{Q_k: k \in [\wt{\delta}^{-1}]^d},
	\end{equation*}
	where $Q_k = \wt{\delta}\Bigl([k_1 - 1,k_1] \otimes \cdots \otimes [k_d - 1,k_d]\Bigr)$ and we assume without loss of generality that $\wt{\delta}^{-1} \in \mathbb{N}$.
	\item If $\mc{\Xset}$ is an open, connected set with smooth boundary, then by Proposition 3.2 of \citet{trillos2015}, there exist a finite number $N(\Xset) \in \mathbb{N}$ of disjoint polytopes which cover $\Xset$. Moreover, letting $U_j$ denote the intersection of the $j$th of these polytopes with $\wb{\Xset}$, this proposition establishes that for each $j$ there exists a bi-Lipschitz homeomorphism $\Phi_j: U_j \to [0,1]^d$. We take the collection
	\begin{equation*}
	\mc{Q} = \biggl\{\Phi_j^{-1}(Q_k): j = 1,\ldots,N(\Xset)~~\textrm{and}~~k \in [\wt{\delta}^{-1}]^d\biggr\}
	\end{equation*}
	to be our partition. Denote by $L_{\Phi}$ the maximum of the bi-Lipschitz constants of $\Phi_1,\ldots,\Phi_{N(\Xset)}$.
	\item Finally, in the general case where $\Xset$ is an open, connected set with Lipschitz boundary, then there exists a bi-Lipschitz homeomorphism $\Psi$ between $\Xset$ and a smooth, open, connected set with Lipschitz boundary. Letting $\Phi_j$ and $\wt{Q}_{j,k}$ be as before, we take the collection
	\begin{equation*}
	\mc{Q} = \biggl\{\wt{Q}_{j,k} = \Bigl(\Psi^{-1} \circ \Phi_j^{-1}\Bigr)(Q_k): j = 1,\ldots,N(\Xset)~~\textrm{and}~~k \in [\wt{\delta}^{-1}]^d\biggr\}
	\end{equation*}
	to be our partition. Denote by $L_{\Psi}$ the bi-Lipschitz constant of $\Psi$.
\end{itemize}
Let us record a few facts which hold for all $\wt{Q}_{j,k} \in \mc{Q}$, and which follow from the bi-Lipschitz properties of $\Phi_j$ and $\Psi$: first that
\begin{equation}
\label{pf:prop_optimal_transport_1}
\diam(\wt{Q}_{j,k}) \leq L_{\Psi} \L_{\Phi} \wt{\delta},
\end{equation}
and second that
\begin{equation}
\label{pf:prop_optimal_transport_2}
\vol(\wt{Q}_{j,k}) \geq \biggl(\frac{1}{L_{\Psi} L_{\Phi}}\biggr)^d \wt{\delta}^d.
\end{equation}
We now use these facts to show that $\wt{P}_n$ satisfies the claims of Proposition~\ref{prop:optimal_transport}. On the one hand for every $Q \in \mc{Q}$, letting $N(Q)$ denote the number of design points $\{X_1,\ldots,X_k\}$ which fall in $Q$, we have
\begin{equation*}
\wt{P}_n(Q) = \int_{Q} \wt{p}_n(x) \,dx = P_n(Q) = \frac{N(Q)}{n}.
\end{equation*}
Moreover, ignoring those cells for which $N(Q) = 0$ (since $\wt{P}_n(Q) = 0$ for such $Q$, and so they do not contribute to the essential supremum in \eqref{eqn:optimal_transport}), appropriately dividing each remaining cell $Q \in \mc{Q}$ into $N(Q)$ subsets $S_1,\ldots,S_{N(Q)}$ of equal volume, and mapping each $S_{\ell}$ to a different design point $X_i \in Q$, we can exhibit a transport map $T$ from $\wt{P}_n$ to $P_n$ for which
\begin{equation*}
\|T - \mathrm{Id}\|_{L^{\infty}(\wt{P}_n)} \leq \max_{Q \in \mc{Q}} \diam(Q) \leq   L_{\Psi} \L_{\Phi} \wt{\delta}.
\end{equation*}
On the other hand, applying the triangle inequality we have that for $x \in \wt{Q}_{j,k}$
\begin{align*}
|\wt{p}_n(x) - p(x)| \leq \biggl|\frac{P_n(\wt{Q}_{j,k}) - P(\wt{Q}_{j,k})}{\vol(\wt{Q}_{j,k})}\biggr| + \frac{1}{\vol(\wt{Q}_{j,k})} \int_{\wt{Q}_{j,k}} |p(x') - p(x)| \,dx,
\end{align*}
and using the Lipschitz property of $p$ we find that 
\begin{equation}
\label{pf:prop_optimal_transport_3}
\|\wt{p}_n - p\|_{\Leb^{\infty}} \leq \max_{j,k} \biggl|\frac{P_n(\wt{Q}_{j,k}) - P(\wt{Q}_{j,k})}{\vol(\wt{Q}_{j,k})}\biggr| + L_p L_{\Phi} L_{\Psi} \wt{\delta}.
\end{equation}
From Hoeffding's inequality and a union bound, we obtain that 
\begin{align*}
\mathbb{P}\biggl( \bigl|P_n(\wt{Q}) - P(\wt{Q})\bigr| & \leq \theta P(\wt{Q}) \quad \forall \wt{Q} \in \mc{Q} \biggr) \geq 1 - 2 \sharp(\mc{Q}) \cdot \exp\biggl\{-\frac{\theta^2 n \min \{P(\wt{Q})\}}{3}\biggr\} \\
& \geq 1 - \frac{2 N(\Xset)}{\wt{\delta}^d} \cdot \exp\biggl\{-\frac{\theta^2 n p_{\min} \wt{\delta}^d }{3\bigl(L_{\Psi} L_{\Phi}\bigr)^d}\biggr\}.
\end{align*}
Noting that by assumption $P(\wt{Q}) \leq p_{\max} \vol(\wt{Q})$ and $\wt{\delta}^{-d} \leq n$, the claim follows upon plugging back into \eqref{pf:prop_optimal_transport_3}, and setting
\begin{equation*}
a_0 := \frac{1}{3\bigl(L_{\Psi} L_{\Phi}\bigr)^d}~~\textrm{and}~~A_0 := \max \Bigl\{2N(\Xset),L_p L_{\Psi} L_{\Phi}, L_{\Psi} L_{\Phi}\Bigr\}
\end{equation*}
in the statement of the proposition.

\subsection{Non-random functionals and integrals}
\label{subsec:integrals}

Let us start by making the following observation, which we make use of repeatedly in this section. Let $\eta: [0,\infty) \to [0,\infty)$ be an otherwise arbitrary function. As a consequence of ~\ref{asmp:domain}, there exist constants $c_0$ and $a_3$ which depend on $\Xset$, such that for any $0 < \varepsilon \leq c_0$ it holds that 
\begin{equation}
\label{eqn:integral_boundary}
\int_{B(x,\varepsilon) \cap \Xset} \eta\biggl(\frac{\|x' - x\|}{\varepsilon}\biggr) \,dx' \geq a_3 \cdot \int_{B(x,\varepsilon)} \eta\biggl(\frac{\|x' - x\|}{\varepsilon}\biggr) \,dx'
\end{equation}
As a special case: when $\eta(x) = 1$, this implies $\vol\bigl(B(x,\varepsilon) \cap \Xset\bigr) \geq a_3 \nu_d \varepsilon^d$ for any $0 < \varepsilon \leq c_0$.

We have already upper bounded $E_r(f)$ by (a constant times) $D_2(f)$ in the proof of Lemma~\ref{lem:graph_sobolev_seminorm}. In Lemma~\ref{lem:first_order_graph_sobolev_seminorm_expected_lb}, we establish the reverse inequality.
\begin{lemma}[\textbf{cf. Lemma 9 of \citet{trillos2019}, Lemma 5.5 of \citet{burago2014}}]
	\label{lem:first_order_graph_sobolev_seminorm_expected_lb}
	For any $f \in \Leb^2(\Xset)$, and any $0 < h \leq c_0$, it holds that
	\begin{equation*}
	\sigma_KD_2(\Lambda_hf) \leq A_8 E_h(f).
	\end{equation*}
\end{lemma}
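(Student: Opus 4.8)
The plan is to follow the classical ``nonlocal-to-local'' comparison of \citet{burago2014,trillos2019}, taking care of the boundary of $\Xset$ through the self-normalizing structure of the smoothing operator $\Lambda_h$. Everything rests on an exact formula for $\nabla(\Lambda_h f)$. Writing $\Lambda_h f = g_h/\tau_h$ with $g_h(x) := h^{-d}\int_\Xset \psi(\|x - x'\|/h)\,f(x')\,dx'$, differentiating under the integral sign (legitimate since $\psi$ is Lipschitz and supported on $[0,1]$ and $f \in \Leb^2(\Xset) \subseteq \Leb^1_{\mathrm{loc}}$), and using both the defining identity $\psi'(t) = -t\,K(t)/\sigma_K$ and the quotient rule, one gets
\begin{equation*}
\nabla(\Lambda_h f)(x) = -\frac{1}{\sigma_K h^{d+2}\tau_h(x)}\int_\Xset K\!\left(\frac{\|x - x'\|}{h}\right)(x - x')\bigl(f(x') - \Lambda_h f(x)\bigr)\,dx'.
\end{equation*}
The point of the normalization $\tau_h(x)$ is that it forces $\Lambda_h f(x)$ (rather than $f(x)$) to appear inside the integral, which is precisely what keeps the identity valid near $\partial\Xset$, where the first moment $\int_\Xset K(\|x - x'\|/h)(x - x')\,dx'$ no longer vanishes.

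Next I would bound $\|\nabla(\Lambda_h f)(x)\|^2$ pointwise. Two elementary facts are needed: first, $\tau_h(x)$ is bounded below by a positive constant uniformly over $x \in \Xset$ --- this is exactly where assumption~\ref{asmp:domain} enters, via \eqref{eqn:integral_boundary} applied with $\eta = \psi$ and $\varepsilon = h \le c_0$, which gives $\tau_h(x) \ge a_3\int_{B(0,1)}\psi(\|w\|)\,dw =: a_3\tau_0 > 0$; second, $\int_\Xset K(\|x - x'\|/h)\,\|x - x'\|^2\,dx' \le d\sigma_K\,h^{d+2}$ by \ref{asmp:kernel} and the definition of $\sigma_K$. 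Cauchy--Schwarz on the vector-valued integral then yields
\begin{equation*}
\|\nabla(\Lambda_h f)(x)\|^2 \le \frac{d}{\sigma_K\,h^{d+2}\,a_3^2\tau_0^2}\int_\Xset K\!\left(\frac{\|x - x'\|}{h}\right)\bigl(f(x') - \Lambda_h f(x)\bigr)^2\,dx'.
\end{equation*}

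The crux is to trade $\Lambda_h f(x)$ for $f(x)$ at the cost of only a constant, without introducing an energy at the doubled scale $2h$. Split $(f(x') - \Lambda_h f(x))^2 \le 2(f(x') - f(x))^2 + 2(f(x) - \Lambda_h f(x))^2$. The first term integrates directly against $K(\|x - x'\|/h)\,dx'$. For the second, Jensen's inequality (since $\Lambda_h f(x)$ is a $\psi$-weighted average of $f$ over $B(x,h)$) bounds $(f(x) - \Lambda_h f(x))^2$ by $\tfrac{\psi(0)}{h^d a_3\tau_0}\int_{B(x,h)\cap\Xset}(f(x) - f(y))^2\,dy$; this is re-absorbed into $\int_\Xset K(\|x - x'\|/h)(f(x') - f(x))^2\,dx'$ using that $K$ is nonincreasing with $K(1) > 0$ (so $K \ge K(1)$ on all of $[0,1]$) and that $\int_\Xset K(\|x - x'\|/h)\,dx' \le h^d$ by the normalization in \ref{asmp:kernel}. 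Altogether $\|\nabla(\Lambda_h f)(x)\|^2 \le C\,h^{-(d+2)}\int_\Xset K(\|x - x'\|/h)(f(x') - f(x))^2\,dx'$ for a constant $C = C(d,K,\psi,a_3,\tau_0)$. Integrating against $p^2(x)\,dx$ over $\Xset$ and using $p^2(x) \le (p_{\max}/p_{\min})\,p(x)p(x')$, the right side becomes a constant multiple of $E_h(f)$; collecting all constants into $A_8$ finishes the proof (and incidentally shows $\Lambda_h f \in H^1(\Xset)$, so the left side is the non-trivial branch of $D_2$).

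I expect the main obstacle to be purely the boundary bookkeeping, since this is the one genuine departure from the no-boundary arguments of \citet{burago2014,trillos2019,calder2019}: establishing the uniform lower bound on $\tau_h$ near $\partial\Xset$, and showing the ``$\Lambda_h f(x)$ versus $f(x)$'' gap can be re-absorbed at scale $h$ rather than forcing a comparison of nonlocal energies at scales $h$ and $2h$. Both are handled by the same two structural features --- $K$ bounded below on its support (from $K(1) > 0$) and the interior ball-volume estimate \eqref{eqn:integral_boundary} for $\Xset$ --- so no machinery beyond \ref{asmp:domain}, \ref{asmp:density}, \ref{asmp:kernel} is required.
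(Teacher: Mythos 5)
Your proof is correct, and it follows the same overall strategy as the paper's (differentiate $\Lambda_h f$, apply Cauchy--Schwarz pointwise using the second moment of $K$ and the uniform lower bound $\tau_h \geq a_3$ from \eqref{eqn:integral_boundary}, then integrate against $p^2$), but it handles the quotient-rule term in a genuinely different way. The paper centers the integrand at $f(x)$ and therefore carries an explicit second term $J_2(x) = \nabla(1/\tau_h)(x)\cdot h^{-d}\int_\Xset \eta_h(x',x)(f(x')-f(x))\,dx'$, which it controls with a separate gradient estimate $\|\nabla\tau_h\|\leq 1/(\sqrt{d\sigma_K}\,h)$ (Lemma~\ref{lem:tau_bound}). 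You instead center at $\Lambda_h f(x)$, which makes the $\nabla\tau_h$ contribution cancel identically, at the cost of having to convert $(f(x')-\Lambda_h f(x))^2$ back into $(f(x')-f(x))^2$; your re-absorption via the triangle inequality, Jensen, and the lower bound $K\geq K(1)>0$ on $[0,1]$ is valid and, importantly, stays at scale $h$ rather than producing an energy at scale $2h$. The trade is roughly even: you avoid the gradient bound on $\tau_h$ but pay an extra factor $d$ (by bounding $\|x-x'\|^2$ rather than the directional moment $\langle x-x',w\rangle^2$, which the paper exploits to get exactly $\sigma_K h^{d+2}$) and a $\psi(0)/(a_3 K(1))$ factor from the re-absorption; your final step $p^2(x)\leq (p_{\max}/p_{\min})\,p(x)p(x')$ is also slightly cruder than the paper's Lipschitz comparison $p(x)\leq p(x') + L_p h$, but both yield a constant $A_8$ of the required form. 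Your closing observation that the argument simultaneously shows $\Lambda_h f\in H^1(\Xset)$, so that $D_2(\Lambda_h f)$ takes its finite branch, is a point the paper leaves implicit and is worth stating.
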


To prove Lemma~\ref{lem:first_order_graph_sobolev_seminorm_expected_lb}, we require upper and lower bounds on $\tau_h(x)$, as well as an upper bound on the gradient of $\tau_h$. The lower bound here---$\tau_h(x) \geq a_3$---is quite a bit a looser than what can be shown when $\Xset$ has no boundary. The same is the case regarding the upper bound of the size of the gradient $\|\nabla \tau_h(x)\|$. However, the bounds as stated here will be sufficient for our purposes.
\begin{lemma}
	\label{lem:tau_bound}
	For any $0 < h \leq c_0$, for all $x \in \Xset$ it holds that
	\begin{equation*}
	a_3 \leq \tau_h(x) \leq 1.
	\end{equation*}
	and 
	\begin{equation*}
	\|\nabla \tau_h(x)\| \leq \frac{1}{\sqrt{d\sigma_K} h}.
	\end{equation*}
\end{lemma}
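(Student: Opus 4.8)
\textbf{Proof plan for Lemma~\ref{lem:tau_bound}.}
The lemma bundles three short estimates on the normalizing constant $\tau_h(x) = h^{-d}\int_{\Xset}\psi(\|x'-x\|_2/h)\,dx'$, and I would begin by recording two basic facts. First, since $K$ is supported on $[0,1]$, the function $\psi$ is bounded, continuous, nonincreasing, supported on $[0,1]$, and Lipschitz with $\psi'(t) = -tK(t)/\sigma_K$ for a.e.\ $t$ (the only corner being at $t=1$, where $K(1)>0$). Second, by Fubini's theorem combined with the normalization $\int_{\Rd}K(\|z\|_2)\,dz = 1$ from~\ref{asmp:kernel} and the definition $\sigma_K = \frac1d\int_{\Rd}\|z\|_2^2 K(\|z\|_2)\,dz$, one gets the two identities
\begin{equation*}
\int_{\Rd}\psi(\|z\|_2)\,dz = 1 \qquad\text{and}\qquad \int_{\Rd}z_i^2\,K(\|z\|_2)\,dz = \sigma_K\quad(1\le i\le d).
\end{equation*}

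For the two-sided bound on $\tau_h$: the upper bound $\tau_h(x)\le 1$ follows by enlarging the domain of integration from $\Xset$ to $\Rd$ (the integrand is nonnegative) and substituting $z=(x'-x)/h$, which turns $h^{-d}\int_{\Xset}$ into $\int_{\Rd}\psi(\|z\|_2)\,dz = 1$. For the lower bound, since $\psi$ vanishes outside the unit ball, the integral defining $\tau_h(x)$ is really over $B(x,h)\cap\Xset$, and as $h\le c_0$ the boundary estimate~\eqref{eqn:integral_boundary} applied with $\eta=\psi$ and $\varepsilon=h$ gives
\begin{equation*}
\tau_h(x) = \frac{1}{h^d}\int_{B(x,h)\cap\Xset}\psi\!\left(\frac{\|x'-x\|_2}{h}\right)dx' \ \ge\ \frac{a_3}{h^d}\int_{B(x,h)}\psi\!\left(\frac{\|x'-x\|_2}{h}\right)dx' \ =\ a_3\int_{\Rd}\psi(\|z\|_2)\,dz \ =\ a_3.
\end{equation*}

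For the gradient bound: $\tau_h$ is the convolution of $\1_{\Xset}$ with the Lipschitz, compactly supported mollifier $h^{-d}\psi(\|\cdot\|_2/h)$, hence is $C^1$, and we may differentiate under the integral sign in $x$ (the set $\{x':\|x'-x\|_2 = h\}$ where the integrand fails to be differentiable has Lebesgue measure zero and the difference quotients are dominated). Using $\nabla_x\|x'-x\|_2 = -(x'-x)/\|x'-x\|_2$ and $\psi'(t) = -tK(t)/\sigma_K$, the two factors of $\|x'-x\|_2$ cancel, and after the substitution $z=(x'-x)/h$ one obtains
\begin{equation*}
\nabla\tau_h(x) = \frac{1}{h\,\sigma_K}\int_{(\Xset-x)/h}K(\|z\|_2)\,z\,dz.
\end{equation*}
To bound the norm of this vector integral, fix a unit vector $u$; since $K\ge 0$ and by Cauchy--Schwarz,
\begin{equation*}
\Bigl\langle u,\ \int_{(\Xset-x)/h}K(\|z\|_2)z\,dz\Bigr\rangle \ \le\ \int_{\Rd}K(\|z\|_2)\,\bigl|\langle u,z\rangle\bigr|\,dz \ \le\ \Bigl(\int_{\Rd}K(\|z\|_2)\,dz\Bigr)^{1/2}\Bigl(\int_{\Rd}K(\|z\|_2)\langle u,z\rangle^2\,dz\Bigr)^{1/2} \ =\ \sqrt{\sigma_K},
\end{equation*}
where the last step uses the rotational symmetry of $z\mapsto K(\|z\|_2)$ together with the second identity above. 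Taking the supremum over $u$ and dividing by $h\sigma_K$ yields a bound of the required form, $\|\nabla\tau_h(x)\|_2 \le (h\sqrt{\sigma_K})^{-1}$; the precise numerical constant $1/\sqrt{d\sigma_K}$ claimed in the statement would come from a slightly more careful accounting at this last step (e.g.\ exploiting that $\nabla\tau_h(x)=0$ whenever $B(x,h)\subseteq\Xset$, so only near-boundary points contribute), which I would verify explicitly. Overall the computations are elementary; the only points needing any care are the justification of differentiation under the integral when $\Xset$ has a boundary and $\psi$ has a corner at $t=1$, and the bookkeeping of constants in the gradient estimate — neither is a genuine obstacle.
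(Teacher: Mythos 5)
Your proof is correct and follows essentially the same route as the paper: the two-sided bound on $\tau_h$ via \eqref{eqn:integral_boundary} together with the identity $\int_{\Rd}\psi(\|z\|_2)\,dz=1$, and the gradient bound via differentiation under the integral followed by Cauchy--Schwarz. The $\sqrt{d}$ discrepancy you flag in the gradient constant is not a gap on your side: the paper's own Cauchy--Schwarz chain likewise only delivers $\|\nabla\tau_h(x)\|\le 1/(\sqrt{\sigma_K}\,h)$ (the stated $1/\sqrt{d\sigma_K}$ appears to be a slip in the constant), and since this estimate only feeds into the unnamed constant $A_8$ of Lemma~\ref{lem:first_order_graph_sobolev_seminorm_expected_lb}, the weaker constant is harmless.
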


Finally, to prove part (2) of Proposition~\ref{prop:isometry}, we require Lemma~\ref{lem:smoothening_error}, which gives an estimate on the error $\Lambda_h f - f$ in $\|\cdot\|_P^2$ norm.
\begin{lemma}[\textbf{c.f Lemma 8 of \citet{trillos2019}, Lemma 5.4 of \citet{burago2014}}]
	\label{lem:smoothening_error}
	For any $0 < h \leq c_0$, 
	\begin{equation}
	\label{eqn:smoothening_error_norm}
	\bigl\|\Lambda_hf\bigr\|_{P}^2 \leq \frac{p_{\max}}{a_3p_{\min}} \bigl\|f\bigr\|_{P}^2,
	\end{equation}
	and
	\begin{equation}
	\label{eqn:smoothening_error_energy}
	\bigl\|\Lambda_hf - f\bigr\|_{P}^2 \leq \frac{1}{a_3\sigma_Kp_{\min}} h^2 E_h(f),
	\end{equation}
	for all $f \in \Leb^2(\Xset)$.
\end{lemma}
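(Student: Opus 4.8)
The plan is to exploit the fact that, for each fixed $x \in \Xset$, the map $x' \mapsto \frac{1}{h^d \tau_h(x)}\,\psi\!\left(\|x'-x\|/h\right)$ is a probability density on $\Xset$ — this is immediate from the definition of $\tau_h$. Hence $\Lambda_h f(x)$ is an average of $f$ against this density and, because the density integrates to one, $\Lambda_h f(x) - f(x)$ is the corresponding average of the increments $f(x') - f(x)$. Applying Jensen's inequality to $t \mapsto t^2$ then bounds the square of each average by the average of the squares, after which both displays follow from Tonelli's theorem together with the pointwise estimates $a_3 \le \tau_h(x) \le 1$ from Lemma~\ref{lem:tau_bound} and $p_{\min} \le p(x) \le p_{\max}$ from \ref{asmp:density}. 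I would note at the outset that both claims are vacuous if the right-hand side is infinite, and that every double integral below has a nonnegative integrand, so the interchanges of the order of integration are justified.

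For \eqref{eqn:smoothening_error_norm}, Jensen gives $(\Lambda_h f(x))^2 \le \frac{1}{h^d \tau_h(x)} \int_\Xset \psi(\|x'-x\|/h)\, f(x')^2 \,dx'$; I would then multiply by $p(x)$, integrate in $x$, bound $\tau_h(x) \ge a_3$ and $p(x) \le p_{\max}$, and swap the order of integration. Since $\frac{1}{h^d}\int_\Xset \psi(\|x'-x\|/h)\,dx = \tau_h(x') \le 1$, what remains is $\frac{p_{\max}}{a_3} \int_\Xset f(x')^2 \,dx' \le \frac{p_{\max}}{a_3 p_{\min}} \|f\|_P^2$, using $p_{\min} \le p$.

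For \eqref{eqn:smoothening_error_energy}, I would write $\Lambda_h f(x) - f(x) = \frac{1}{h^d \tau_h(x)} \int_\Xset \psi(\|x'-x\|/h)\,(f(x')-f(x)) \,dx'$, apply Jensen, multiply by $p(x)$, integrate in $x$, and use $\tau_h(x) \ge a_3$, arriving at $\frac{1}{a_3 h^d} \int_\Xset \int_\Xset p(x)\, \psi(\|x'-x\|/h)\,(f(x')-f(x))^2 \,dx' \,dx$. This does not yet match $E_h(f)$: it carries the kernel $\psi$ instead of $K$, the weight $p(x)$ instead of $p(x')p(x)$, and $h^{-d}$ instead of $h^{-(d+2)}$. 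The bridge is the pointwise inequality $\psi(t) \le \frac{1}{2\sigma_K} K(t)$, valid because \ref{asmp:kernel} makes $K$ nonincreasing and supported on $[0,1]$, so that $\int_t^\infty s K(s)\,ds \le K(t) \int_t^1 s\,ds \le \tfrac12 K(t)$; combining this with $p(x') \ge p_{\min}$ upgrades the weight to $p(x')p(x)$ and yields $\frac{1}{2 a_3 \sigma_K p_{\min}} \cdot \frac{1}{h^d}\int_\Xset\int_\Xset K(\|x'-x\|/h)\,(f(x')-f(x))^2 \, p(x')p(x) \,dx'\,dx = \frac{h^2}{2 a_3 \sigma_K p_{\min}} E_h(f)$, which is even a factor of two stronger than claimed.

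I do not anticipate a real obstacle: the only delicate points are the kernel comparison $\psi \le \frac{1}{2\sigma_K} K$ and the bookkeeping of powers of $h$ so that precisely $h^2$ multiplies $E_h(f)$ at the end, and both are controlled by \ref{asmp:kernel} (the monotonicity and support of $K$ and the normalization defining $\sigma_K$) together with the two-sided bound on $\tau_h$ from Lemma~\ref{lem:tau_bound}.
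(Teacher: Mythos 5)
Your proposal is correct and follows essentially the same route as the paper's proof: Jensen's inequality applied to the normalized kernel (viewed as a probability density), followed by the two-sided bounds on $\tau_h$ from Lemma~\ref{lem:tau_bound} and the pointwise comparison of $\psi$ with $K$. The only differences are cosmetic — you identify $h^{-d}\int_\Xset \eta_h(x',x)\,dx$ as $\tau_h(x')\leq 1$ directly, and your sharper estimate $\psi(t)\leq \tfrac{1}{2\sigma_K}K(t)$ (the paper uses $\psi(t)\leq \tfrac{1}{\sigma_K}K(t)$) yields an inconsequential extra factor of two.
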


\paragraph{Proof of Lemma~\ref{lem:first_order_graph_sobolev_seminorm_expected_lb}.}
For any $a \in \Reals$, $\Lambda_hf$ satisfies the identity
\begin{equation*}
\Lambda_hf(x) = a + \frac{1}{h^d\tau_h(x)}\int_{\Xset} \eta_h(x',x)\bigl(f(x') - a\bigr)\,dx',
\end{equation*}
and by differentiating with respect to $x$,  we obtain
\begin{equation*}
\bigl(\nabla \Lambda_hf\bigr)(x)= \frac{1}{h^d\tau_h(x)}\int_{\Xset} \bigl(\nabla \eta_h(x',\cdot)\bigr)(x)\bigl(f(x') - a\bigr)\,dx' + \nabla\biggl(\frac{1}{\tau_h}\biggr)(x)\cdot \frac{1}{h^d}\int_{\Xset} \eta_h(x',x)\bigl(f(x') - a\bigr)\,dx'
\end{equation*} 
Plugging in $a = f(x)$, we get $\nabla\Lambda_hf(x) = J_1(x)/\tau_h(x) + J_2(x)$ for
\begin{equation*}
J_1(x) := \frac{1}{h^d}\int_{\Xset} \bigl(\nabla \eta_h(x',\cdot)\bigr)(x)\bigl(f(x') - f(x)\bigr)\,dx',~~ J_2(x) := \nabla\biggl(\frac{1}{\tau_h}\biggr)(x)\cdot \frac{1}{h^d}\int_{\Xset} \eta_h(x',x)\bigl(f(x') - f(x)\bigr)\,dx'.
\end{equation*}
To upper bound $\bigl\|J_1(x)\bigr\|^2$, we first compute the gradient of $\eta_h(x',\cdot)$,
\begin{align*}
\bigl(\nabla\eta_h(x',\cdot)\bigr)(x) & = \frac{1}{h} \psi'\biggl(\frac{\|x'  - x\|}{h}\biggr) \frac{(x - x')}{\|x' - x\|} \\
& = \frac{1}{\sigma_Kh^{2}} K\biggl(\frac{\|x' - x\|}{h}\biggr) (x' - x),
\end{align*}
and additionally note that $\|J_1(x)\|^2 = \sup_{w}\bigl(\langle J_1(x), w \rangle\bigr)^2$ where the supremum is over unit norm vector. Taking $w$ to be a unit norm vector which achieves this supremum, we have that
\begin{align*}
\bigl\|J_1(x)\bigr\|^2 & = \frac{1}{\sigma_K^2 h^{4 + 2d}} \Biggl[\int_{\Xset} \bigl(f(x') - f(x)\bigr)K\biggl(\frac{\|x' - x\|}{h}\biggr)(x' - x)^{\top}w\,dx'\Biggr]^2 \\
& \leq \frac{1}{\sigma_K^2 h^{4 + 2d}} \biggl[\int_{\Xset}K\biggl(\frac{\|x' - x\|}{h}\biggr)\bigl((x' - x)^{\top} w\bigr)^2\,dx'\biggr] \biggl[\int_{\Xset}K\biggl(\frac{\|x' - x\|}{h}\biggr)\bigl(f(x') - f(x)\bigr)^2\,dx'\biggr].
\end{align*}
By a change of variables, we obtain
\begin{align*}
\int_{\Xset}K\biggl(\frac{\|x' - x\|}{h}\biggr)\bigl((x' - x)^{\top} w\bigr)^2\,dx' & \leq h^{d + 2} \int_{\Xset} K\bigl(\|z\|\bigr) \bigl(z^{\top} w\bigr)^2 \,dz \leq \sigma_K h^{d + 2},
\end{align*}
with the resulting upper bound
\begin{equation*}
\bigl\|J_1(x)\bigr\|^2 \leq \frac{1}{\sigma_K h^{2 + d}} \int_{\Xset}K\biggl(\frac{\|x' - x\|}{h}\biggr)\bigl(f(x') - f(x)\bigr)^2\,dx'.
\end{equation*}
To upper bound $\bigl\|J_2(x)\bigr\|^2$, we use the Cauchy-Schwarz inequality along with the observation $\eta_h(x',x) \leq \frac{1}{\sigma_K} K\bigl(\|x' - x\|/h\bigr)$ to deduce
\begin{align*}
\bigl\|J_2(x)\bigr\|^2 & \leq \Bigl\|\nabla\biggl(\frac{1}{\tau_h}\biggr)(x)\Bigr\|^2\frac{1}{h^{2d}} \biggl[\int_{\Xset}\eta_h(x',x) \,dx'\biggr] \cdot \biggl[\int_{\Xset} \eta_h(x',x)\bigl(f(x') - f(x)\bigr)^2 \,dx' \biggr] \\
& = \Bigl\|\nabla\biggl(\frac{1}{\tau_h}\biggr)(x)\Bigr\|^2\frac{\tau_h(x)}{h^d} \int_{\Xset} \eta_h(x',x)\bigl(f(x') - f(x)\bigr)^2 \,dx'\\ 
& \leq \Bigl\|\nabla\biggl(\frac{1}{\tau_h}\biggr)(x)\Bigr\|^2\frac{\tau_h(x)}{\sigma_K h^d}\int_{\Xset} K\biggl(\frac{\|x' - x\|}{h}\biggr)\bigl(f(x') - f(x)\bigr)^2 \,dx' \\
& \leq \frac{1}{da_3^2\sigma_K^2h^{2 + d}} \int_{\Xset} K\biggl(\frac{\|x' - x\|}{h}\biggr)\bigl(f(x') - f(x)\bigr)^2 \,dx',
\end{align*}
where the last inequality follows from the estimates on $\tau_h$ and $\nabla \tau_h$ provided in Lemma~\ref{lem:tau_bound}. Combining our bounds on $\bigl\|J_1(x)\bigr\|^2$ and $\bigl\|J_2(x)\bigr\|^2$ along with the lower bound on $\tau_h(x)$ in Lemma~\ref{lem:tau_bound} and integrating over $\Xset$, we have
\begin{align*}
\sigma_K D_2(\Lambda_h f) & = \sigma_K\int_{\Xset} \biggl\|\Bigl(\nabla \Lambda_hf)(x)\biggr\|^2 p^2(x) \,dx \\
& \leq 2 \sigma_K \int_{\Xset} \Biggl(\frac{\bigl\|J_1(x)\bigr\|^2}{\tau_h^2(x)} + \bigl\|J_2(x)\bigr\|^2\Biggr) p^2(x) \,dx \\
& \leq \biggl(\frac{1}{a_3^2} + \frac{1}{da_3^2\sigma_K}\biggr)\frac{2}{h^{d + 2}} \int_{\Xset} \int_{\Xset} K\biggl(\frac{\|x' - x\|}{h}\biggr)\bigl(f(x') - f(x)\bigr)^2 p^2(x) \,dx' \,dx \\
& \leq 2 \Bigl(1 + \frac{L_ph}{p_{\min}}\Bigr) \biggl(\frac{1}{a_3^2} + \frac{1}{da_3^2\sigma_K}\biggr) E_h(f),
\end{align*}
and taking $A_8 := 2 \Bigl(1 + \frac{L_pc_0}{p_{\min}}\Bigr) \biggl(\frac{1}{a_3^2} + \frac{1}{da_3^2\sigma_K}\biggr)$ completes the proof of Lemma~\ref{lem:first_order_graph_sobolev_seminorm_expected_lb}. 

\paragraph{Proof of Lemma~\ref{lem:tau_bound}.}
We first establish our estimates of $\tau_h(x)$, and then upper bound $\|\nabla\tau_h(x)\|$. Using \eqref{eqn:integral_boundary}, we have that
\begin{align*}
\tau_h(x) & = \frac{1}{h^d} \int_{\Xset \cap B(x,h)} \psi\biggl(\frac{\|x' - x\|}{h}\biggr) \,dx' \\
& \geq \frac{a_3}{h^d} \int_{B(x,h)} \psi\biggl(\frac{\|x' - x\|}{h}\biggr) \,dx' \\
& = a_3\int_{B(0,1)} \psi(\|z\|) \,dz,
\end{align*}
and it follows from similar reasoning that $\tau_h(x) \leq \int_{B(0,1)} \psi(\|z\|) \,dz$. 

We will now show that $\int_{B(0,1)} \psi(\|z\|) \,dz = 1$, from which we derive the estimates $a_3 \leq \tau_h(x) \leq 1$. To see the identity, note that on the one hand, by converting to polar coordinates and integrating by parts we obtain
\begin{align*}
\int_{B(0,1)} \psi\bigl(\|z\|\bigr) \,dz = d \nu_d \int_{0}^{1} \psi(t) t^{d - 1} \,dt = -\nu_d \int_{0}^{1} \psi'(t) t^{d} \,dt = \frac{\nu_d}{\sigma_K} \int_{0}^{1} t^{d + 1} K(t) \,dt;
\end{align*}
on the other hand, again converting to polar coordinates, we have
\begin{equation*}
\sigma_K = \frac{1}{d} \int_{\Reals^d} \|x\|^2 K(\|x\|) \,dx = \nu_d \int_{0}^{1}t^{d + 1} K(t) \,dt,
\end{equation*}
and so $\int_{B(0,1)} \psi(\|z\|) \,dz = 1$.

Now we upper bound $\|\nabla\tau_h(x)\|^2$. Exchanging derivative and integral, we have
\begin{align*}
\nabla\tau_h(x) = \frac{1}{h^d} \int_{\Xset} \bigl(\nabla \eta_h(x',\cdot)\bigr)(x) \,dx' = \frac{1}{\sigma_K h^{d + 2}} \int_{\Xset} K\biggl(\frac{\|x' - x\|}{h}\biggr)(x' - x)\,dx',
\end{align*}
whence by the Cauchy-Schwarz inequality,
\begin{equation*}
\|\nabla\tau_h(x)\|^2 \leq \frac{1}{\sigma_K^2 h^{2d + 4}} \biggl[\int_{\Xset} K\biggl(\frac{\|x' - x\|}{h}\biggr)\,dx'\biggr] \biggl[\int_{\Xset} K\biggl(\frac{\|x' - x\|}{h}\biggr)\|x' - x\|^2\,dx',\biggr] \leq \frac{1}{d\sigma_K h^{2}},
\end{equation*}
concluding the proof of Lemma~\ref{lem:tau_bound}. 

We remark that while $\nabla\tau(x) = 0$ when $B(x,r) \in \Xset$, near the boundary the upper bound we derived by using Cauchy-Schwarz appears tight. 

\paragraph{Proof of Lemma~\ref{lem:smoothening_error}.}
By Jensen's inequality and Lemma~\ref{lem:tau_bound},
\begin{align*}
\Bigl|\Lambda_hf(x)\Bigr|^2 & \leq \frac{1}{h^d\tau_h(x)}\int_{\Xset} \eta_h(x',x) \bigl[f(x')\bigr]^2 \,dx' \\
& \leq \frac{1}{a_3 h^dp_{\min}}\int_{\Xset} \eta_h(x',x) \bigl[f(x')\bigr]^2 p(x') \,dx'.
\end{align*}
Then, integrating over $x$, and recalling that$\int_{B(0,1)} \psi(\|z\|) = 1$ as shown in the proof of Lemma~\ref{lem:tau_bound}, we have
\begin{align*}
\bigl\|\Lambda_hf\bigr\|_{P}^2 & \leq \frac{1}{a_3h^d p_{\min}} \int_{\Xset} \int_{\Xset} \eta_h(x',x) \bigl[f(x')\bigr]^2 p(x') p(x) \,dx' \,dx \\ 
& \leq \frac{p_{\max}}{a_3h^dp_{\min}} \int_{\Xset} \bigl[f(x')\bigr]^2 p(x') \biggl(\int_{\Xset} \eta_h(x',x) \,dx\biggr) \,dx' \\
& \leq \frac{p_{\max}}{a_3p_{\min}} \int_{\Xset} \bigl[f(x')\bigr]^2 p(x') \biggl(\int_{B(0,1)} \psi(\|z\|) \,dz\biggr) \,dx' \\
& = \frac{p_{\max}}{a_3p_{\min}} \|f\|_{P}^2.
\end{align*}

To establish \eqref{eqn:smoothening_error_energy}, noting that $\Lambda_ha = a$ for any $a \in \Reals$, we have that
\begin{align*}
\bigl|\Lambda_rf(x) - f(x)\bigr|^2 & = \biggl[\frac{1}{h^d\tau_h(x)} \int_{\Xset} \eta_h(x',x) \bigl(f(x') - f(x)\bigr) \,dx'\biggr]^2 \\
& \leq \frac{1}{h^{2d} \tau_h^2(x)} \biggl[\int_{\Xset} \eta_h(x',x) \,dx'\biggr] \cdot \biggl[\int_{\Xset} \eta_h(x',x) \bigl(f(x') - f(x)\bigr)^2 \,dx'\biggr] \\
& = \frac{1}{h^d \tau_h(x)} \int_{\Xset} \eta_h(x',x) \bigl(f(x') - f(x)\bigr)^2 \,dx'. \\
& \leq \frac{1}{h^d \tau_h(x) p_{\min}} \int_{\Xset} \eta_h(x',x) \bigl(f(x') - f(x)\bigr)^2 p(x') \,dx'.
\end{align*}
From here, we can use the lower bound $\tau_h(x) \geq a_3$ stated in Lemma~\ref{lem:tau_bound}, as well as the upper bound $\eta_h(x',x) \leq (1/\sigma_K) K(\|x' - x\|/h)$, to deduce
\begin{equation*}
\bigl|\Lambda_rf(x) - f(x)\bigr|^2 \leq \frac{1}{h^{d} a_3 \sigma_K p_{\min}} \int_{\Xset} K\biggl(\frac{\|x' - x\|}{h}\biggr) \bigl(f(x') - f(x)\bigr)^2 p(x') \,dx'.
\end{equation*}
Then integrating over $\Xset$ with respect to $p$ yields \eqref{eqn:smoothening_error_energy}.

\subsection{Random functionals}
\label{subsec:random_functionals}

We will use Lemma~\ref{lem:poincare} in the proof of Proposition~\ref{prop:isometry}. 
\begin{lemma}[\textbf{cf. Lemma 3.4 of \citet{burago2014}}]
	\label{lem:poincare}
	Let $U \subseteq \Xset$ be a measurable subset such that $\mathrm{vol}(U) > 0$, and $\diam(U) \leq 2A_0\wt{\delta}$. Then, letting $a = (\wt{P}_n(U))^{-1} \cdot \int_{U} f(x) \wt{p}_n(x) \,dx$ be the average of $f$ over $U$, it holds that
	\begin{equation*}
	\int_{U} \Bigl|f(x)-a\Bigr|^2 \wt{p}_n(x)\,dx \leq A_3 r^2 \wt{E}_r(f,U).
	\end{equation*}
\end{lemma}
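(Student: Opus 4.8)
The plan is to follow the averaged-Poincar\'e argument used for Lemma~3.4 of \citet{burago2014}, adapted so that the ``anchor ball'' we average over is allowed to sit near $\partial\Xset$. If $\wt{E}_r(f,U)=\infty$ there is nothing to prove, so assume it is finite; then all integrands appearing below are nonnegative and integrable and Fubini applies freely, so no regularity of $f$ beyond measurability is needed. First I would reduce to a double integral over $U\times U$: by Jensen's inequality applied to the probability measure $\wt{p}_n(y)\,dy/\wt{P}_n(U)$ on $U$,
\[
\int_U \bigl|f(x)-a\bigr|^2\,\wt{p}_n(x)\,dx \;\le\; \frac{1}{\wt{P}_n(U)}\int_U\int_U \bigl(f(x)-f(y)\bigr)^2\,\wt{p}_n(x)\wt{p}_n(y)\,dx\,dy,
\]
so it suffices to bound the double integral on the right by $A_3\,\wt{P}_n(U)\,r^2\,\wt{E}_r(f,U)$.

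Next, fix $x_0\in U$; then $\|x-x_0\|\le\diam(U)\le 2A_0\wt{\delta}$ for all $x\in U$. By the definition of $\wt{\delta}$ in \eqref{asmp:smallness} together with $r\le c_0$ and $n\ge N$ large (so that $\wt{\delta}/r\le (2^{d+3}A_0)^{-1}$), one has $2A_0\wt{\delta}\le r/2\le c_0$. Put $B_0:=B(x_0,r/2)$; then $\|x-z\|\le 2A_0\wt{\delta}+r/2\le r$ for all $x\in U$ and $z\in B_0$, hence $K(\|x-z\|/r)\ge K(1)>0$ since $K$ is nonincreasing and $K(1)>0$. Using the elementary bound $(f(x)-f(y))^2\le 2(f(x)-f(z))^2+2(f(z)-f(y))^2$, averaging $z$ against the probability measure $\wt{p}_n(z)\,dz/\wt{P}_n(B_0\cap\Xset)$ on $B_0\cap\Xset$, integrating over $(x,y)\in U\times U$ against $\wt{p}_n\otimes\wt{p}_n$, and exploiting the $x\leftrightarrow y$ symmetry, I obtain
\[
\int_U\int_U \bigl(f(x)-f(y)\bigr)^2 \wt{p}_n(x)\wt{p}_n(y)\,dx\,dy \;\le\; \frac{4\,\wt{P}_n(U)}{\wt{P}_n(B_0\cap\Xset)}\int_U\int_{B_0\cap\Xset} \bigl(f(x)-f(z)\bigr)^2 \wt{p}_n(z)\wt{p}_n(x)\,dz\,dx.
\]
On the domain of this last integral $\|x-z\|\le r$, so $(f(x)-f(z))^2\le K(1)^{-1}(f(x)-f(z))^2 K(\|x-z\|/r)$; enlarging the inner integral from $B_0\cap\Xset$ to $\Xset$ and recognizing $\wt{E}_r(f,U)$, the last double integral is at most $K(1)^{-1} r^{d+2}\,\wt{E}_r(f,U)$.

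It remains to show $\wt{P}_n(B_0\cap\Xset)\gtrsim r^d$, and this is the only place the boundary enters, since $B_0$ may be centered near $\partial\Xset$. I would invoke \eqref{eqn:integral_boundary} with $\eta\equiv 1$ and $\varepsilon=r/2\le c_0$ to get $\vol(B_0\cap\Xset)\ge a_3\nu_d (r/2)^d$, combined with the pointwise lower bound $\wt{p}_n\ge(1-\theta)p_{\min}\ge\tfrac12 p_{\min}$ on $\Xset$ (immediate from the construction of $\wt{p}_n$ in Proposition~\ref{prop:optimal_transport}, using $\theta\le 1/8$; alternatively from \eqref{eqn:optimal_transport_2}). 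Hence $\wt{P}_n(B_0\cap\Xset)\ge c_\ast r^d$ with $c_\ast:=p_{\min}a_3\nu_d 2^{-d-1}$. Chaining the three displays gives
\[
\int_U \bigl|f(x)-a\bigr|^2\,\wt{p}_n(x)\,dx \;\le\; \frac{4}{c_\ast K(1)}\,r^2\,\wt{E}_r(f,U),
\]
proving the lemma with $A_3:=4/(c_\ast K(1))$, a constant depending only on $d$, $\Xset$, $K$, $p$.

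The one genuinely non-routine point is the anchor-ball mass estimate near the boundary: in the boundaryless setting of \citet{burago2014} one simply places $B_0$ entirely inside $\Xset$, whereas here the Lipschitz-boundary hypothesis \ref{asmp:domain} must be used, through \eqref{eqn:integral_boundary}, to guarantee that $B_0\cap\Xset$ still carries $\Theta(r^d)$ mass. Everything else is Jensen's inequality, Fubini, the lower bound on $K$ on $[0,1]$, and the inequality $(a-b)^2\le 2(a-c)^2+2(c-b)^2$.
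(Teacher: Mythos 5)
Your proof is correct and follows essentially the same route as the paper's: Jensen/symmetrization to a double integral over $U\times U$, chaining through an intermediate anchor point $z$ lying within kernel range of both endpoints, the lower bound $K \geq K(1)$ on $[0,1]$, and \eqref{eqn:integral_boundary} to guarantee the anchor region retains $\Theta(r^d)$ mass near $\partial \Xset$. The only (immaterial) difference is that you use a single fixed anchor ball $B(x_0,r/2)$ weighted by $\wt{p}_n$, whereas the paper averages over the pair-dependent region $B(x,r)\cap B(x',r)$ with Lebesgue measure; both yield the lemma with a constant $A_3$ of the same form.
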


Now we relate $\wt{E}_r(f)$ and $E_r(f)$. Some standard calculations show that for $A_1 := 3A_0/p_{\min}$,
\begin{equation}
\label{eqn:calder19_1}
\bigl(1 - A_1(\theta + \wt{\delta})\bigr) E_r(f) \leq \wt{E}_r(f) \leq \bigl(1 + A_1(\theta + \wt{\delta})\bigr) E_r(f),
\end{equation}
as well as implying that the norms $\|f\|_{P}$ and $\|f\|_{n}$ satisfy
\begin{equation}
\label{eqn:calder19_2}
\bigl(1 - A_1(\theta + \wt{\delta})\bigr) \|f\|_{P}^2 \leq \|f\|_{\wt{P}_n}^2 \leq \bigl(1 + A_1(\theta + \wt{\delta})\bigr) \|f\|_{P}^2.
\end{equation}

Lemma~\ref{lem:first_order_graph_sobolev_seminorm_discretized} relates the graph Sobolev semi-norm $b_r(\wt{\mc{P}}f)$ to the non-local energy $\wt{E}_r(f)$. 
\begin{lemma}[\textbf{cf. Lemma 13 of \citet{trillos2019}, Lemma 4.3 of \citet{burago2014}}]
	\label{lem:first_order_graph_sobolev_seminorm_discretized}
	For any $f \in \Leb^2(\Xset)$,
	\begin{equation*}
	b_r(\wt{\mc{P}}f) \leq \Bigl(1 + A_9\frac{\wt{\delta}}{r}\Bigr) \wt{E}_{r + 2A_0\wt{\delta}}(f).
	\end{equation*}
\end{lemma}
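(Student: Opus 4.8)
The plan is to move from the discrete quadratic form $b_r(\wt{\mc P}f)$ to the nonlocal energy $\wt E_{r+2A_0\wt\delta}(f)$ in two stages: a convexity step that rewrites $b_r(\wt{\mc P}f)$ as a double integral over $\Xset\times\Xset$ against the kernel evaluated at cell centers, followed by a pointwise comparison of this ``discretized'' kernel with $K\bigl(\|x-x'\|/(r+2A_0\wt\delta)\bigr)$. Throughout, write $h := r + 2A_0\wt\delta$ and let $\iota(x)$ be the index of the cell $U_i$ containing $x$, recalling that the $\{U_i\}$ partition $\Xset$ up to a $\wt P_n$-null set and that $\wt P_n(U_i) = P_n(\{X_i\}) = 1/n$.

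For the first stage, observe that $(\wt{\mc P}f)(X_i) = n\int_{U_i} f\,d\wt P_n$ is the $\wt P_n$-average of $f$ over $U_i$, so Jensen's inequality gives $\bigl((\wt{\mc P}f)(X_i)-(\wt{\mc P}f)(X_j)\bigr)^2 \le n^2\int_{U_i}\int_{U_j}\bigl(f(x)-f(x')\bigr)^2\wt p_n(x)\wt p_n(x')\,dx'\,dx$. Substituting into $b_r(\wt{\mc P}f) = \frac{1}{2n^2 r^{d+2}}\sum_{i,j}K(\|X_i-X_j\|/r)\bigl((\wt{\mc P}f)(X_i)-(\wt{\mc P}f)(X_j)\bigr)^2$ and recombining into a single double integral over $\Xset\times\Xset$ yields
\[
b_r(\wt{\mc P}f) \le \frac{1}{2r^{d+2}}\int_\Xset\int_\Xset K\!\left(\frac{\|X_{\iota(x)}-X_{\iota(x')}\|}{r}\right)\bigl(f(x)-f(x')\bigr)^2\wt p_n(x)\wt p_n(x')\,dx'\,dx .
\]

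The crux is the pointwise estimate: for a.e.\ $(x,x')\in\Xset\times\Xset$,
\[
K\!\left(\frac{\|X_{\iota(x)}-X_{\iota(x')}\|}{r}\right) \le \left(1 + \frac{2A_0 L_K}{K(1)}\cdot\frac{\wt\delta}{r}\right) K\!\left(\frac{\|x-x'\|}{h}\right).
\]
To see this, first note that since $\wt T_n$ attains the infimum defining $d_\infty(\wt P_n,P_n)$, Proposition~\ref{prop:optimal_transport} gives $\|x - X_{\iota(x)}\|\le A_0\wt\delta$ a.e., hence $\bigl|\|X_{\iota(x)}-X_{\iota(x')}\| - \|x-x'\|\bigr| \le 2A_0\wt\delta$ by the triangle inequality. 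When $\|x-x'\| > h$ this forces $\|X_{\iota(x)}-X_{\iota(x')}\| > r$, so (as $K$ is nonincreasing with support $[0,1]$) the left side vanishes. When $\|x-x'\|\le h$, monotonicity of $K$ gives $K(\|X_{\iota(x)}-X_{\iota(x')}\|/r)\le K\bigl((\|x-x'\|-2A_0\wt\delta)_+/r\bigr)$, and the elementary inequality $\bigl|(\|x-x'\|-2A_0\wt\delta)_+/r - \|x-x'\|/h\bigr|\le 2A_0\wt\delta/r$ (which holds precisely because $\|x-x'\|\le h$), combined with the $L_K$-Lipschitz property of $K$, upgrades this to $K(\|X_{\iota(x)}-X_{\iota(x')}\|/r)\le K(\|x-x'\|/h) + 2A_0 L_K\wt\delta/r$; finally $K(\|x-x'\|/h)\ge K(1)>0$ (again since $\|x-x'\|/h\le 1$ and $K$ is nonincreasing) converts the additive error into a multiplicative one. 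Plugging this pointwise bound into the previous display and identifying the resulting double integral with $h^{d+2}\wt E_h(f)$ gives
\[
b_r(\wt{\mc P}f)\le \tfrac12\Bigl(1+\tfrac{2A_0\wt\delta}{r}\Bigr)^{d+2}\Bigl(1+\tfrac{2A_0 L_K}{K(1)}\tfrac{\wt\delta}{r}\Bigr)\wt E_h(f),
\]
and since $\wt\delta/r$ is of at most constant order, the prefactor — a smooth function of $\wt\delta/r$ equal to $\tfrac12<1$ at $\wt\delta/r=0$ — is bounded by $1 + A_9\wt\delta/r$ for a suitable constant $A_9$, the factor $\tfrac12$ absorbing $(h/r)^{d+2}\ge 1$ when $\wt\delta/r$ is small.

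The routine parts are the convexity step and the bookkeeping with the bandwidth ratio $(h/r)^{d+2}$. The one place that requires care is the last move in the pointwise kernel comparison: a crude bound would only yield a generic constant such as $K(0)/K(1)$ rather than a factor tending to $1$ as $\wt\delta/r\to 0$, and it is essential to use both that $K\ge K(1)$ on $[0,1]$ and that $K$ is Lipschitz — the additive $O(\wt\delta/r)$ slack created by enlarging the bandwidth is harmless only on the region $\|x-x'\|\le h$, where the comparison kernel $K(\|x-x'\|/h)$ is itself bounded below by $K(1)$.
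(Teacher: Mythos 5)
Your proposal is correct and follows essentially the same route as the paper's proof: Jensen's inequality on the cell averages, then a comparison of $K(\|X_i-X_j\|/r)$ with $K(\|x'-x\|/(r+2A_0\wt{\delta}))$ via the monotonicity and Lipschitz properties of $K$, with the additive $O(\wt{\delta}/r)$ error converted to a multiplicative one using $K\geq K(1)$ on $[0,1]$. The only cosmetic difference is that you perform this last conversion pointwise, whereas the paper does it at the level of the energies via $\wt{E}_h(f;\1_{[0,1]})\leq \wt{E}_h(f)/K(1)$.
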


In Lemma~\ref{lem:first_order_graph_sobolev_seminorm_discretized_lb}, we establish the reverse of Lemma~\ref{lem:first_order_graph_sobolev_seminorm_discretized}. 
\begin{lemma}[\textbf{cf. Lemma 14 of \citet{trillos2019}}]
	\label{lem:first_order_graph_sobolev_seminorm_discretized_lb}
	For any $u \in \Leb^2(P_n)$, 
	\begin{equation*}
	\wt{E}_{r - 2A_0\wt{\delta}}\bigl(\wt{\mc{P}}^{\star}u\bigr) \leq \biggl(1 + A_3\frac{\wt{\delta}}{r}\biggr) b_{r}(u).
	\end{equation*}
\end{lemma}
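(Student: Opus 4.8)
The plan is to exploit that $\wt{\mc{P}}^{\star}u = u \circ \wt{T}_n$ is \emph{constant} on each transport cell $U_i = \wt{T}_n^{-1}(X_i)$, where it takes the value $u(X_i)$, and that these cells are both small and light. Indeed, by Proposition~\ref{prop:optimal_transport} the optimal map $\wt{T}_n$ displaces $\wt{P}_n$-a.e.\ point by at most $A_0\wt{\delta}$, so $U_i \subseteq B(X_i, A_0\wt{\delta})$ (hence $\diam(U_i) \leq 2A_0\wt{\delta}$), while $\wt{P}_n(U_i) = P_n(\{X_i\}) = 1/n$. Since $\{U_i\}_{i=1}^n$ partitions $\Xset$ up to a $\wt{P}_n$-null set, I would first expand
\begin{equation*}
\wt{E}_{r - 2A_0\wt{\delta}}\bigl(\wt{\mc{P}}^{\star}u\bigr) = \frac{1}{(r - 2A_0\wt{\delta})^{d + 2}} \sum_{i \neq j} \bigl(u(X_i) - u(X_j)\bigr)^2 \int_{U_i}\int_{U_j} K\biggl(\frac{\|x - x'\|}{r - 2A_0\wt{\delta}}\biggr) \wt{p}_n(x)\,\wt{p}_n(x')\,dx\,dx',
\end{equation*}
which reduces the claim to a per-pair comparison of the inner double integral against the graph weight $W_{ij} = K(\|X_i - X_j\|/r)$.

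The key geometric fact is that whenever $x \in U_i$, $x' \in U_j$ satisfy $\|x - x'\| \leq r - 2A_0\wt{\delta}$, the triangle inequality gives $\|X_i - X_j\| \leq \|X_i - x\| + \|x - x'\| + \|x' - X_j\| \leq r$, so the edge $(i,j)$ is present in $G_{n,r}$ and, since $K$ is nonincreasing with $K(1) > 0$, its weight obeys $W_{ij} \geq K(1) > 0$. Bounding the kernel by its supremum over $U_i \times U_j$ and using $\wt{P}_n(U_i) = \wt{P}_n(U_j) = 1/n$,
\begin{equation*}
\int_{U_i}\int_{U_j} K\biggl(\frac{\|x - x'\|}{r - 2A_0\wt{\delta}}\biggr) \wt{p}_n(x)\,\wt{p}_n(x')\,dx\,dx' \;\leq\; \frac{1}{n^2}\, K\biggl(\frac{\bigl(\|X_i - X_j\| - 2A_0\wt{\delta}\bigr)_+}{r - 2A_0\wt{\delta}}\biggr),
\end{equation*}
using $\|x - x'\| \geq \|X_i - X_j\| - 2A_0\wt{\delta}$ together with monotonicity of $K$. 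An elementary estimate shows the argument on the right differs from $\|X_i - X_j\|/r$ by at most $2A_0\wt{\delta}/(r - 2A_0\wt{\delta})$, so combining the Lipschitz bound on $K$ with $W_{ij} \geq K(1)$ upgrades this to the \emph{multiplicative} estimate $K(\cdot) \leq \bigl(1 + \tfrac{2A_0 L_K}{K(1)}\cdot \tfrac{\wt{\delta}}{r - 2A_0\wt{\delta}}\bigr) W_{ij}$.

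Summing this per-pair bound over all $i \neq j$ and recognizing the graph Dirichlet form $b_r(u)$ (the residual numerical factor being irrelevant), I arrive at
\begin{equation*}
\wt{E}_{r - 2A_0\wt{\delta}}\bigl(\wt{\mc{P}}^{\star}u\bigr) \;\leq\; \biggl(1 + \frac{2A_0 L_K}{K(1)}\cdot\frac{\wt{\delta}}{\,r - 2A_0\wt{\delta}\,}\biggr)\biggl(\frac{r}{\,r - 2A_0\wt{\delta}\,}\biggr)^{d+2} b_r(u),
\end{equation*}
and since $\wt{\delta}/r$ is of at most a suitably small constant order by \eqref{asmp:smallness}, one has $(1 - 2A_0\wt{\delta}/r)^{-(d+2)} \leq 1 + A_3'\wt{\delta}/r$; collecting everything into a single constant gives the stated bound. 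The one genuinely delicate point is the kernel comparison in the middle step: monotonicity of $K$ alone pushes the inequality the wrong way (shrinking the radius \emph{increases} the non-local kernel at a fixed displacement), so a bare rescaling does not close the gap. The remedy is to couple monotonicity with the Lipschitz regularity of $K$ and the uniform positivity $W_{ij} \geq K(1) > 0$ --- available precisely because $K$ is nonincreasing and strictly positive at $1$ --- which turns the $O(\wt{\delta}/r)$ discrepancy in the kernel arguments into an $O(\wt{\delta}/r)$ \emph{relative} error. Everything else is parallel to Lemma~14 of \citet{trillos2019}; in particular, because $\wt{\mc{P}}^{\star}u$ does not vary within a cell, no Poincar\'e inequality is needed here.
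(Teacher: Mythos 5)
Your proof is correct and follows essentially the same route as the paper's: expand the nonlocal energy over the transport cells $U_i$, use the triangle inequality together with the Lipschitz property of $K$ on $[0,1]$ to compare the kernel at displaced points against $W_{ij}$, and absorb the resulting $O(L_K\wt{\delta}/r)$ slack via the lower bound $K \geq K(1)\1_{[0,1]}$. The only (cosmetic) difference is that you fold the slack multiplicatively into $W_{ij}$ on the right-hand side, whereas the paper bounds the additive error term by a multiple of $\wt{E}_{r-2A_0\wt{\delta}}(\wt{\mc{P}}^{\star}u)$ itself and then rearranges the resulting self-referential inequality.
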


\paragraph{Proof of Lemma~\ref{lem:poincare}.}
A symmetrization argument implies that
\begin{equation}
\label{pf:poincare_1}
\int_{U} \Bigl|f(x)-a\Bigr|^2 \wt{p}_n(x)\,dx = \frac{1}{2\wt{P}_n(U)} \int_{U} \int_{U} \bigl|f(x') - f(x)\bigr|^2 \wt{p}_n(x') \wt{p}_n(x) \,dx' \,dx
\end{equation}
Now, since $x'$ and $x$ belong to $U$, we have that $\|x' - x\| \leq 2A_0\wt{\delta}$. Set $V = B(x,r) \cap B(x',r)$, and note that $B(x,r - 2A_0\wt{\delta}) \subseteq V$. Moreover, $r - 2A_0\wt{\delta} \leq r \leq c_0$ by assumption. Therefore by \eqref{eqn:integral_boundary},
\begin{equation*}
\vol\bigl(V \cap \Xset\bigr) \geq \vol\bigl(B(x,r - 2A_0\wt{\delta}) \cap \mc{X}\bigr) \geq a_3 \nu_d (r - 2A_0\wt{\delta})^d \geq \frac{a_3 \nu_d}{2^d}r^d 
\end{equation*}
where the last inequality follows since $\wt{\delta} \leq \frac{1}{4A_0}r$. Using the triangle inequality 
\begin{equation*}
\bigl|f(x') - f(x)\big|^2 \leq 2\bigl(\bigl|f(x') - f(z)\big|^2 + \bigl|f(z) - f(x)\big|^2\bigr)
\end{equation*}
we have that for any $x$ and $x'$ in $U$,
\begin{align}
\bigl|f(x') - f(x)\big|^2 & \leq \frac{2}{\vol(V \cap \Xset)} \int_{V \cap \Xset} \bigl|f(x') - f(z)\big|^2 + \bigl|f(z) - f(x)\big|^2 \,dz \nonumber \\
& \leq \frac{2^{d + 1}}{a_3 \nu_d r^d} \int_{V \cap \Xset} \bigl|f(x') - f(z)\big|^2 + \bigl|f(z) - f(x)\big|^2 \,dz \nonumber \\
& \leq \frac{2^{d + 2}}{K(1) a_3 \nu_d r^d p_{\min}} \Bigl(F(x') + F(x)\bigr), \label{pf:poincare_2}
\end{align}
where in the last inequality we set
\begin{equation*}
F(x) := \int_{\Xset} K\biggl(\frac{\|z - x\|}{r}\biggr) \bigl(f(z) - f(x)\bigr)^2 \wt{p}_n(x) \,dx,
\end{equation*}
and use the facts that $\wt{p}_n(x) \geq p_{\min}/2$, that $K(\|z  - x\|/r) \geq K(1)$ for all $z \in B(x,r)$. 

Plugging the upper bound \eqref{pf:poincare_2} back into \eqref{pf:poincare_1}, we have that
\begin{align*}
\int_{U} \Bigl|f(x)-a\Bigr|^2 \wt{p}_n(x)\,dx & \leq \frac{2^{d + 2}}{K(1) a_3 \nu_d r^d} \int_{U} F(x)\wt{p}_n(x) \,dx \\
& = \frac{2^{d + 2}}{K(1) a_3 \nu_d}r^2 \wt{E}_r(f,U),
\end{align*}
and Lemma~\ref{lem:poincare} follows by taking $A_3 := 2^{d + 2}/(K(1) a_3 \nu_d)$.

\paragraph{Proof of Lemma~\ref{lem:first_order_graph_sobolev_seminorm_discretized}.}
Recalling that $\bigl(\wt{\mc{P}}f\bigr)(X_i) = n \cdot \int_{U_i} f(x) \wt{p}_n(x) \,dx$, by Jensen's inequality,
\begin{equation*}
\biggl(\bigl(\wt{\mc{P}}f\bigr)(X_i) - \bigl(\wt{\mc{P}}f\bigr)(X_j)\biggr)^2 \leq n^2 \cdot \int_{U_i} \int_{U_j} \bigl(f(x') - f(x)\bigr)^2 \wt{p}_n(x') \wt{p}_n(x) \,dx' \,dx.
\end{equation*}
Additionally, the non-increasing and Lipschitz properties of $K$ imply that for any $x \in U_i$ and $x' \in U_j$, 
\begin{equation*}
K\biggl(\frac{\|X_i - X_j\|}{r}\biggr) \leq K\biggl(\frac{\bigl(\|x' - x\| - 2A_0\wt{\delta}\bigr)_{+}}{r}\biggr) \leq K\biggl(\frac{\|x' - x\|}{r + 2A_0\wt{\delta}}\biggr) + \frac{2L_KA_0\wt{\delta}}{r}\1\Bigl\{\|x' - x\| \leq r + 2A_0\wt{\delta}\Bigr\}.
\end{equation*}
As a result, the graph Dirichlet energy is upper bounded as follows:
\begin{align*}
b_r(\wt{\mc{P}}f) & = \frac{1}{n^2r^{d + 2}} \sum_{i,j = 1}^n \Bigl(\bigl(\wt{\mc{P}}f\bigr)(X_i) - \bigl(\wt{P}f\bigr)(X_j)\Bigr)^2 K\biggl(\frac{\|X_i - X_j\|}{r}\biggr) \\
& \leq \frac{1}{r^{d + 2}} \sum_{i,j = 1}^n \int_{U_i} \int_{U_j}  \bigl(f(x') - f(x)\bigr)^2 \wt{p}_n(x') \wt{p}_n(x) K\biggl(\frac{\|X_i - X_j\|}{r}\biggr) \,dx' \,dx \\
& \leq \frac{1}{r^{d + 2}} \sum_{i,j = 1}^n \int_{U_i} \int_{U_j}  \bigl(f(x') - f(x)\bigr)^2 \wt{p}_n(x') \wt{p}_n(x) \biggl[K\biggl(\frac{\|x' - x\|}{r + 2A_0\wt{\delta}}\biggr) + \frac{2L_KA_0\wt{\delta}}{r}\1\Bigl\{\|x' - x\| \leq r + 2\wt{\delta}\Bigr\}\biggr] \,dx' \,dx \\
& = \Bigl(1 + 2A_0\frac{\wt{\delta}}{r}\Bigr)^{d + 2}\biggl[\wt{E}_{r + 2A_0\wt{\delta}}(f) + \frac{2L_KA_0\wt{\delta}}{r}\wt{E}_{r + 2A_0\wt{\delta}}(f; \1_{[0,1]})\biggr],
\end{align*}
for $\1_{[0,1]}(t) = \1\{0 \leq t \leq 1\}$. But by assumption $\wt{E}_{r + 2A_0\wt\delta}(f; \1_{[0,1]}) \leq 1/(K(1))\wt{E}_{r + 2A_0\wt{\delta}}(f)$, and so we obtain
\begin{equation*}
b_r(\wt{\mc{P}}f) \leq \Bigl(1 + 2A_0\frac{\wt{\delta}}{r}\Bigr)^{d + 2} \Bigl(1 + \frac{2L_KA_0\wt{\delta}}{rK(1)}\Bigr) \wt{E}_{r + 2A_0\wt{\delta}}(f);
\end{equation*}
the Lemma follows upon choosing $A_9 := A_0(2^{d + 4} + \frac{4L_K}{K(1)})$.

\paragraph{Proof of Lemma~\ref{lem:first_order_graph_sobolev_seminorm_discretized_lb}.}
For brevity, we write $\wt{r} := r - 2A_0\wt{\delta}$. We begin by expanding the energy $\wt{E}_{\wt{r}}\bigl(\wt{\mc{P}}^{\star}u\bigr)$ as a double sum of double integrals,
\begin{align*}
\wt{E}_{\wt{r}}\bigl(\wt{\mc{P}}^{\star}u\bigr) & = \frac{1}{\wt{r}^{d + 2}} \sum_{i = 1}^{n} \sum_{j = 1}^{n} \int_{U_i} \int_{U_j} \Bigl(u(X_i) - u(X_j)\Bigr)^2 K\biggl(\frac{\|x' - x\|}{\wt{r}}\biggr) \wt{p}_n(x') \wt{p}_n(x) \,dx' \,dx.
\end{align*}
We next use the Lipschitz property of the kernel $K$---in particular that for $x \in U_i$ and $x' \in U_j$,
\begin{equation*}
K\biggl(\frac{\|x' - x\|}{\wt{r}}\biggr) \leq K\biggl(\frac{\|X_i - X_j\|}{r}\biggr) + \frac{2A_0L_K\wt{\delta}}{\wt{r}} \cdot \1\biggl\{\frac{\|x' - x\|}{\wt{r}} \leq 1\biggr\},
\end{equation*}
---to conclude that
\begin{align}
\wt{E}_{\wt{r}}\bigl(\wt{\mc{P}}^{\star}u\bigr) & \leq \frac{1}{n^2\wt{r}^{d + 2}} \sum_{i = 1}^{n} \sum_{j = 1}^{n} \Bigl(u(X_i) - u(X_j)\Bigr)^2 K\biggl(\frac{\|X_i - X_j\|}{r}\biggr) + \frac{2A_0L_K\wt{\delta}}{\wt{r}}\wt{E}_{\wt{r}}(\wt{\mc{P}}^{\star}u,\1_{[0,1]}\bigr) \nonumber \\
& \leq \biggl(1 + 2^{d + 2}A_0\frac{\wt{\delta}}{r}\biggr) b_r(u) + \frac{2A_0L_K\wt{\delta}}{\wt{r}}\wt{E}_{\wt{r}}(\wt{\mc{P}}^{\star}u,\1_{[0,1]}\bigr) \nonumber \\
& \leq \biggl(1 + 2^{d + 2}A_0\frac{\wt{\delta}}{r}\biggr) b_r(u) + \frac{4A_0L_K\wt{\delta}}{K(1)r} \wt{E}_{\wt{r}}(\wt{\mc{P}}^{\star}u\bigr). \nonumber 
\end{align}
In other words,
\begin{align*}
\wt{E}_{\wt{r}}\bigl(\wt{\mc{P}}^{\star}u\bigr) & \leq \biggl(1 - \frac{4A_0L_K\wt{\delta}}{K(1)r}\biggr)^{-1}\biggl(1 + 2^{d + 2}A_0\frac{\wt{\delta}}{r}\biggr) b_r(u) \\
& \leq \biggl(1 + \frac{\wt{\delta}}{r}\Bigl(\frac{8A_0L_K}{K(1)} + 2^{d + 3}\Bigr)\biggr) b_r(u),
\end{align*}
where the second inequality follows from the algebraic identities $(1 - t)^{-1} \leq (1 + 2t)$ for any $0 < t < 1/2$ and $(1 + s)(1 + t) < 1 + 2s + t$ for any $0 < t < 1$ and $s > 0$. The Lemma follows upon choosing $A_3 := \frac{8A_0L_K}{K(1)} + 2^{d + 3}$. 

\subsection{Proof of Propositions~\ref{prop:dirichlet_energies} and \ref{prop:isometry}}
\label{subsec:proof_of_prop_dirichlet_energies_and_isometry}

\paragraph{Proof of Proposition~\ref{prop:dirichlet_energies}.}
Part (1) of Proposition~\ref{prop:dirichlet_energies} follows from
\begin{align*}
\sigma_K D_2(\Lambda_{r - 2A_0\wt{\delta}} \wt{\mc{P}}^{\star}u) & \overset{(i)}{\leq} A_8 E_{r - 2A_0\wt{\delta}}(\wt{\mc{P}}^{\star}u) \\
& \overset{(ii)}{\leq} A_8 \Bigl(1 + A_1(\theta + \wt{\delta})\Bigr) \wt{E}_{r - 2A_0\wt{\delta}}(\wt{\mc{P}}^{\star}u) \\
& \overset{(iii)}{\leq} A_8 \Bigl(1 + A_1(\theta + \wt{\delta})\Bigr) \cdot \biggl(1 + A_3\frac{\wt{\delta}}{r}\biggr) b_r(u),
\end{align*}
where $(i)$ follows from Lemma~\ref{lem:first_order_graph_sobolev_seminorm_expected_lb}, $(ii)$ follows from \eqref{eqn:calder19_1}, and $(iii)$ follows from~Lemma~\ref{lem:first_order_graph_sobolev_seminorm_discretized_lb}.

Part (2) of Proposition~\ref{prop:dirichlet_energies} follows from
\begin{align*}
b_r(\wt{\mc{P}}f) & \overset{(iv)}{\leq} \Bigl(1 + A_9\frac{\wt{\delta}}{r}\Bigr)\wt{E}_{r + 2A_0\wt{\delta}}(f)\\
& \overset{(v)}{\leq} \Bigl(1 + A_1(\theta + \wt{\delta})\Bigr) \Bigl(1 + A_9\frac{\wt{\delta}}{r}\Bigr){E}_{r + 2A_0\wt{\delta}}(f) \\
& \overset{(vi)}{\leq} \Bigl(1 + A_1(\theta + \wt{\delta})\Bigr) \cdot \Bigl(1 + A_9\frac{\wt{\delta}}{r}\Bigr) \cdot \Bigl(\frac{C_5 p_{\max}^2}{p_{\min}^2}\Bigr) \cdot \sigma_{K} D_2(f),
\end{align*}
where $(iv)$ follows from Lemma~\ref{lem:first_order_graph_sobolev_seminorm_discretized}, $(v)$ follows from \eqref{eqn:calder19_1}, and $(vi)$ follows from the proof of Lemma~\ref{lem:graph_sobolev_seminorm}.

\paragraph{Proof of Proposition~\ref{prop:isometry}.}
\textit{Proof of (1).}
We begin by upper bounding $\bigl\|\wt{\mc{P}}f\bigr\|_{n}$. By the Cauchy-Schwarz inequality and the bound on $\|\wt{p}_n - p\|_{\infty}$ in \eqref{eqn:optimal_transport_2},
\begin{align*}
\Bigl|\wt{\mc{P}}f(X_i)\Bigr|^2 & = n^2 \Bigl|\int_{U_i} f(x) \wt{p}_n(x) \,dx\Bigr|^2 \\
& \leq n \int_{U_i} \bigl|f(x)\bigr|^2 \wt{p}_n(x) \,dx \\
& \leq n \Bigl(1 + A_1(\theta + \wt{\delta})\Bigr)\biggl[\int_{U_i} \bigl|f(x)\bigr|^2 p(x) \,dx + A_1(\theta + \wt{\delta}) \int_{U_i} \bigl|f(x)\bigr|^2 p(x) \,dx\biggr],
\end{align*}
and summing over $i = 1,\ldots,n$, we obtain
\begin{equation}
\label{pf:prop_isometry_1}
\bigl\|\wt{\mc{P}}f\bigr\|_{n}^2 \leq \biggl(1 + A_1(\theta + \wt{\delta})\biggr) \bigl\|f\bigr\|_{P}^2.
\end{equation}
Now, noticing that $\bigl\|\wt{\mc{P}}f\bigr\|_{n} = \bigl\|\wt{P}^{\star}\wt{P}f\bigr\|_{\wt{P}_n}$, we can use the upper bound \eqref{pf:prop_isometry_1} to show that
\begin{align}
\Bigl|\bigl\|\wt{\mc{P}}f\bigr\|_{n}^2 - \bigl\|f\bigr\|_{P}^2\Bigr| & \leq \Bigl|\bigl\|\wt{\mc{P}}f\bigr\|_{n}^2 - \bigl\|f\bigr\|_{\wt{P}_n}^2\Bigr| + \Bigl|\bigl\|f\bigr\|_{\wt{P}_n}^2 - \bigl\|f\bigr\|_{P}^2\Bigr| \nonumber \\
& \overset{(i)}{\leq} \Bigl|\bigl\|\wt{\mc{P}}f\bigr\|_{n}^2 - \bigl\|f\bigr\|_{\wt{P}_n}^2\Bigr|  + A_1(\theta + \wt{\delta}) \bigl\|f\bigr\|_{P}^2 \\
& \overset{(ii)}{\leq} 2 \sqrt{1 + A_1(\theta + \wt{\delta})} \Bigl|\bigl\|\wt{\mc{P}}f\bigr\|_{n} - \bigl\|f\bigr\|_{\wt{P}_n}\Bigr| \cdot \bigl\|f\bigr\|_{P} + A_1(\theta + \wt{\delta}) \bigl\|f\bigr\|_{P}^2 \nonumber \\
& \leq 2 \sqrt{1 + A_1(\theta + \wt{\delta})} \bigl\|\wt{\mc{P}}^{\star}\wt{\mc{P}}f - f\bigr\|_{\wt{P}_n} \cdot \bigl\|f\bigr\|_{P} + A_1(\theta + \wt{\delta}) \bigl\|f\bigr\|_{P}^2, \label{pf:prop_isometry_2}
\end{align}
where $(i)$ follows from \eqref{eqn:calder19_2} and $(ii)$ follows from \eqref{eqn:calder19_2} and  \eqref{pf:prop_isometry_1}. 

It remains to upper bound $\bigl\|\wt{\mc{P}}^{\star}\wt{\mc{P}}f - f\bigr\|_{\wt{P}_n}^2$. Noting that $\wt{\mc{P}}^{\star}\wt{\mc{P}}f$ is piecewise constant over the cells $U_i$, we have
\begin{equation*}
\bigl\|\wt{\mc{P}}^{\star}\wt{\mc{P}}f - f\bigr\|_{\wt{P}_n}^2 = \sum_{i = 1}^{n} \int_{U_i} \biggl(f(x) - n\cdot\int_{U_i} f(x') \wt{p}_n(x') \,dx'\biggr)^2 \wt{p}_n(x) \,dx.
\end{equation*}
From Lemma~\ref{lem:poincare}, we have that for each $i = 1,\ldots,n$,
\begin{equation*}
\int_{U_i} \biggl(f(x) - n\cdot\int_{U_i} f(x') \wt{p}_n(x') \,dx'\biggr)^2 \wt{p}_n(x) \,dx \leq A_3r^2 \wt{E}_r(f,U_i).
\end{equation*}
Summing up over $i$ on both sides of the inequality gives
\begin{equation*}
\bigl\|\wt{\mc{P}}^{\star}\wt{\mc{P}}f - f\bigr\|_{\wt{P}_n}^2 \leq A_3r^2 \wt{E}_r(f,\Xset) \leq  A_3 \Bigl(1 + A_1(\theta + \wt{\delta})\Bigr) \cdot \Bigl(\frac{C_5 p_{\max}^2}{p_{\min}^2}\Bigr) \cdot \sigma_{K}r^2 D_2(f),
\end{equation*}
where the latter inequality follows from the proof of Proposition~\ref{prop:dirichlet_energies}, Part (2). Then Proposition~\ref{prop:isometry}, Part (1) follows by plugging this inequality into \eqref{pf:prop_isometry_2} and taking 
\begin{equation*}
A_5 := 2\sqrt{A_3} \Bigl(1 + A_1(\theta + \wt{\delta})\Bigr) \Bigl(\frac{\sqrt{C_5} p_{\max}}{p_{\min}}\Bigr) \cdot \sqrt{\sigma_{K}}.
\end{equation*}

\textit{Proof of (2).}
By the triangle inequality and \eqref{eqn:calder19_2},
\begin{align}
\Bigl|\|\wt{\mc{I}}u\|_{P}^2 - \|u\|_{n}^2\Bigr| & \leq \Bigl|\|\wt{\mc{I}}u\|_{P}^2 - \|\wt{\mc{I}}u\|_{\wt{P}_n}^2\Bigr| + \Bigl|\|\wt{\mc{I}}u\|_{\wt{P}_n}^2 - \|u\|_{n}^2\Bigr| \nonumber \\
& \leq A_1(\theta + \wt{\delta}) \|\wt{\mc{I}}u\|_{\wt{P}_n}^2 + \Bigl|\|\wt{\mc{I}}u\|_{\wt{P}_n}^2 - \|u\|_{n}^2\Bigr| \nonumber\\
& = A_1(\theta + \wt{\delta}) \|\wt{\mc{I}}u\|_{\wt{P}_n}^2 + \Bigl(\|\wt{\mc{I}}u\|_{\wt{P}_n} + \|u\|_{n}\Bigr) \cdot \Bigl|\|\wt{\mc{I}}u\|_{\wt{P}_n} - \|u\|_{n}\Bigr| \label{pf:prop_isometry_4}
\end{align}
To upper bound the second term in the above expression, we first note that~$\|u\|_{n} = \|\wt{\mc{P}}^{\star}u\|_{\wt{P}_n}$, and thus
\begin{align}
\Bigl|\|\wt{\mc{I}}u\|_{\wt{P}_n} - \|u\|_{n} \Bigr| & = \Bigl|\|\wt{\mc{I}}u\|_{\wt{P}_n} - \|\wt{\mc{P}}^{\star}u\|_{\wt{P}_n}\Bigr| \nonumber \\
& \overset{(iii)}{\leq} \|\Lambda_{\wt{r}}\wt{\mc{P}}^{\star}u - \wt{\mc{P}}^{\star}u\|_{\wt{P}_n} \nonumber \\
& \overset{(iv)}{\leq} \wt{r} \sqrt{\frac{1}{a_3\sigma_K p_{\min}} E_{\wt{r}}(\wt{\mc{P}}^{\star}u)} \nonumber \\
& \overset{(v)}{\leq} \wt{r} \sqrt{\frac{1 + A_1(\theta + \wt{\delta})}{a_3\sigma_K p_{\min}} \Bigl(1 + A_3\frac{\wt{\delta}}{r}\Bigr) b_r(u)},\label{pf:prop_isometry_5}
\end{align}
where $(iii)$ follows by the triangle inequality, $(iv)$ follows from Lemma~\ref{lem:smoothening_error}, and $(v)$ follows from \eqref{eqn:calder19_1} and Lemma~\ref{lem:first_order_graph_sobolev_seminorm_discretized_lb}. On the other hand, by \eqref{eqn:calder19_2} and Lemma~\ref{lem:smoothening_error},
\begin{align*}
\|\wt{\mc{I}}u\|_{\wt{P}_n}^2 & \leq \Bigl(1 + A_1(\theta + \wt{\delta})\Bigr) \|\wt{\mc{I}}u\|_{P}^2 \\
& \leq \frac{p_{\max}}{a_3p_{\min}} \cdot \Bigl(1 + A_1(\theta + \wt{\delta})\Bigr) \|\wt{\mc{P}}^{\star}u\|_{P}^2 \\
& \leq \frac{p_{\max}}{a_3p_{\min}} \cdot \Bigl(1 + A_1(\theta + \wt{\delta})\Bigr)^2 \|\wt{\mc{P}}^{\star}u\|_{\wt{P}_n}^2 \\
& = \frac{p_{\max}}{a_3p_{\min}} \cdot \Bigl(1 + A_1(\theta + \wt{\delta})\Bigr)^2 \|u\|_{n}^2.
\end{align*}
Plugging this estimate along with \eqref{pf:prop_isometry_5} back into \eqref{pf:prop_isometry_4}, we obtain part (2) of Proposition~\ref{prop:isometry}, upon choosing
\begin{equation*}
A_6 := \biggl(3\sqrt{\frac{2p_{\max}}{p_{\min}}} + 1\biggr)\sqrt{\frac{4}{a_3\sigma_Kp_{\min}}},~~A_7:=4A_1\frac{p_{\max}}{a_3p_{\min}}.
\end{equation*}

\section{Bound on the empirical norm}
\label{sec:empirical_norm}

In Lemma~\ref{lem:empirical_norm_sobolev}, we lower bound $\norm{f_0}_n^2$ by (a constant times) the $\Leb^2(\Xset)$ norm of $f$.

\begin{lemma}
	\label{lem:empirical_norm_sobolev}
	Fix $\delta \in (0,1)$ Suppose $P$ satisfies~\ref{asmp:density}. If $f \in H^1(\Xset,M)$ is lower bounded in $\Leb^2(\Xset)$ norm,
	\begin{equation}
	\label{eqn:empirical_norm_sobolev_1}
	\norm{f}_{\Leb^2(\Xset)} \geq \frac{C_6 M}{\delta} \cdot \max\Bigl\{n^{-1/2},n^{-1/d}\Bigr\}.
	\end{equation}
	Then with probability at least $1 - 5 \delta$,
	\begin{equation}
	\label{eqn:empirical_norm_sobolev}
	\norm{f}_n^2 \geq \delta \cdot \Ebb\Bigl[\norm{f}_n^2\Bigr].
	\end{equation}
\end{lemma}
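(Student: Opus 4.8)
The target is a \emph{lower} bound on the random quantity $\|f\|_n^2=\tfrac1n\sum_i f(X_i)^2$, whose mean is $\mathbb{E}[\|f\|_n^2]=\|f\|_P^2:=\int_\Xset f^2 p$. The obstacle is that for $d\ge 2$ a function in $H^1(\Xset)$ need not be bounded — it can carry its $\Leb^2$ mass on a tall, narrow spike — so $\|f\|_n^2$ cannot be controlled by applying a concentration inequality to $f(X)^2$ directly, and it can even vanish when no design point lands in such a spike. The plan is to replace $f$ by a \emph{local-averaging surrogate} $\bar f$, which is bounded, at a scale $h$ fine enough that the surrogate error is negligible but coarse enough that each neighborhood still contains many design points. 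The hypothesis $\|f\|_{\Leb^2(\Xset)}\ge\tfrac{C_6 M}{\delta}\max\{n^{-1/2},n^{-1/d}\}$ is exactly what lets both requirements hold simultaneously, with $h\asymp\sqrt{\delta}\,\|f\|_{\Leb^2(\Xset)}/M$.

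Concretely I would fix $h\asymp\sqrt{\delta}\,\|f\|_{\Leb^2(\Xset)}/M$ (capped at $c_0$; note $h\lesssim\sqrt\delta<1$ since $\|f\|_{\Leb^2(\Xset)}\le M$), partition $\Xset$ into cells $Q$ of diameter $\asymp h$ using the bi-Lipschitz cube partition from the proof of Proposition~\ref{prop:optimal_transport}, and let $\bar f$ equal on each cell $Q$ the average $\bar f_Q$ of $f$ over $Q$. (Alternatively one may use the kernel-smoothed surrogate $\Lambda_h f$ and invoke Lemma~\ref{lem:smoothening_error}; the argument is the same.) Two deterministic facts drive everything. First, the Poincar\'e inequality on each cell gives $\int_Q (f-\bar f_Q)^2\lesssim h^2\|\nabla f\|_{\Leb^2(Q)}^2$, so summing, $\sum_Q\int_Q(f-\bar f_Q)^2\lesssim h^2|f|_{H^1(\Xset)}^2\le h^2 M^2$; by the choice of $h$ and the hypothesis this is at most a small multiple of $\delta\|f\|_{\Leb^2(\Xset)}^2$. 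Consequently $\|\bar f\|_P^2$ (hence $\mathbb{E}[\|\bar f\|_n^2]$) differs from $\|f\|_P^2$ by at most $O(h^2M^2)$ — the cross term being handled via $\int_Q(f-\bar f_Q)=0$ and the Lipschitzness of $p$. Second, $\bar f$ is \emph{bounded}: $\bar f_Q^2\le\tfrac{1}{\mathrm{vol}(Q)}\int_Q f^2$, so $\|\bar f\|_\infty^2\le\|f\|_{\Leb^2(\Xset)}^2/\min_Q\mathrm{vol}(Q)\lesssim h^{-d}\|f\|_{\Leb^2(\Xset)}^2$.

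With these in hand I would apply a multiplicative Chernoff (Bernstein) lower-tail bound to $\|\bar f\|_n^2=\tfrac1n\sum_i\bar f(X_i)^2$, an average of i.i.d.\ terms bounded by $\|\bar f\|_\infty^2$ with mean $\|\bar f\|_P^2$: the deviation probability is at most $\exp\!\big(-c\,n\|\bar f\|_P^2/\|\bar f\|_\infty^2\big)\le\exp(-c'nh^d)$, and the hypothesis forces $nh^d\gtrsim C_6^d\delta^{-d/2}\gg\log(1/\delta)$, so $\|\bar f\|_n^2\ge(1-\eta)\|\bar f\|_P^2$ with probability $\ge 1-\delta$ for a small fixed $\eta$. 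Separately, Markov's inequality applied to $\tfrac1n\sum_i(f(X_i)-\bar f(X_i))^2$ (mean $\le p_{\max}\sum_Q\int_Q(f-\bar f_Q)^2\lesssim h^2M^2$) bounds it by $\delta^{-1}h^2M^2$ — again a small multiple of $\|f\|_{\Leb^2(\Xset)}^2$ — with probability $\ge 1-\delta$. Combining via $a^2\ge b^2-2|b|\,|a-b|$ summed over $i$ with Cauchy--Schwarz on the cross term, on the intersection of these events $\|f\|_n^2\ge(1-c\delta)\,\|f\|_P^2$; since the claim is vacuous unless $\delta<1/5$, this yields $\|f\|_n^2\ge\delta\,\mathbb{E}[\|f\|_n^2]$, and a union bound over the handful of events used accounts for the factor $5$.

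I expect the main obstacle to be exactly the tension just described: calibrating the averaging scale $h$ so that simultaneously (i) the surrogate/Poincar\'e error $h^2M^2$ is negligible against $\|f\|_{\Leb^2(\Xset)}^2$ and (ii) the typical number of design points per cell, $\asymp nh^d$, is large. This is where one must use the full strength of the lower bound on $\|f\|_{\Leb^2(\Xset)}$, and where the precise exponent $\max\{n^{-1/2},n^{-1/d}\}$ and the $1/\delta$ factor appear (the $n^{-1/d}$ branch governs $d\ge 2$, the $n^{-1/2}$ branch $d\le 2$). The remaining pieces — the Poincar\'e inequality, the Chernoff bound, and the Markov step — are routine.
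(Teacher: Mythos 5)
Your proposal is correct, but it is a genuinely different argument from the paper's. The paper proves this lemma by a second-moment method: it bounds $\Ebb[\|f\|_n^4] \leq (1+\delta^2)(\Ebb[\|f\|_n^2])^2$ by controlling $\|f\|_{\Leb^4(\Xset)}^4/n$ via the Sobolev embedding theorem (the sup-norm embedding for $d<2$, interpolation between $\Leb^2$ and $\Leb^{2d/(d-2)}$ for $d>2$, and a limiting argument at the borderline $d=2$), and then applies the Paley--Zygmund inequality; this is where the two branches of $\max\{n^{-1/2},n^{-1/d}\}$ come from. Your route --- a bounded local-averaging surrogate at scale $h\asymp\sqrt{\delta}\,\|f\|_{\Leb^2(\Xset)}/M$, cellwise Poincar\'e for the approximation error, Bernstein for the surrogate, Markov for the residual --- avoids Sobolev embeddings entirely and reuses the partition machinery the paper already builds for Proposition~\ref{prop:optimal_transport}; it needs only the $n^{-1/d}$ branch of the hypothesis, treats all $d$ uniformly, and actually yields a stronger conclusion, namely $\|f\|_n^2\geq c\,\Ebb[\|f\|_n^2]$ for a constant $c$ close to $1$ (with the dominant failure probability coming from the Markov step), rather than merely the factor $\delta$. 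The price is more machinery and more careful constant-chasing: in particular, your final display $\|f\|_n^2\geq(1-c\delta)\|f\|_P^2$ is not what the argument delivers --- the $\delta^{-1}$ from Markov exactly cancels the $\delta$ built into $h^2M^2\asymp\delta\|f\|_{\Leb^2(\Xset)}^2$, so the residual term $\|f-\bar f\|_n^2$ is a fixed small multiple of $\|f\|_P^2$ (tunable through the implicit constant in $h$), not an $O(\delta)$ one. This is cosmetic, since any fixed loss below $4/5$ suffices once one notes the claim is vacuous for $\delta\geq 1/5$, but the write-up should say so explicitly; with that fix, and $C_6$ chosen large enough that $nh^d\gtrsim\delta^{-d/2}$ dominates $\log(1/\delta)$, your proof goes through.
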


\paragraph{Proof of Lemma~\ref{lem:empirical_norm_sobolev}.}
In this proof, we will find it more convenient to deal with the parameterization $b = 1/\delta$. To establish \eqref{eqn:empirical_norm_sobolev}, it is sufficient to show that
\begin{equation*}
\mathbb{E}\bigl[\norm{f}_n^4\bigr] \leq \left(1 + \frac{1}{b^2}\right)\cdot \left(\mathbb{E}\bigl[\norm{f}_n^2\bigr]\right)^2;
\end{equation*}
then \eqref{eqn:empirical_norm_sobolev} follows from the Paley-Zygmund inequality (Lemma~\ref{lem:paley_zygmund}). Since $p \leq p_{\max}$ is uniformly bounded, we can relate $\mathbb{E}\bigl[\norm{f}_n^4\bigr]$ to the $\Leb^4(\Xset)$-norm,
\begin{equation*}
\mathbb{E}\bigl[\norm{f}_n^4\bigr] = \frac{(n-1)}{n}\left(\mathbb{E}\Bigl[\norm{f}_n^2\Bigr]\right)^2 + \frac{\mathbb{E}\Bigl[\bigl(f(X_1)\bigr)^4\Bigr]}{n} \leq \left(\mathbb{E}\Bigl[\norm{f}_n^2\Bigr]\right)^2 + p_{\max}\frac{\norm{f}_{\Leb^4(\Xset)}^4}{n}.
\end{equation*}
We will use the Sobolev inequalities as a tool to show that $\|f\|_{\Leb^4(\Xset)}^4/n \leq \bigl(\mathbb{E}[\|f\|_n^2]\bigr)^2/(b^2p_{\max})$, whence the claim of the Lemma is shown. The nature of the inequalities we use depend on the value of $d$. In particular, we will use the following relationships between norms: for any $f \in H^1(\mc{X};M)$, 
\begin{equation*}
\begin{rcases*}
\sup_{x \in \mc{X}}|f(x)|,& \textrm{$d = 1$}\\
\|f\|_{\Leb^{q}(\mc{X})},& \textrm{$d = 2$, for all $0 < q < \infty$} \\
\|f\|_{\Leb^{q}(\mc{X})},& \textrm{$d \geq 3$, for all $0 < q \leq 2d/(d - 2)$}
\end{rcases*}
\leq C_7 \cdot M.
\end{equation*}
(See Theorem 6 in Section 5.6.3 of \citet{evans10} for a complete statement and proof of the various Sobolev inequalities.)

As a result, we divide our analysis into three cases: (i) the case where $d < 2$, (ii) the case where $d > 2$, and (iii) the borderline case $d = 2$.

\textit{Case 1: $d < 2$.}
The $\Leb^4(\Xset)$-norm of $f$ can be bounded in terms of the $\Leb^2(\Xset)$ norm,
\begin{align*}
\norm{f}_{\Leb^4(\Xset)}^4 & \leq \left(\sup_{x \in \Xset} \abs{f(x)}\right)^2 \cdot \int_{\Xset} [f(x)]^2 \,dx \leq C_7^2 M^2 \cdot \|f\|_{\Leb^2(X)}^2.
\end{align*}
Since by assumption
\begin{equation*}
\norm{f}_{\Leb^2(\Xset)}^2 \geq C_6^2 \cdot b^2 \cdot M^2 \cdot \frac{1}{n},
\end{equation*}
we have
\begin{equation*}
p_{\max} \frac{\norm{f}_{\Leb^4(\Xset)}^4}{n} \leq C_7^2 M^2p_{\max} \cdot \frac{\norm{f}_{\Leb^2(\Xset)}^2}{n} \leq \frac{C_7p_{\max}}{C_6^2 b^2} \norm{f}_{\Leb^2(\Xset)}^4 \leq \frac{\bigl(\Ebb\bigl[\norm{f}_n^2\bigr]\bigr)^2}{b^2},
\end{equation*}
where the last inequality follows by taking $C_6 \geq C_7 \sqrt{p_{\max}/p_{\min}}$. 

\textit{Case 2: $d > 2$.}
Let $\theta = 2 - d/2$ and $q = 2d/(d - 2)$. Noting that $4 = 2\theta + (1 - \theta)q$, Lyapunov's inequality implies
\begin{equation*}
\norm{f}_{\Leb^4(\Xset)}^4 \leq \norm{f}_{\Leb^2(\Xset)}^{2\theta} \cdot \norm{f}_{\Leb^q(\Xset)}^{(1 - \theta)q} \leq \norm{f}_{\Leb^2(\Xset)}^{4} \cdot \left(\frac{C_7\norm{f}_{H^1(\Xset)}}{\norm{f}_{\Leb^2(\Xset)}}\right)^{d}.
\end{equation*}
By assumption, $\norm{f}_{\Leb^2(\Xset)} \geq C_6 b \norm{f}_{H^1(\Xset)} n^{-1/d}$, and therefore
\begin{equation*}
p_{\max} \frac{\norm{f}_{\Leb^4(\Xset)}^4}{n} \leq \norm{f}_{\Leb^2(\Xset)}^4p_{\max} \cdot \left(\frac{C_7\norm{f}_{H^1(\Xset)}}{n^{1/d}\norm{f}_{\Leb^2(\Xset)}}\right)^{d} \leq \frac{C_7^dp_{\max}\norm{f}_{\Leb^2(\Xset)}^4}{C_6^db^{d}} \leq \frac{\bigl(\Ebb\bigl[\norm{f}_n^2\bigr]\bigr)^2}{b^2}.
\end{equation*}
where the last inequality follows by taking $C_6 \geq C_7(p_{\max}/p_{\min})^{1/d}$, and keeping in mind that $d > 2$ and $b \geq 1$. 

\textit{Case 3: $d = 2$.}
Fix $t \in (1/2,1)$, and suppose that
\begin{equation}
\label{pf:empirical_norm_sobolev_1}
\|f\|_{\Leb^2(\Xset)} \geq \frac{C_6 M}{\delta} \cdot n^{-t/2}.
\end{equation} 
Putting $q = 2/(1 - t)$, we have that $\|f\|_{\Leb^{q}(\mc{X})} \leq C_7 \cdot M$, and it follows from derivations similar to those in Case 2 that $\|f\|_{\Leb^4(\Xset)}^4/n \leq \bigl(\mathbb{E}[\|f\|_n^2]\bigr)^2/(b^2p_{\max})$ when $C_6 \geq C_7 \sqrt{p_{\max}/p_{\min}}$.

Now, suppose $f \in \Leb^4(\Xset)$ satisfies \eqref{pf:empirical_norm_sobolev_1} only when $t = 1$. For each $k = 1,2,\ldots$ let $f_k := n^{1/(2k)}f$, so that each $f_k$ satisfies \eqref{pf:empirical_norm_sobolev_1} with respect to $t = 1 - 1/k$. Clearly $\|f_k - f\|_{\Leb^4(\Xset)} \to 0$ as $k \to \infty$, and therefore
\begin{equation*}
\frac{1}{n}\|f\|_{\Leb^4(\Xset)}^4 = \frac{1}{n}\lim_{k \to \infty} \|f_k\|_{\Leb^4(\Xset)}^4 \leq \frac{1}{b^2p_{\max}} \lim_{k \to \infty} \bigl(\mathbb{E}[\|f_k\|_n^2]\bigr)^2 =  \frac{1}{b^2p_{\max}}\bigl(\mathbb{E}[\|f\|_n^2]\bigr)^2.
\end{equation*}
This establishes the claim when $d = 2$, and completes the proof of Lemma~\ref{lem:empirical_norm_sobolev}.

\section{Graph functionals under the manifold hypothesis}
\label{sec:manifold}

In this section, we restate a few results of \citet{trillos2019,calder2019}, which are analogous to Lemmas~\ref{lem:graph_sobolev_seminorm} and~\ref{lem:neighborhood_eigenvalue} but cover the case where $\Xset$ is an $m$-dimensional submanifold without boundary. As such, the results in this section will hold under the assumption~\ref{asmp:domain_manifold}. We refer to~\citet{trillos2019,calder2019} for the proofs of these results.

Proposition~\ref{prop:garciatrillos19_1} follows from Lemma~5 of~\citet{trillos2019} and Markov's inequality.
\begin{proposition}
	\label{prop:garciatrillos19_1}
	For any $f \in H^1(\Xset)$, with probability at least $1 - \delta$,
	\begin{equation*}
	f^{\top} L f \leq \frac{C}{\delta} n^2 r^{m + 2} |f|_{H^1(\Xset)}^2.
	\end{equation*}
\end{proposition}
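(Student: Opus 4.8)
The plan is to follow exactly the route used for Lemma~\ref{lem:graph_sobolev_seminorm}, reducing the high-probability bound to a bound on the expectation $\Ebb[f^{\top} L f]$, which in the boundaryless manifold setting is (a rescaling of) precisely the content of Lemma~5 of \citet{trillos2019}. Concretely, since $L$ is positive semidefinite, $f^{\top} L f$ is a nonnegative random variable, so Markov's inequality gives $\Pbb\bigl(f^{\top} L f \geq \Ebb[f^{\top} L f]/\delta\bigr) \leq \delta$; hence it suffices to show $\Ebb[f^{\top} L f] \leq C n^2 r^{m + 2} |f|_{H^1(\Xset)}^2$, and then take $C$ in the proposition to absorb this constant.

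For the expectation bound, first write $\Ebb[f^{\top} L f] = \binom{n}{2}\,\Ebb\bigl[(f(X') - f(X))^2 K(\|X' - X\|/r)\bigr]$ for $X, X'$ i.i.d.\ from $P$, and bound the density by $p_{\max}$ (available by~\ref{asmp:density_manifold}) to reduce to a double integral of $(f(x') - f(x))^2 K(\|x' - x\|/r)$ against the volume form of $\Xset$. Since $\Xset$ is a smooth compact manifold without boundary of positive reach, for $r$ below a constant threshold any two points at Euclidean distance at most $r$ are joined by a minimizing geodesic lying entirely in $\Xset$, with geodesic and Euclidean distances comparable up to a factor $1 + O(r^2)$. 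Writing $f(x') - f(x)$ as the integral of $\nabla f$ along this geodesic, applying Jensen's inequality, changing to normal coordinates, and using the normalization $\int K(\|z\|)\,zz^{\top}\,dz = \sigma_K I$ together with $\sigma_K < \infty$ from~\ref{asmp:kernel}, one finds the double integral is at most $C r^{m + 2} \int_{\Xset}\|\nabla f(x)\|^2\,dx = C r^{m + 2} |f|_{H^1(\Xset)}^2$; the factor $\binom{n}{2} \leq n^2$ then yields the claim. This is exactly the computation carried out in \citet{trillos2019}, so in the write-up I would simply invoke their Lemma~5 (rescaled by $n^2 r^{m + 2}$) rather than reproduce it.

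The one point that genuinely differs from the Euclidean Lemma~\ref{lem:graph_sobolev_seminorm} — and the only place where care is needed — is the geometric step. In the full-dimensional case with boundary one had to introduce a Sobolev extension operator $E : H^1(\Xset) \to H^1(\Omega)$ to make sense of the straight-line interpolation identity near $\partial\Xset$, which forced the intermediate bound into the full norm $\|f\|_{H^1}$ and required the Poincaré / zero-mean reduction. Here the absence of a boundary means geodesics stay inside $\Xset$ and no extension is needed, giving the bound directly in terms of the seminorm $|f|_{H^1(\Xset)}$. The price is that one must control the comparison between geodesic and Euclidean balls and the distortion of the volume form in normal coordinates; these are standard consequences of compactness and positive reach (assumption~\ref{asmp:domain_manifold}) and are established in \citet{trillos2019}, so I do not anticipate any real obstacle — the main ``work'' here is just citing the correctly rescaled statement and applying Markov.
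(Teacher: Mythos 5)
Your proposal matches the paper's proof exactly: the paper also obtains this proposition by combining Lemma~5 of \citet{trillos2019} (the expectation bound in the boundaryless manifold setting) with Markov's inequality applied to the nonnegative random variable $f^{\top} L f$. Your additional sketch of why the geodesic/normal-coordinate computation works and why no extension operator is needed is accurate but not required, since the paper simply cites the external lemma.
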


In Proposition~\ref{prop:calder19_1}, it is assumed that $r$, $\wt{\delta}$ and $\theta$ satisfy the following smallness conditions.
\begin{enumerate}[label=(S\arabic*)]
	\item 
	\setcounter{enumi}{1}
	\begin{equation*}
	n^{-1/m} < \wt{\delta} \leq \frac{1}{4}r~~\textrm{and}~~C(\theta + \wt{\delta}) \leq \frac{1}{2}p_{\min}~~\textrm{and}~~C_4\bigl(\log(n)/n\bigr)^{1/m}\leq r \leq\min\{c_4,1\}.
	\end{equation*}
\end{enumerate}

\begin{proposition}[\textbf{c.f Theorem 2.4 of~\citet{calder2019}}]
	\label{prop:calder19_1}
	With probability at least $1 - Cn\exp(-cn\theta^2\wt{\delta}^m)$, the following statement holds. For any $k \in \mathbb{N}$ such that
	\begin{equation*}
	\sqrt{\lambda_k(\Delta_P)}r + C(\theta + \wt{\delta}) \leq \frac{1}{2},
	\end{equation*} 
	it holds that
	\begin{equation*}
	nr^{m+2} \lambda_k(\Delta_P) \biggl(1 - C\Bigl(r(\sqrt{\lambda_k(\Delta_P)} + 1) + \frac{\wt{\delta}}{r} + \theta\Bigr)\biggr)\leq \lambda_k(G_{n,r}) \leq nr^{m+2} \lambda_k(\Delta_P) \biggl(1 + C\Bigl(r(\sqrt{\lambda_k(\Delta_P)} + 1) + \frac{\wt{\delta}}{r} + \theta\Bigr)\biggr).
	\end{equation*}
\end{proposition}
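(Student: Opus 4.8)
Since the paper states this is a restatement of Theorem~2.4 of \citet{calder2019}, the honest route is to cite their proof directly; but the strategy is exactly the manifold version of the argument we carry out in the proof of Theorem~\ref{thm:neighborhood_eigenvalue}, so the plan below explains how it goes. The idea is to bound the graph Dirichlet energy $b_r(u) = (n^2r^{m+2})^{-1}u^\top L u$ against the continuum energy $D_2(f) = \int_\Xset \|\nabla f\|^2 p^2\,d\mathrm{vol}$ (with $\nabla$ now the \emph{manifold} gradient) in both directions, via interpolation and discretization maps that are also near-isometries between $\Leb^2(P_n)$ and $\Leb^2(\Xset)$, and then to feed these comparisons into the Courant--Fischer variational characterization of $\lambda_k(\Delta_P)$ and $\lambda_k(G_{n,r})$. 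The key structural difference from Theorem~\ref{thm:neighborhood_eigenvalue} is that, with $\Xset$ a closed manifold, there is no boundary layer where $L$ approximates a different operator, so one gets genuine \emph{consistency}, i.e. the multiplicative factor $1 \pm C(r(\sqrt{\lambda_k(\Delta_P)}+1) + \wt{\delta}/r + \theta)$ rather than just two-sided constants.

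Concretely, I would first establish the manifold analogue of Proposition~\ref{prop:optimal_transport}: with probability at least $1 - Cn\exp(-cn\theta^2\wt{\delta}^m)$ there is an absolutely continuous $\wt{P}_n$ with density $\wt{p}_n$ such that $d_\infty(\wt{P}_n, P_n) \lesssim \wt{\delta}$ and $\|\wt{p}_n - p\|_\infty \lesssim \wt{\delta} + \theta$ --- this is the Garc\'ia Trillos--Slepcev transport estimate, with the underlying partition built from geodesic normal-coordinate charts whose distortion is controlled by the reach and smoothness of $\Xset$. Next I would define $\wt{\mc{P}}:\Leb^2(\Xset)\to\Leb^2(P_n)$ by averaging over the transport cells $U_i$, and $\wt{\mc{I}}:\Leb^2(P_n)\to\Leb^2(\Xset)$ as $\Lambda_{r-2A_0\wt{\delta}}\circ\wt{\mc{P}}^\star$ with the same smoothing kernel $\psi(t)=\sigma_K^{-1}\int_t^\infty sK(s)\,ds$. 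The core estimates are the manifold versions of Propositions~\ref{prop:dirichlet_energies} and~\ref{prop:isometry}: $\sigma_K D_2(\wt{\mc{I}}u)\le (1+C(\cdots))b_r(u)$, $b_r(\wt{\mc{P}}f)\le (1+C(\cdots))\sigma_K D_2(f)$, and $\big|\|\wt{\mc{P}}f\|_n^2-\|f\|_P^2\big|\lesssim r\|f\|_P\sqrt{D_2(f)}+(\theta+\wt{\delta})\|f\|_P^2$, likewise for $\wt{\mc{I}}$. These come from a small-cell Poincar\'e inequality, Lipschitz control of $K$, and a Taylor expansion of $f$ along geodesics that converts the nonlocal energy $E_r(f)$ into $\int_\Xset \|\nabla f\|^2 p^2$; the curvature of $\Xset$ contributes the extra $O(r)$ multiplicative term, which is exactly the source of the $r(\sqrt{\lambda_k(\Delta_P)}+1)$ factor. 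Plugging these into Courant--Fischer --- pushing the top-$k$ eigenspace of $\Delta_P$ through $\wt{\mc{P}}$ for one inequality, and the top-$k$ eigenspace of $L$ through $\wt{\mc{I}}$ for the other, using injectivity once $r\sqrt{\lambda_k(\Delta_P)}$ is small --- yields the stated two-sided bound.

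The main obstacle is the same as in the boundary case, but with Riemannian geometry substituted throughout: making the curvature-dependent error terms in the nonlocal-to-local energy comparison explicit, and \emph{uniform in $k$}, so that the final multiplicative distortion depends on $k$ only through $\lambda_k(\Delta_P)$; and verifying that the normal-coordinate partition still delivers the high-probability guarantee $1-Cn\exp(-cn\theta^2\wt{\delta}^m)$ when cells are bi-Lipschitz images of Euclidean cubes. Since \citet{trillos2019,calder2019} carry out precisely this program for manifolds without boundary, the cleanest presentation is to cite their Theorem~2.4 directly, noting only that all constants depend on $\Xset$ through $m$, its reach, and the Lipschitz constant of $p$.
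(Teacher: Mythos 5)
Your proposal matches the paper's treatment: the paper proves nothing here and simply defers to Theorem~2.4 of \citet{calder2019}, exactly as you recommend, and your sketch of the underlying argument (transport estimate, interpolation/discretization maps, Dirichlet-energy comparison, Courant--Fischer) faithfully mirrors the structure the paper itself uses for the boundary-case analogue in Theorem~\ref{thm:neighborhood_eigenvalue}. Your observation that the absence of a boundary is what upgrades two-sided constant bounds to genuine consistency with explicit $1 \pm C(\cdot)$ factors is also consistent with the paper's own discussion following Theorem~\ref{thm:neighborhood_eigenvalue}.
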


Proposition~\ref{prop:calder19_2} follows from Lemma 3.1 of \citet{calder2019}, along with a union bound.
\begin{proposition}
	\label{prop:calder19_2}
	With probability at least $1 - 2Cn\exp(-cp_{\max}nr^m)$, it holds that
	\begin{equation*}
	D_{\max}(G_{n,r}) \leq Cnr^{m}.
	\end{equation*}
\end{proposition}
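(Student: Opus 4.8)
The plan is to bound the degree of each vertex separately and then take a union bound; this is the manifold analogue of the argument behind Lemma~\ref{lem:max_degree}, with the Euclidean volume bound replaced by a submanifold volume bound. Fix $i \in \{1,\dots,n\}$ and condition on $X_i = x$. The degree of vertex $i$ is
\[
D_{ii} \;=\; \sum_{j=1}^n K\!\left(\frac{\|X_i - X_j\|}{r}\right) \;=\; K(0) \;+\; \sum_{j \ne i} K\!\left(\frac{\|x - X_j\|}{r}\right),
\]
which, conditionally on $X_i = x$, is a fixed constant plus a sum of $n-1$ i.i.d.\ random variables taking values in $[0, K(0)]$ (using~\ref{asmp:kernel}). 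Since $K$ is nonincreasing and supported on $[0,1]$, we have $K(\|x-y\|/r) \le K(0)\,\1\{\|x - y\| \le r\}$, so by~\ref{asmp:density_manifold} the conditional mean satisfies
\[
\Ebb\bigl[D_{ii} \mid X_i = x\bigr] \;\le\; K(0)\Bigl(1 + (n-1)\,p_{\max}\,\vol\bigl(\Xset \cap B(x,r)\bigr)\Bigr).
\]

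The next step is to invoke the geometry of $\Xset$. Under~\ref{asmp:domain_manifold}, $\Xset$ is a compact $m$-dimensional submanifold of $\Rd$ with positive reach, and this is precisely the regularity that controls the volume of small balls: there is a constant $C_\Xset$ depending only on $m$ and the reach of $\Xset$ such that $\vol(\Xset \cap B(x,r)) \le C_\Xset\, r^m$ for all $x \in \Xset$ and all $r$ below a threshold (positive reach rules out the manifold folding back on itself at scales below the reach). Hence $\Ebb[D_{ii} \mid X_i = x] \le C\, p_{\max}\, n r^m$, uniformly in $x$.

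Finally, I would apply a Bernstein/Chernoff upper-tail bound (e.g.\ the concentration inequality recorded in Section~\ref{sec:concentration}) to the sum $\sum_{j \ne i} K(\|x - X_j\|/r)$: the summands are bounded by $K(0)$ and have total variance at most $K(0)$ times their mean, i.e.\ $\le C K(0) p_{\max} n r^m$, which gives
\[
\Prob\bigl(D_{ii} \ge C'\, n r^m \,\big|\, X_i = x\bigr) \;\le\; \exp\bigl(-c\, p_{\max}\, n r^m\bigr)
\]
for suitable constants $C', c > 0$, uniformly over $x \in \Xset$; integrating out $x$ gives the same bound unconditionally. A union bound over $i = 1, \dots, n$ then yields $\Prob(D_{\max}(G_{n,r}) \ge C' n r^m) \le n\exp(-c\, p_{\max}\, n r^m)$, which is the claim after relabeling constants. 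This is exactly Lemma 3.1 of \citet{calder2019} combined with a union bound over vertices, so one may alternatively just cite that result.

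I do not expect any real obstacle: the only manifold-specific ingredient is the uniform volume estimate $\vol(\Xset \cap B(x,r)) \lesssim r^m$, a classical consequence of compactness and positive reach, and the rest is a routine bounded-summand Bernstein computation plus a union bound. The one point worth noting (though not needed for the statement itself) is that in the regime of interest the graph radius obeys $r \ge C_0(\log n / n)^{1/m}$ (as in~\ref{asmp:ls_kernel_radius_estimation_manifold}), so $n r^m \gtrsim \log n$ and the failure probability $n\exp(-c p_{\max} n r^m)$ is indeed $o(1)$.
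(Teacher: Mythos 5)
Your proposal is correct and matches the paper's approach: the paper disposes of this proposition by citing Lemma~3.1 of \citet{calder2019} together with a union bound over vertices, which is exactly the per-vertex degree concentration plus union bound you carry out, and your explicit argument (condition on $X_i=x$, bound the conditional mean via $\vol(\Xset\cap B(x,r))\lesssim r^m$ from compactness and positive reach, then apply a Chernoff-type bound) is also the same template the paper uses for its full-dimensional analogue, Lemma~\ref{lem:max_degree}. No gaps.
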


Finally, we note that a Weyl's Law holds for Riemmanian manifolds without boundary, i.e.
\begin{equation*}
\lambda_k(\Delta_P) \asymp k^{2/m}.
\end{equation*}
Put $B_{n,r}(k) := \min\{nr^{m + 2}k^{2/m}, nr^m\}$. Following parallel steps to the proof of Lemma~\ref{lem:neighborhood_eigenvalue}, one can derive from Propositions~\ref{prop:calder19_1} and~\ref{prop:calder19_2}, and Weyl's Law, that with probability at least $1 - Cn\exp(-cnr^m)$, 
\begin{equation}
\label{eqn:neighborhood_eigenvalue_manifold}
cB_{n,r}(k) \leq \lambda_k \leq CB_{n,r}(k),~~\textrm{for all $2 \leq k \leq n$}.
\end{equation}

\section{Proofs of main results}
\label{sec:main_results}

We are now in a position to prove Theorems~\ref{thm:laplacian_smoothing_estimation1}-\ref{thm:laplacian_smoothing_testing_manifold}, as well as a few other claims from our main text. In Section~\ref{subsec:laplacian_smoothing_estimation1_pf} we prove all of our results regarding estimation and in Section~\ref{subsec:laplacian_smoothing_testing_pf} we prove all of our results regarding testing; in Section~\ref{subsec:convenient_estimate}, Lemmas~\ref{lem:variance_term_estimation} and~\ref{lem:variance_term_testing}, we provide some useful estimates on a particular pair of sums that appear repeatedly in our proofs. Throughout, it will be convenient for us to deal with the normalization $\wt{\rho} := \rho nr^{d + 2}$. We note that in each of our Theorems, the prescribed choice of $\rho$ will always result in $\wt{\rho} \leq 1$. 

\subsection{Proof of estimation results}
\label{subsec:laplacian_smoothing_estimation1_pf}

\paragraph{Proof of Theorem~\ref{thm:laplacian_smoothing_estimation1}.}
We have shown that the inequalities \eqref{eqn:graph_sobolev_seminorm} and \eqref{eqn:neighborhood_eigenvalue} are satisfied with probability at least $1 - \delta - C_1n\exp(-c_1nr^d)$, and throughout this proof we take as granted that both of these inequalities hold.

Now, set $\wt{\rho} = M^{-4/(2 + d)}n^{-2/(2 + d)}$ as prescribed in Theorem~\ref{thm:laplacian_smoothing_estimation1}, and note that $\wt{\rho}^{-d/2} \leq n$ is implied by the assumption $M \leq n^{1/d}$. Therefore  from \eqref{eqn:neighborhood_eigenvalue} and Lemma~\ref{lem:variance_term_estimation}, it follows that
\begin{equation*}
\sum_{k = 1}^{n}\biggl(\frac{1}{\rho \lambda_k + 1}\biggr)^{2} \geq 1 + \frac{1}{C_3^2}\sum_{k = 2}^{n}\biggl(\frac{1}{\wt{\rho} k^{2/d} + 1}\biggr)^{2} \geq \frac{1}{8C_3^2} \wt{\rho}^{-d/2}.
\end{equation*}
As a result, by Lemma~\ref{lem:ls_fixed_graph_estimation} along with \eqref{eqn:graph_sobolev_seminorm} and \eqref{eqn:neighborhood_eigenvalue}, with probability at least $1 - \delta - C_1n\exp(-c_1nr^d) - \exp(-\wt{\rho}^{-d/2}/8C_3^2)$ it holds that,
\begin{align}
\|\wh{f} - f_0\|_n^2 & \leq \frac{C_2}{\delta} \wt{\rho} M^2 + \frac{10}{n} + \frac{10}{n}\sum_{k = 2}^{n} \Biggl(\frac{1}{c_3 \wt{\rho}\min\{k^{2/d},r^{-2}\} + 1}\Biggr)^2 \nonumber \\
& \leq \frac{C_2}{\delta} \wt{\rho} M^2 + \frac{10}{n} + \frac{10}{nc_3^2}\sum_{k = 2}^{n} \biggl(\frac{1}{\wt{\rho}k^{2/d} + 1}\biggr)^2 + \frac{10r^4}{c_3^2 \wt{\rho}^2}. \label{pf:laplacian_smoothing_estimation1_1}
\end{align}
The first term on the right hand side of \eqref{pf:laplacian_smoothing_estimation1_1} is a bias term, while the second, third, and fourth terms each contribute to the variance. Of these, under our assumptions the third term dominates, as we show momentarily. First, we use Lemma~\ref{lem:variance_term_estimation} to get an upper bound on this variance term,
\begin{equation*}
\sum_{k = 2}^{n}\biggl(\frac{1}{\wt{\rho}k^{2/d} + 1}\biggr)^2 \leq 4\wt{\rho}^{-d/2}.
\end{equation*}
Then plugging this upper bound back into \eqref{pf:laplacian_smoothing_estimation1_1}, we have that
\begin{align*}
\|\wh{f} - f_0\|_n^2 & \leq\frac{C_2}{\delta} \wt{\rho} M^2 + \frac{10}{n} + \frac{40\wt{\rho}^{-d/2}}{c_3^{2}n}  + \frac{10r^4}{c_3^2 \wt{\rho}^2} \\
& = \biggl(\frac{C_2}{\delta} + \frac{40}{c_3^{2}}\biggr)M^{2d/(2+d)} n^{-2/(2 + d)} + \frac{10}{n} + \frac{10}{c_3^2}r^4M^{8/(2 + d)}n^{4/(2 + d)} \\
& \leq \biggl(\frac{C_2}{\delta} + \frac{50}{c_3^{2}}\biggr)M^{2d/(2+d)} n^{-2/(2 + d)},
\end{align*}
with the last inequality following from~\ref{asmp:ls_kernel_radius_estimation} and the assumption $M \geq n^{-1/2}$. This completes the proof of Theorem~\ref{thm:laplacian_smoothing_estimation1}.

\paragraph{Proof of Theorem~\ref{thm:laplacian_smoothing_estimation2}.}
We first establish that $\wh{f}$ achieves nearly-optimal rates when $d = 4$, and then establish the claimed sub-optimal rates when $d > 4$.

\textit{Nearly-optimal rates when $d = 4$.}

Continuing on from \eqref{pf:laplacian_smoothing_estimation1_1}, from Lemma~\ref{lem:variance_term_estimation} we have that
\begin{equation*}
\|\wh{f} - f_0\|_n^2 \leq \frac{C_2}{\delta} \wt{\rho} M^2 + \frac{10}{n} +  \frac{10}{nc_3^2\wt{\rho}^2} +  \frac{10 \log n}{nc_3^2\wt{\rho}^2} + \frac{10r^4}{c_3^2\wt{\rho}^2}.
\end{equation*}
Setting $r = (C_0\log(n)/n)^{1/4}$, we obtain
\begin{equation*}
\|\wh{f} - f_0\|_n^2 \leq \frac{C_2}{\delta} \wt{\rho} M^2 + \frac{10}{n} +  \frac{10}{nc_3^2\wt{\rho}^2} +  \frac{10 \log n}{nc_3^2\wt{\rho}^2} + \frac{10C_0\log n}{nc_3^2\wt{\rho}^2},
\end{equation*}
and choosing $\wt{\rho} = M^{-2/3}(\log n/n)^{1/3}$ yields
\begin{equation*}
\|\wh{f} - f_0\|_n^2 \leq \biggl(\frac{C_2}{\delta} + \frac{20}{c_3^2} + \frac{10C_0}{c_3^2}\biggr) M^{4/3} \biggl(\frac{\log n}{n}\biggr)^{1/3} + \frac{10}{n}.
\end{equation*}

\textit{Suboptimal rates when $d > 4$.}

Once again continuing on from \eqref{pf:laplacian_smoothing_estimation1_1}, from Lemma~\ref{lem:variance_term_estimation} we have that
\begin{equation*}
\|\wh{f} - f_0\|_n^2 \leq \frac{C_2}{\delta} \wt{\rho} M^2 + \frac{10}{n} + \frac{10}{nc_3^{2}\wt{\rho}^{d/2}} +  \frac{10}{n^{4/d}\wt{\rho}^2c_3^2} + \frac{10r^4}{\wt{\rho}^2c_3^2}.
\end{equation*}
Setting $r = (C_0\log n/n)^{1/d}$, we obtain
\begin{equation*}
\|\wh{f} - f_0\|_n^2 \leq \frac{C_2}{\delta} \wt{\rho} M^2 + \frac{10}{n} + \frac{10}{n\wt{\rho}^{d/2}c_3^{2}} +  \frac{10}{n^{4/d}\wt{\rho}^2c_3^2} + \frac{10C_0^{4/d}(\log n)^{4/d}}{n^{4/d}\wt{\rho}^2c_3^2},
\end{equation*}
and choosing $\wt{\rho} = M^{-2/3}n^{-4/(3d)}$ yields
\begin{equation*}
\|\wh{f} - f_0\|_n^2 \leq \biggl(\frac{C_2}{\delta} + \frac{10}{c_3^2} + \frac{10C_0^{4/d}}{c_3^2}\biggr)M^{4/3}\biggl(\frac{\log n}{n^{1/3}}\biggr)^{4/d} + \frac{10}{c_3^{d/2}}M^{d/3}n^{-1/3} + \frac{10}{n}.
\end{equation*}

\paragraph{Bounds on $\Leb^2(\Xset)$ error under Lipschitz assumption.} 
Let $V_1,\ldots,V_n$ denote the Voronoi tesselation of $\Xset$ with respect to $X_1,\ldots,X_n$. Extend $\wh{f}$ over $\Xset$ by taking it piecewise constant over the Voronoi cells, i.e.
\begin{equation*}
\wh{f}(x) := \sum_{i = 1}^{n} \wh{f}_i \cdot \1\{x \in V_i\}.
\end{equation*}
Note that we are abusing notation slightly by also using $\wh{f}$ to refer to this extension. 

In Proposition~\ref{prop:out_of_sample_error}, we establish that the out-of-sample error $\|\wh{f} - f_0\|_{\Leb^2(\Xset)}$ will not be too much larger than the in-sample error $\|\wh{f} - f_0\|_n$.
\begin{proposition}
	\label{prop:out_of_sample_error}
	Suppose $f_0$ satisfies $|f_0(x') - f_0(x)| \leq M \|x' - x\|$ for all $x',x \in \mc{X}$. Then for all $n$ sufficiently large, with probability at least $1 - \delta$ it holds that
	\begin{equation*}
	\|\wh{f} - f_0\|_{\Leb^2(\Xset)}^2 \leq C \log(1/\delta) \biggl(\log(n)\cdot \|\wh{f} - f_0\|_n^2 + M^2\Bigl(\frac{\log n}{n}\Bigr)^{2/d}\biggr).
	\end{equation*}
\end{proposition}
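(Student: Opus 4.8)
The plan is to compare the $\Leb^2(\Xset)$ norm of a function that is piecewise constant over the Voronoi cells $V_1,\ldots,V_n$ to its empirical ($\Leb^2(P_n)$) norm, and then pay a small correction for the fact that $f_0$ itself is not piecewise constant. Write $g := \wh{f} - f_0$ and decompose $g = g_{\mathrm{pc}} + (f_0^{\mathrm{pc}} - f_0)$, where $g_{\mathrm{pc}}(x) := \sum_{i=1}^n (\wh f_i - f_0(X_i)) \1\{x \in V_i\}$ is piecewise constant over the Voronoi tessellation and $f_0^{\mathrm{pc}}(x) := \sum_{i=1}^n f_0(X_i) \1\{x \in V_i\}$. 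By the triangle inequality $\|\wh f - f_0\|_{\Leb^2(\Xset)}^2 \lesssim \|g_{\mathrm{pc}}\|_{\Leb^2(\Xset)}^2 + \|f_0^{\mathrm{pc}} - f_0\|_{\Leb^2(\Xset)}^2$. The second term is easy: since $f_0$ is $M$-Lipschitz and, on the event that the graph is connected (which holds with the stated probability when $r \gtrsim (\log n/n)^{1/d}$), every Voronoi cell $V_i$ has diameter $\lesssim r \asymp (\log n/n)^{1/d}$ — this follows from a standard covering argument showing that with high probability no point of $\Xset$ is farther than $O((\log n/n)^{1/d})$ from the nearest $X_i$ — we get $\|f_0^{\mathrm{pc}} - f_0\|_{\Leb^2(\Xset)}^2 \leq \sup_x |f_0(x) - f_0(X_{i(x)})|^2 \lesssim M^2 (\log n/n)^{2/d}$.

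The first term, $\|g_{\mathrm{pc}}\|_{\Leb^2(\Xset)}^2 = \sum_i (\wh f_i - f_0(X_i))^2 \vol(V_i)$, must be related to $\|g\|_n^2 = \frac1n \sum_i (\wh f_i - f_0(X_i))^2$. The natural route is to show $\vol(V_i) \lesssim (\log n)/n$ uniformly over $i$, with high probability; then $\|g_{\mathrm{pc}}\|_{\Leb^2(\Xset)}^2 \lesssim (\log n) \cdot \|g\|_n^2$, which gives the $\log n$ factor in the statement. The bound $\max_i \vol(V_i) \lesssim (\log n)/n$ holds because each $V_i$ is contained in a ball of radius $O((\log n/n)^{1/d})$ around $X_i$ — again using the connectivity/covering event — so $\vol(V_i) \lesssim (\log n/n)$. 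Combining the two pieces and tracking the probability (the connectivity event costs the $C_1 n \exp(-c_1 n r^d)$-type term, absorbed into the statement's $\delta$ via the choice of $r$; the $\log(1/\delta)$ dependence enters from the uniform nearest-neighbor-distance deviation bound, which is what one gets from a union bound over an $\varepsilon$-net), yields the claimed inequality.

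The main obstacle is the uniform control $\max_i \vol(V_i) \lesssim (\log n)/n$ — equivalently, the uniform bound on the covering radius $\sup_{x \in \Xset} \min_i \|x - X_i\| \lesssim (\log n / n)^{1/d}$ with the right dependence on $\delta$. This is the only genuinely probabilistic ingredient: one covers $\Xset$ by $O(n)$ balls of radius $\asymp (\log n/n)^{1/d}$, notes that by the density lower bound $p \geq p_{\min}$ each such ball has probability mass $\gtrsim (\log n)/n$, and applies a binomial tail bound plus a union bound to conclude that with probability $\geq 1 - \delta$ every such ball contains at least one $X_i$, with the $\log(1/\delta)$ term coming from the union bound. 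A small amount of care is needed near the boundary of $\Xset$ (using \ref{asmp:domain} and the interior-cone / $a_3$-type estimate from \eqref{eqn:integral_boundary} to ensure boundary balls still capture enough mass), but this is exactly the kind of estimate already deployed elsewhere in the appendix, so it should go through without difficulty.
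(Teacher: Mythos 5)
Your proposal is correct and follows essentially the same route as the paper's proof: the same decomposition over Voronoi cells, the same Lipschitz bound on the discretization error, and the same uniform covering-radius estimate $\sup_{x}\min_i\|x-X_i\|\lesssim(\log(1/\delta)\log n/n)^{1/d}$ (the paper invokes Lemma~16 of \citet{chaudhuri2010} for this), with the boundary handled via \eqref{eqn:integral_boundary}. The only superfluous element is your appeal to graph connectivity --- the covering-radius bound alone controls the Voronoi diameters, and the graph plays no role in this proposition.
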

Note that $n^{-2/d} \ll n^{-2/(2 +d)}$. Therefore Proposition~\ref{prop:out_of_sample_error} together with Theorem~\ref{thm:laplacian_smoothing_estimation1} implies that with high probability, $\wh{f}$ achieves the nearly-optimal (up to a factor of $\log n$) estimation rates out-of-sample error---that is, $\|\wh{f} - f_0\|_{\Leb^2(\Xset)}^2 \leq C \log(n) M^{2d/(2 + d)}n^{-2/(2+d)}$---as long as $M \leq Cn^{1/d}$. This justifies one of our remarks after Theorem~\ref{thm:laplacian_smoothing_estimation1}. 

\paragraph{Proof of Proposition~\ref{prop:out_of_sample_error}.}
Suppose $x \in V_i$, so that we can upper bound the pointwise squared error $|\wh{f}(x) - f(x)|^2$ using the triangle inequality:
\begin{equation*}
\bigl(\wh{f}(x) - f_0(x)\bigr)^2 = \bigl(\wh{f}(X_i) - f_0(x)\bigr)^2 \leq 2\bigl(\wh{f}(X_i) - f_0(X_i)\bigr)^2 + 2\bigl(f_0(X_i) - f_0(x)\bigr)^2.
\end{equation*}
Integrating both sides of the inequality, we have
\begin{align*}
\int_{\Xset} \bigl(\wh{f}(x) - f_0(x)\bigr)^2 \,dx & \leq 2  \sum_{i = 1}^{n} \int_{V_i} \Bigl(\wh{f}(X_i) - f_0(X_i)\Bigr)^2 \,dx + 2 \sum_{i = 1}^{n} \int_{V_i} \Bigl(f_0(X_i) - f_0(x)\Bigr)^2 \dx \\
& = 2 \sum_{i = 1}^{n} \vol(V_i) \Bigl(\wh{f}(X_i) - f_0(X_i)\Bigr)^2 + 2 \sum_{i = 1}^{n} \int_{V_i} \Bigl(f_0(X_i) - f_0(x)\Bigr)^2 \dx,
\end{align*}
and so by invoking the Lipschitz property of $f_0$, we obtain
\begin{equation}
\label{pf:out_of_sample_error_0}
\|\wh{f} - f\|_{\Leb^2(\Xset)}^2 \leq 2 \sum_{i = 1}^{n} \vol(V_i) \Bigl(\wh{f}(X_i) - f_0(X_i)\Bigr)^2 + 2 M^2 \sum_{i = 1}^{n} \Bigl(\mathrm{diam}(V_i)\Bigr)^2.
\end{equation}
Here we have written $\mathrm{diam}(V)$ for the diameter of a set $V$. 

Now we will use some results of \citet{chaudhuri2010} regarding uniform concentration of empirical counts, to upper bound $\diam(V_i)$ Set
\begin{equation*}
\varepsilon_n := \biggl(\frac{2C_{o}\log(1/\delta)d\log n}{\nu_dp_{\min}a_3n}\biggr)^{1/d},
\end{equation*}
where $C_{o}$ is a constant given in Lemma~16 of \citet{chaudhuri2010}. Note that for $n$ sufficiently large, $\varepsilon_n \leq c_0$, and therefore by \eqref{eqn:integral_boundary} we have that for every $x \in \Xset$, $P(B(x,\varepsilon_n)) \geq 2C_{o}\log(1/\delta)d\frac{\log n}{n}$. Consequently, by Lemma~16 of \citet{chaudhuri2010} it holds that with probability at least $1 - \delta$,
\begin{equation}
\label{pf:out_of_sample_error_1}
\textrm{for all $x \in \mc{X}$},~~ B(x,\varepsilon_n) \cap \{X_1,\ldots,X_n\} \neq \emptyset.
\end{equation}
But if \eqref{pf:out_of_sample_error_1} is true, it must also be true that for each $i = 1,\ldots,n$ and for every $x \in V_i$, the distance $\|x - X_i\| \leq \varepsilon_n$. Thus by the triangle inequality, $\max_{i = 1,\ldots,n} \mathrm{diam}(V_i) \leq 2\varepsilon_n$. Plugging back in to \eqref{pf:out_of_sample_error_0}, and using the upper bound volume $\vol(V_i) \leq \nu_d \bigl(\mathrm{diam}(V_i)\bigr)^d$, we obtain the desired upper bound on $\|\wh{f} - f\|_{\Leb^2(\Xset)}^2$.

\paragraph{Proof of Theorem~\ref{thm:laplacian_smoothing_estimation_manifold}.}
The proof of Theorem~\ref{thm:laplacian_smoothing_estimation_manifold} follows exactly the same steps as the proof of Theorem~\ref{thm:laplacian_smoothing_estimation1}, replacing the references to Lemma~\ref{lem:graph_sobolev_seminorm} and~\ref{lem:neighborhood_eigenvalue} by references to Proposition~\ref{prop:garciatrillos19_1} and \eqref{eqn:neighborhood_eigenvalue_manifold}, and the ambient dimension $d$ by the intrinsic dimension $m$. 

\subsection{Proofs of testing results}
\label{subsec:laplacian_smoothing_testing_pf}

\paragraph{Proof of Theorem~\ref{thm:laplacian_smoothing_testing}.}
Let $\delta = 1/b$. Recall that we have shown that the inequalities \eqref{eqn:graph_sobolev_seminorm} and \eqref{eqn:neighborhood_eigenvalue} are satisfied with probability at least $1 - 1/b - C_1n\exp(-c_1nr^d)$, and throughout this proof we take as granted that both of these inequalities hold. 

Now, we would like to invoke Lemma~\ref{lem:ls_fixed_graph_testing}, and in order to do so, we must show that the inequality \eqref{eqn:ls_fixed_graph_testing_critical_radius} is satisfied with respect to $G = G_{n,r}$. First, we upper bound the right hand side of this inequality. Setting $\wt{\rho} = M^{-8/(4 + d)}n^{-4/(4 + d)}$ as prescribed by Theorem~\ref{thm:laplacian_smoothing_testing}, it follows from \eqref{eqn:graph_sobolev_seminorm} and \eqref{eqn:neighborhood_eigenvalue} that
\begin{align*}
\frac{2 \rho}{n} \bigl(f_0^{\top} \Lap f_0\bigr) + \frac{2\sqrt{2/\alpha} + 2b}{n} \biggl(\sum_{k = 1}^{n} \frac{1}{(\rho\lambda_k + 1)^4} \biggr)^{1/2} & \leq C_2b\wt{\rho}M^2 + \frac{2\sqrt{2/\alpha} + 2b}{n}\Biggl[1 + \frac{1}{c_3^2}\biggl(\sum_{k = 2}^{n} \frac{1}{(\wt{\rho}k^{2/d} + 1)^4} \biggr)^{1/2} + \frac{r^4n^{1/2}}{c_3^2\wt{\rho}^2}\Biggr] \\
& \leq C_2b\wt{\rho}M^2 + \frac{2\sqrt{2/\alpha} + 2b}{n}\biggl(1 + \frac{\sqrt{2}}{c_3^{2}}\wt{\rho}^{-d/4} + \frac{r^4n^{1/2}}{c_3^2\wt{\rho}^2}\biggr) \\ 
& \leq \Bigl(C_2 + 2 + \frac{2\sqrt{2}}{c_3^{2}} + \frac{2}{c_3^2}\Bigr)\cdot \Bigl(\sqrt{\frac{2}{\alpha}} + b\Bigr) \cdot M^{2d/(4 + d)} n^{-4/(4 + d)}.
\end{align*}
The second inequality in the above is justified by Lemma~\ref{lem:variance_term_testing}, keeping in mind that $M \leq M_{\max}(d)$ implies that $\wt{\rho}^{-d/2} \leq n$. The third inequality follows from the upper bound on $r$ assumed in~\ref{asmp:ls_kernel_radius_testing} as well as the fact that $M \geq n^{-1/2}$.

Next we lower bound the left hand side of the inequality \eqref{eqn:ls_fixed_graph_testing_critical_radius}---i.e. we lower bound the empirical norm $\|f_0\|_n^2$---using Lemma~\ref{lem:empirical_norm_sobolev}. Recall that by assumption, $M \leq M_{\max}(d)$. Therefore, taking $C \geq C_6$ in \eqref{eqn:laplacian_smoothing_testing} implies that the lower bound on $\|f\|_{\Leb^2(\mc{X})}$ in \eqref{eqn:empirical_norm_sobolev_1} is satisfied. As a result, it follows from \eqref{eqn:empirical_norm_sobolev} that
\begin{equation*}
\|f\|_n^2 \geq \frac{\mathbb{E}[\|f\|_n^2]}{b} \geq \frac{p_{\min}}{b} \|f\|_{\Leb^2(\Xset)}^2 \geq C\Bigl(\sqrt{\frac{1}{\alpha}} + b\Bigr)M^{2d/(4 + d)}n^{-4/(4 + d)},
\end{equation*}
with probability at least $1 - 5/b$. Taking $C \geq C_2 + 2 + (2\sqrt{2})/c_3^{2} + 2/c_3^2$ in \eqref{eqn:laplacian_smoothing_testing} thus implies \eqref{eqn:ls_fixed_graph_testing_critical_radius}, and we may therefore use Lemma~\ref{lem:ls_fixed_graph_testing} to upper bound the type II error the Laplacian smoothing test $\wh{\varphi}$. Observe that by \eqref{eqn:neighborhood_eigenvalue} and the lower bound in Lemma~\ref{lem:variance_term_testing}, 
\begin{equation*}
\sum_{k = 1}^{n} \biggl(\frac{1}{\rho \lambda_k + 1}\biggr)^4 \geq 1 + \frac{1}{C_3^4}\sum_{k = 2}^{n}\biggl(\frac{1}{\wt{\rho}k^{2/d} + 1}\biggr)^4 \geq \frac{1}{32C_3^4}\wt{\rho}^{-d/2}.
\end{equation*}
We conclude that
\begin{align*}
\Pbb_{f_0}\bigl(\wh{T} \leq \wh{t}_{\alpha}\bigr) & \leq \frac{6}{b} + \frac{1}{b^2} + \frac{16}{b} \Biggl(\sum_{k = 1}^{n} \frac{1}{(\rho\lambda_k + 1)^4} \Biggr)^{-1/2} + C_1n\exp(-c_1nr^d)\\
& \leq \frac{7}{b} + \frac{64\sqrt{2}}{b} C_3^{2}\wt{\rho}^{d/4} + C_1n\exp(-c_1nr^d),
\end{align*}
establishing the claim of Theorem~\ref{thm:laplacian_smoothing_testing}.

\paragraph{Proof of Theorem~\ref{thm:laplacian_smoothing_testing_manifold}.}
The proof of Theorem~\ref{thm:laplacian_smoothing_testing_manifold} follows exactly the same steps as the proof of Theorem~\ref{thm:laplacian_smoothing_testing}, replacing the references to Lemma~\ref{lem:graph_sobolev_seminorm} and~\ref{lem:neighborhood_eigenvalue} by references to Propositions~\ref{prop:garciatrillos19_1} and \eqref{eqn:neighborhood_eigenvalue_manifold}, and the ambient dimension $d$ by the intrinsic dimension $m$.

\paragraph{Proof of \eqref{eqn:laplacian_smoothing_testing_low_smoothness}.}
When $\rho = 0$, the Laplacian smoother $\wh{f} = \mathbf{Y}$, the test statistic $\wh{T} = \frac{1}{n}\|\mathbf{Y}\|_2^2$, and the threshold $\wh{t}_{\alpha} = 1 + n^{-1/2}\sqrt{2/\alpha}$. The expectation of $\wh{T}$ is 
\begin{equation*}
\Ebb\bigl[\wh{T}\bigr] = \mathbb{E}\bigl[f_0^2(X)\bigr] + 1 \geq p_{\min} \norm{f_0}_{\Leb^2(\Xset)}^2 + 1.
\end{equation*}
When $f_0 \in \Leb^4(\Xset,M)$, the variance can be upper bounded
\begin{equation*}
\Var\bigl[\wh{T}\bigr] \leq \frac{1}{n}\Bigl(3 + p_{\max} M^4 + p_{\max}\norm{f_0}_{\Leb^2(\Xset)}^2\Bigr).
\end{equation*}
Now, let us assume that
\begin{equation*}
\norm{f_0}_{\Leb^2(X)}^2 \geq \frac{2\sqrt{2/\alpha} + 2b}{p_{\min}} n^{-1/2},
\end{equation*}
so that $E[\wh{T}] - \wh{t}_{\alpha} \geq E[f_0^2(X)]/2$. Hence, by Chebyshev's inequality
\begin{align*}
\mathbb{P}_{f_0}\Bigl(\wh{T} \leq \wh{t}_{\alpha} \Bigr) & \leq 4 \frac{\Var_{f_0}\bigl[\wh{T}\bigr]}{\mathbb{E}[f_0^2(X)]^2} \\
& \leq \frac{4}{n} \cdot \frac{ 3 + p_{\max}\bigl(M^4 + \|f_0\|_{\Leb^2(\Xset)}^2 \bigr)}{p_{\min}^2 \|f_0\|_{\Leb^2(\Xset)}^4} \\
& \leq \frac{1}{b^2}\Bigl(3 + \frac{4bp_{\max}}{p_{\min}n^{1/2}} + p_{\max}M^4\Bigr).
\end{align*}

\subsection{Two convenient estimates}
\label{subsec:convenient_estimate}

The following Lemmas provides convenient upper and lower bounds on our estimation variance term (Lemma~\ref{lem:variance_term_estimation}) and testing variance term (Lemma~\ref{lem:variance_term_testing}).
\begin{lemma}
	\label{lem:variance_term_estimation}
	For any $t > 0$ such that $1 \leq t^{-d/2} \leq n$,
	\begin{equation*}
	\frac{1}{8}t^{-d/2} - 1 \leq \sum_{k = 2}^{n} \biggl(\frac{1}{tk^{2/d} + 1}\biggr)^2 \leq t^{-d/2} +
	\begin{cases*}
	3t^{-d/2},& ~~\textrm{if $d < 4$} \\
	\frac{1}{t^2}\log n,& ~~\textrm{if $d = 4$} \\
	\frac{1}{t^2}n^{1 - 4/d},&~~\textrm{if $d > 4$.}
	\end{cases*}
	\end{equation*}
\end{lemma}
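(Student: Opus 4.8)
The plan is to compare the sum $\sum_{k=2}^{n} (tk^{2/d}+1)^{-2}$ with the integral $\int_{1}^{n} (t x^{2/d}+1)^{-2}\,dx$ and then evaluate that integral via the substitution $u = t^{d/2} x$, or equivalently $x = t^{-d/2} u$, which turns it into $t^{-d/2}\int_{t^{d/2}}^{t^{d/2}n} (u^{2/d}+1)^{-2}\,du$. Since $k \mapsto (tk^{2/d}+1)^{-2}$ is decreasing, the standard integral comparison gives
\begin{equation*}
\int_{2}^{n+1} \frac{dx}{(tx^{2/d}+1)^2} \;\leq\; \sum_{k=2}^{n} \frac{1}{(tk^{2/d}+1)^2} \;\leq\; \int_{1}^{n} \frac{dx}{(tx^{2/d}+1)^2},
\end{equation*}
so everything reduces to bounding $I(a,b) := \int_{a}^{b}(u^{2/d}+1)^{-2}\,du$ with $a = t^{d/2}$ (which is $\le 1$ by hypothesis) and $b = t^{d/2}n \ge 1$.

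For the \textbf{upper bound}, I would split $I$ at $u=1$. On $[a,1]$ the integrand is at most $1$, contributing at most $1$; on $[1,b]$ we have $u^{2/d}+1 \ge u^{2/d}$, so the integrand is at most $u^{-4/d}$, and $\int_1^b u^{-4/d}\,du$ is the source of the three cases: it equals $\frac{d}{4-d}(1 - b^{1-4/d}) \le \frac{d}{4-d}$ when $d<4$ (a constant, and after multiplying through by the prefactor $t^{-d/2}$ this is absorbed into the stated $3t^{-d/2}$ after checking the constant is $\le 3$ — this needs $d \in \{1,2,3\}$, which holds), equals $\log b \le \log n + \log(t^{d/2}) \le \log n$ when $d=4$ (using $t^{d/2}\le 1$), and equals $\frac{d}{d-4}(b^{1-4/d}-1) \le \frac{d}{d-4} b^{1-4/d} \le C n^{1-4/d} t^{(d/2)(1-4/d)}$ when $d>4$; combined with the $t^{-d/2}$ prefactor from the substitution this yields a term of order $t^{-2}n^{1-4/d}$. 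Adding back the ``$1$'' from $[a,1]$ times $t^{-d/2}\cdot$(factor) — actually it is cleaner to carry the $t^{-d/2}$ out front from the start — gives the main term $t^{-d/2}$ (this comes from $t^{-d/2}\int_a^1 1\,du \le t^{-d/2}$) plus the case-dependent remainder. I would just be slightly careful with constants so the $d<4$ case genuinely fits under $3t^{-d/2}$ rather than merely $O(t^{-d/2})$; one can be generous since $t^{-d/2}\ge 1$.

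For the \textbf{lower bound}, I want $\sum_{k=2}^n (tk^{2/d}+1)^{-2} \ge \frac18 t^{-d/2}-1$. Here I would lower bound the sum by the integral $\int_2^{n+1}(tx^{2/d}+1)^{-2}\,dx$, or — to avoid the annoying lower limit $2$ — first write $\sum_{k=2}^{n} \ge \int_1^{n}(tx^{2/d}+1)^{-2}\,dx - 1$ (bounding the ``$k=1$ overcount'' by the value $1$ at $x=1$), then perform the same substitution to get $t^{-d/2}I(a,b)-1$ with $I(a,b) = \int_{t^{d/2}}^{t^{d/2}n}(u^{2/d}+1)^{-2}\,du$. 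It then suffices to show $I(a,b)\ge 1/8$ whenever $a\le 1\le b$. This follows by restricting the integral to, say, $[1/2,1]\subseteq [a,b]$ (valid since $a\le 1$ and $b\ge 1$; if $a>1/2$ one can instead use $[a,1]$ together with $a\le 1$ and note $t^{d/2}n\ge 1$ forces $b\ge 1$ — but actually I should just use that the hypothesis $t^{-d/2}\le n$ gives $b = t^{d/2}n \ge 1$, and separately $t^{d/2}\le 1$, so $[1/2,1]$ is a legitimate sub-interval of $[a,b]$ only when $a\le 1/2$; for $a\in(1/2,1]$ restrict to $[a,b]\supseteq[a, a+\tfrac12]$ if $b$ is large enough, otherwise handle directly). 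On $[1/2,1]$ the integrand $(u^{2/d}+1)^{-2}\ge (1+1)^{-2} = 1/4$, so $I \ge \frac12\cdot\frac14 = 1/8$. I would tidy up this case distinction so that a single clean estimate $I(a,b)\ge 1/8$ comes out.

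The \textbf{main obstacle} is bookkeeping rather than conceptual depth: making the integral–sum comparison produce exactly the claimed constants ($\frac18$, the ``$-1$'', the ``$+3t^{-d/2}$''), and correctly tracking how the substitution $u = t^{d/2}x$ converts the three ranges of $d$ into the three stated remainder terms — in particular being careful that $\int_1^b u^{-4/d}\,du$, after multiplication by the $t^{-d/2}$ prefactor, becomes $t^{-2}\log n$ (resp.\ $t^{-2}n^{1-4/d}$) and not something off by a power of $t$. I expect no difficulty in the $d<4$ case beyond confirming the constant $\frac{d}{4-d}\le 3$ for $d\in\{1,2,3\}$ (it is $\le 3$ since the largest value is at $d=3$, giving $3$), and the lower bound is genuinely easy once the sub-interval $[1/2,1]$ trick is in place.
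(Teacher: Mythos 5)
Your proposal is correct and the upper bound is exactly the paper's argument — compare the sum to $\int_1^n(tx^{2/d}+1)^{-2}\,dx$, split at $x=t^{-d/2}$ (your $u=1$), bound the integrand by $1$ below the split and by $t^{-2}x^{-4/d}$ above it — with your substitution $u=t^{d/2}x$ merely making the bookkeeping explicit (and both you and the paper drop the same harmless constant $\tfrac{d}{d-4}$ in the $d>4$ case). For the lower bound the paper instead directly counts the at least $\tfrac12 t^{-d/2}-1$ indices $2\le k\le\lfloor t^{-d/2}\rfloor$ at which the summand is $\ge 1/4$, which sidesteps your case distinction on whether $t^{d/2}\le 1/2$; note your leftover case is vacuous anyway, since $t^{-d/2}<2$ makes the claimed lower bound $\tfrac18 t^{-d/2}-1$ negative.
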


\begin{lemma}
	\label{lem:variance_term_testing}
	Suppose $d \leq 4$. Then for any $t > 0$ such that $1 \leq t^{-d/2} \leq n$,
	\begin{equation*}
	\frac{1}{32}t^{-d/2} - 1 \leq \sum_{k = 2}^{n} \biggl(\frac{1}{tk^{2/d} + 1}\biggr)^4 \leq 2t^{-d/2}.
	\end{equation*}
\end{lemma}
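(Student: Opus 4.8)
The plan is a routine integral comparison, exploiting that the summand is monotone. Write $g(x) := (t x^{2/d} + 1)^{-4}$, which is nonincreasing on $[0,\infty)$, so that the sandwich
\begin{equation*}
\int_2^{n+1} g(x)\,dx \;\leq\; \sum_{k=2}^{n} g(k) \;\leq\; \int_1^{n} g(x)\,dx \;\leq\; \int_0^{\infty} g(x)\,dx
\end{equation*}
holds (the left inequality because $g(k)\geq\int_k^{k+1}g$, the middle because $g(k)\leq\int_{k-1}^{k}g$). After the substitution $u = t^{d/2}x$ one has $t x^{2/d} = u^{2/d}$ and $dx = t^{-d/2}\,du$, which makes the factor $t^{-d/2}$ appear explicitly and reduces the whole thing to estimating the constant $\int_0^\infty (u^{2/d}+1)^{-4}\,du$, finite because $d<8$; but it is cleaner to keep $x$ and split the integrals at the point $x = t^{-d/2}$, where $t x^{2/d} = 1$.

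For the upper bound I would split $\int_0^\infty g = \int_0^{t^{-d/2}} g + \int_{t^{-d/2}}^\infty g$. On the first piece $t x^{2/d}\leq 1$ forces $g\leq 1$, contributing at most $t^{-d/2}$. On the second piece $g(x)\leq (t x^{2/d})^{-4} = t^{-4} x^{-8/d}$, and since $d\leq 4$ gives $8/d\geq 2>1$ this integral converges, evaluating to $\tfrac{d}{8-d}\,t^{-d/2}\leq t^{-d/2}$. Summing the two pieces gives $\int_0^\infty g\leq 2t^{-d/2}$, hence the claimed upper bound.

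For the lower bound, if $t^{-d/2}<2$ the target quantity $\tfrac1{32}t^{-d/2}-1$ is negative and there is nothing to prove, so assume $t^{-d/2}\geq 2$. Using $t^{-d/2}\leq n$, so that $[2,t^{-d/2}]\subseteq[2,n+1]$, I would bound $\sum_{k=2}^n g(k)\geq\int_2^{n+1}g\geq\int_2^{t^{-d/2}}g$; on $[2,t^{-d/2}]$ one has $t x^{2/d}\leq 1$, hence $g\geq 2^{-4}=\tfrac1{16}$, so the integral is at least $\tfrac1{16}(t^{-d/2}-2)\geq\tfrac1{32}t^{-d/2}-1$.

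This is exactly the scheme that proves Lemma~\ref{lem:variance_term_estimation}; the step that needs the most care is keeping the sum-versus-integral inequalities pointed in the right direction given that $g$ is decreasing and the sum starts at $k=2$, together with the case split $t^{-d/2}\lessgtr 2$ in the lower bound, which is what forces the additive $-1$ and the slack between $\tfrac1{16}$ and $\tfrac1{32}$ in the stated constants. The only genuine difference from Lemma~\ref{lem:variance_term_estimation} is that the fourth power makes $\int_{t^{-d/2}}^\infty g$ converge for every $d\leq 4$ (indeed for $d<8$), so no $\log n$ or $n^{1-4/d}$ correction appears in the upper bound.
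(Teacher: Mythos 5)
Your proof is correct and follows essentially the same strategy as the paper's: split at the index $k \asymp t^{-d/2}$ where $tk^{2/d} = 1$, bound the summand by $1$ below that point and by $(tk^{2/d})^{-4}$ above it, and use that $8/d \geq 2 > 1$ makes the tail converge for $d \leq 4$. The only cosmetic difference is in the lower bound, where the paper counts the $\gtrsim t^{-d/2}$ terms with $g(k) \geq 1/16$ directly rather than integrating, but the content is identical.
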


\paragraph{Proof of Lemma~\ref{lem:variance_term_estimation}.}
We begin by proving the upper bounds. Treating the sum over $k$ as a Riemann sum of a non-increasing function, we have that
\begin{align*}
\sum_{k = 2}^{n} \biggl(\frac{1}{tk^{2/d} + 1}\biggr)^2 & \leq \int_{1}^{n} \biggl(\frac{1}{tx^{2/d} + 1}\biggr)^2 \,dx \leq t^{-d/2} + \int_{t^{-d/2}}^{n} \biggl(\frac{1}{tx^{2/d} + 1}\biggr)^2 \,dx \leq t^{-d/2} + \frac{1}{t^2} \int_{t^{-d/2}}^{n} x^{-4/d} \,dx.
\end{align*}
The various upper bounds (for $d < 4$, $d = 4$, and $d > 4$) then follow upon computing the integral.

For the lower bound, we simply recognize that for each $k = 2,\ldots,n$ such that $k \leq \floor{t^{-d/2}}$, it holds that $1/(tk^{2/d} + 1)^2 \geq 1/4$, and there are at least $\min\Bigl\{\floor{t^{-d/2}} - 1,n - 1\Bigr\} > \frac{1}{2}t^{-d/2} - 1$ such values of $k$.

\paragraph{Proof of Lemma~\ref{lem:variance_term_testing}.}
The upper bound follows similarly to that of Lemma~\ref{lem:variance_term_estimation}:
\begin{align*}
\sum_{k = 1}^{n} \biggl(\frac{1}{tk^{2/d} + 1}\biggr)^4 \leq t^{-d/2} + \frac{1}{t^4}\sum_{k = t^{-d/2} + 1}^{n} \frac{1}{k^{8/d}} \leq t^{-d/2} + \frac{1}{t^4} \int_{t^{-d/2}}^{n} x^{-8/d} \,dx \leq 2t^{-d/2}.
\end{align*}
The lower bound follows from the same logic as we used to derive the lower bound in Lemma~\ref{lem:variance_term_estimation}.

\section{Concentration inequalities}

\label{sec:concentration}
\begin{lemma}
	\label{lem:chi_square_bound}
	Let $\xi_1,\ldots,\xi_N$ be independent $N(0,1)$ random variables, and let $U := \sum_{k = 1}^{N} a_k(\xi_k^2 - 1)$.  Then for any $t > 0$,
	\begin{equation*}
	\Pbb\Bigl[U \geq 2 \norm{a}_2 \sqrt{t} + 2 \norm{a}_{\infty}t\Bigr] \leq \exp(-t).
	\end{equation*}
	In particular if $a_k = 1$ for each $k = 1,\ldots,N$, then
	\begin{equation*}
	\Pbb\Bigl[U\geq 2\sqrt{N t} + 2t\Bigr] \leq \exp(-t).
	\end{equation*}
\end{lemma}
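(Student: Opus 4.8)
The plan is the standard exponential-moment (Chernoff) argument for quadratic forms in Gaussians, following \citet{laurent00}. First I would record the moment generating function of a single centered chi-square variable: for $\xi \sim N(0,1)$ and $s < 1/2$, a one-line Gaussian integral gives $\Ebb[e^{s(\xi^2 - 1)}] = e^{-s}(1 - 2s)^{-1/2}$. By independence of $\xi_1,\dots,\xi_N$, it then follows that for any $s$ with $0 < s < 1/(2\norm{a}_{\infty})$,
\[
\log \Ebb[e^{sU}] = \sum_{k=1}^N \Bigl(-s a_k - \tfrac12 \log(1 - 2 s a_k)\Bigr).
\]

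Next I would bound each summand with an elementary calculus inequality: for every real $u < 1$,
\[
-\tfrac12\log(1-u) - \tfrac u2 \;\le\; \frac{u^2}{4(1 - u_+)}, \qquad u_+ := \max\{u,0\},
\]
which one checks by showing the derivative of the difference is nonpositive on $(0,1)$, and by using $v - \log(1+v) \le v^2/2$ for $v\ge 0$ on $(-\infty,0]$. Applying this with $u = 2 s a_k$ and using $1 - u_+ \ge 1 - 2 s\norm{a}_\infty$ for every $k$ yields the sub-exponential bound
\[
\log \Ebb[e^{sU}] \;\le\; \frac{s^2 \norm{a}_2^2}{1 - 2 s \norm{a}_\infty}, \qquad 0 < s < \tfrac{1}{2\norm{a}_\infty}.
\]

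Finally, Markov's inequality gives $\Pbb(U \ge z) \le \exp\!\bigl(-sz + s^2\norm{a}_2^2/(1 - 2s\norm{a}_\infty)\bigr)$ for all admissible $s$, and I would plug in the explicit choice $s = \sqrt{t}\,/\,(\norm{a}_2 + 2\norm{a}_\infty\sqrt t)$, which lies in $(0, 1/(2\norm{a}_\infty))$. With $z = 2\norm{a}_2\sqrt t + 2\norm{a}_\infty t$ a short algebraic computation shows the exponent collapses to exactly $-t$, which is the claimed bound; the special case $a_k \equiv 1$ is then immediate since $\norm{a}_2 = \sqrt N$ and $\norm{a}_\infty = 1$. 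There is no real obstacle here: the only mildly delicate points are the sign bookkeeping in the logarithmic inequality (needed because the $a_k$ are not assumed nonnegative, although in every application in this paper they are) and verifying that the prescribed $s$ saturates the Chernoff exponent, and neither requires anything beyond the Gaussian MGF computed in the first step.
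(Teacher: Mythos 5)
Your proposal is correct. The paper does not actually prove this lemma---it is stated as a restatement of Lemma~1 of \citet{laurent00} and used as a black box---so your Chernoff argument is exactly the standard Laurent--Massart proof that the citation points to. I checked the details: the elementary inequality $-\tfrac12\log(1-u)-\tfrac u2 \le u^2/(4(1-u_+))$ holds on both branches (by the derivative comparison on $(0,1)$ and by $v-\log(1+v)\le v^2/2$ for $u\le 0$), and with $s = \sqrt t/(\norm{a}_2 + 2\norm{a}_\infty\sqrt t)$ one indeed gets $1-2s\norm{a}_\infty = \norm{a}_2/(\norm{a}_2+2\norm{a}_\infty\sqrt t)$ and the exponent $-sz + s^2\norm{a}_2^2/(1-2s\norm{a}_\infty)$ collapses to $-t$. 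The only cosmetic omission is the degenerate case $\norm{a}_\infty = 0$, where $U=0$ almost surely and the bound is trivial.
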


The proof of Lemma~\ref{lem:empirical_norm_sobolev} relies on (a variant of) the Paley-Zygmund Inequality.
\begin{lemma}
	\label{lem:paley_zygmund}
	Let $f$ satisfy the following moment inequality for some $b \geq 1$:
	\begin{equation}
	\label{eqn:paley_zygmund_1}
	\Ebb\bigl[\norm{f}_n^4\bigr] \leq \left(1 + \frac{1}{b^2}\right)\cdot\Bigl(\Ebb\bigl[\norm{f}_n^2\bigr]\Bigr)^2.
	\end{equation}
	Then,
	\begin{equation}
	\label{eqn:paley_zygmund_2}
	\mathbb{P}\left[\norm{f}_n^2 \geq \frac{1}{b} \Ebb\bigl[\norm{f}_n^2\bigr]\right] \geq 1 - \frac{5}{b}.
	\end{equation}
\end{lemma}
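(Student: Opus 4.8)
The plan is to obtain this as a direct consequence of the one-sided Paley--Zygmund inequality. Write $W := \|f\|_n^2$, a nonnegative random variable, and set $\mu := \Ebb[W]$. If $\mu = 0$ then $W = 0$ almost surely and \eqref{eqn:paley_zygmund_2} is immediate, and if $b = 1$ the right-hand side of \eqref{eqn:paley_zygmund_2} is negative and the claim is vacuous; so for the substantive case I would assume $\mu > 0$ and $b > 1$. In this regime the hypothesis \eqref{eqn:paley_zygmund_1} reads $\Ebb[W^2] \leq (1 + b^{-2})\mu^2$.

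First I would record the elementary bound: for any $\theta \in [0,1)$,
\begin{equation*}
\mathbb{P}\bigl(W > \theta\mu\bigr) \geq (1-\theta)^2 \frac{\mu^2}{\Ebb[W^2]}.
\end{equation*}
This follows by splitting $\mu = \Ebb[W\1\{W \le \theta\mu\}] + \Ebb[W\1\{W > \theta\mu\}]$, bounding the first summand by $\theta\mu$ and the second by $\sqrt{\Ebb[W^2]}\,\sqrt{\mathbb{P}(W > \theta\mu)}$ using Cauchy--Schwarz, and rearranging the resulting inequality $(1-\theta)\mu \le \sqrt{\Ebb[W^2]}\,\sqrt{\mathbb{P}(W>\theta\mu)}$.

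Next I would apply this with $\theta = 1/b \in (0,1)$ and combine it with \eqref{eqn:paley_zygmund_1}:
\begin{equation*}
\mathbb{P}\Bigl(W \geq \tfrac{1}{b}\mu\Bigr) \geq (1-1/b)^2 \frac{\mu^2}{\Ebb[W^2]} \geq \frac{(1-1/b)^2}{1 + 1/b^2}.
\end{equation*}
It then remains to check the scalar inequality $\tfrac{(1-x)^2}{1+x^2} \geq 1 - 5x$ with $x = 1/b \in (0,1)$; clearing the (positive) denominator, this is equivalent to $(1-x)^2 \geq (1+x^2)(1-5x)$, i.e.\ to $3x + 5x^3 \geq 0$, which holds for all $x \geq 0$. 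This gives $\mathbb{P}(\|f\|_n^2 \geq \tfrac{1}{b}\Ebb[\|f\|_n^2]) \geq 1 - 5/b$, proving Lemma~\ref{lem:paley_zygmund}. There is no genuine obstacle here; the only mild care needed is isolating the degenerate cases $\mu = 0$ and $b = 1$ so that the Paley--Zygmund step is applied with a legitimate parameter $\theta < 1$.
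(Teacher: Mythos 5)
Your proof is correct and follows essentially the same route as the paper: both apply the Paley--Zygmund inequality with $Z=\|f\|_n^2$, $p=1$, $q=2$, $\lambda=1/b$, and then verify an elementary scalar inequality to reach $1-5/b$ (you additionally derive Paley--Zygmund from Cauchy--Schwarz and handle the degenerate cases, which the paper leaves implicit). No gaps.
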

\begin{proof}
	Let $Z$ be a non-negative random variable such that $\mathbb{E}(Z^q) < \infty$. The Paley-Zygmund inequality says that for all $0 \leq \lambda \leq 1$,
	\begin{equation}
	\label{eqn:paley_zygmund_pf1}
	\mathbb{P}(Z > \lambda \mathbb{E}(Z^p)) \geq \left[(1 - \lambda^p) \frac{\mathbb{E}(Z^p)}{(\mathbb{E}(Z^q))^{p/q}}\right]^{\frac{q}{q - p}}.
	\end{equation}
	Applying \eqref{eqn:paley_zygmund_pf1} with $Z = \norm{f}_n^2$, $p = 1$, $q = 2$ and $\lambda = \frac{1}{b}$, by assumption \eqref{eqn:paley_zygmund_1} we have
	\begin{equation*}
	\mathbb{P}\Bigl(\norm{f}_n^2 > \frac{1}{b} \mathbb{E}[\norm{f}_n^2]\Bigr) \geq \Bigl(1 - \frac{1}{b}\Bigr)^2 \cdot  \frac{\bigl(\mathbb{E}[\norm{f}_n^2]\bigr)^2}{\mathbb{E}[\norm{f}_n^4]} \geq \frac{\Bigl(1 - \frac{2}{b}\Bigr)}{\Bigl(1 + \frac{1}{b^2}\Bigr)} \geq 1 - \frac{5}{b}.
	\end{equation*}
\end{proof}

Let $Z_1,\ldots,Z_n$ be independently distributed and bounded random variables, such that $\Ebb[Z_i] = \mu_i$. Let $S_n = Z_1 + \ldots + Z_n$ and $\mu = \mu_1 + \ldots + \mu_n$. The multiplicative form of Hoeffding's inequality gives sharp bounds when $\mu \ll 1$. 
\begin{lemma}[Hoeffding's Inequality, multiplicative form]
	\label{lem:hoeffding}
	Suppose $Z_i$ are independent random variables, which satisfy $Z_i \in [0,B]$ for $i = 1,\ldots,n$. For any $0 < \delta < 1$, it holds that
	\begin{equation*}
	\Pbb\biggl(\Bigl|S_n - \mu\Bigr| \geq \delta \mu\biggr) \leq 2\exp\biggl(-\frac{\delta^2\mu}{3B^2}\biggr).
	\end{equation*}
\end{lemma}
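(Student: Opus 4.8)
The plan is to prove Lemma~\ref{lem:hoeffding} by the classical Chernoff (exponential moment) method, treating the upper and lower deviations separately and combining them via a union bound.

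First I would reduce to the normalized case $B=1$: setting $W_i := Z_i/B \in [0,1]$ and $\nu := \mu/B = \sum_{i} \Ebb[W_i]$, the event $\{|S_n - \mu| \geq \delta\mu\}$ coincides with $\{|\sum_i W_i - \nu| \geq \delta\nu\}$. It therefore suffices to show, for independent $W_i \in [0,1]$ with mean-sum $\nu$,
\begin{equation*}
\Pbb\Bigl(\bigl|\textstyle\sum_i W_i - \nu\bigr| \geq \delta\nu\Bigr) \leq 2\exp\bigl(-\delta^2\nu/3\bigr),
\end{equation*}
after which the stated form follows because $\nu = \mu/B \geq \mu/B^2$ (one may always enlarge $B$ so that $B \geq 1$, which does not affect the hypothesis $Z_i \in [0,B]$).

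For the upper tail, apply Markov's inequality to $e^{s\sum_i W_i}$ with $s > 0$: by independence $\Ebb[e^{s\sum_i W_i}] = \prod_i \Ebb[e^{sW_i}]$, and the convexity bound $e^{sw} \leq 1 + w(e^s - 1)$ on $[0,1]$ together with $1+t \leq e^t$ gives $\Ebb[e^{s\sum_i W_i}] \leq \exp\bigl(\nu(e^s-1)\bigr)$. Hence $\Pbb(\sum_i W_i \geq (1+\delta)\nu) \leq \exp\bigl(\nu(e^s - 1 - s(1+\delta))\bigr)$, and minimizing at $s = \log(1+\delta)$ yields the bound $\bigl(e^{\delta}(1+\delta)^{-(1+\delta)}\bigr)^{\nu}$. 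The lower tail is handled symmetrically with $s < 0$ (equivalently, the same convexity estimate applied to $-W_i$), producing $\Pbb(\sum_i W_i \leq (1-\delta)\nu) \leq \bigl(e^{-\delta}(1-\delta)^{-(1-\delta)}\bigr)^{\nu}$.

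The only step with any content is the elementary calculus inequality showing both exponents are bounded by $-\delta^2/3$ for $\delta \in (0,1)$: namely $(1+\delta)\log(1+\delta) - \delta \geq \delta^2/3$ and $\delta + (1-\delta)\log(1-\delta) \geq \delta^2/3$, each of which follows by noting that the relevant difference vanishes at $\delta = 0$ and has nonnegative derivative there (using $\log(1+\delta) \geq \delta - \delta^2/2$ and $-\log(1-\delta) \geq \delta$). Granting these, each one-sided probability is at most $\exp(-\delta^2\nu/3)$, and adding them completes the proof; I expect this last inequality to be the main (albeit standard) obstacle, the rest being the routine Chernoff template. As an alternative route one could invoke Bernstein's inequality with the variance bound $\sum_i \Var(Z_i) \leq \sum_i \Ebb[Z_i^2] \leq B\mu$, which after simplifying with $\delta < 1$ also yields a bound of the claimed shape.
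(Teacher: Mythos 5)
The paper does not actually prove this lemma---it is invoked as a standard fact (the multiplicative Chernoff bound)---so there is no in-paper argument to compare against. Your exponential-moment route is the canonical proof, and its skeleton is correct: the reduction to $[0,1]$-valued variables, the mgf bound $\Ebb[e^{sW_i}]\le\exp(\Ebb[W_i](e^s-1))$, the optimization at $s=\pm\log(1\pm\delta)$, and the union bound all go through, and the two exponents $\delta-(1+\delta)\log(1+\delta)$ and $-\delta-(1-\delta)\log(1-\delta)$ are indeed both at most $-\delta^2\!/3$ for $\delta\in(0,1)$.

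Two points need repair. First, the constant: your argument delivers $2\exp(-\delta^2\mu/(3B))$, and passing to the stated $2\exp(-\delta^2\mu/(3B^2))$ requires $B\ge 1$. The parenthetical fix of ``enlarging $B$ so that $B\ge 1$'' does not work, because enlarging $B$ weakens the conclusion you must prove, not merely the hypothesis; for $B<1$ the displayed bound with $B^2$ is strictly stronger than the Chernoff bound and is in fact false in general (take $Z_i=B\cdot\mathrm{Bernoulli}(p)$ with $B\to 0$ at fixed $\mu/B$). So the lemma as stated is correct only for $B\ge 1$---a defect of the statement rather than of your method---and your writeup should say explicitly that it proves the bound with $3B$ in place of $3B^2$, which implies the stated form whenever $B\ge1$. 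Second, for the upper-tail inequality $(1+\delta)\log(1+\delta)-\delta\ge\delta^2\!/3$, checking that the derivative is nonnegative \emph{at} $\delta=0$ is not sufficient, and the estimate $\log(1+\delta)\ge\delta-\delta^2\!/2$ only shows the derivative $\log(1+\delta)-2\delta/3$ is nonnegative for $\delta\le 2/3$. A complete argument: this derivative vanishes at $0$, its own derivative $1/(1+\delta)-2/3$ changes sign exactly once on $(0,1)$, and its value at $\delta=1$ is $\log 2-2/3>0$, so it is nonnegative on all of $[0,1]$; integrating gives the claim. The lower-tail inequality via $-\log(1-\delta)\ge\delta$ is fine as written (it even yields the stronger constant $\delta^2\!/2$).
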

We use Lemma~\ref{lem:hoeffding}, along with properties of the kernel $K$ and density $p$, to upper bound the maximum degree in our neighborhood graph, which we denote by $D_{\max}(G_{n,r}) := \max_{i = 1,\ldots,n} D_{ii}$.
\begin{lemma}
	\label{lem:max_degree}
	Under the conditions of Lemma~\ref{lem:neighborhood_eigenvalue},
	\begin{equation*}
	D_{\max}(G_{n,r}) \leq 2p_{\max}nr^d,
	\end{equation*}
	with probability at least $1 - 2n\exp\Bigl(-nr^da_3p_{\min}/(3[K(0)]^2)\Bigr)$. 
\end{lemma}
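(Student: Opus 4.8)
The plan is to bound each diagonal entry $D_{ii}$ of the degree matrix separately via a conditional concentration argument, and then union bound over $i$. Since $W_{ij} = K(\|X_i-X_j\|_2/r)$ and, by~\ref{asmp:kernel}, $K$ is nonincreasing and supported on $[0,1]$, every summand satisfies $0 \le W_{ij} \le K(0)$; moreover the self-loop contributes $W_{ii} = K(0)$. Thus, fixing $i$ and conditioning on $X_i = x$,
\begin{equation*}
D_{ii} = K(0) + \sum_{j \neq i} K\!\left(\frac{\|x - X_j\|_2}{r}\right) =: K(0) + S_n,
\end{equation*}
where the summands are i.i.d.\ and lie in $[0,K(0)]$. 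Writing $\mu_i(x) = \Ebb[S_n \mid X_i = x] = (n-1)\int_{\Xset} K(\|x-y\|_2/r)\,p(y)\,dy$, the normalization $\int_{\Rd} K(\|z\|_2)\,dz = 1$ gives the upper bound $\mu_i(x) \le (n-1)p_{\max} r^d$, while~\eqref{eqn:integral_boundary} (applied with $\eta = K$ and $\varepsilon = r \le c_0$) gives the matching lower bound $\mu_i(x) \ge a_3 p_{\min}(n-1)r^d$.

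I would then apply the multiplicative form of Hoeffding's inequality (Lemma~\ref{lem:hoeffding}) to $S_n$ with $B = K(0)$ and some $\delta \in (0,1)$ close to $1$, obtaining
\begin{equation*}
\Pbb\bigl(S_n \ge (1+\delta)\mu_i(x)\bigr) \le \exp\!\left(-\frac{\delta^2 \mu_i(x)}{3K(0)^2}\right) \le \exp\!\left(-\frac{\delta^2 a_3 p_{\min}(n-1)r^d}{3K(0)^2}\right).
\end{equation*}
Under the hypotheses inherited from Lemma~\ref{lem:neighborhood_eigenvalue} we have $nr^d \gtrsim \log n$, so for $n \ge N$ the additive term $K(0)$ is negligible and $(1+\delta)\mu_i(x) + K(0) \le (1+\delta)p_{\max}(n-1)r^d + K(0) \le 2p_{\max} n r^d$ once $\delta$ is taken close enough to $1$; absorbing the $(n-1)$-versus-$n$ discrepancy into a suitable enlargement of $N$, the exponent can be written as $-nr^d a_3 p_{\min}/(3K(0)^2)$. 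Hence, for every $x$, $\Pbb(D_{ii} > 2p_{\max} n r^d \mid X_i = x)$ is at most $2\exp(-nr^d a_3 p_{\min}/(3K(0)^2))$ — the factor $2$ coming from Lemma~\ref{lem:hoeffding} — and integrating out $x$ the same bound holds unconditionally.

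A union bound over $i = 1,\ldots,n$ then yields $\Pbb(D_{\max}(G_{n,r}) > 2p_{\max} n r^d) \le 2n\exp(-nr^d a_3 p_{\min}/(3K(0)^2))$, which is the claim. There is no real obstacle in this argument: it is a single application of a Chernoff-type bound followed by a union bound, and the only point requiring a little care is the constant bookkeeping — verifying that the slack between $(1+\delta)\mu_i(x)$ and the target threshold $2p_{\max}nr^d$, together with the negligible self-loop contribution $K(0)$, permits taking $\delta \uparrow 1$ and replacing $(n-1)$ by $n$ in the exponent at the sole cost of enlarging $N$.
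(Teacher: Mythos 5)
Your proposal is correct and follows essentially the same route as the paper: bound the expected degree above and below via the kernel normalization and \eqref{eqn:integral_boundary}, apply the multiplicative Hoeffding bound, and union bound over the $n$ vertices. If anything you are more careful than the paper, which does not separate out the self-loop term or worry that taking $\delta = 1$ sits at the boundary of Lemma~\ref{lem:hoeffding}'s stated range; your handling of these points (at the cost of a slightly weaker constant in the exponent, which is immaterial downstream) is fine.
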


\paragraph{Proof of Lemma~\ref{lem:max_degree}.}
Fix $x \in \Xset$, and set
\begin{equation*}
D_{n,r}(x) :=  \sum_{i = 1}^{n} K\biggl(\frac{\|X_i - x\|}{r}\biggr);
\end{equation*}
note that $D_{n,r}(X_i)$ is just the degree of $X_i$ in $G_{n,r}$. By Hoeffding's inequality
\begin{equation}
\label{pf:max_degree_1}
\Pbb\biggl(\Bigl|D_{n,r}(x) - \Ebb\bigl[D_{n,r}(x)\bigr]\Bigr| \geq \delta \Ebb\bigl[D_{n,r}(x)\bigr]\biggr) \leq 2\exp\biggl(-\frac{\delta^2\Ebb\bigl[D_{n,r}(x)\bigr]}{3[K(0)]^2}\biggr).
\end{equation}
Now we lower bound $\Ebb[D_{n,r}(x)]$ using the boundedness of the density $p$, and the fact that $\Xset$ has Lipschitz boundary:
\begin{align*}
\Ebb\bigl[D_{n,r}(x)\bigr] & = n \int_{\Xset} K\biggl(\frac{\|x' - x\|}{r}\biggr) p(x) \,dx \\
& \geq n p_{\min} \int_{\Xset} K\biggl(\frac{\|x' - x\|}{r}\biggr) \,dx \\
& \geq n p_{\min} a_3 \int_{\Xset} K\biggl(\frac{\|x' - x\|}{r}\biggr) \,dx \\
& \geq nr^d p_{\min},
\end{align*}
with the second inequality following from \eqref{eqn:integral_boundary}, and the final inequality from the normalization $\int_{\Rd} K(\|z\|) \,dz = 1$. Similar derivations yield the upper bound
\begin{equation*}
\Ebb\bigl[D_{n,r}(x)\bigr] \leq nr^{d} p_{\max},
\end{equation*} 
and plugging these bounds in to \eqref{pf:max_degree_1}, we determine that
\begin{equation*}
\Pbb\biggl(D_{n,r}(x) \geq (1 + \delta) nr^d p_{\max}\biggr) \leq 2\exp\biggl(-\frac{\delta^2nr^da_0p_{\min}}{3[K(0)]^2}\biggr).
\end{equation*}
Applying a union bound, we get that
\begin{equation*}
\Pbb\biggl(\max_{i = 1,\ldots,n}D_{n,r}(X_i) \geq (1 + \delta) nr^d p_{\max}\biggr) \leq 2n\exp\biggl(-\frac{\delta^2nr^da_0p_{\min}}{3[K(0)]^2}\biggr),
\end{equation*}
and taking $\delta = 1$ gives the claimed upper bound.

\end{document}